\documentclass[12pt]{amsart}   
\usepackage{amssymb,hyperref}
\usepackage{tikz}

\textheight 23.0cm
\textwidth 17.2cm
\topmargin-2cm
\oddsidemargin0cm
\evensidemargin0cm
\parindent2em

\theoremstyle{plain}
\newtheorem{theorem}{Theorem}[section]
\newtheorem{Theorem}{Theorem}

\newtheorem{lemma}[theorem]{Lemma}
\newtheorem{proposition}[theorem]{Proposition}
\newtheorem{corollary}[theorem]{Corollary}

\theoremstyle{definition}
\newtheorem{definition}[theorem]{Definition}
\newtheorem{example}[theorem]{Example}
\newtheorem{remark}[theorem]{Remark}
\newtheorem{problem}[theorem]{Problem}

\DeclareMathOperator{\BSub}{BSub}
\DeclareMathOperator{\Sub}{Sub}
\DeclareMathOperator{\Dir}{Pp}
\DeclareMathOperator{\oDir}{Dir}

\DeclareMathOperator{\BShad}{BShad}
\newcommand{\cat}[1]{\ensuremath{\mathbf{#1}}}

\newcommand{\up}{\ensuremath{\mathop\uparrow\nolimits}}
\newcommand{\down}{\ensuremath{\mathop\downarrow\nolimits}}
\newcommand{\mc}{\mathcal}

\newcommand{\tA}{\tilde{A}}

\pgfdeclarelayer{dotlayer}
\pgfdeclarelayer{linelayer}
\pgfsetlayers{main,linelayer,dotlayer}
\makeatletter
\pgfkeys{%
  /tikz/on layer/.code={
    \pgfonlayer{#1}\begingroup
    \aftergroup\endpgfonlayer
    \aftergroup\endgroup
  },
  /tikz/node on layer/.code={
    \gdef\node@@on@layer{%
      \setbox\tikz@tempbox=\hbox\bgroup\pgfonlayer{#1}\unhbox\tikz@tempbox\endpgfonlayer\pgfsetlinewidth{0.7pt}\egroup}
    \aftergroup\node@on@layer
  },
  /tikz/end node on layer/.code={
    \endpgfonlayer\endgroup\endgroup
  }
}
\def\node@on@layer{\aftergroup\node@@on@layer}
\makeatother
\tikzstyle{dot}=[circle, draw=black!70, fill=black!20, inner sep=.25ex, node on layer=dotlayer]
\tikzstyle{Dot}=[circle, draw=black, fill=black!70, inner sep=.25ex, node on layer=dotlayer]
\tikzstyle{thinline}=[-, draw=black!20, line width=.5pt, node on layer=linelayer]
\tikzstyle{line}=[-, draw=black!50, line width=.5pt, node on layer=linelayer]
\tikzstyle{Line}=[-, draw=black, line width=1pt, node on layer=linelayer]
\newcommand{\powerone}[1]{\begin{tikzpicture}[scale=.5,draw=gray!50,inner sep=0pt,outer sep=0pt]
    \node (a0) {$\;\;0\;\;$};
    \node (a1) at ([yshift=1cm]a0) {$\;\;#1\;\;$};
    \draw ([yshift=-2mm]a0.north) to ([yshift=2mm]a1.south);
  \end{tikzpicture}}
\newcommand{\powertwo}[2]{\begin{tikzpicture}[scale=.33,draw=gray!50,inner sep=0pt,outer sep=0pt]
    \node (b0) {$0$};
    \node (b1) at ([yshift=2cm]0) {$1$};
    \node (bx) at ([xshift=-1cm,yshift=1cm]0) {$#1$};
    \node (bx') at ([xshift=1cm,yshift=1cm]0) {$#2$};
    \draw (b0) to (bx) to (b1);
    \draw (b0) to (bx') to (b1);
  \end{tikzpicture}}
\newcommand{\powerthree}[6]{\begin{tikzpicture}[scale=.5,draw=gray!50,inner sep=0pt,outer sep=0pt]
    \node (c0) {$0$};
    \node (c1) at ([yshift=26mm]0) {$1$};
    \node (ca) at ([xshift=-1cm,yshift=8mm]0) {$#1$};
    \node (cb) at ([yshift=8mm]0) {$#2$};
    \node (cc) at ([xshift=1cm,yshift=8mm]0) {$#3$};
    \node (cd) at ([xshift=-1cm,yshift=18mm]0) {$#4$};
    \node (ce) at ([yshift=18mm]0) {$#5$};
    \node (cf) at ([xshift=1cm,yshift=18mm]0) {$#6$};
    \draw (c0) to (ca); \draw (c0) to (cb); \draw (c0) to (cc);
    \draw (c1) to (cd); \draw (c1) to (ce); \draw (c1) to (cf);
    \draw (ca) to (ce); \draw (ca) to (cf);
    \draw (cb) to (cd); \draw (cb) to (cf);
    \draw (cc) to (cd); \draw (cc) to (ce);
  \end{tikzpicture}}

\title{Boolean subalgebras of orthoalgebras}

\author{John Harding}
\address{New Mexico State University}
\email{hardingj@nmsu.edu}

\author{Chris Heunen}
\address{University of Edinburgh}
\email{chris.heunen@ed.ac.uk}

\author{Bert Lindenhovius}
\address{Tulane University}
\email{alindenh@tulane.edu} 

\author{Mirko Navara}
\thanks{C.H. is supported by EPSRC Fellowship EP/L002388/1. B.L. is supported by the
	MURI project `Semantics, Formal Reasoning, and Tool Support for Quantum Programming' sponsored by the U.S. Department of Defense and the U.S. Air Force Office of Scientific Research. M.N. was supported by the Ministry of Education of the Czech Republic under Project RVO13000.}
\address{Department of Cybernetics,
Faculty of Electrical Engineering,
Czech Technical University in Prague}
\email{navara@cmp.felk.cvut.cz}

\begin{document}

\begin{abstract}
  We develop a direct method to recover an orthoalgebra from its poset of Boolean subalgebras. For this a new notion of direction is introduced. Directions are also used to characterize in purely order-theoretic terms those posets that are isomorphic to the poset of Boolean subalgebras of an orthoalgebra. These posets are characterized by simple conditions defining orthodomains and the additional requirement of having enough directions. Excepting pathologies involving maximal Boolean subalgebras of four elements, it is shown that there is an equivalence between the category of orthoalgebras and the category of orthodomains with enough directions with morphisms suitably defined. Furthermore, we develop a representation of orthodomains with enough directions, and hence of orthoalgebras, as certain hypergraphs. This hypergraph approach extends the technique of Greechie diagrams and resembles projective geometry. Using such hypergraphs, every orthomodular poset can be represented by a set of points and lines where each line contains exactly three points. 
\end{abstract}

\maketitle

\section{Introduction}

\emph{Contextuality} is the phenomenon in quantum theory that the outcome of a measurement may depend on the context in which that measurement is made, that is, on the experimental implementation of that measurement. This principle prevents hidden-variable explanations and clarifies why deterministic explanations of quantum theory are impossible. Contextuality is concerned with how various parts, that are locally consistent, fit together globally. This work is centered around the idea that ``the shape of how the parts fit together'' is enough to determine the whole. The (contents of the) parts themselves are not necessary.

Here, we treat contextuality algebraically: the quantum system is modeled as an algebra, and measurement contexts are modeled as certain subalgebras. We consider \emph{orthoalgebras}~\cite{Greechie}, which are certain structures with a partially defined binary operation $\oplus$ called orthogonal sum, a unary operation $'$ called orthocomplementation, and constants $0,1$. This includes Boolean algebras, orthomodular lattices, and orthomodular posets. For an example, see Figure~\ref{fig:exampleorthodomain}. The appropriate notion of a measurement context then is a \emph{Boolean subalgebra}: a subset that is closed under the operations and is induced by restricting the join of a Boolean algebra to orthogonal elements. See for example Figures~\ref{fig:exampleorthodomain} and~\ref{fig:booleanorthodomains}.
Thus our main object of study is the partially ordered set $\BSub(A)$ of Boolean subalgebras $B \subseteq A$ of an orthoalgebra $A$; we call this its \emph{orthodomain}.

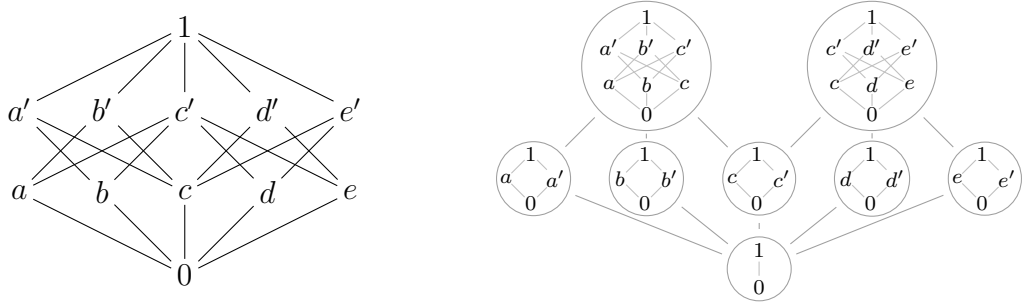
\begin{figure}[h!]
\hfill$\begin{aligned}\begin{tikzpicture}[inner sep=2pt, scale=1.1]
 \node (0) at (0,0) {$0$};
 \node (1) at (0,3) {$1$};
 \node (a) at (-2,1) {$a$};
 \node (b) at (-1,1) {$b$};
 \node (c) at (0,1) {$c$};
 \node (d) at (1,1) {$d$};
 \node (e) at (2,1) {$e$};
 \node (a') at (-2,2) {$a'$};
 \node (b') at (-1,2) {$b'$};
 \node (c') at (0,2) {$c'$};
 \node (d') at (1,2) {$d'$};
 \node (e') at (2,2) {$e'$};
 \draw (0) to (a); \draw (0) to (b); \draw (0) to (c); \draw (0) to (d); \draw (0) to (e);
 \draw (1) to (a'); \draw (1) to (b'); \draw (1) to (c'); \draw (1) to (d'); \draw (1) to (e');
 \draw (a) to (b'); \draw (a) to (c');
 \draw (b) to (a'); \draw (b) to (c');
 \draw (c) to (a'); \draw (c) to (b'); \draw (c) to (d'); \draw (c) to (e');
 \draw (d) to (c'); \draw (d) to (e'); 
 \draw (e) to (c'); \draw (e) to (d');
\end{tikzpicture}\end{aligned}
\qquad\qquad
\begin{aligned}\begin{tikzpicture}[font=\tiny,scale=1.5,inner sep=-3pt,outer sep=3pt, draw=gray!75]
 \node[circle,draw] (0) at (0,0) {$\powerone{1}$};
 \node[circle,draw] (a) at (-2,.8) {$\powertwo{a}{a'}$};
 \node[circle,draw] (b) at (-1,.8) {$\powertwo{b}{b'}$};
 \node[circle,draw] (c) at (0,.8) {$\powertwo{c}{c'}$};
 \node[circle,draw] (d) at (1,.8) {$\powertwo{d}{d'}$};
 \node[circle,draw] (e) at (2,.8) {$\powertwo{e}{e'}$};
 \node[circle,draw] (l) at (-1,1.8) {$\powerthree{a}{b}{c}{a'}{b'}{c'}$};
 \node[circle,draw] (r) at (1,1.8) {$\powerthree{c}{d}{e}{c'}{d'}{e'}$};
 \draw (0) to (a); \draw (0) to (b); \draw (0) to (c); \draw (0) to (d); \draw (0) to (e);
 \draw (a) to (l); \draw (b) to (l); \draw (c) to (l);
 \draw (c) to (r); \draw (d) to (r); \draw (e) to (r);
\end{tikzpicture}\end{aligned}$\hfill\,
\caption{The orthoalgebra on the left is constructed from gluing together two Boolean algebras $\{0,a,b,c,a',b',c',1\}$ and $\{0,c,d,e,c',d',e',1\}$. The orthogonal sum $\oplus$ is the union of the orthogonal join operations of these Boolean algebras. 
On the right is the Hasse diagram of its poset of Boolean subalgebras.}
\label{fig:exampleorthodomain}
\end{figure}

This fits in the established mathematical pattern where some collection of substructures of a structure plays a key role: in classical logic, the collection of subsets of a set; in probability theory, the measurable subsets of a measurable space; in intuitionistic logic, the open subsets of a topological space; in projective geometry, the subspaces of a vector space; and in quantum theory, the collection of closed subspaces of a Hilbert space. In the recent topos-theoretic approach to quantum mechanics \cite{Isham}, the poset of abelian subalgebras of a von Neumann algebra are the central ingredient used to treat contextuality. This latter example is the origin of this work. 

Let us emphasize again that, if an orthoalgebra $A$ is the `whole', we merely consider the `shape' $\BSub(A)$ of how the `parts' $B \subseteq A$ fit together, and not the internal structure of the `parts' $B$ as Boolean algebras. This is like a jigsaw puzzle that can be solved by finding out how the pieces fit without relying on the pictures on the pieces. Our first main result is the following reconstruction.

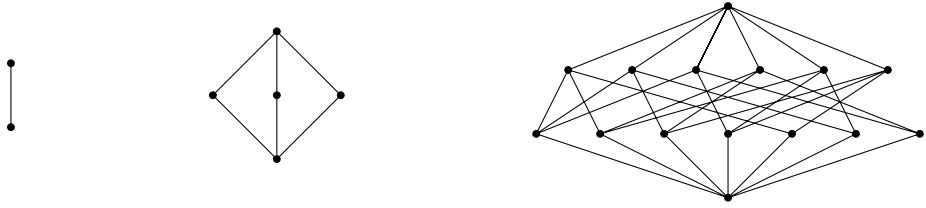
\begin{figure}
\hfill
\begin{tikzpicture}[scale=.85]
\node at (0,0) { };
\draw[fill] (0,1) circle (1.5pt); \draw[fill] (0,2) circle (1.5pt); 
\draw[thin] (0,1)--(0,2);
\end{tikzpicture}
\hfill
\begin{tikzpicture}[scale=.85]
\node at (0,0) { };
\draw[fill] (0,0.5) circle (1.5pt); \draw[fill] (0,2.5) circle (1.5pt); 
\draw[fill] (-1,1.5) circle (1.5pt); \draw[fill] (0,1.5) circle (1.5pt); \draw[fill] (1,1.5) circle (1.5pt);
\draw[thin] (0,0.5)--(-1,1.5)--(0,2.5)--(0,1.5)--(0,0.5)--(1,1.5)--(0,2.5);
\end{tikzpicture}
\hfill
\begin{tikzpicture}[scale=.85]
\draw[fill] (0,0) circle (1.5pt); \draw[fill] (0,3) circle (1.5pt); 
\draw[fill] (-3,1) circle (1.5pt); \draw[fill] (-2,1) circle (1.5pt); \draw[fill] (-1,1) circle (1.5pt);\draw[fill] (0,1) circle (1.5pt);\draw[fill] (1,1) circle (1.5pt);\draw[fill] (2,1) circle (1.5pt);\draw[fill] (3,1) circle (1.5pt);
\draw[fill] (-2.5,2) circle (1.5pt);\draw[fill] (-1.5,2) circle (1.5pt);\draw[fill] (-.5,2) circle (1.5pt);\draw[fill] (.5,2) circle (1.5pt);\draw[fill] (1.5,2) circle (1.5pt);\draw[fill] (2.5,2) circle (1.5pt);
\draw[thin] (0,0)--(-3,1)--(-2.5,2)--(-2,1)--(0,0)--(-1,1)--(-1.5,2)--(-3,1)--(-.5,2)--(0,1)--(1.5,2)--(-2,1)--(.5,2)--(-1,1)--(2.5,2)--(0,1)--(0,0)--(1,1)--(-2.5,2)--(0,3)--(-1.5,2)--(2,1)--(0,0)--(3,1)--(-.5,2)--(0,3)--(-.5,2)--(0,3)--(.5,2)--(3,1);
\draw[thin] (1.5,2)--(0,3)--(2.5,2)--(1,1);\draw[thin] (2,1)--(1.5,2);
\end{tikzpicture}
\hfill\,
\caption{Hasse diagrams of the posets $\BSub(A)$ for Boolean algebras $A$ with 4, 8, and 16 elements. 
A Boolean algebra with 4 elements has two subalgebras: $\{0,1\}$, and itself. 
A Boolean algebra $A$ with 8 elements has five subalgebras: $\{0,1\}$, three subalgebras $\{0,a,a',1\}$ for $a \in A \setminus\{0,1\}$, and $A$ itself. 
A Boolean algebra with 16 elements has a more complicated structure of subalgebras, containing subalgebras with 2, 4, 8, and 16  elements.}
\label{fig:booleanorthodomains}
\end{figure}

\begin{Theorem}\label{firstmainresult}
  If $A$ is a proper orthoalgebra, $\BSub(A)$ has enough directions and $\oDir(\BSub(A))$ is an orthoalgebra isomorphic to $A$.
\end{Theorem}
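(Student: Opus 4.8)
The plan is to recover the orthoalgebra structure on $A$ purely from the order-theoretic data of $\BSub(A)$, via the construction $\oDir$ of "directions". First I would identify the atoms of $\BSub(A)$: these are the two-element Boolean subalgebras of the form $\{0,a,a',1\}$ for $a \in A \setminus \{0,1\}$, so the atoms of $\BSub(A)$ are in bijection with the complementary pairs $\{a, a'\}$ from $A$. The bottom element $\{0,1\}$ of $\BSub(A)$ corresponds to the pair $\{0,1\}$. Thus the underlying set of $A$ should be recoverable as the atoms of $\BSub(A)$ together with a choice, for each atom, of which of the two elements of the pair it names — and the role of a direction is precisely to make this choice coherently, i.e. to orient each atom. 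I would then invoke the definition of $\oDir(\BSub(A))$ and the hypothesis that $A$ is a proper orthoalgebra to see that $\BSub(A)$ has enough directions (this should follow because each element $a \in A$, or rather the principal data it generates, furnishes a direction on $\BSub(A)$; properness is what rules out the degenerate four-element-block pathologies mentioned in the abstract, where $a$ and $a'$ cannot be distinguished order-theoretically).

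Next I would construct the comparison map $\varphi \colon A \to \oDir(\BSub(A))$. The idea is that an element $a \in A$ determines, for every Boolean subalgebra $B$, the "shadow" of $a$ in $B$ — essentially the largest element of $B$ below $a$ (or the relevant Boolean-algebra-theoretic trace), and collecting these over all $B$ gives a consistent family that is by definition a direction. I would check that $\varphi$ is well-defined, i.e. that this family really satisfies the axioms defining a direction, using the compatibility of shadows under inclusion of Boolean subalgebras. Injectivity of $\varphi$ should come from the fact that if $a \neq b$ then there is a Boolean subalgebra (e.g. one containing both, or the one generated by $a$ and something separating them) in which their shadows differ; here properness of $A$ is again needed to separate $a$ from $a'$. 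Surjectivity — that every direction arises from an element of $A$ — is the crux: given an abstract direction on $\BSub(A)$, I would need to glue its local values into a single element of $A$, using that any two Boolean subalgebras sit inside a common one exactly when the corresponding elements are compatible, so the coherence built into the definition of direction forces the local pieces to amalgamate. This is where I expect the real work and the main obstacle to lie.

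Finally, having a bijection $\varphi$, I would verify that it transports the orthoalgebra structure. The orthocomplement is easy: $'$ swaps the two elements of each complementary pair, and on directions this corresponds to the canonical involution on $\oDir(\BSub(A))$ that reverses orientations, so $\varphi(a') = \varphi(a)'$. The partial operation $\oplus$ is the substantive point: $a \perp b$ should be detected order-theoretically by the existence of a Boolean subalgebra containing (the atoms corresponding to) both $a$ and $b$ with $a \leq b'$ inside it, equivalently by a join condition on the atoms in $\BSub(A)$; and when $a \perp b$, the value $a \oplus b$ is computed inside any such common Boolean subalgebra, which the direction machinery again reproduces. I would check the orthoalgebra axioms for $\oDir(\BSub(A))$ hold and that $\varphi$ respects $\oplus$, $'$, $0$, $1$, concluding that $\varphi$ is an isomorphism of orthoalgebras. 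The portions I expect to be routine are the verification of well-definedness of $\varphi$ and the structure-preservation once surjectivity is in hand; the portion I expect to fight with is surjectivity together with the precise extraction of the $\oplus$-relation from the poset, since that is exactly the place where "enough directions" and properness are doing their work.
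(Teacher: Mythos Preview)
Your overall architecture is right: define $\varphi(a)=d_a$, check it lands in $\oDir(\BSub(A))$, then show it is a bijection compatible with $0,1,{}',\oplus$. The paper does exactly this (Theorem~\ref{hhh}), and your description of injectivity and structure preservation matches the paper's argument closely.

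The gap is in surjectivity. You propose to take an abstract direction and \emph{glue} its local values into an element of $A$, relying on the idea that ``any two Boolean subalgebras sit inside a common one exactly when the corresponding elements are compatible.'' In an orthoalgebra this is false in any useful form: Example~\ref{ex:frasercube} shows that even intersections of Boolean subalgebras need not be Boolean, and there is no amalgamation mechanism that would let you assemble a single $a\in A$ from the local principal-pair data of an arbitrary direction. The paper does not attempt any such gluing.

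Instead, surjectivity is obtained by \emph{counting}. The key lemma you are missing is Proposition~\ref{at most 2}: every basic element of an orthodomain has \emph{at most two} directions. This is not automatic; it uses condition~(3) in the definition of a direction (Definition~\ref{scary}), which forces that whenever a direction for $x$ points ``down'' on one cover $y\gtrdot x$ and ``up'' on another cover $z\gtrdot x$, the join $y\vee z$ exists and covers both. That condition is precisely what rules out a third direction. Once you know there are at most two directions for each basic $x_a$, surjectivity is immediate: $d_a$ and $d_{a'}$ are directions for $x_a$, and properness of $A$ makes them distinct, so they exhaust the directions for $x_a$.

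So the place you flagged as ``the real work'' is indeed the crux, but the mechanism is a rigidity/counting argument driven by condition~(3), not an amalgamation. Incidentally, the same condition~(3) is what you need when verifying that $d_a$ is a direction in the first place (the case analysis with two $8$-element Boolean subalgebras sharing $x_a$ as atom in one and coatom in the other), so once you see its role there, the counting lemma becomes natural.
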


The main ingredient in this reconstruction is the new notion of a \emph{direction}. Call an element of a poset \emph{basic} if it is of height 0 or 1. Then the basic elements in $\BSub(A)$ are the subalgebras $\{0,a,a',1\}$ for $a \in A$. In each Boolean subalgebra $B$ covering $\{0,a,a',1\}$ in $\BSub(A)$, we can consider the subalgebra $\down a\cup\up a'$. This will be equal to either $\{0,a,a',1\}$ or to $B$ depending on which of $a$ or $a'$ is basic in $B$. A direction assigns a consistent choice of this to each cover of $\{0,a,a',1\}$. 
If $A$ is \emph{proper}, in that its maximal Boolean subalgebras have more than 4 elements, each basic element $\{0,a,a',1\}$ in $\BSub(A)$ will have exactly 2 directions, and these serve the role of $a$ and $a'$ in an isomorphic copy of the given orthoalgebra built from the directions. This is what is meant by having \emph{enough directions}. Theorem~\ref{firstmainresult} somewhat resembles
the reconstruction of a sober topological space from the points of its lattice of open sets.

Compare this to related results. There has been considerable work showing that $\BSub(A)$ determines $A$ in the setting of Boolean algebras~\cite{Sachs}, and orthomodular posets~\cite{HardingNavara}. Similar to $\BSub(A)$ is the poset $\mathrm{CSub}(A)$ of commutative subalgebras of some operator algebra $A$. This poset determines the Jordan structure of $A$ for von Neumann algebras~\cite{Doring}, or for various classes of C*-algebras~\cite{Firby,Hamhalter,Hamhalter2,Lindenhovius}. However, these results are all of the following nature: if $\BSub(A)$ and $\BSub(A')$ are isomorphic (or in the analytic case, if $\mathrm{CSub}(A)$ and $\mathrm{CSub}(A')$ are isomorphic), then there exists a (Jordan) isomorphism between $A$ and $A'$. Even when $A$ is a Boolean algebra, the only known method to reconstruct $A$ from $\BSub(A)$ is indirect, via a family of colimits in the category of Boolean algebras~\cite{Gratzer,Heunen}. Theorem~\ref{firstmainresult} is a concrete, direct, reconstruction of $A$ from the poset $\BSub(A)$.

The second main result of this paper is a characterization of the partially ordered sets of the form $\BSub(A)$ for an orthoalgebra $A$. For Boolean algebras $A$ such a characterization is known~\cite{Gratzer}: the posets $\BSub(A)$ are those algebraic lattices where the principal downset of each compact element is a partition lattice.
Such lattices are called \emph{Boolean domains}. We extend this to the quantum setting of orthoalgebras. For orthoalgebras $A$ we identify several basic properties of $\BSub(A)$, such as having Boolean domains as principal downsets. We call such posets \emph{orthodomains}. It would take complex combinatorics to characterize in elementary terms the orthodomains of the form $\BSub(A)$ from some orthoalgebra $A$. We sidestep this issue by characterizing them as the orthodomains with enough directions, just like lattices of open sets of topological spaces are characterized as frames with enough points.

\begin{Theorem}\label{secondmainresult}
  An orthodomain $X$ is of the form $\BSub(A)$ for a proper orthoalgebra $A$ if and only if it is tall and has enough directions, and in that case $X \simeq \BSub(\oDir(X))$.  
\end{Theorem}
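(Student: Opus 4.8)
The plan is to prove the two implications separately, extracting the identification $X\simeq\BSub(\oDir(X))$ along the way, with the backward implication carrying essentially all of the content.

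\emph{Forward implication.} Suppose $X=\BSub(A)$ for a proper orthoalgebra $A$. That $X$ is an orthodomain is among the basic structural properties of $\BSub(A)$ recorded earlier, and that $X$ is tall simply restates properness of $A$: the maximal elements of $\BSub(A)$ are the maximal Boolean subalgebras of $A$, and a Boolean subalgebra has at most four elements exactly when it is basic in $\BSub(A)$, so requiring each maximal Boolean subalgebra to have more than four elements is requiring no maximal element of $X$ to be basic. That $X$ has enough directions is Theorem~\ref{firstmainresult}. Finally, Theorem~\ref{firstmainresult} also gives $\oDir(X)=\oDir(\BSub(A))\cong A$, so applying $\BSub$ gives $\BSub(\oDir(X))\cong\BSub(A)=X$.

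\emph{Backward implication: the comparison map.} Let $X$ be a tall orthodomain with enough directions and set $A:=\oDir(X)$, which is an orthoalgebra by the development surrounding Theorem~\ref{firstmainresult}. I would build a comparison map $\theta_X\colon X\to\BSub(A)$ by sending $x$ to the set of directions ``passing through'' $x$ --- concretely, for each basic $p\le x$ the directions at $p$ compatible with $x$, in the technical sense underlying the definition of direction. The orthodomain axioms, in particular that each principal downset is a partition lattice, are what is needed to verify that $\theta_X(x)$ is closed under the operations of $A$, hence lies in $\BSub(A)$. Order-preservation of $\theta_X$ is routine; order-reflection uses that in an orthodomain every element is the join of the basic elements below it, so if $\theta_X(x)\subseteq\theta_X(y)$ then every basic $p\le x$ yields a direction in $\theta_X(y)$ and therefore satisfies $p\le y$, whence $x\le y$. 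Thus $\theta_X$ is an order embedding.

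\emph{Backward implication: surjectivity, and properness.} The crux --- and the step I expect to be the main obstacle --- is that $\theta_X$ is onto: given a Boolean subalgebra $C\subseteq A=\oDir(X)$ one must produce $x\in X$ with $\theta_X(x)=C$. The natural candidate is $x:=\bigvee\{p_d:d\in C\}$, the join in the algebraic lattice $X$ of the basic elements underlying the directions of $C$, and the work is to show that this join selects exactly the directions in $C$ and no others. Here ``enough directions'' is used at full strength, since each basic element below $x$ then has precisely the two expected directions and no spurious one can be picked up, and tallness is what rules out the four-element pathologies --- without it a maximal basic element could be a Boolean subalgebra of $A$ that no element of $X$ reproduces under $\theta_X$. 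Carrying this out is a careful analysis of covers in $X$ together with the coherence built into directions, and it reuses the identification from Theorem~\ref{firstmainresult} of the directions at a basic element with the elements they represent. Once $\theta_X$ is known to be an order isomorphism we obtain $X\simeq\BSub(\oDir(X))$, and then $\oDir(X)$ is proper because tallness of $X$ transports across this isomorphism to the statement that no maximal Boolean subalgebra of $\oDir(X)$ has four elements.
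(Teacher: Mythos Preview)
Your proposal has a genuine gap rooted in a misreading of the definition of \emph{tall}. You treat ``tall'' as a synonym for ``proper'' (no basic maximal elements), but in the paper tallness is the much stronger join-existence condition: for every Boolean shadow $S\subseteq X^*$, the join $m=\bigvee S$ exists in $X$ and $\down m\cap X^*=S$. Properness is already absorbed into ``has enough directions'' (Definition~\ref{proper}), so your reading makes tallness redundant and collapses the theorem to Theorem~\ref{firstmainresult} alone---which is false, as Remark~\ref{r:nonisomorphic} shows: there are orthodomains with enough directions for which $X\not\simeq\BSub(\oDir(X))$.

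This misreading vitiates both implications. In the forward direction, showing that $\BSub(A)$ is tall (in the correct sense) is the content of Proposition~\ref{weep}: given a Boolean shadow $S$ one uses the isomorphism $\xi\colon A\to\oDir(\BSub(A))$ to pull $\oDir_S(X)$ back to a Boolean subalgebra $m$ of $A$, and then argues that $m=\bigvee S$ with $\down m\cap X^*=S$. None of this is trivial, and none of it is in your sketch. In the backward direction, your key step writes $x=\bigvee\{p_d:d\in C\}$ ``in the algebraic lattice $X$'', but an orthodomain is \emph{not} an algebraic lattice---only directed joins are guaranteed, and the Fraser cube (Example~\ref{ex:frasercube}) shows even bounded pairs can fail to have joins. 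The existence of exactly this join, and the fact that the resulting element cuts out the right shadow, is precisely what tallness supplies. The paper organizes this through Boolean shadows: Proposition~\ref{gamp} gives an isomorphism $\BSub(\oDir(X))\simeq\BShad(X)$ that holds for \emph{any} orthodomain with enough directions, and then Proposition~\ref{lopp} uses tallness to identify $\BShad(X)$ with $X$ via $S\mapsto\bigvee S$ and $w\mapsto\down w\cap X^*$. Your $\theta_X$ is morally the composite of these, but without the shadow machinery you have neither the join's existence nor the verification that its downset recovers exactly $C$.
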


Tallness is a condition requiring the existence of certain joins. The structure of a tall orthodomain with enough directions is fundamentally determined by its elements of height at most 3. We call an orthodomain short if all of its elements have height at most 3. Each tall orthodomain with enough directions can be truncated to a short orthodomain with enough directions, and each short orthodomain with enough directions can be uniquely extended to a tall orthodomain with enough directions. 

Short orthodomains can be efficiently described via certain \emph{hypergraphs}. A hypergraph $\mathcal{H}$ consists of a set $P$ of points; a set $L$ of lines where each line consists of three distinct points; and a set $T$ of planes where each plane consists of 6 lines in a particular configuration similar to a Fano plane. These correspond to posets of subalgebras of 4, $8$, and $16$-element Boolean algebras, see Figure~\ref{fig:hypergraphs}. Those hypergraphs arising as the hypergraphs of orthoalgebras are called \emph{orthohypergraphs} and can be characterized in terms of their having enough directions. We also show that for orthomodular posets, the orthodomain is in fact already fundamentally determined at height 2, so we may forget about planes and fruitfully think in terms of \emph{projective geometry}, retaining only points and lines, where each line consists of exactly three points. In effect, in the hypergraph of an orthomodular poset, any configuration of points and lines that looks like a plane is a plane. 

\begin{figure}[h]
\begin{center}
\begin{tikzpicture}
\draw[fill] (-6,.6) circle(.75pt); \draw[fill] (-4,.6) circle(.75pt);\draw[fill] (-3,.6) circle(.75pt);\draw[fill] (-2,.6) circle(.75pt);
\draw[thin] (-4,.6)--(-2,.6);
\node at (-6,-.5) {point}; \node at (-3,-.5) {line}; \node at (1,-.5) {plane};
\draw[fill] (0,0) circle(.75pt); \draw[fill] (1,0) circle(.75pt); \draw[fill] (2,0) circle(.75pt); \draw[fill] (1,1.414) circle(.75pt); \draw[fill] (1,.48) circle(.75pt); \draw[fill] (.5,.707) circle(.75pt); \draw[fill] (1.5,.707) circle(.75pt); 
\draw[thin] (0,0)--(2,0)--(1,1.414)--(0,0)--(1.5,.707); \draw[thin] (1,1.414)--(1,0); \draw[thin] (.5,.707)--(2,0);
\end{tikzpicture}
\end{center}
\caption{Hypergraphs of the examples from Figure~\ref{fig:booleanorthodomains}.}
\label{fig:hypergraphs}
\end{figure}
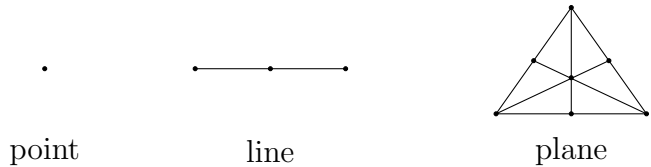

The standard method to represent orthomodular posets and orthoalgebras is via \emph{Greechie diagrams}~\cite{GreechieD,N:handQL}, which are also hypergraphs but of a different sort. Both methods are preferable to Hasse diagrams, which have no convenient way to indicate orthocomplements or orthogonal joins, and are completely intractable in all but the simplest cases. Greechie diagrams will generally be much smaller than our hypergraphs, but the price to pay is that they hide much of the structure, often in a way that is very difficult to understand. Furthermore, Greechie diagrams apply only to chain-finite orthomodular posets and orthoalgebras, while hypergraphs apply to arbitrary ones, even ones without atoms. Finally, as we discuss next, hypergraphs allow one to deal also with morphisms. For Greechie diagrams there are no such results about morphisms, and it seems highly problematic. Thus our new hypergraph representation can be an effective addition to the toolbox of working with orthoalgebras, and seems to capture the essence of their contextuality.

By Theorems~\ref{firstmainresult} and~\ref{secondmainresult} we may work with $\BSub(A)$ or even its hypergraph directly, instead of with $A$ itself, without losing information. The third main result of this paper is an investigation of functorial aspects of the reconstruction. We define morphisms between orthohypergraphs as certain partial functions that map points to points. We do not obtain a full categorical equivalence, and have to make some exceptions because the Boolean algebra with 1 element and the Boolean algebra with 2 elements have the same orthodomain. Similarly, the Boolean algebra with 4 elements has 2 automorphisms, whereas its orthodomain has only 1. However, the fundamental problems lie only with such small pathologies. Using the term {\em proper} to restrict to cases where 4-element maximal Boolean subalgebras do not play a role, we prove the following. 

\begin{Theorem}\label{thirdmainresult}
  The functor that assigns to each orthoalgebra its orthohypergraph is essentially surjective and injective on non-trivial objects, and full and faithful with respect to proper maps. 
\end{Theorem}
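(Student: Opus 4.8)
The plan is to realize the orthohypergraph functor as $\BSub$ followed by two steps that are already known to be equivalences --- truncation to the elements of height at most $3$, and the passage from a short orthodomain to its hypergraph --- so that Theorem~\ref{thirdmainresult} becomes a statement about $\BSub$ and its object-level inverse $\oDir$, upgraded to the level of morphisms. First I would record that $\BSub$ is genuinely functorial on orthoalgebra morphisms: if $f\colon A\to A'$ is such a morphism and $B\subseteq A$ is a Boolean subalgebra, then $f[B]$ is again a Boolean subalgebra, the one delicate point being closure under $\oplus$, which follows from the orthoalgebra axiom forbidding $a\oplus a$ for $a\neq 0$ (it forces $f(b_1\wedge b_2)=0$ whenever $f(b_1)\perp f(b_2)$, so $f(b_1)\oplus f(b_2)=f(b_1\vee b_2)\in f[B]$); and $|f[B]|\le |B|$, so truncation is preserved. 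Since truncation and the hypergraph construction restrict to equivalences among short orthodomains with enough directions, tall orthodomains with enough directions, and orthohypergraphs, everything then reduces to the interplay of $\BSub$ and $\oDir$.

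For essential surjectivity, given an orthohypergraph $H$ I would form the associated short orthodomain with enough directions, extend it uniquely to a tall orthodomain $X$ with enough directions, and set $A:=\oDir(X)$; Theorem~\ref{secondmainresult} then gives that $A$ is proper with $\BSub(A)\simeq X$, so the orthohypergraph of $A$ is isomorphic to $H$ (the empty hypergraph being the image of the two-element Boolean algebra). For injectivity on non-trivial objects: when $A$ is proper, Theorem~\ref{firstmainresult} recovers $A\simeq\oDir(\BSub(A))$, and $\BSub(A)$ is recovered from the orthohypergraph by the equivalences; when $A$ is non-trivial but not proper, the points of its orthohypergraph lying on no line are exactly the four-element maximal Boolean subalgebras, each of which exhibits $A$ as a horizontal sum of a four-element Boolean algebra with a smaller orthoalgebra, so deleting those points produces the orthohypergraph of the proper part $A_0$; reconstructing $A_0$ (by the proper case, or by inspection if $A_0=\{0,1\}$) and re-gluing the copies of the four-element Boolean algebra reconstructs $A$, so that isomorphic orthohypergraphs of non-trivial orthoalgebras force isomorphic orthoalgebras.

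The fullness and faithfulness statements I would obtain by upgrading Theorems~\ref{firstmainresult} and~\ref{secondmainresult} to an equivalence of categories: show that $\oDir$ is functorial with values in orthoalgebras --- a proper morphism of orthodomains $\phi\colon\BSub(A)\to\BSub(A')$ induces a well-defined map on directions which is a morphism of orthoalgebras, the substantive point being compatibility with the partial sum $\oplus$ (this is where one uses that $\phi$ carries a basic element together with the covers witnessing an orthogonality relation to the corresponding configuration over $A'$) --- and then check that $\BSub\circ\oDir$ and $\oDir\circ\BSub$ are naturally isomorphic to the identity, via the isomorphisms already produced by those theorems. Fullness then follows by transporting a proper orthohypergraph morphism $g$ to a proper orthodomain morphism $\hat g$ and taking $f:=\oDir(\hat g)$, read through $A\simeq\oDir(\BSub(A))$ and $A'\simeq\oDir(\BSub(A'))$; faithfulness follows because two proper morphisms with the same orthohypergraph agree on all four-element Boolean subalgebras, hence on the directions that represent the elements of $A\simeq\oDir(\BSub(A))$, with properness removing the one ambiguity --- the order-two automorphism of the four-element Boolean algebra, invisible to its one-point hypergraph.

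The hard part will be exactly this functor-level upgrade: proving that $\oDir$ takes morphisms of orthodomains to morphisms of orthoalgebras --- above all the compatibility with the partially defined operation $\oplus$ --- verifying the naturality of the reconstruction isomorphisms, and pinning down the precise notion of \emph{proper} morphism that makes this work while excising the four-element pathologies. By contrast, essential surjectivity and injectivity on non-trivial objects should be fairly direct once Theorems~\ref{firstmainresult} and~\ref{secondmainresult} and the horizontal-sum structure theory are in hand.
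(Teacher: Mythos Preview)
Your overall plan is sensible at the object level and lines up with the paper there, but at the morphism level it diverges from the paper and rests on a notion the paper never supplies. You want to factor the orthohypergraph functor through a category of orthodomains, invoking ``equivalences'' between short orthodomains, tall orthodomains, and orthohypergraphs; but the paper defines no morphisms of orthodomains, so those equivalences are only bijections on isomorphism classes of objects. Your key step --- ``a proper morphism of orthodomains $\phi\colon\BSub(A)\to\BSub(A')$ induces a well-defined map on directions'' --- presupposes a definition of ``proper morphism of orthodomains'' that you have not given, and it is not obvious what the right one is: the total order-preserving map $B\mapsto f[B]$ is not what the paper uses, and naive choices will not pin down the image of a direction (a direction is a \emph{choice} of $\uparrow$ or $\downarrow$ at each cover of a basic element, and an order map alone does not dictate how such choices transport).

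The paper avoids this by never introducing orthodomain morphisms. It defines hypergraph morphisms directly as \emph{partial} functions on points satisfying concrete combinatorial constraints (A1)--(A3) governing images of lines and planes, with properness encoded by two further conditions (A4)--(A5). Faithfulness (your last paragraph's sketch) is then proved elementwise (Proposition~\ref{cvb}): given $\mathcal{G}(f)=\mathcal{G}(g)$ and $a\in A$, properness produces a block on which the image has at least eight elements, and one argues from the atom/coatom position of $f(a)$ in an $8$-element subalgebra that $f(a)=g(a)$. Fullness is the heart: for a proper hypergraph morphism $\alpha$ the paper \emph{defines} $f_\alpha$ on directions by choosing any line $l$ through $p$ whose image covers $\alpha(p)$ and transporting the value $d(l)\in\{\uparrow,\downarrow\}$; well-definedness needs (A3) in an essential way (Lemma after Lemma~\ref{loi}), and preservation of $\oplus$ is a six-case analysis (Cases~1--6 in the proof of the proposition preceding Theorem~\ref{drew}) tracking how $\uparrow/\downarrow$ values propagate through the plane pictures in (A2). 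You are right that this compatibility with $\oplus$ is the hard part; what your proposal is missing is the framework in which to carry it out --- the specific axioms (A1)--(A5) and the plane-by-plane case split --- rather than an abstract appeal to functoriality of $\oDir$ on an as-yet-undefined category.
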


We proceed as follows. Section~\ref{sec:Booleandirections} starts by describing directions in the Boolean setting, and Section~\ref{sec:orthoalgebras} introduces orthoalgebras and their Boolean subalgebras. Section~\ref{sec:directions} generalizes directions to orthoalgebras and proves Theorem~\ref{firstmainresult}. Directions are used again in Section~\ref{sec:orthodomains} to prove Theorem~\ref{secondmainresult}. Section~\ref{sec:hypergraphs} introduces the hypergraph representation of an orthoalgebra, and Section~\ref{sec:morphisms} considers morphisms to prove Theorem~\ref{thirdmainresult}. Section~\ref{sec:conclusion} concludes.

\section{Subalgebras of Boolean algebras and their directions}\label{sec:Booleandirections}

We use standard terminology for partially ordered sets, as in \textit{e.g.}~\cite{Burris}. In particular, for an element $x$ of a partially ordered set $X$, denote its principal ideal and principal filter by
\[
\down x=\{w\in X \mid w\leq x\}
\quad\text{ and }\quad 
\up  x=\{y\in X \mid x\leq y\}\text.
\]

\begin{definition}
Write $\Sub(B)$ for the set of Boolean subalgebras of a Boolean algebra $B$ partially ordered by inclusion with $\bot$ its least element and $\top$ its largest element.
\end{definition}

Since the intersection of Boolean subalgebras is a subalgebra, $\Sub(B)$ is a complete lattice, and since finitely generated Boolean algebras are finite, the compact elements of this lattice are the finite Boolean subalgebras of $B$. Here we recall that an element $x$ of a partially ordered set $X$ with directed joins is \emph{compact} if $x \leq \bigvee Y$ for a directed subset $Y \subseteq X$ implies that $x \leq y$ for some $y \in Y$. Since every Boolean algebra is the union of its finite subalgebras, $\Sub(B)$ is an algebraic lattice.  The algebraic lattices of the form $\Sub(B)$ were characterized by Gr\"atzer \textit{et.\ al.}~\cite{Gratzer} as follows. Here we recall that a partition lattice is a lattice that is isomorphic to the lattice of the partitions of a set. 

\begin{theorem}\label{hui}\cite{Gratzer}
A poset $X$ is isomorphic to $\Sub(B)$ for a Boolean algebra $B$ if and only if the following two conditions hold:
\begin{enumerate}
\item $X$ is an algebraic lattice;
\item $\down x$ is a finite partition lattice for each compact element $x$ of $X$.
\end{enumerate}
\end{theorem}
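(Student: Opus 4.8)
The forward direction is contained in the remarks above: $\Sub(B)$ is an algebraic lattice whose compact elements are exactly the finite subalgebras, $\down C=\Sub(C)$ for a finite subalgebra $C$, and for a finite set $S$ the lattice $\Sub(\psub{S})$ is the partition lattice of $S$, because a subalgebra of $\psub{S}$ is precisely the algebra of unions of blocks of a partition of $S$. So conditions (1) and (2) hold.

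For the converse, let $X$ be an algebraic lattice in which $\down x$ is a finite partition lattice for every compact $x$, and let $K$ be its poset of compact elements. Then $K$ is a join-subsemilattice of $X$ containing the least element, every principal downset of $K$ is finite (being some $\down x$, and finite lattices have only compact elements), and $K$ is up-directed. Since $X$ is algebraic, $X$ is isomorphic to the lattice $\mathrm{Idl}(K)$ of ideals of $K$, and the compact elements of an ideal lattice are exactly its principal ideals; so it suffices to construct a Boolean algebra $B$ whose poset $\Sub_{\mathrm{fin}}(B)$ of finite subalgebras is order-isomorphic to $K$, since then $\Sub(B)\cong\mathrm{Idl}(\Sub_{\mathrm{fin}}(B))\cong\mathrm{Idl}(K)\cong X$.

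I would construct $B$ Stone-dually, as a filtered colimit of finite Boolean algebras. For each $x\in K$ the lattice $\down x$ is the partition lattice of a finite set, so I may fix an $n(x)$-element set $S_x$ with $\down x\cong\Sub(\psub{S_x})$, the cardinality $n(x)$ being intrinsic to $\down x$. For $y\le x$, the element $y$ singles out, inside $\down x\cong\Sub(\psub{S_x})$, a subalgebra of $\psub{S_x}$, that is, a partition of $S_x$, and hence a surjection $S_x\twoheadrightarrow S_y$. Granting that these surjections compose correctly, $x\mapsto S_x$ is a functor from $K^{\mathrm{op}}$ to finite sets; applying $S\mapsto\psub{S}$ yields a directed system $(\psub{S_x})_{x\in K}$ of finite Boolean algebras with injective transition maps, and I set $B:=\varinjlim_{x\in K}\psub{S_x}$. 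Then $B$ is the directed union of its finite subalgebras $\psub{S_x}$. Any finite subalgebra of $B$ is finitely generated, so (by directedness of $K$) it lies inside some $\psub{S_x}$, hence equals the image of $\psub{S_z}$ for some $z\le x$; conversely each $x$ contributes $\psub{S_x}$; and injectivity of the transition maps makes $x\mapsto\psub{S_x}$ an order isomorphism $K\to\Sub_{\mathrm{fin}}(B)$. Thus $\Sub(B)\cong X$.

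The heart of the argument, and the only point at which hypothesis (2) is genuinely used, is the coherent choice of the system $\bigl(S_x,\ (S_x\twoheadrightarrow S_y)_{y\le x}\bigr)$: one must show that the abstractly presented intervals of the partition lattices $\down x$ all arise from a single compatible family of nested partitions, equivalently that the candidate transition maps are well defined and functorial. I expect this to require a careful analysis of how intervals and joins in $K$ interact with partition-lattice structure, organized as a recursion over $K$ (using finiteness of principal downsets) in which one makes consistent choices identifying each $S_x$ with the relevant quotient sets. The Boolean algebra $B$ so produced is of course not unique — already the one-element and two-element Boolean algebras have the same $\Sub$ — but existence is all that is needed here. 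A more hands-on alternative reconstructs the order of $B$ directly from the atoms of $X$, namely the pairs $\{b,b'\}$ of complementary non-trivial elements of $B$, together with the way triples of them do or do not generate an eight-element subalgebra; this is close to the hypergraph description developed later in the paper.
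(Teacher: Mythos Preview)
The paper does not give its own proof of this theorem: it is quoted verbatim from Gr\"atzer--Koh--Makkai \cite{Gratzer} and used as a black box, so there is nothing in the paper to compare your argument against line by line.

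That said, your sketch is exactly the Gr\"atzer--Koh--Makkai strategy (colimit of finite Boolean algebras indexed by the compact elements), and you have located the one non-trivial step correctly. What you have written is not yet a proof, because the passage ``Granting that these surjections compose correctly, $x\mapsto S_x$ is a functor'' is precisely the content of the theorem. Each isomorphism $\down x\cong\Sub(\psub{S_x})$ is only determined up to a permutation of $S_x$, so the induced surjection $S_x\twoheadrightarrow S_y$ depends on the two choices made at $x$ and at $y$; making all these choices simultaneously coherent over the whole directed poset $K$ is where the work lies. In \cite{Gratzer} this is handled by an inductive argument exploiting the internal structure of partition lattices (atoms, coatoms, and how intervals $[\pi,\hat 1]$ are again partition lattices), and that analysis is not something one can simply ``expect'': it is the theorem. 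Your final paragraph is honest about this, but as written the proposal is a proof outline rather than a proof.

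Your closing remark is apt: the present paper's notion of \emph{direction} (Sections~\ref{sec:Booleandirections} and~\ref{sec:directions}) is exactly the ``more hands-on alternative'' you allude to, reconstructing $B$ directly from the atoms of $X$ rather than via a colimit. Indeed the authors advertise this as an improvement over the indirect colimit reconstruction of \cite{Gratzer,Heunen}; see the discussion following Theorem~\ref{firstmainresult} in the introduction.
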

We call such lattices \emph{Boolean domains}. 

\begin{definition}
A subalgebra of a Boolean algebra is called a \emph{(principal) ideal subalgebra} when it is of the form $I \cup I'$ for a (principal) ideal $I$, where $I'=\{a' \mid a \in I\}$.
\[\begin{tikzpicture}[scale=.66]
\begin{scope}
\clip (0,0) circle (2cm);
\draw[fill=gray!33, draw=gray] (-1,.25) to (-2,-2) to (0,-2) to (-1,.25); 
\draw[fill=gray!33, draw=gray] (1,-.25) to (2,2) to (0,2) to (1,-.25); 
\end{scope}
\draw (0,0) circle (2cm);
\draw[fill] (0,-2) circle (1pt); \node at (0,-2) [below] {$0$};
\draw[fill] (0,2) circle (1pt); \node at (0,2) [above] {$1$};
\draw[fill] (-1,.25) circle (1pt); \node at (-1,.25) [above] {$a$};
\draw[fill] (1,-.25) circle (1pt); \node at (1,-.25) [below] {$a'$};
\end{tikzpicture}\]
\end{definition}

Principal ideal subalgebras are of the form $\down a \cup \up a'$ for $a\in B$. They will play a central role throughout the paper. To describe their use, we begin with the order-theoretic characterization of ideal subalgebras given by Sachs~\cite{Sachs}.

\begin{definition}
An element $x$ of a lattice is \emph{dual modular} if $(x \vee y) \wedge z = x \vee (y \wedge z)$ for each~$z$ with $x\leq z$ and $(w \vee x) \wedge y = w \vee (x \wedge y)$ for each $y$ with $w\leq y$.
\end{definition}

\begin{lemma}\cite[Theorem~1]{Sachs}\label{dualmodular}
The dual modular elements of $\Sub(B)$ are the ideal subalgebras.
\end{lemma}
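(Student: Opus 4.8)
The statement is Sachs's theorem, and the plan is to give a self-contained proof of both implications using only that $\Sub(B)$ is an algebraic lattice (already noted above) together with the standard normal form: every element of a join $C\vee D$ in $\Sub(B)$ has the form $\bigvee_{k=1}^{m}c_k\wedge p_k$ with all $c_k\in C$ and with $p_1,\dots,p_m\in D$ pairwise disjoint and joining to $1$ (one checks this set is closed under the Boolean operations and contains $C\cup D$). I would also first record the order-theoretic description of ideal subalgebras to be used on both sides: a subalgebra $x$ of $B$ is an ideal subalgebra if and only if for every $a\in x$ one has $\down a\subseteq x$ or $\up a\subseteq x$. The forward direction is immediate; for the converse one checks that $I:=\{a\in x:\down a\subseteq x\}$ is an ideal of $B$ (downward closure is clear, and $a,b\in I$ forces $\down(a\vee b)\subseteq x$ since $c\le a\vee b$ gives $c=(c\wedge a)\vee(c\wedge b)$ with $c\wedge a\le a$ and $c\wedge b\le b$) and that $x=I\cup I'$.

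\textbf{Ideal subalgebras are dual modular.} Write $x=I\cup I'$ and put $x_a:=\down a\cup\up a'$ for $a\in I$. First I would reduce to the principal case: given $t\in x\vee y$ with normal form $t=\bigvee_k c_k\wedge p_k$, let $a$ be the join of those $c_k$ lying in $I$ together with the complements of those lying in $I'$; then $a\in I$ (a finite join in an ideal), every $c_k$ lies in $\down a\cup\up a'=x_a$, and hence $t\in x_a\vee y$. So both dual-modularity identities for $x$ follow from the corresponding ones for the $x_a$, which moreover satisfy $x_a\le x$. For $x_a$ itself, given $t\in x_a\vee y$ I would use the normal form together with $u=a'\vee(u\wedge a)$ for $u\ge a'$ to write $t=t_0\vee(a'\wedge q)$ with $t_0\le a$ and with $q$ lying in $y$ (or in $w$, in the second identity); then $t\wedge a=t_0\in\down a$, $t\wedge a'=a'\wedge q$, $t\wedge q'=t_0\wedge q'\le a$ and $t\vee q'=t_0\vee a'\vee q'\ge a'$. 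Because $\down a\subseteq x_a$ and $\up a'\subseteq x_a$, the four pieces land where needed: if $z\ge x_a$ then $q\wedge a'=t\wedge a'\in z$ while $q\wedge a\in\down a\subseteq z$, so $q\in y\wedge z$ and $t=t_0\vee(a'\wedge q)\in x_a\vee(y\wedge z)$; and if $w\le y$ then $t\wedge q'\in\down a\cap y$ and $t\vee q'\in\up a'\cap y$ both lie in $x_a\wedge y$, while $t\wedge q=(t\vee q')\wedge q$, so $t=(t\wedge q)\vee(t\wedge q')\in w\vee(x_a\wedge y)$. The reverse inclusions in both identities hold in any lattice.

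\textbf{Dual modular elements are ideal subalgebras.} I would argue the contrapositive. If $x$ is not an ideal subalgebra, the description above gives $a\in x$ with $\down a\not\subseteq x$ and $\up a\not\subseteq x$; since $\down a'=(\up a)'$, pick $b<a$ with $b\notin x$ and $e<a'$ with $e\notin x$. Then $b\wedge e=0$, and one checks $0<b\vee e<1$. Set $y:=\{0,b\vee e,(b\vee e)',1\}$ and $z:=\langle x\cup\{b\}\rangle$, so $z\supseteq x$ strictly. Since $(b\vee e)\wedge a=b$, we get $b\in x\vee y$, hence $z\subseteq x\vee y$ and $(x\vee y)\wedge z=z$. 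On the other hand $e\notin z$: every element of $z$ has the form $(s_1\wedge b)\vee(s_2\wedge b')$ with $s_i\in x$, and equating this to $e$ forces (using $b\le a$ and $e\le a'$) $e=s_2\wedge b'=s_2\wedge a'\in x$, a contradiction; hence $b\vee e\notin z$, so $y\wedge z=\{0,1\}$ and $x\vee(y\wedge z)=x\subsetneq z$. As $x\le z$, this gives $(x\vee y)\wedge z\neq x\vee(y\wedge z)$, so $x$ is not dual modular.

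The step I expect to require the most care is the principal case $x=x_a$ in the first implication: one must take the normal form with the partition drawn from the correct subalgebra and keep careful track of which of $t_0$, $a'\wedge q$, $t\wedge q'$, $t\vee q'$ serves which inclusion. What makes it go through is that $x_a$ contains all of $\down a$ and all of $\up a'$; in particular any $z\supseteq x_a$ contains $\down a$, so once $q\wedge a'$ is known to lie in $z$ the whole of $q$ does, which is exactly what is needed to recover the $y\wedge z$ part of $t$.
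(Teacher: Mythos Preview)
The paper does not supply its own proof of this lemma; it simply cites Sachs. Your proposal is therefore not being compared against a proof in the paper but stands as a self-contained argument, and as such it is correct.

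A few remarks on the details. Your preliminary characterization of ideal subalgebras (via ``$\down a\subseteq x$ or $\up a\subseteq x$ for all $a\in x$'') is sound; the verification that $I=\{a\in x:\down a\subseteq x\}$ is an ideal of $B$ uses distributivity exactly as you indicate. The reduction from a general ideal subalgebra $x=I\cup I'$ to the principal case $x_a$ is clean: the element $a\in I$ you construct from the finitely many $c_k$ does lie in $I$, and $x_a\le x\le z$ (resp.\ $x_a\le x$) is what makes the inclusion $x_a\vee(y\wedge z)\subseteq x\vee(y\wedge z)$ (resp.\ $w\vee(x_a\wedge y)\subseteq w\vee(x\wedge y)$) go the right way. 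In the principal case, the key point you flag is indeed the crux: because $x_a$ contains \emph{all} of $\down a$ and $\up a'$, any $z\ge x_a$ contains $\down a$ as a set, so $q\wedge a\in z$ automatically and hence $q=(q\wedge a)\vee(q\wedge a')\in z$; this is exactly what recovers $q\in y\wedge z$. The second identity is handled symmetrically with the partition drawn from $w$, and your identities $t\wedge q'=t_0\wedge q'\le a$, $t\vee q'\ge a'$, $t\wedge q=(t\vee q')\wedge q$ are all correct.

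For the contrapositive direction your counterexample works. One small point worth making explicit: the step ``$e=s_2\wedge b'=s_2\wedge a'$'' uses $a'\le b'$ (from $b\le a$) together with $e\le a'$, so $e=e\wedge a'=s_2\wedge b'\wedge a'=s_2\wedge a'$; and ``$b\vee e\notin z$'' follows because $b\vee e\in z$ and $b'\in z$ would give $(b\vee e)\wedge b'=e\wedge b'=e\in z$. With those filled in, the failure of $(x\vee y)\wedge z=x\vee(y\wedge z)$ at $x\le z$ is established, which suffices.
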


The least element $\bot$ of the Boolean domain $\Sub(B)$ is $\{0,1\}$, the largest element $\top$ is $B$, and the atoms of $\Sub(B)$ are the elements $\{0,a,a',1\}$ for $a\neq 0,1$. Hence there is a bijection between complementary pairs $\{a,a'\}$ in $B$ and elements of $\Sub(B)$ that are either $\bot$ or an atom. 

\begin{definition}
Call an element of a poset with a least element \emph{basic} if it is either an atom or the least element. 
\end{definition}

\begin{definition}
For an element $a$ of a Boolean algebra, we denote the Boolean subalgebra
$$x_a=\{0,a,a',1\}\,.$$
\end{definition}

Later on, we shall use the same notation also when $a$ is an element of an orthoalgebra.

\begin{lemma}\label{sbi}
For a Boolean algebra $B$, the basic elements of $\Sub(B)$ that are dual modular are $x_a
$ where either $a,a'$ is basic. In fact, they are principal ideal subalgebras.
\end{lemma}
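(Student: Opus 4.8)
The plan is to reduce to Sachs's characterization and then analyze the only two shapes a basic element can have. By Lemma~\ref{dualmodular} the dual modular elements of $\Sub(B)$ are exactly the ideal subalgebras, so it suffices to identify which basic elements of $\Sub(B)$ are ideal subalgebras, to check that each such element has the asserted form $x_a$ with $a$ or $a'$ basic in $B$, and to verify that it is in fact a \emph{principal} ideal subalgebra.

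For the easy direction I would start from an element $a\in B$ with $a$ (or, symmetrically, $a'$) basic in $B$. If $a=0$, then $x_a=\{0,1\}=\down 0\cup\up 1$ is the least element of $\Sub(B)$, hence basic and a principal ideal subalgebra. If $a$ is an atom of $B$, then $\down a=\{0,a\}$ in $B$, so $\down a\cup\up a'=\{0,a,a',1\}=x_a$; this is a principal ideal subalgebra, and it is basic in $\Sub(B)$ (an atom, or in the small degenerate cases $\bot$ or $\top$, where $\top$ still covers $\bot$). The case with $a'$ basic is the same after swapping $a$ and $a'$.

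For the converse, suppose $x$ is a basic element of $\Sub(B)$ that is an ideal subalgebra, say $x=I\cup I'$ for an ideal $I$ of $B$. If $x=\bot$, then $x=x_0$ and $0$ is basic, so we are done. Otherwise $x$ is an atom of $\Sub(B)$, so $x=x_a=\{0,a,a',1\}$ for some $a$ with $0\neq a\neq 1$ (hence $a\neq a'$). Every ideal contains $0$, so $0\in I$ and $1\in I'$; since $I\cup I'=\{0,a,a',1\}$, at least one of $a,a'$ lies in $I$, and after relabelling we may assume $a\in I$. Then $\down a\subseteq I\subseteq x_a$. But no element $c$ with $0<c<a$ can lie in $x_a$: $c=a'$ would force $a'<a$ and $c=1$ is impossible since $1$ is the top, so $\down a=\{0,a\}$, i.e.\ $a$ is an atom of $B$, and $x_a=\down a\cup\up a'$ is a principal ideal subalgebra.

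The argument is essentially bookkeeping; the one place that wants care — and which I regard as the main (minor) obstacle — is to keep straight that $I$ is an ideal of the \emph{ambient} algebra $B$, not of the subalgebra $x_a$: it is precisely the fact that the full principal ideal $\down a\subseteq B$ must be squeezed inside the four-element set $x_a$ that forces $a$ to be an atom. One should also double-check the degenerate possibilities ($x_a=\top$, or $B$ very small) so that "atom of $\Sub(B)$" and "atom of $B$" are not silently conflated.
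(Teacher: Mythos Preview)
Your proposal is correct and follows exactly the paper's approach: reduce via Sachs's Lemma~\ref{dualmodular} to identifying which basic elements of $\Sub(B)$ are ideal subalgebras, then check this directly. The paper's own proof is the one-liner ``Follows immediately from Lemma~\ref{dualmodular},'' so you have simply spelled out the bookkeeping that the paper leaves implicit.
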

\begin{proof}
Follows immediately from Lemma~\ref{dualmodular}.
\end{proof}

Our key definition is the following:

\begin{definition} \label{phi}
For $B$ a Boolean algebra, we define the mapping $\varphi\colon B\to(\Sub(B))^2$ by
$$\varphi(a)=(\down a \cup \up a', \down a' \cup \up a)\,.$$ 
We call $\varphi(a)$ the \emph{principal pair} corresponding to $a$.
\end{definition}

We call a Boolean algebra \emph{small} if it has at most $4$ elements. Our aim is to show that if $B$ is not small, than $\varphi$ is one-to-one, and to characterize the range of $\varphi$ in purely order-theoretical terms. This will allow us to reconstruct an isomorphic copy of $B$ from the poset $\Sub(B)$.
We formulate this for a general Boolean domain rather than a special case of $\Sub(B)$ for a Boolean algebra~$B$, although all are isomorphic to such.

\begin{definition}
\label{pp}
Let $X$ be a Boolean domain.
A~\emph{principal pair} of $X$ is an ordered pair $(y,z)$ of dual modular elements of~$X$ that satisfies one of the following conditions:
\begin{enumerate}
\item $y=\top$, $z$ is a 
basic element;
\item $z=\top$, $y$ is a 
basic element;
\item $y\vee z=\top$ and $y\wedge z$ is a basic element which is not dual modular.
\end{enumerate}
We say that a principal pair $(y,z)$ (as well as $(z,y)$) is a principal pair for the basic element~$x$ if $y\wedge z=x$.
Write $\Dir(X)$ for the set of principal pairs of~$X$. 
\end{definition}

\begin{remark} \label{smallB}
Notice that in Definition~\ref{pp} the element $y\wedge z=x$ is always basic and it is dual modular iff case (1) or (2) applies. Therefore, if $x$ is a dual modular basic element, then $(x,\top)$ and $(\top,x)$ are the only principal pairs for~$x$. 

If $B=\{0,1\}$, then $\bot=\top$ and $\varphi[B]=\Dir(\Sub(B))=\{(\top,\top)\}$.
If $B$ has $4$ elements, then $B=x_a
=\top$, $a\notin\{0,1\}$, and $\varphi[B]=\Dir(\Sub(B))=\{(\bot,\top), (\top,\bot), (\top,\top)\}$, where $(\top,\top)=\varphi(a)=\varphi(a')$.
A principal pair $(y,z)$ satisfies $y=z$ only if $B$ is small and $y=z=\top$.
\end{remark}

\begin{proposition} \label{prop:two-directions}
Let $B$ be a Boolean algebra, $a\in B$, and $x_a
$ be the corresponding basic element of $\Sub(B)$. For the map $\varphi$ of Definition~\ref{phi}, $\varphi(a)$ and $\varphi(a')$ are the only principal pairs for $x_a$, and if $B$ is not small these are distinct. So the image $\varphi[B]$ is $\Dir(\Sub(B))$, and if $B$ is not small then $\varphi\colon B\to\Dir(\Sub(B))$ is a bijection. 
\end{proposition}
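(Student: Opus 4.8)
The plan is to analyze the three defining cases of a principal pair for the basic element $x_a$, using the characterization of dual modular elements of $\Sub(B)$ as the ideal subalgebras (Lemma~\ref{dualmodular}). First I would observe that any principal pair $(y,z)$ for $x_a$ consists of dual modular elements $y,z$ with $y \wedge z = x_a$; since $x_a$ is basic (either $\bot$ or an atom), the possibilities are highly constrained. When $x_a = \bot$, i.e.\ $a \in \{0,1\}$, one checks directly that $x_a$ is dual modular, so by Remark~\ref{smallB} the only principal pairs for $x_a$ are $(\bot,\top)$ and $(\top,\bot)$, and these are exactly $\varphi(0)$ and $\varphi(1)$ — or all collapse when $B$ is small. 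When $x_a$ is an atom, I would split on whether $x_a$ is dual modular. If it is — which by Lemma~\ref{sbi} happens precisely when $a$ or $a'$ is basic in $B$ — then again by Remark~\ref{smallB} only cases (1) and (2) apply, giving the pairs $(x_a,\top)$ and $(\top,x_a)$, which are $\varphi(a)$ and $\varphi(a')$ (the $\down a \cup \up a'$ being $x_a$ and $\down a' \cup \up a$ being $B = \top$, or vice versa). If $x_a$ is an atom that is not dual modular, only case (3) can apply.

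The heart of the argument is case (3): showing that the only pairs $(y,z)$ of ideal subalgebras with $y \vee z = \top$ and $y \wedge z = x_a$ are $\varphi(a) = (\down a \cup \up a', \down a' \cup \up a)$ and its swap $\varphi(a')$. For this I would argue as follows. Write $y = I \cup I'$ and $z = J \cup J'$ for ideals $I, J$ of $B$. The meet $y \wedge z$ in $\Sub(B)$ is the intersection $y \cap z$, which is again an ideal subalgebra $(I \cap J) \cup (I \cap J)'$; requiring this to equal $x_a = \{0,a,a',1\}$ forces $I \cap J = \{0,a\}$ if $a$ is the "small" representative, or more symmetrically forces the ideal $I\cap J$ to be $\down a$ for one of $a, a'$. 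After relabeling we may assume $I \cap J = \down a$, so $a \in I$ and $a \in J$. Next I would use the join condition: $y \vee z = \top = B$ means the subalgebra generated by $I \cup I' \cup J \cup J'$ is all of $B$. Since $a \in I \cap J$ and $a' \in I' \cap J'$, every element of the generated subalgebra is built from elements of $I, I', J, J'$; the key point is that because $a \le$ every element of $I$ and $a \le$ every element of $J$, while $a' \ge$ every element of $I'$ and $J'$, the only way the generated subalgebra can exhaust $B$ is if $I = \down a$ and $J = \up a'$ hold after suitable labeling, i.e.\ $\{I, J\} = \{\down a, \down a'\}$. One then reads off $y = \down a \cup \up a'$ and $z = \down a' \cup \up a$ (or swapped), which is exactly $\{\varphi(a), \varphi(a')\}$. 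Distinctness when $B$ is not small follows because $\down a \cup \up a' = \down a' \cup \up a$ would force $a \le a'$ and $a' \le a$, hence $a = a'$, impossible; and $a \ne 0,1$ in this case since those give dual modular $x_a$. That $\varphi(a)$ is always a genuine principal pair is the content of Lemma~\ref{sbi} together with checking $\down a \cup \up a'$ and $\down a' \cup \up a$ are dual modular (they are ideal subalgebras) and that their meet is $x_a$ with join $\top$.

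Finally, for the global statement: every basic element of $\Sub(B)$ is $x_a$ for some $a$ (as recalled before Definition~\ref{pp}), so $\Dir(\Sub(B)) = \bigcup_a \{\varphi(a), \varphi(a')\} = \varphi[B]$. If $B$ is not small, I would show $\varphi$ is injective: $\varphi(a) = \varphi(b)$ gives $\down a \cup \up a' = \down b \cup \up b'$, and comparing the ideal parts (the elements below $1$ that are not above $0$ nontrivially — more precisely, intersecting with a complement-free description) yields $\down a = \down b$ or $\down a = \down b'$; the latter combined with the second coordinate is ruled out unless $B$ is small, leaving $a = b$. Combined with surjectivity onto $\Dir(\Sub(B))$, this gives the claimed bijection.

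I expect the main obstacle to be the case (3) analysis, specifically the step showing that $y \vee z = \top$ together with $y \wedge z = x_a$ pins down the two ideals to be exactly $\down a$ and $\down a'$: one must rule out, e.g., $I$ being a larger ideal whose "extra" elements happen to be generated back via joins with $J'$. This requires a careful look at which elements of $B$ lie in the subalgebra generated by two ideal subalgebras, and is where the combinatorics of Boolean algebra generation does the real work.
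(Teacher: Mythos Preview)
Your overall case split matches the paper's, and the easy cases (where $x_a$ is dual modular) are fine. The gap is in your case (3) analysis, and it is a real one, not just a detail to be filled in.

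First, the identity you assert is false: for ideals $I,J$ one has
\[
(I\cup I')\cap(J\cup J') \;=\; (I\cap J)\cup(I\cap J')\cup(I'\cap J)\cup(I'\cap J'),
\]
and this is \emph{not} in general equal to $(I\cap J)\cup(I\cap J)'$; the intersection of two ideal subalgebras need not be an ideal subalgebra. So your deduction ``$I\cap J=\down a$, hence $a\in I$ and $a\in J$'' is unfounded, and in fact the opposite is true: since $a$ is neither an atom nor a coatom here, there exist $0<b<a<c<1$, and if $a\in I\cap J$ then $b\in I\cap J\subseteq y\cap z=x_a$, a contradiction. Thus $a$ must lie in one of the cross terms $I\cap J'$ or $I'\cap J$. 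Your subsequent conclusion ``$\{I,J\}=\{\down a,\down a'\}$'' is also internally inconsistent with ``$I\cap J=\down a$'', since $\down a\cap\down a'=\{0\}$.

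Second, you plan to pin down $I$ and $J$ using the join condition $y\vee z=\top$ and ``the combinatorics of Boolean algebra generation''. The paper does not need this at all. Once you know (say) $a\in I\cap J'$, the meet condition alone finishes: any element of $I$ strictly above $a$ would lie in $I\cap J'$ (as $J'$ is a filter) and hence in $x_a$, forcing it to be $1$; but $1\in I$ would give $c\in I$ and then $c\in I\cap J'\subseteq x_a$, impossible. So $a=\max I$, i.e.\ $I=\down a$, and symmetrically $J'=\up a$, i.e.\ $J=\down a'$. No analysis of generated subalgebras is required. Your anticipated ``main obstacle'' is therefore the place where your plan actually goes wrong, and the fix is to use the four-term decomposition and the elements $b,c$ witnessing that $a$ is neither an atom nor a coatom.
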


\[\begin{tikzpicture}[scale=.66]
\begin{scope}
\clip (0,0) circle (2cm);
\draw[fill=gray!33, draw=gray] (-1,.25) to (-2,-2) to (0,-2) to (-1,.25); \node at (-1,-1) {$y$};
\draw[fill=gray!33, draw=gray] (1,-.25) to (2,2) to (0,2) to (1,-.25); \node at (1,1) {$y$};
\draw[fill=gray!50, draw=gray] (-1,.25) to (-1.75,2) to (0,2) to (-1,.25); \node at (-1,1.25) {$z$};e
\draw[fill=gray!50, draw=gray] (1,-.25) to (1.75,-2) to (0,-2) to (1,-.25); \node at (1,-1.25) {$z$};
\end{scope}
\draw (0,0) circle (2cm);
\draw[fill] (0,-2) circle (1pt); \node at (0,-2) [below] {$0$};
\draw[fill] (0,2) circle (1pt); \node at (0,2) [above] {$1$};
\draw[fill] (-1,.25) circle (1pt); \node at (-1,.25) [left] {$a$};
\draw[fill] (1,-.25) circle (1pt); \node at (1,-.25) [right] {$a'$};
\end{tikzpicture}\]

\begin{proof}
If $a$ is basic then $\varphi(a)=(x_a,\top)$, and if $a'$ is basic then $\varphi(a)=(\top,x_a)$. In both cases, $x_a$~is a dual modular basic element. 
So $\varphi(a),\varphi(a')$ are the two possible principal pairs for~$x_a$. If $B$ is not small, they are distinct (Remark~\ref{smallB}).

Assume that $a$ is not $0$, $1$, an atom, or a coatom. 
So there are $b,c$ with $0<b<a<c<1$ (and $B$ is not small). Then $x_a$ is a basic element that is not dual modular. Let $y=\down a\cup\up a'$ and $z=\down a'\cup\up a$. It is clear that $y,z$ are ideal subalgebras, and hence are dual modular. Also $y\wedge z=x_a$. For any $e\in B$ we have $e=(e\wedge a)\vee(e\wedge a')$. Since $e\wedge a \in y$ and $e\wedge a'\in z$, then $e$ is in the subalgebra $y\vee z$ generated by $y,z$. So $y\vee z = \top$. Hence $(y,z)$ and $(z,y)$ are principal pairs for~$x_a$. Since $b\in y$ and $b\notin z$, these principal pairs are distinct. 

We now show these are the only principal pairs for~$x_a$. Suppose that $(v,w)$ is a principal pair for~$x_a$. Since $v,w$ are dual modular, they are ideal subalgebras. So $v=I\cup I'$ and $w=J\cup J'$ for ideals $I,J \subseteq B$. Now $x_a=v\wedge w$ gives 
\[ 
x_a=\{0,a,a',1\} = (I\cap J)\cup (I\cap J')\cup (I'\cap J)\cup (I'\cap J')
\]
It cannot be the case that $a\in I\cap J$ since then $b \in I\cap J$ because $I$ and $J$ are ideals, and similarly $a\notin I'\cap J'$. So one of $a,a'$ belongs to $I\cap J'$ and the other to $I'\cap J$. Say $a\in I\cap J'$. Since $J'$ is a filter, there cannot be an element of $I$ other than $1$ that is larger than $a$ since it would belong to $I\cap J'$, and since $I$ is an ideal and $a<c<1$ it cannot be that $1\in I$ since this would imply that $c\in I$. So $a$ is the largest element of $I$, and similarly it is the least element of $J'$. So $(v,w)=(y,z)$. If $a\in I'\cap J$, then by symmetry $(v,w)=(z,y)$. 

We have shown for any $a\in B$ that $\varphi(a),\varphi(a')$ are principal pairs for~$x_a
$, they are the only principal pairs for $x_a$, and that these are distinct if $B$ is not small. Since every principal pair of $\Sub(B)$ is a principal pair for some basic element and all basic elements arise as $x_a
$ for some $a\in B$, this shows that $\varphi$ is onto. If $a,b\in B$ and $\varphi(a)=\varphi(b)$, then $\varphi(a)$ and $\varphi(b)$ are principal pairs for the same basic element. Then if $B$ is not small, $a=b$.
\end{proof}

As every Boolean domain is isomorphic to $\Sub(B)$ for some Boolean algebra $B$, and a Boolean domain with more than two elements is isomorphic to $\Sub(B)$ for some $B$ that is not small, Proposition~\ref{prop:two-directions} and Remark~\ref{smallB} give the following:

\begin{corollary}\label{two}
If $X$ is a Boolean domain, then each basic element has at most two principal pairs, and if $X$ has more than two elements, each basic element has exactly two principal pairs. In the case where $X$ has two or fewer elements, each element is basic, all elements other than $\top$ have two directions, and $\top$ has a single direction. 
\end{corollary}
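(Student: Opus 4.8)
The plan is to derive Corollary~\ref{two} directly from Proposition~\ref{prop:two-directions} and Remark~\ref{smallB}, transferring everything along an isomorphism $X\cong\Sub(B)$. First I would invoke Theorem~\ref{hui}: every Boolean domain $X$ is, by definition, isomorphic to $\Sub(B)$ for some Boolean algebra $B$, and the notions involved---basic elements, dual modularity, joins, and hence principal pairs in the sense of Definition~\ref{pp}---are all order-theoretic, so they are preserved and reflected by any such isomorphism. Thus it suffices to prove the statements for $X=\Sub(B)$.

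Next I would split into the two cases according to size. If $X$ has more than two elements, then $B$ has more than two elements; but a two-element Boolean algebra is the only one with exactly two subalgebras (namely $\{0,1\}$ and $B$ coinciding when $|B|\le 2$, and giving a two-element $\Sub(B)$ when $|B|=4$---wait, here one must be slightly careful: $|\Sub(B)|>2$ forces $B$ to have more than $4$ elements, hence $B$ is not small). With $B$ not small, Proposition~\ref{prop:two-directions} says each basic element $x_a$ of $\Sub(B)$ has exactly the two distinct principal pairs $\varphi(a),\varphi(a')$, and every basic element is of the form $x_a$; this gives ``exactly two principal pairs'' for every basic element. For the general bound ``at most two'', I would note that when $X$ has two or fewer elements the claim is handled by the final sentence, and otherwise the ``exactly two'' statement already gives it. Actually, cleaner: Proposition~\ref{prop:two-directions} asserts $\varphi(a),\varphi(a')$ are \emph{the only} principal pairs for $x_a$ regardless of whether $B$ is small, so ``at most two'' holds in all cases.

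For the remaining case, I would treat $X$ with at most two elements by direct inspection, using Remark~\ref{smallB}. If $X$ has one element, then $X=\{\top\}$ with $\bot=\top$, corresponding to $B=\{0,1\}$; the only principal pair is $(\top,\top)$ by Remark~\ref{smallB}, and $\top$ is basic (being the least element), so $\top$ has a single direction and there is no other element. If $X$ has exactly two elements, it is $\Sub(B)$ for the four-element $B$, so $X=\{\bot,\top\}$ with $\bot$ an atom; by Remark~\ref{smallB} the principal pairs are $(\bot,\top),(\top,\bot),(\top,\top)$. Thus $\bot$ (basic, as an atom) has the two principal pairs $(\bot,\top)$ and $(\top,\bot)$, while $\top$ (basic, as the least element is $\bot\neq\top$, but $\top$ here is not the least element---still it is basic since in a two-element poset $\top$ is an atom) has only the principal pair $(\top,\top)$ as a pair \emph{for} $\top$, i.e.\ a single direction. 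Every element of a two-element poset is basic, which verifies the last sentence.

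The main obstacle, such as it is, is bookkeeping in the degenerate small cases: one must be careful that ``principal pair for $x$'' counts unordered coincidences correctly (Remark~\ref{smallB} notes $\varphi(a)=\varphi(a')=(\top,\top)$ when $|B|=4$, so although there are two principal pairs \emph{for} the atom $\bot$, the set $\Dir$ records three pairs total), and that the phrase ``more than two elements'' in the hypothesis translates to ``$B$ not small'' rather than merely ``$B$ has more than two elements.'' Once the translation via $X\cong\Sub(B)$ is in place and the three possibilities $|X|=1$, $|X|=2$, $|X|>2$ are separated, the corollary is immediate from the cited results.
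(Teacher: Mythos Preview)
Your approach is correct and is exactly the route the paper takes: transfer along an isomorphism $X\cong\Sub(B)$ via Theorem~\ref{hui}, note that $|X|>2$ forces $B$ not small, and then read off everything from Proposition~\ref{prop:two-directions} and Remark~\ref{smallB}. One small slip to fix in your write-up: in the two-element case $X=\{\bot,\top\}$, it is $\top$ that is the atom (it covers $\bot$), not $\bot$; you partially catch this later, but the line ``$X=\{\bot,\top\}$ with $\bot$ an atom'' and ``$\bot$ (basic, as an atom)'' should be corrected---$\bot$ is basic because it is the least element, $\top$ is basic because it is an atom, and your identification of which principal pairs belong to which element is nonetheless right.
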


How do we incorporate the Boolean algebra structure in our considerations? If $\varphi(a)=(y,z)$, then $\varphi(a')=(z,y)$. The partial ordering of $\Dir(\Sub(B))$ is more subtle. If $\varphi(a)=(y_1,z_1)$ and $\varphi(b)=(y_2,z_2)$, then $a\le b$ implies $y_1 \le y_2$, and $z_1 \ge z_2$. But these relationships can hold without $a\leq b$. If $a$ is an atom of $B$, so $y_1=\{0,a,a',1\}$ is a dual modular atom of $X$, then $\varphi(a)=(y_1,\top)$ and  $\varphi(a')=(\top,y_1)$, but of course $a\not\leq a'$. The following definition excludes this situation.
Notice that this is the only situation requiring an exception.
Suppose that $y_1 \le y_2$, $z_1 \ge z_2$, and $a\not\le b$.
We can easily exclude the cases when $y_1\wedge z_1$ is not a dual modular atom or when $y_1=\top$.
In the remaining case, $z_1=\top$ and $a$ is an atom.
Thus $a\not\le b$ means that $a,b$ are orthogonal, i.e., $a\le b'$.
If $a<b'$, then $y_2=\down b \cup \up b'$ does not contain~$a$; a contradiction with $y_1 \le y_2$.
The case of $b=a'$ ($y_2=z_1$, $z_2=y_1$) is the only one which needs to be forbidden.

\begin{definition}\label{swop}
Let $X$ be a Boolean domain. Define a unary operation $\,'$ on $\Dir(X)$ by $$(y,z)' = (z,y)\,.$$ Define a binary relation $\le$ on $\Dir(X)$ by $(y_1,z_1) \leq (y_2,z_2)$ when $y_1 \le y_2$, $z_1 \ge z_2$, and, additionally, if $y_1\wedge z_1$ is a dual modular atom, then $(y_2,z_2)\neq (z_1,y_1)$. 
\end{definition}

\begin{proposition}\label{lpo}
For a Boolean algebra $B$ that is not small, $\varphi$ is an order isomorphism that preserves~$\,'$.
\end{proposition}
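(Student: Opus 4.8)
The plan is to rest on Proposition~\ref{prop:two-directions}, which already shows that $\varphi\colon B\to\Dir(\Sub(B))$ is a bijection when $B$ is not small. It then remains to check that $\varphi$ preserves $\,'$ and that, since it is a bijection, $a\le b\iff\varphi(a)\le\varphi(b)$ for the order of Definition~\ref{swop}. Preservation of $\,'$ is just unwinding the definitions: $\varphi(a')=(\down a'\cup\up a,\,\down a\cup\up a')=\varphi(a)'$.

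For the forward direction, assume $a\le b$ and write $\varphi(a)=(y_1,z_1)$, $\varphi(b)=(y_2,z_2)$. Then $\down a\subseteq\down b$, and $b'\le a'$ gives $\up a'\subseteq\up b'$, so $y_1\subseteq y_2$; dually $z_2\subseteq z_1$. The only point needing care is the exceptional clause of Definition~\ref{swop}: if $y_1\wedge z_1$ is a dual modular atom I must rule out $(y_2,z_2)=(z_1,y_1)$. One computes $y_1\wedge z_1=y_1\cap z_1=x_a$, and $x_a$ being an atom forces $a\notin\{0,1\}$, in particular $a\neq 0$. If $(y_2,z_2)=(z_1,y_1)$ then $\varphi(b)=(z_1,y_1)=\varphi(a)'=\varphi(a')$, so $b=a'$ by injectivity; but then $a\le b=a'$ gives $a=a\wedge a'=0$, a contradiction. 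Hence $\varphi(a)\le\varphi(b)$.

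For the reverse direction, assume $\varphi(a)\le\varphi(b)$, that is $y_1\subseteq y_2$, $z_2\subseteq z_1$, plus the clause. From $a\in\down a\subseteq y_1\subseteq y_2=\down b\cup\up b'$ we get $a\le b$ or $b'\le a$, and from $b\in\up b\subseteq z_2\subseteq z_1=\down a'\cup\up a$ we get $b\le a'$ or $a\le b$. If $a\le b$ we are done; otherwise $b'\le a$ and $b\le a'$ force $b=a'$, whence $\varphi(b)=(z_1,y_1)$ and the hypothesis reads $(y_1,z_1)\le(z_1,y_1)$, so in particular $y_1\subseteq z_1$. I then derive a contradiction by cases. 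If $a=1$ then $b=0$, $y_1=B$ and $z_1=\{0,1\}$, so $y_1\subseteq z_1$ contradicts $B$ not being small; $a=0$ is excluded since then $a\le b$. If $a\notin\{0,1\}$ and $a$ or $a'$ is an atom, then $y_1\wedge z_1=x_a$ is a dual modular atom by Lemma~\ref{sbi}, and the exceptional clause for $(y_1,z_1)\le(z_1,y_1)$ demands $(z_1,y_1)\neq(z_1,y_1)$, which is absurd. Finally, if $a\notin\{0,1\}$ and neither $a$ nor $a'$ is an atom, pick $c$ with $0<c<a$; then $c\in y_1$ but $c\notin z_1$ (from $c\le a'$ we would get $c\le a\wedge a'=0$, and $c\ge a$ fails), contradicting $y_1\subseteq z_1$. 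So $a\le b$ in all cases, and $\varphi$ is the desired order isomorphism preserving $\,'$.

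I expect the main obstacle to be the bookkeeping around the exceptional clause: invoking the ``not small'' hypothesis at exactly the right spots, and checking that the sub-cases $a\in\{0,1\}$, ``$a$ or $a'$ an atom'', and ``neither an atom'' exhaust the possibilities. This is precisely the scenario already analyzed informally in the paragraph preceding Definition~\ref{swop}, so once organized it should be routine to render rigorously.
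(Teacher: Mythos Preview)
Your proof is correct and follows essentially the same approach as the paper's own proof: both rely on Proposition~\ref{prop:two-directions} for bijectivity, verify the containments $y_1\subseteq y_2$ and $z_2\subseteq z_1$ directly, and for the reverse implication reduce to the situation $b=a'$, then obtain a contradiction by splitting according to whether $x_a$ is a dual modular atom (handled by the exceptional clause) or not (handled by picking $0<c<a$). Your version is slightly more explicit in separating out the degenerate cases $a\in\{0,1\}$, whereas the paper absorbs these into the observation that ``neither of $a,a'$ is an atom'' and ``$a\neq 0$''; but the substance is identical.
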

\begin{proof}
The complement $\,'$ of $\Dir(\Sub(B))$ commutes with~$\varphi$.

Suppose $a,b\in B$, $a \leq b$. 
By Proposition~\ref{prop:two-directions}, $\varphi(a)$ is a principal pair for $x=\{0,a,a',1\}$. 
Observe $\down a \cup \up a' \subseteq \down b \cup \up b'$ and $\down a' \cup \up a \supseteq\down b' \cup \up b$. 
This suffices for $\varphi(a)\leq\varphi(b)$ unless $x$ is a dual modular atom.
If $x$ is a dual modular atom, then, by Lemma~\ref{sbi}, $a$ or $a'$ is an atom. 
Then $b \neq a'$, so $\varphi(b) \neq \varphi(a')= \varphi(a)'$, and hence $\varphi(a)\leq\varphi(b)$.

Finally, suppose $\varphi(a)\leq\varphi(b)$. We will show $a \leq b$ by contradiction; suppose $b' \not\leq a'$. Again $\down a \cup \up a' \subseteq \down b \cup \up b'$ and $\down a' \cup \up a \supseteq\down b' \cup \up b$. It follows that $a'\in\down b$ and $b\in\down a'$. So $a'\leq b$ and $b\leq a'$, giving $a'=b$. Since $\varphi(a)\leq\varphi(b)=\varphi(a)'$, the definition of $\leq$ implies that $x=\{0,a,a',1\}$ cannot be a dual modular atom of $\Sub(B)$. Hence neither of $a,a'$ is an atom of $B$. Since $a\nleq b$ we have $a\neq 0$, so there is $c$ such that $0<c<a$. Then $c\in\down a\cup\up a'$, but $c\notin\down a'\cup\up a=\down b\cup\up b'$, a contradiction. 
\end{proof}

For a Boolean algebra $B$ that is small, $\varphi$ preserves $\leq$ and~$\,'$, but it is not a bijection, see the proof of Proposition~\ref{prop:two-directions}.

\begin{theorem}\label{thm:booleanobjects}
For a Boolean algebra $B$ with more than $4$ elements, and a Boolean domain $X$ with more than $2$ elements: 
\begin{enumerate}
\item $\Sub(B)$ is a Boolean domain;
\item $\Dir(X)$ is a Boolean algebra;
\item $B$ is isomorphic to $\Dir(\Sub(B))$;
\item $X$ is isomorphic to $\Sub(\Dir(X))$.
\end{enumerate}
\end{theorem}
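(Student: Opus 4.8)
The plan is to assemble this statement from results already in hand, with essentially no new computation. The one conceptual point worth isolating at the outset is that the assignment $X\mapsto\Dir(X)$, together with its order $\le$ and unary operation $\,'$, is \emph{invariant under order isomorphisms of Boolean domains}: every ingredient used in Definitions~\ref{pp} and~\ref{swop}---being dual modular, being $\top$, being basic, the meet, the join, being a dual modular atom---is purely order-theoretic. So any order isomorphism $f\colon X\to Y$ of Boolean domains induces an isomorphism $\Dir(X)\to\Dir(Y)$, $(y,z)\mapsto(f(y),f(z))$, of the structures $(\Dir(-),\le,\,')$. I would record this small fact first, since it is what glues the ``Boolean algebra side'' to the ``Boolean domain side''.

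Part~(1) is then immediate: by definition a Boolean domain is a lattice satisfying conditions~(1)--(2) of Theorem~\ref{hui}, which by that theorem are exactly those isomorphic to $\Sub(B)$ for some Boolean algebra $B$, and $\Sub(B)\cong\Sub(B)$. For part~(2), given a Boolean domain $X$ with more than two elements, the remark preceding Corollary~\ref{two} gives $X\cong\Sub(B)$ for a Boolean algebra $B$ that is not small. By the invariance above, $\Dir(X)\cong\Dir(\Sub(B))$ as ordered sets with $\,'$, and by Proposition~\ref{lpo} the inverse $\varphi^{-1}\colon\Dir(\Sub(B))\to B$ is an order isomorphism respecting $\,'$. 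Composing, $\Dir(X)$ is order isomorphic to the Boolean algebra $B$ by a map matching up $\,'$; transporting $\wedge,\vee,0,1$ and the complement of $B$ along this isomorphism---these being determined by the order, so no choice is involved---equips $\Dir(X)$ with a Boolean algebra structure in which $\,'$ is the complement. Hence $\Dir(X)$ is a Boolean algebra.

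Parts~(3) and~(4) follow by the same transport. For~(3): $B$ has more than $4$ elements, hence is not small, so Proposition~\ref{lpo} makes $\varphi\colon B\to\Dir(\Sub(B))$ an order isomorphism preserving $\,'$; by part~(2) applied to $X=\Sub(B)$ (which has more than two elements, $B$ being non-small) the target is a Boolean algebra, and an order isomorphism between lattices that preserves the complement is automatically a Boolean algebra isomorphism, so $B\cong\Dir(\Sub(B))$. For~(4): write again $X\cong\Sub(B)$ with $B$ not small; parts~(2) and~(3) give $\Dir(X)\cong\Dir(\Sub(B))\cong B$ with all maps respecting $\le$ and $\,'$, hence Boolean algebra isomorphisms, and since $\Sub(-)$ carries isomorphic Boolean algebras to isomorphic posets we conclude $\Sub(\Dir(X))\cong\Sub(B)\cong X$.

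The only place any care is needed is the order-invariance of the $\Dir$ construction in the first paragraph and the attendant upgrade from ``order isomorphism preserving $\,'$'' to ``Boolean algebra isomorphism'', which rests on the fact that in a Boolean algebra the operations are recovered from the order and the complement is unique. Alongside this one must keep the size hypotheses aligned: $B$ has more than $4$ elements $\iff$ $B$ is not small $\iff$ $\Sub(B)$ has more than two elements, which is precisely the translation supplied by the discussion around Corollary~\ref{two}. Everything else is a direct appeal to Theorem~\ref{hui} and Propositions~\ref{prop:two-directions} and~\ref{lpo}, so I do not expect a genuine obstacle.
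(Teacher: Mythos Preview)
Your proposal is correct and follows essentially the same route as the paper: both reduce (2) and (4) to the case $X=\Sub(B)$ via Theorem~\ref{hui}, then invoke Proposition~\ref{lpo} for (3) and chain the pieces together. Your version is slightly more explicit about the order-invariance of $\Dir$ and the upgrade from order isomorphism to Boolean isomorphism, and you prove (2) before (3) rather than deducing (2) from (3), but these are cosmetic differences.
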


\begin{proof}
Part (1) follows from Theorem~\ref{hui} (even without any limitation of the number of elements of~$B$). Part (3) follows from Proposition~\ref{lpo}. Since $X$ is a Boolean domain with more than $2$ elements, Theorem~\ref{hui} provides a Boolean algebra $A$ with more than $4$ elements with $X\simeq\Sub(A)$, so $\Dir(X)\simeq\Dir(\Sub(A))$, which is Boolean by (3), establishing part (2). To prove part (4), say $X\simeq\Sub(A)$ for a Boolean algebra $A$; then part (3) gives $\Sub(\Dir(X))\simeq\Sub(\Dir(\Sub(A)))\simeq\Sub(A)\simeq X$.
\end{proof}

For a Boolean algebra $B$ and $a\in B$, consider the Boolean subalgebras of $B$ that contain $a$, and in each of these take the principal pair in its subalgebra lattice corresponding to $a$. While this is a more complex object, it leads to an alternative view of how principal pairs encode elements, and is the tool we use to extend matters to the orthoalgebra setting. For the following, we note that for $x=\{0,a,a',1\}$, the upset $\up x$ is the set of Boolean subalgebras of $B$ that contain $a$. For $y\in\up x$ we use the following notation. 
\[ \down_y a=\{b\in y:b\leq a\}\quad\mbox{ and }\quad\up_y a=\{b\in y:a\leq b\}\]

\begin{definition}
\label{log}
For $B$ a Boolean algebra, $a\in B$, and $x=\{0,a,a',1\}$, let $d_a\colon \up x\to (\Sub(B))^2$ be given by 
\begin{equation*} \label{d_a}
d_a(y) = (\down_y a\cup\up_y a',\down_y a'\cup\up_y a)\text.
\end{equation*}
\end{definition}

Note that if $y$ is a subalgebra of $B$, then the lattice of subalgebras $\Sub(y)$ of $y$ is the interval $\down y$ of $\Sub(B)$. Note also that the definition of $d_a$ can be expanded to
\[ d_a(y)=\bigl(y\land(\down a\cup\up a'), y\land(\down a'\cup\up a)\bigr)\text.\]

The aim is as before --- to characterize the mappings $d_a$ order-theoretically and show that when $B$ is not small that these are in bijective correspondence with $B$. Then we define structure on the collection of such mappings, and show that with respect to this structure, this bijective correspondence is an isomorphism.

\begin{definition}
\label{logg}
For a Boolean domain $X$, a \emph{direction} of $X$ is a map $d\colon \up x\to X^2$ for some basic element $x\in X$ such that for each $y,z\in\up x$:
\begin{enumerate}
\item $d(y)$ is a principal pair for $x$ in the Boolean domain $\down y$;
\item if $y\leq z$ and $d(z)=(v,w)$, then $d(y)=(y\wedge v,y\wedge w)$.
\end{enumerate}
We say $d$ is a \emph{direction for} $x$, and write $\oDir(X)$ for the set of directions of $X$. 
\end{definition}

\begin{proposition}
\label{swa}
Let $X$ be a Boolean domain
. 
\begin{enumerate}
\item Each direction $d$ of $X$ determines a principal pair $d(\top)$ of $X$ and vice versa. 
\item If $d$ is a direction for $x$ and $x<y$, then $d(y)$ determines $d(\top)$ and hence $d$. 
\item For each principal pair $(u,v)$ of $X$ there is a unique direction $d$ with $d(\top)=(u,v)$.
\end{enumerate}
In particular, there is a bijection $\gamma\colon \oDir(X)\to \Dir(X)$ with $\gamma(d)=d(\top)$. 
\end{proposition}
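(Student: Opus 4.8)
The plan rests on one key observation: if $d$ is a direction for a basic element $x \in X$ and we set $(u,v) := d(\top)$, then condition~(2) of Definition~\ref{logg} applied to $y \le \top$ gives $d(y) = (y \wedge u,\, y \wedge v)$ for every $y \in \up x$. Hence a direction is completely determined by its value at $\top$, and moreover $x = u \wedge v$ can be read off from that value. Part~(1) is then immediate: putting $y = \top$ in condition~(1) of Definition~\ref{logg} and using $\down \top = X$ shows $d(\top)$ is a principal pair for $x$ in $X$, i.e.\ $d(\top) \in \Dir(X)$; the converse assertion of~(1) is just part~(3). The key observation also yields at once the uniqueness in~(3) (any direction $d'$ for $x$ with $d'(\top) = (u,v)$ has $d'(y) = (y\wedge u, y\wedge v)$ for all $y$, hence is unique) and the injectivity of $\gamma$ (if $d_1(\top) = d_2(\top)$ then $d_1, d_2$ are directions for the same basic element $u\wedge v$, hence equal).

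For the existence part of~(3), given a principal pair $(u,v)$ for $x := u\wedge v$ I would simply define $d\colon \up x \to X^2$ by $d(y) = (y\wedge u, y\wedge v)$. Condition~(2) of Definition~\ref{logg} holds automatically, since $y \le z$ gives $y \wedge(z\wedge u) = y\wedge u$ and likewise for $v$, and clearly $d(\top) = (u,v)$. The one substantive point --- and, I expect, the main obstacle --- is condition~(1): that $(y\wedge u, y\wedge v)$ is actually a principal pair for $x$ in the Boolean domain $\down y$. Verifying this purely order-theoretically would require, among other things, the identity $(y\wedge u) \vee (y\wedge v) = y$, which is of distributive-law type and does not obviously follow from dual modularity of $u$ and $v$ alone. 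I would instead pass to a representation $X \cong \Sub(B)$ (Theorem~\ref{hui}): by Proposition~\ref{prop:two-directions} we have $(u,v) = \varphi(a)$ for some $a \in B$, so $x = x_a$, each $y \in \up x_a$ is a subalgebra of $B$ containing $a$, and the expanded formula for $d_a$ recorded after Definition~\ref{log} identifies $d(y)$ with $d_a(y) = (\down_y a \cup \up_y a',\, \down_y a' \cup \up_y a)$. Applying Proposition~\ref{prop:two-directions} to the Boolean algebra $y$ and the element $a \in y$ then exhibits this pair as a principal pair for $x_a$ in $\Sub(y) = \down y$, which is exactly condition~(1).

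For part~(2), let $d$ be a direction for $x$ with $x < y$ and put $(u,v) := d(\top)$. By~(1) this is a principal pair for $x$ in $X$, and by Corollary~\ref{two} (or Proposition~\ref{prop:two-directions}) $x$ has at most two principal pairs in $X$, namely $(u,v)$ and $(v,u)$. By the key observation $d(y) = (y\wedge u, y\wedge v)$, so it is enough to show $y\wedge u \ne y\wedge v$: then $d(y)$ cannot arise from the choice $d(\top) = (v,u)$, so $d(y)$ determines $d(\top)$, hence determines $d$. And if $y\wedge u = y\wedge v =: r$, then $d(y) = (r,r)$ would be a principal pair for $x$ in $\down y$, and inspecting the three cases of Definition~\ref{pp} in $\down y$ --- whose top element is $y$ --- forces $r = y$ in every case; then $y \le u$ and $y \le v$, so $y \le u\wedge v = x$, contradicting $x < y$. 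Finally, $\gamma(d) = d(\top)$ is well defined into $\Dir(X)$ by~(1), surjective by the existence half of~(3), and injective as noted in the first paragraph, so it is a bijection.
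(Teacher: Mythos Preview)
Your proof is correct and follows essentially the same approach as the paper: both reduce everything to the observation that $d(y)=(y\wedge u,y\wedge v)$ once $d(\top)=(u,v)$, and both establish the existence half of~(3) by passing to a representation $X\cong\Sub(B)$ and invoking the explicit direction $d_a$. Your argument for~(2) is in fact slightly cleaner than the paper's, which splits into the cases $x=\bot$ and $x$ an atom, whereas you argue uniformly that $(r,r)$ being a principal pair in $\down y$ forces $r=y$ and hence $y\le u\wedge v=x$.
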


\begin{proof}
(1) This is clear from the definition of a direction. (2) Assume first that $x=\bot$. Then the principal pairs for $x$ in $X$ are $(\top,\bot)$ and $(\bot,\top)$
. If $d(\top)=(\top,\bot)$, then the definition of a direction gives $d(y)=(y,\bot)$, and if $d(\top)=(\bot,\top)$, then $d(y)=(\bot,y)$. Since $y\neq\bot$, $d(y)$ determines $d(\top)$. Suppose that $x$ is an atom of $X$. Then, since $x<y$, $\down y$ has more than two elements. So by Theorem~\ref{thm:booleanobjects} the components of the principal pair $d(y)$ for $x$ in $\down y$ are different. If $d(\top)=(u,v)$, then $d(y)=(y\wedge u,y\wedge v)$, and if $d(\top)=(v,u)$, then $d(y)=(y\wedge v,y\wedge u)$. Thus $d(y)$ determines $d(\top)$. (3) Since every Boolean domain is isomorphic to $\Sub(B)$ for some Boolean algebra $B$, we may assume $X=\Sub(B)$. Suppose $(u,v)$ is a principal pair for a basic element $x$ of $X$. By Theorem~\ref{thm:booleanobjects} there is $a\in B$ with $x=\{0,a,a',1\}$ and $\varphi(a)=(u,v)$. So $u=\down a\cup\up a'$ and $v=\down a'\cup\up a$. Define $d\colon \up x\to X^2$ by $d(y)=(\down_y a\cup\up_y a',\down_y a'\cup\up_y a)$. It is easily seen that $d$ is a direction with $d(\top)=(u,v)$. Its uniqueness follows from (2). 
\end{proof}

Proposition~\ref{swa} allows us to count the number of directions for a basic element using the known number of principal pairs for it.

\begin{corollary}\label{Cor:no_of_Bool_directions}
Let $X$ be a Boolean domain. 
If $x\ne\top$ is a basic element of $X$, then there are exactly two directions for~$x$.
If $\top$ is a basic element of $X$ (so $X$ has at most two elements), then there is exactly one direction for~$\top$.
\end{corollary}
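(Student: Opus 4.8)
The plan is to reduce the statement to the already-known count of principal pairs. By Proposition~\ref{swa} the assignment $d\mapsto d(\top)$ is a bijection $\gamma\colon\oDir(X)\to\Dir(X)$. Applying clause~(1) of Definition~\ref{logg} with $y=\top$ shows that a direction for a basic element $x$ is sent to a principal pair for $x$ in $\down\top=X$; conversely, the unique direction produced from a principal pair for $x$ in part~(3) of Proposition~\ref{swa} is again a direction for $x$. Hence $\gamma$ restricts to a bijection between the directions for $x$ and the principal pairs for $x$, so it suffices to count the latter.

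First I would dispatch the generic case: if $X$ has more than two elements, then Corollary~\ref{two} says every basic element has exactly two principal pairs, hence exactly two directions; and a basic element can equal $\top$ only when $X$ has at most two elements, so this already proves the first claim whenever $|X|>2$. It remains to treat $X$ with at most two elements, where the principal pairs can be listed explicitly (Remark~\ref{smallB}, or a direct check). If $X=\{\top\}$, then $\top=\bot$ is basic and its only principal pair is $(\top,\top)$. If $X=\{\bot,\top\}$ with $\bot<\top$, then $(\top,\top)$ is the unique principal pair for $\top$, while the basic element $\bot$ has precisely the two principal pairs $(\bot,\top)$ and $(\top,\bot)$: these qualify as principal pairs by clauses~(1) and~(2) of Definition~\ref{pp}, since $\bot$ and $\top$ are both dual modular in a two-element lattice; they are distinct because $\bot\neq\top$; and there are no others by Corollary~\ref{two}. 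Combining the cases yields the statement.

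The proof is therefore essentially immediate from Proposition~\ref{swa} and Corollary~\ref{two}. The only point demanding any real care is the handful of degenerate small configurations --- where $\bot$ and $\top$ may coincide, or $X$ is a two-element chain --- and there the work is just to confirm that $\gamma$ still behaves as stated and that $(\bot,\top)$ and $(\top,\bot)$ genuinely meet the conditions of Definition~\ref{pp}.
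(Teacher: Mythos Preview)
Your proposal is correct and follows exactly the same route as the paper: use the bijection $\gamma\colon d\mapsto d(\top)$ from Proposition~\ref{swa} to reduce counting directions for $x$ to counting principal pairs for $x$, then invoke Corollary~\ref{two}. The paper states this in a single sentence, whereas you have additionally spelled out why $\gamma$ restricts to a bijection on the fibres over a fixed basic element and checked the degenerate cases $|X|\le 2$ by hand; this extra care is fine and does not deviate from the intended argument.
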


The bijection of Proposition~\ref{swa} can be used to define a unary operation $'$ and binary relation $\leq$ on $\oDir(X)$ so that $\Dir(X)$ is isomorphic to $\oDir(X)$. For a direction $d$, we have that $d'$ is the direction with the same domain and 
if $d(y)=(u,v)$ then $d'(y)=d(y)'=(v,u)$. 
For directions $d,e$, we have $d\leq e$ iff the principal pairs $d(\top)$ and $e(\top)$ satisfy $d(\top)\leq e(\top)$. The following corollary of Theorem~\ref{thm:booleanobjects} is then immediate. 

\begin{corollary}\label{Cor:booleanobjects}
For a Boolean algebra $B$ with more than $4$ elements, and a Boolean domain $X$ with more than $2$ elements: 
\begin{enumerate}
\item $\Sub(B)$ is a Boolean domain;
\item $\oDir(X)$ is a Boolean algebra;
\item $B$ is isomorphic to $\oDir(\Sub(B))$;
\item $X$ is isomorphic to $\Sub(\oDir(X))$.
\end{enumerate}
\end{corollary}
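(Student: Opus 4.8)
The plan is to obtain the whole statement from Theorem~\ref{thm:booleanobjects} by transporting structure along the bijection $\gamma\colon\oDir(X)\to\Dir(X)$, $\gamma(d)=d(\top)$, supplied by Proposition~\ref{swa}. Part (1) needs no argument: it is literally Theorem~\ref{thm:booleanobjects}(1), itself an instance of Theorem~\ref{hui}, and it holds for every Boolean algebra $B$ regardless of size.

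For the rest, I would first record that $\gamma$ is not merely a bijection but an isomorphism of the relevant structure. The relation $\leq$ on $\oDir(X)$ was defined precisely so that $d\leq e$ iff $\gamma(d)\leq\gamma(e)$, so $\gamma$ is an order isomorphism by construction. The operation $\,'$ on $\oDir(X)$ was defined pointwise, $d'(y)=d(y)'$; evaluating at $\top$ gives $\gamma(d')=d'(\top)=d(\top)'=\gamma(d)'$, and $(y,z)''=(y,z)$ shows $\,'$ is an involution. Since a Boolean algebra is determined by its underlying order (meets, joins, $0$, $1$, and complements are all order-definable, and the complement coincides with $\,'$ on $\Dir(X)$ by Proposition~\ref{lpo}), transporting $(\leq,\,')$ along $\gamma$ equips $\oDir(X)$ with a Boolean algebra structure making $\gamma$ a Boolean-algebra isomorphism; combined with Theorem~\ref{thm:booleanobjects}(2) this is part (2). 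I would note here that the hypothesis that $X$ has more than two elements guarantees, via Corollary~\ref{two}, that each basic element has exactly two principal pairs, so Proposition~\ref{swa} applies without its small-case proviso.

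Part (3) is then the composite $B\;\cong\;\Dir(\Sub(B))\;\cong\;\oDir(\Sub(B))$, the first isomorphism being Theorem~\ref{thm:booleanobjects}(3) and the second being $\gamma$ applied to $X=\Sub(B)$, which has more than two elements because $B$ has more than four. For part (4), Theorem~\ref{thm:booleanobjects}(4) gives $X\cong\Sub(\Dir(X))$; since $\gamma$ is a Boolean-algebra isomorphism $\oDir(X)\cong\Dir(X)$ and $\Sub(-)$ depends on its argument only up to isomorphism, we get $\Sub(\oDir(X))\cong\Sub(\Dir(X))$, and composing yields $X\cong\Sub(\oDir(X))$.

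I do not expect a genuine obstacle: this corollary is essentially bookkeeping on top of Theorem~\ref{thm:booleanobjects}. The only point that needs care is keeping track of the size hypotheses ("$B$ has more than $4$ elements", "$X$ has more than $2$ elements") so that the exceptional small cases of Corollary~\ref{two} and Proposition~\ref{swa}(2) never intervene — in particular, whenever one writes $X\simeq\Sub(A)$ one must be able to take $A$ with more than four elements, exactly as in the proof of Theorem~\ref{thm:booleanobjects}.
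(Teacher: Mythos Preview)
Your proposal is correct and matches the paper's approach exactly: the paper defines $\leq$ and $'$ on $\oDir(X)$ by transport along the bijection $\gamma$ of Proposition~\ref{swa} and then states the corollary as immediate from Theorem~\ref{thm:booleanobjects}. Your write-up simply spells out that transport explicitly; the one minor inaccuracy is that Proposition~\ref{swa} itself has no small-case proviso (the bijection $\gamma$ exists for every Boolean domain), so your remark about it applying ``without its small-case proviso'' is unnecessary.
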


We conclude this section with an alternate view of the reconstruction of a Boolean algebra~$B$ from its Boolean domain $X=\Sub(B)$. Let $a\in B$
. We consider the case when $x_a\neq\bot$. For each cover $y$ of $x_a$ we have that the 4-element Boolean algebra $x_a$ is a subalgebra of the $8$-element Boolean algebra $y$. The element $a\in x_a$ can either embed as an atom in $y$, or as a coatom in $y$. In the first case $(\down_y a\cup\up_y a',\down_y a'\cup\up_y a)$ is $(x_a,y)$, and in the second, it is $(y,x_a)$. If we use $\down$ for $(x_a,y)$ and $\up$ for $(y,x_a)$, a direction $d$ of $X$ for the basic element $x_a$ assigns to each cover $y$ of $x_a$ the value $d(y)=\down $ or $d(y)=\up $ describing how  $x_a$ is embedded. This assignment of $\down $ and $\up $ to the covers of $x_a$ must be done in a way that is consistent with $d$ being a direction, and for each $x_a$ there are only two possibilities, one obtained from the other by interchanging $\down $ and $\up $ for each cover. Virtually identical remarks hold when $x_a=\bot$, except that we consider embedding a 2-element Boolean algebra into 4-element ones. See Figure \ref{fig1b}. 

\begin{example} 
Consider the power set $B=\mathcal{P}(\{1,2,3,4\})$ of $\{1,2,3,4\}$. Its poset $X$ of Boolean subalgebras is given in Figure~\ref{fig1}. 

\renewcommand{\powerone}[1]{\begin{tikzpicture}[scale=.5,draw=gray!50,inner sep=0pt,outer sep=0pt]
    \node (a0) {$\;\emptyset\;$};
    \node (a1) at ([yshift=1cm]a0) {$\;#1\;$};
      \draw (a0) to (a1);
  \end{tikzpicture}}
\renewcommand{\powertwo}[2]{\begin{tikzpicture}[scale=.33,draw=gray!50,inner sep=0pt,outer sep=0pt]
    \node (b0) {$\emptyset$};
    \node (b1) at ([yshift=2cm]0) {$1234$};
    \node (bx) at ([xshift=-1cm,yshift=1cm]0) {$#1$};
    \node (bx') at ([xshift=1cm,yshift=1cm]0) {$#2$};
    \draw (b0) to (bx) to (b1);
    \draw (b0) to (bx') to (b1);
  \end{tikzpicture}}
\renewcommand{\powerthree}[6]{\begin{tikzpicture}[scale=.5,draw=gray!50,inner sep=0pt,outer sep=0pt]
    \node (c0) {$\emptyset$};
    \node (c1) at ([yshift=26mm]0) {$1234$};
    \node (ca) at ([xshift=-1cm,yshift=8mm]0) {$#1$};
    \node (cb) at ([yshift=8mm]0) {$#2$};
    \node (cc) at ([xshift=1cm,yshift=8mm]0) {$#3$};
    \node (cd) at ([xshift=-1cm,yshift=18mm]0) {$#4$};
    \node (ce) at ([yshift=18mm]0) {$#5$};
    \node (cf) at ([xshift=1cm,yshift=18mm]0) {$#6$};
    \draw (c0) to (ca); \draw (c0) to (cb); \draw (c0) to (cc);
    \draw (c1) to (cd); \draw (c1) to (ce); \draw (c1) to (cf);
    \draw (ca) to (ce); \draw (ca) to (cf);
    \draw (cb) to (cd); \draw (cb) to (cf);
    \draw (cc) to (cd); \draw (cc) to (ce);
  \end{tikzpicture}}  

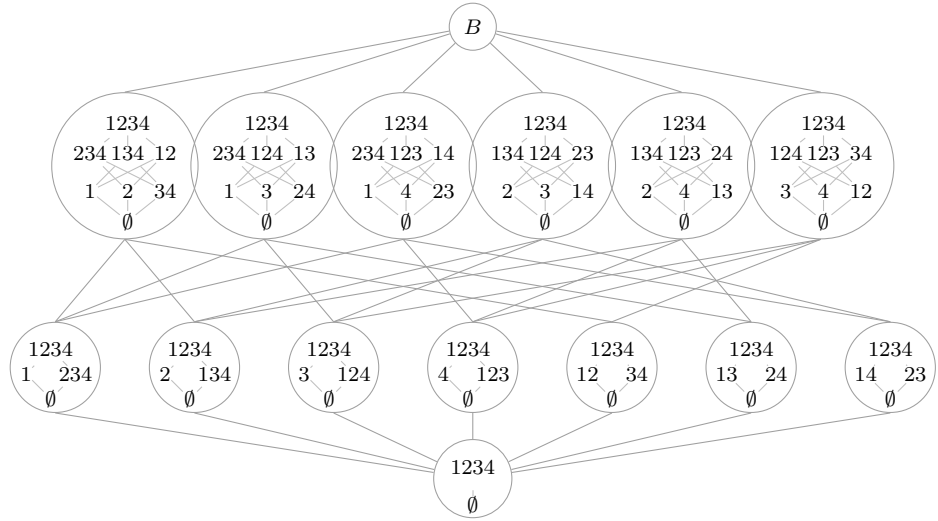
\begin{figure}[h]
\begin{tikzpicture}[font=\tiny,scale=1.85,inner sep=-3pt,draw=gray!75]
    \node[circle,draw] (0) at (0,0) {$\powerone{1234}$};
    \node[circle,draw] (a) at (-3,.8) {$\powertwo{1}{234}$};
    \node[circle,draw] (b) at (-2,.8) {$\powertwo{2}{134}$};
    \node[circle,draw] (c) at (-1,.8) {$\powertwo{3}{124}$};
    \node[circle,draw] (d) at (0,.8) {$\powertwo{4}{123}$};
    \node[circle,draw] (e) at (1,.8) {$\powertwo{12}{34}$};
    \node[circle,draw] (f) at (2,.8) {$\powertwo{13}{24}$};
    \node[circle,draw] (g) at (3,.8) {$\powertwo{14}{23}$};
    \node[circle,draw] (m1) at (-2.5,2.25) {$\powerthree{1}{2}{34}{234}{134}{12}$};
    \node[circle,draw] (m2) at (-1.5,2.25) {$\powerthree{1}{3}{24}{234}{124}{13}$};
    \node[circle,draw] (m3) at (-.5,2.25) {$\powerthree{1}{4}{23}{234}{123}{14}$};
    \node[circle,draw] (m4) at (.5,2.25) {$\powerthree{2}{3}{14}{134}{124}{23}$};
    \node[circle,draw] (m5) at (1.5,2.25) {$\powerthree{2}{4}{13}{134}{123}{24}$};
    \node[circle,draw] (m6) at (2.5,2.25) {$\powerthree{3}{4}{12}{124}{123}{34}$};
    \node[circle,draw] (B) at (0,3.25) {$\quad B\quad$};
    \draw (0) to (a.south); \draw (0) to (b.south); \draw (0) to (c.south); \draw (0) to (d.south); \draw (0) to (e.south); \draw (0) to (f.south); \draw (0) to (g.south); 
    \draw (B) to (m1.north); \draw (B) to (m2.north); \draw (B) to (m3.north); \draw (B) to (m4.north); \draw (B) to (m5.north); \draw (B) to (m6.north);
    \draw (a.north) to (m1.south); \draw (a.north) to (m2.south); \draw (a.north) to (m3.south);
    \draw (b.north) to (m1.south); \draw (b.north) to (m4.south); \draw (b.north) to (m5.south);
    \draw (c.north) to (m2.south); \draw (c.north) to (m4.south); \draw (c.north) to (m6.south);
    \draw (d.north) to (m3.south); \draw (d.north) to (m5.south); \draw (d.north) to (m6.south);
    \draw (e.north) to (m1.south); \draw (e.north) to (m6.south); 
    \draw (f.north) to (m2.south); \draw (f.north) to (m5.south); 
    \draw (g.north) to (m3.south); \draw (g.north) to (m4.south); 
  \end{tikzpicture}
  \caption{The poset of subalgebras of a $16$-element Boolean algebra}
  \label{fig1}
\end{figure}

We describe the directions of $X$ corresponding to the elements $a=\{1\}$ and $b=\{1,2\}$ in Figure~\ref{fig1b} by indicating their values $\down $ or $\up $ on the covers of the basic elements corresponding to these elements. 

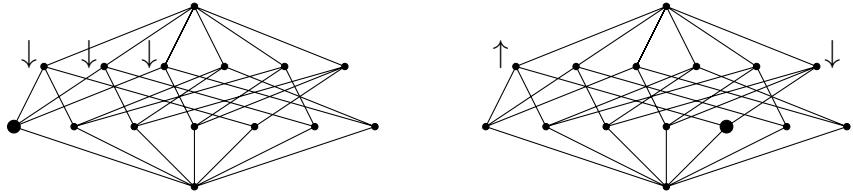
\begin{figure}[h]
\begin{center}
\begin{tikzpicture}[scale=.8]
\draw[fill] (0,0) circle (1.5pt); \draw[fill] (0,3) circle (1.5pt); 
\draw[fill] (-3,1) circle (3pt); \draw[fill] (-2,1) circle (1.5pt); \draw[fill] (-1,1) circle (1.5pt);\draw[fill] (0,1) circle (1.5pt);\draw[fill] (1,1) circle (1.5pt);\draw[fill] (2,1) circle (1.5pt);\draw[fill] (3,1) circle (1.5pt);
\draw[fill] (-2.5,2) circle (1.5pt);\draw[fill] (-1.5,2) circle (1.5pt);\draw[fill] (-.5,2) circle (1.5pt);\draw[fill] (.5,2) circle (1.5pt);\draw[fill] (1.5,2) circle (1.5pt);\draw[fill] (2.5,2) circle (1.5pt);
\draw[thin] (0,0)--(-3,1)--(-2.5,2)--(-2,1)--(0,0)--(-1,1)--(-1.5,2)--(-3,1)--(-.5,2)--(0,1)--(1.5,2)--(-2,1)--(.5,2)--(-1,1)--(2.5,2)--(0,1)--(0,0)--(1,1)--(-2.5,2)--(0,3)--(-1.5,2)--(2,1)--(0,0)--(3,1)--(-.5,2)--(0,3)--(-.5,2)--(0,3)--(.5,2)--(3,1);
\draw[thin] (1.5,2)--(0,3)--(2.5,2)--(1,1);\draw[thin] (2,1)--(1.5,2);
\node at (-2.75,2.2) {$\down $}; \node at (-1.75,2.2) {$\down $}; \node at (-.75,2.2) {$\down $};
\end{tikzpicture}
\quad \quad \quad
\begin{tikzpicture}[scale=.8]
\draw[fill] (0,0) circle (1.5pt); \draw[fill] (0,3) circle (1.5pt); 
\draw[fill] (-3,1) circle (1.5pt); \draw[fill] (-2,1) circle (1.5pt); \draw[fill] (-1,1) circle (1.5pt);\draw[fill] (0,1) circle (1.5pt);\draw[fill] (1,1) circle (3pt);\draw[fill] (2,1) circle (1.5pt);\draw[fill] (3,1) circle (1.5pt);
\draw[fill] (-2.5,2) circle (1.5pt);\draw[fill] (-1.5,2) circle (1.5pt);\draw[fill] (-.5,2) circle (1.5pt);\draw[fill] (.5,2) circle (1.5pt);\draw[fill] (1.5,2) circle (1.5pt);\draw[fill] (2.5,2) circle (1.5pt);
\draw[thin] (0,0)--(-3,1)--(-2.5,2)--(-2,1)--(0,0)--(-1,1)--(-1.5,2)--(-3,1)--(-.5,2)--(0,1)--(1.5,2)--(-2,1)--(.5,2)--(-1,1)--(2.5,2)--(0,1)--(0,0)--(1,1)--(-2.5,2)--(0,3)--(-1.5,2)--(2,1)--(0,0)--(3,1)--(-.5,2)--(0,3)--(-.5,2)--(0,3)--(.5,2)--(3,1);
\draw[thin] (1.5,2)--(0,3)--(2.5,2)--(1,1);\draw[thin] (2,1)--(1.5,2);
\node at (-2.75,2.2) {$\up $}; \node at (2.76,2.2) {$\down $}; 
\end{tikzpicture}
\end{center}
  \caption{Direction for a distinguished atom in a $16$-element Boolean algebra}
  \label{fig1b}
\end{figure}

We note that the upper covers of a basic element will usually be assigned a mixture of values of $\down $ and $\up $, a matter we return to in greater detail when we consider orthoalgebras. 
\end{example}

\section{Orthoalgebras}\label{sec:orthoalgebras}

This section briefly recalls the basics of orthoalgebras and their subalgebras~\cite{Greechie}. 

\begin{definition}
An \emph{orthoalgebra} is a set $A$, together with a partial binary operation $\oplus$ with domain of definition $\perp$, a unary operation $'$, and constants $0,1$, satisfying:
\begin{enumerate}
\item $\oplus$ is commutative and associative in the usual sense for partial operations;
\item $a'$ is the unique element with $a\oplus a'$ defined and equal to $1$;
\item $a\oplus a$ is defined if and only if $a=0$.
\end{enumerate}
An orthoalgebra is \emph{Boolean} when it arises from a Boolean algebra by restricting the join to pairs of orthogonal elements.
\end{definition}

Any orthoalgebra is partially ordered by $a\leq c$ if $a\perp b$ and $a\oplus b=c$ for some~$b$. An orthoalgebra is Boolean if and only if this is the partial ordering of a Boolean algebra. 

\begin{definition}
	For orthoalgebras $A$ and $C$, an {\em orthoalgebra morphism} $f\colon A\to C$ is a function that preserves orthocomplementation and satisfies: if $a\oplus b$ is defined, then so is $f(a)\oplus f(b)$, and $f(a\oplus b)=f(a)\oplus f(b)$. If, in addition, there is an orthoalgebra morphism $g\colon C\to A$ that is the set-theoretic inverse of $f$, then we call $f$ an {\em orthoalgebra isomorphism}.
\end{definition}

\begin{definition}
Let $A$ be an orthoalgebra. A subset $S\subseteq A$ is a \emph{subalgebra} if:
\begin{enumerate}
\item $0,1\in S$;
\item $a\in S\Rightarrow a'\in S$;
\item if $a,b\in S$ and $a\perp b$ then $a\oplus b\in S$.
\end{enumerate}
A subalgebra that is a Boolean orthoalgebra is a \emph{Boolean subalgebra}. A \emph{block} is a maximal Boolean subalgebra. A block is \emph{small} when it has $4$ or fewer elements. Write $\BSub(A)$ for the set of Boolean subalgebras of $A$ partially ordered by inclusion. We call $A$ \emph{proper} if it does not have small blocks, or equivalently, if $\BSub(A)$ does not have basic elements that are maximal. 
\end{definition}

We next consider how to recognize when an orthoalgebra is obtained from a Boolean algebra. 
	\begin{definition}
		For $n\geq 0$ let $x_1,\ldots,x_n$ be a finite sequence of elements of an orthoalgebra $A$. Define $\bigoplus_{i=1}^0x_i=0$. If $\bigoplus_{i=1}^kx_i$ is defined for $0\leq k<n$, and if $\big(\bigoplus_{i=1}^kx_i\big)\oplus x_{k+1}$ is defined, set $\bigoplus_{i=1}^{k+1}x_i=\big(\bigoplus_{i=1}^kx_i\big)\oplus x_{k+1}$.  
	\end{definition}
	Given a permutation $\pi$ of $(1,\ldots,n)$, the commutativity and associativity laws for orthoalgebras assure that $\bigoplus_{i=1}^nx_i$ is defined if and only if $\bigoplus_{i=1}^nx_{\pi(i)}$ is defined, and when defined, these are equal. Thus $\bigoplus F$ in the following definition is well defined.
	
	\begin{definition}
		Let $A$ be an orthoalgebra. We call a finite subset $F$ (say of $n$ elements) of $A$ \emph{jointly orthogonal} if there is an enumeration $x_i$ ($1\leq i\leq n$) of $F$ such that $\bigoplus_{i=1}^nx_i$ is defined, in which case we define $\bigoplus F=\bigoplus_{i=1}^nx_i$. If $0\notin F$ and $\bigoplus F=1$, we call $F$ a \emph{partition of unity}.
	\end{definition}

\begin{proposition}\label{prop:booleansubalgebra}
	An orthoalgebra $A$ is Boolean if and only if every finite subset $S \subseteq A$ is contained in $\{ \bigoplus E \mid E \subseteq F \}$ for some jointly orthogonal set~$F$.
\end{proposition}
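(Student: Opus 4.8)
The plan is to prove the two implications separately: the forward one is a short computation, and the converse is where the work lies, since there we must manufacture a Boolean lattice structure on $A$ out of its Boolean subalgebras. For the forward direction, suppose $A$ arises from a Boolean algebra $\tilde A$ and let $S\subseteq A$ be finite. The subalgebra $B$ of $\tilde A$ generated by $S$ is finite, so it has a finite set $F$ of atoms. Then $F$ is a partition of unity of $A$: its elements are pairwise disjoint, hence pairwise orthogonal, so $F$ is jointly orthogonal, and $\bigoplus F=\bigvee F=1$. Every $s\in S\subseteq B$ is the join in $B$ of the atoms lying below it, and a join of finitely many pairwise orthogonal elements of $A$ equals their $\oplus$-sum; hence $s=\bigoplus\{f\in F\mid f\leq s\}$ lies in $\{\bigoplus E\mid E\subseteq F\}$.

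For the converse, I would first normalise the hypothesis. Given jointly orthogonal $F$ with $S\subseteq\{\bigoplus E\mid E\subseteq F\}$, put $\hat F=\bigl(F\cup\{(\bigoplus F)'\}\bigr)\setminus\{0\}$; associativity shows $\hat F$ is again jointly orthogonal with $\bigoplus\hat F=1$, so it is a partition of unity, and $\{\bigoplus E\mid E\subseteq F\}\subseteq B_{\hat F}:=\{\bigoplus E\mid E\subseteq\hat F\}$. I then claim $B_{\hat F}$ is a Boolean subalgebra of $A$: it contains $0$ and $1$; it is closed under $\,'$ because $\bigoplus E\oplus\bigoplus(\hat F\setminus E)=1$; and the surjection $E\mapsto\bigoplus E$ from $\psub{\hat F}$ onto $B_{\hat F}$ is a bijection carrying set complement to $\,'$ and disjoint union to $\oplus$. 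Injectivity and the transport of $\oplus$ reduce --- by associativity together with the axiom that $a\oplus a$ is defined only for $a=0$ --- to the observation that $f\in\hat F$ and $f\leq\bigoplus E$ force $f\in E$. Thus $B_{\hat F}\simeq\psub{\hat F}$ is Boolean, and we conclude that every finite subset of $A$ lies in a finite Boolean subalgebra; equivalently, $A$ is the directed union of its finite Boolean subalgebras.

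With that in hand, define binary operations $\wedge$ and $\vee$ on $A$ by computing them inside any Boolean subalgebra that contains both arguments. The key lemma is that this does not depend on the choice: if $B\subseteq B'$ are Boolean subalgebras both containing $a$ and $b$, then $a\wedge_B b=a\wedge_{B'}b$. Here I would use that orthoalgebras are cancellative --- $a\oplus b=a\oplus c$ implies $b=c$, since the identity $(x\oplus y)'\oplus x=y'$ applied to both sides gives $b'=c'$ --- and that a subalgebra is closed under the partial difference $\ominus$, so that the partial order and $\ominus$ restrict correctly. Writing $n=a\wedge_B b$, $m=a\wedge_{B'}b$, one has $n\leq m$ as $n$ is a lower bound of $\{a,b\}$ lying in $B'$; putting $m=n\oplus e$ and cancelling $n$ in $a=n\oplus(a\ominus n)=n\oplus e\oplus(a\ominus m)$, and likewise for $b$, gives $e\leq a\ominus n$ and $e\leq b\ominus n$; since $a\ominus n$ and $b\ominus n$ are orthogonal in $B$, this makes $e\oplus e$ defined, so $e=0$ and $m=n$. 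Well-definedness for arbitrary $B_1,B_2$ then follows by routing through the Boolean subalgebra $B_1\cap B_2$. Using this, one checks --- each time working inside a single Boolean subalgebra containing the finitely many elements involved, and invoking the lemma to transfer conclusions back to $A$ --- that $a\wedge b$ and $a\vee b$ are the meet and join of $\{a,b\}$ in the poset $A$, that $\,'$ is a complement, and that the distributive law holds; hence $(A,\wedge,\vee,{}',0,1)$ is a Boolean algebra. Finally, $\oplus$ is exactly the restriction of $\vee$ to orthogonal pairs: if $a\oplus b$ is defined then in a Boolean subalgebra containing $a,b$ we have $a\wedge b=0$ and $a\oplus b=a\vee b$, while conversely $a\wedge b=0$ forces $a$ and $b$ to be disjoint, hence orthogonal, inside such a subalgebra. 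So $A$ is Boolean.

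The only step that is not routine bookkeeping inside a fixed finite Boolean algebra is the lemma that the meet and join of a pair are computed identically in every Boolean subalgebra containing it; this is precisely where the cancellation law for orthoalgebras is indispensable, and where care is needed that passing to a subalgebra changes neither the order nor the partial subtraction.
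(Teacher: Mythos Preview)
Your forward direction and the reduction of the converse to ``$A$ is the directed union of finite Boolean subalgebras $B_{\hat F}$'' are correct and match the paper's approach; your explicit verification that $E\mapsto\bigoplus E$ gives $B_{\hat F}\simeq\psub{\hat F}$ is more detailed than what the paper writes. The genuine difference is in the final step: the paper simply observes that the family $\{B_F\}$ is up-directed with union $A$ and concludes ``hence $A$ is Boolean'', whereas you try to construct the Boolean operations on $A$ by hand and verify the axioms. Your route is more explicit and arguably more honest, since the paper's last sentence does sweep the well-definedness of $\wedge$ and $\vee$ under the rug.

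However, there is a real gap in your well-definedness argument. You prove the key lemma for $B\subseteq B'$ correctly (the cancellation trick with $e\oplus e$ is fine), but then you write that the general case ``follows by routing through the Boolean subalgebra $B_1\cap B_2$''. In an orthoalgebra the intersection of two Boolean subalgebras need not be Boolean --- this is exactly the content of Example~\ref{ex:frasercube} (the Fraser cube), where two $16$-element blocks meet in a non-Boolean set. You cannot appeal to $B_1\cap B_2$ without first knowing $A$ is Boolean, which is what you are proving. The fix is immediate from what you have already established: the family $\{B_{\hat F}\}$ is \emph{up-directed} (any finite union $B_{\hat F_1}\cup B_{\hat F_2}$ is a finite subset of $A$, hence lies in some $B_{\hat F_3}$), so given finite Boolean subalgebras $B_1,B_2$ containing $a,b$ you should route through a common \emph{superalgebra} $B_3\supseteq B_1\cup B_2$ and apply your $B\subseteq B'$ lemma twice to get $a\wedge_{B_1}b=a\wedge_{B_3}b=a\wedge_{B_2}b$. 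With that correction your argument goes through.
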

\begin{proof}
	First assume $A$ is a Boolean orthoalgebra, say it is the restriction of a Boolean algebra~$B$. Any finite subset $S \subseteq A$ is contained in a finite subalgebra $C$ of $B$ because finitely generated subalgebras of Boolean algebras are finite.
	Let $F$ be the set of atoms of $C$. This is a jointly orthogonal set in $A$. For each $b \in S$ let $E=\{x \in F \mid x \leq b\}$. Now, because $b$ is the join of $E$ in $C$ and $B$, we have $b = \bigoplus E$.
	
	For the converse, suppose that every finite subset $S \subseteq A$ is contained in $\{\bigoplus E \mid E \subseteq F\}$ for some jointly orthogonal set~$F$. First assume that $A$ is finite. In this case, there is a jointly orthogonal family $F$ such that every element of $A$ equals $\bigoplus E$ for some $E \subseteq F$. Clearly $\bigoplus F = 1$, and if $F$ contains $0$, we may remove $0$ from $F$ to obtain a partition of unity of $A$. 
	Since $A=\{\bigoplus E \mid E\subseteq F\}$, it is isomorphic to the orthoalgebra induced by the Boolean algebra $\mathcal{P}(F)$, and hence $A$ is Boolean.
	
	Now consider the case where $A$ is infinite. For each partition of unity $F$ of $A$, let $B_F$ be the subalgebra of $A$ generated by $F$. Explicitly, $B_F = \{ \bigoplus E \mid E \subseteq F\}$, and in particular each $B_F$ is a finite Boolean orthoalgebra. By hypothesis, each finite subset of $A$ is contained in $B_F$ for some partition of unity $F$. If $F_1$ and $F_2$ are partitions of unity, then $B_{F_1}\cup B_{F_2}$ is a finite subset of $A$, hence $B_{F_1}\cup B_{F_2}$ is contained in $B_F$ for some partition of unity $F$. Thus 
	\[
	\{B_F \mid F\mbox{ is a finite partition of unity for }A\}
	\] 
	is an up-directed family of subalgebras of $A$. Furthermore, each finitely generated subalgebra of $A$ is contained in some member of this family. But then the union of this family is all of $A$. Hence $A$ is Boolean.
\end{proof}

When an orthoalgebra $A$ has more than 2 elements, all of its small blocks have 4 elements. In this case small blocks are also known as horizontal summands. By removing a small block from such $A$, we mean removing the two elements of the block that are not $0,1$. Except when $A$ has only small blocks, removing the small blocks from $A$ leaves an orthoalgebra $\tA $ without small blocks, and $A$ can be recovered from $\tA $ by taking the horizontal sum of $\tA $ and an appropriate number of 4-element Boolean algebras. 

We collect in the following remark motivation for why orthoalgebras are a natural choice of ambient structure to reconstruct from Boolean subalgebras.

\begin{remark}
Each element $a$ of an orthoalgebra belongs to the Boolean subalgebra~$x_a$. Thus any orthoalgebra \emph{pastes together} a family of Boolean orthoalgebras. More generally, call a family $\mc{F}$ of Boolean orthoalgebras \emph{compatible}~\cite[1.7]{Czelakowski} if for each $B,C\in\mc{F}$:
\begin{enumerate}
\item $B$ and $C$ have the same $0$ and $1$;
\item If $a\in B\cap C$, then $a'$ in $B$ equals $a'$ in $C$;
\item for $a,b\in B\cap C$, $a\oplus b$ exists in $B$ iff it exists in $C$, and when defined they are equal. 
\end{enumerate}
Any compatible family gives rise to a structure $(A,\oplus,\,',0,1)$ by union. A structure $(A,\oplus,\,',0,1)$ that arises this way is called a \emph{weak orthostructure}, extending~\cite{Czelakowski}. This general setup includes orthoalgebras as well as partial Boolean algebras~\cite{Kochen}. 

A Boolean subalgebra of a weak orthostructure $A$ is a subset $B\subseteq A$ that is closed under $0,1,\,',\oplus$ and forms a Boolean orthoalgebra. One might hope to reconstruct $A$ from its poset $\BSub(A)$ of Boolean subalgebras, but this is impossible: the partially ordered set $\BSub(A)$ in the introduction is not only induced by the orthoalgebra $A$ in the introduction, but it is also isomorphic to $\BSub(D)$ for the weak orthostructure $D$ obtained by taking two $8$-element Boolean algebras that intersect in a 4-element Boolean algebra $x_c$ where $c$ is an atom of one of the $8$-element Boolean algebras and a coatom of the other $8$-element Boolean algebra. This $D$ is not only a weak orthostructure, but is a partial Boolean algebra. This structure $D$ is not an orthoalgebra, and cannot be depicted via a Hasse diagram.
\[\begin{tikzpicture}[inner sep=2pt]
\node (0) at (0,0) {$0$};
\node (1) at (0,3) {$1$};
\node (02) at (3,0) {$0$};
\node (12) at (3,3) {$1$};
\node (a) at (-1,1) {$a$};
\node (b) at (0,1) {$b$};
\node (c) at (1,1) {$c$};
\node (c2) at (2,1) {$c'$};
\node (d) at (3,1) {$d$};
\node (e) at (4,1) {$e$};
\node (a') at (-1,2) {$a'$};
\node (b') at (0,2) {$b'$};
\node (c') at (1,2) {$c'$};
\node (c2') at (2,2) {$c$};
\node (d') at (3,2) {$d'$};
\node (e') at (4,2) {$e'$};
\draw (0) to (a); \draw (0) to (b); \draw (0) to (c); 
\draw (02) to (e); \draw (02) to (d); \draw (02) to (c2); 
\draw (1) to (a'); \draw (1) to (b'); \draw (1) to (c'); 
\draw (12) to (e'); \draw (12) to (d'); \draw (12) to (c2'); 
\draw (a) to (b'); \draw (a) to (c');
\draw (b) to (a'); \draw (b) to (c');
\draw (c) to (a'); \draw (c) to (b'); 
\draw (e) to (d'); \draw (e) to (c2');
\draw (d) to (e'); \draw (d) to (c2');
\draw (c2) to (e'); \draw (c2) to (d'); 
\end{tikzpicture}\]
\end{remark}


\section{Orthodomains and directions}\label{sec:directions}

This section abstracts basic properties of $\BSub(A)$ for orthoalgebras $A$ into a notion of orthodomain. We generalize directions from Boolean domains to directions on orthodomains, and show that an orthoalgebra $A$ can be reconstructed from the directions on its orthodomain $\BSub(A)$. First, an example that exhibits some counterintuitive behavior in $\BSub(A)$. 

\begin{example}\label{ex:frasercube}
The \emph{Fraser cube} is the orthoalgebra $A$ displayed in the diagram. The four vertices of each face are the atoms of a $16$-element Boolean subalgebra of~$A$. 
\[\begin{tikzpicture}[scale=2]
\node (a) at (0,0) {$a$};
\node (b) at (1,0) {$b$};
\node (c) at (.5,.5) {$c$};
\node (d) at (1.5,.5) {$d$};
\node (e) at (0,1) {$e$};
\node (f) at (1,1) {$f$};
\node (g) at (.5,1.5) {$g$};
\node (h) at (1.5,1.5) {$h$};
\draw (a) to (b); \draw (b) to (d); \draw (d) to (c); \draw (c) to (a);
\draw (e) to (f); \draw (f) to (h); \draw (h) to (g); \draw (g) to (e);
\draw (a) to (e); \draw (b) to (f); \draw (c) to (g); \draw (d) to (h);
\end{tikzpicture}\]

Consider the element $a\oplus b$. Its orthocomplement in the Boolean algebra corresponding to the bottom face is $c\oplus d$, and its orthocomplement in the Boolean algebra corresponding to the front face is $e\oplus f$. Thus $c\oplus d=e\oplus f$. Similarly, the intersection of the Boolean subalgebras for the top and bottom of the cube consists of $0,a\oplus b$, $b\oplus d$, $c\oplus d$, $a\oplus c$, and $1$. Thus the intersection of two Boolean subalgebras need not be Boolean. This implies that in $\BSub(A)$, two elements need not have a meet, and two elements that have an upper bound need not have a least upper bound, in contrast to the situation for Boolean domains and posets of Boolean subalgebras of orthomodular posets. 
\end{example}

\begin{definition}
Write $\lessdot$ for the covering relation in a partially ordered set: $x\lessdot z$ means $x<z$ and there is no $y$ with $x<y<z$. 
\end{definition}

\begin{definition}\label{orthodomain}
An \emph{orthodomain} is a partially ordered set $X$ with least element $\bot$ such that:
\begin{enumerate}
\item every directed subset of $X$ has a join;
\item $X$ is atomistic and the atoms are compact;
\item each principal ideal $\down x$ is a Boolean domain;
\item if $x,y$ are distinct atoms and $x,y\lessdot w$, then $x\vee y = w$.
\end{enumerate}
\end{definition}




\begin{lemma}
\label{maximal}
Each element of an orthodomain lies beneath a maximal element.
\end{lemma}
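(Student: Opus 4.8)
The statement is a Zorn's lemma argument, with condition (1) of Definition~\ref{orthodomain} (every directed subset has a join) doing all the work. Fix an orthodomain $X$ and an element $x\in X$. I would apply Zorn's lemma to the principal filter $\up x$, ordered by the restriction of the order of $X$. This set is nonempty since $x\in\up x$, so it suffices to check that every chain in $\up x$ has an upper bound inside $\up x$.

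For the chain condition, let $C\subseteq\up x$ be a chain. If $C=\emptyset$, then $x$ itself is an upper bound. If $C\neq\emptyset$, then $C$ is a directed subset of $X$, so by condition (1) the join $j=\bigvee C$ exists in $X$. Each $c\in C$ satisfies $x\leq c\leq j$, so $j\in\up x$ and $j$ is an upper bound of $C$ in $\up x$. Hence Zorn's lemma yields a maximal element $m$ of $\up x$, and clearly $x\leq m$.

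It remains to note that $m$ is maximal in $X$, not merely in $\up x$: if $m<z$ in $X$, then $x\leq m<z$ forces $z\in\up x$, contradicting maximality of $m$ in $\up x$. So $m$ is a maximal element of $X$ with $x\leq m$, as required.

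\textbf{Main obstacle.} There is essentially none beyond applying Zorn correctly; the only point that needs care is the trivial one of recording that maximality within the up-set $\up x$ upgrades to maximality in $X$ (because $\up x$ is upward closed), and remembering to handle the empty chain so that Zorn's hypotheses are literally met.
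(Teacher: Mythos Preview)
Your proof is correct and takes essentially the same approach as the paper: a one-line application of Zorn's lemma using condition~(1) of Definition~\ref{orthodomain}. The paper phrases it slightly differently (it applies Zorn to the family of directed subsets containing $x$ and then takes the join of a maximal such set), but your version---applying Zorn directly to $\up x$ with chains bounded by their directed joins---is an equally standard variant of the same idea.
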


\begin{proof}
Let $X$ be an orthodomain and $x\in X$. Zorn's lemma produces a maximal directed set containing $x\in X$. Taking the join of this maximal directed set provides a maximal element of $X$ above $x$.
\end{proof}

We next examine condition (4) more closely.

\begin{definition}
Atoms $x,y$ of an orthodomain are called \emph{near} if they are distinct, their join exists and covers $x$ and $y$. Equivalently, by condition (4): $x$ and $y$ are near if they are distinct and have an upper bound of height~$2$.
\end{definition}

The following property, similar to the exchange property of geometry, will be key.

\begin{proposition}[Exchange property] \label{p:exchange}
If $x,y$ are near atoms of an orthodomain with $x\vee y=w$, then there is exactly one atom $z$ that is distinct from $x,y$ and with $z\lessdot w$. Further, any two of $x,y,z$ are near. 
\end{proposition}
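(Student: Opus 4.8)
The plan is to work inside the Boolean domain $\down w$, which by condition (3) of Definition~\ref{orthodomain} is isomorphic to $\Sub(B)$ for some finite Boolean algebra $B$. Since $w$ covers the atoms $x$ and $y$, the element $w$ has height $2$ in $\down w$, so $B$ has $8$ elements (a partition lattice $\down w$ of height $2$ with at least two atoms below $w$ forces $\down w$ to be the subalgebra lattice of the $8$-element Boolean algebra, whose Hasse diagram is the middle picture of Figure~\ref{fig:booleanorthodomains}). First I would verify this: the atoms of $\Sub(B)$ correspond to complementary pairs $\{b,b'\}$ with $b\neq 0,1$, and $\down w$ being a height-$2$ partition lattice with $\geq 2$ coatoms pins down $|B|=8$; its three atoms correspond to the three complementary pairs in the $8$-element Boolean algebra.

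With that in hand, the existence and uniqueness of $z$ is immediate: $\Sub(B)$ for $|B|=8$ has exactly three atoms, $x$, $y$, and a third one $z$, all lying below $w=\top_{\down w}$, and $z\neq x,y$. This $z$ is an atom of $X$ because atoms of $\down w$ are atoms of $X$ (as $\down w$ is a down-set containing $\bot$), and $z\lessdot w$ holds in $X$ since it holds in $\down w$ and $\down w$ is a principal ideal. Any other atom $z'$ of $X$ with $z'\lessdot w$ lies in $\down w$, hence is one of the three atoms, hence equals $z$.

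For the final clause, that any two of $x,y,z$ are near: each pair has the common upper bound $w$, which has height $2$, so by the ``equivalently'' reformulation in the definition of near atoms (which is condition (4) of Definition~\ref{orthodomain}), each pair is near. Concretely, $x\vee y$, $x\vee z$, $y\vee z$ all exist and equal $w$ because in the $8$-element picture any two of the three atoms join to the top, and each such join covers both atoms since $w$ has height $2$.

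I expect the only real obstacle to be the bookkeeping in the first step: confirming that a height-$2$ Boolean domain with at least two coatoms must be exactly the subalgebra lattice of an $8$-element Boolean algebra, and in particular has exactly three atoms. This follows from Theorem~\ref{hui} together with the explicit description of $\Sub(B)$ for small $B$ recalled in Figure~\ref{fig:booleanorthodomains}: a finite partition lattice of height $2$ is the partition lattice of a $3$-element set, which has exactly three atoms and a single top covering all of them. Everything else is then a direct read-off.
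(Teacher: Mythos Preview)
Your proposal is correct and follows essentially the same approach as the paper: both arguments use that $\down w$ is a Boolean domain in which the top covers an atom, forcing $\down w$ to be the subalgebra lattice of an $8$-element Boolean algebra with exactly three atoms, and then invoke condition~(4) of Definition~\ref{orthodomain} to conclude that any two of $x,y,z$ are near. The paper's proof is simply a more compressed version of yours.
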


\begin{proof}
By nearness, $x\vee y=w$ exists and covers $x$ and $y$, and by the definition of an orthodomain, $\down w$ is a Boolean domain. Since the top of this Boolean domain covers an atom in it, the Boolean domain $w$ must be isomorphic to the subalgebra lattice of an $8$-element Boolean algebra. Then $\down w$ must have 3 distinct atoms, so there is a third atom $z$ distinct from $x,y$ with $z\lessdot w$. Then $x,y,z\lessdot w$. It follows from the definition of orthodomain that $w$ is the join of any two of $x,y,z$, hence any two of $x,y,z$ are near. 
\end{proof}


\begin{proposition}\label{prop:orthodomain}
  If $A$ is an orthoalgebra, $\BSub(A)$ is an orthodomain where directed joins are given by unions.
\end{proposition}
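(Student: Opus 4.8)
The plan is to verify the four axioms of Definition~\ref{orthodomain} for $X = \BSub(A)$, together with the claim that directed joins are computed as unions, where $A$ is an arbitrary orthoalgebra. The least element $\bot$ is of course the trivial subalgebra $\{0,1\}$, which is contained in every Boolean subalgebra.

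First I would handle the directed-join claim, since it underlies axiom (1) and is used implicitly throughout. Given a directed family $\mathcal{D}\subseteq\BSub(A)$, I would show that the union $\bigcup\mathcal{D}$ is again a Boolean subalgebra: it contains $0,1$; it is closed under $'$ and (using directedness to place any two elements $a,b$ in a common $B\in\mathcal{D}$) under the partial operation $\oplus$; and it is Boolean because, by Proposition~\ref{prop:booleansubalgebra}, every finite subset $S\subseteq\bigcup\mathcal{D}$ lies in some single $B\in\mathcal{D}$ (again by directedness), hence $S$ is contained in $\{\bigoplus E\mid E\subseteq F\}$ for a jointly orthogonal $F\subseteq B\subseteq\bigcup\mathcal{D}$; applying Proposition~\ref{prop:booleansubalgebra} in the other direction shows $\bigcup\mathcal{D}$ is Boolean. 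Since $\bigcup\mathcal{D}$ is clearly the least upper bound of $\mathcal{D}$ in the inclusion order, this gives axioms (1) and the union description simultaneously.

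Next, axiom (2): the atoms of $\BSub(A)$ are exactly the four-element subalgebras $x_a=\{0,a,a',1\}$ with $a\neq 0,1$, and every Boolean subalgebra $B$ is the union of the directed family of its finite subalgebras, each of which is a join of atoms below it (a finite Boolean algebra being generated by its atoms' two-element ``$x_a$'' pieces) — so $\BSub(A)$ is atomistic. Compactness of each atom $x_a$ follows since $x_a\le\bigvee\mathcal{D}=\bigcup\mathcal{D}$ for directed $\mathcal{D}$ forces $a\in B$ for some $B\in\mathcal{D}$, whence $x_a\subseteq B$. For axiom (3), $\down B$ in $\BSub(A)$ is precisely $\Sub(B)$, the subalgebra lattice of the Boolean algebra $B$, which is a Boolean domain by Theorem~\ref{hui}. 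For axiom (4), suppose $x_a,x_b$ are distinct atoms both covered by $w$; then $x_a,x_b\in\Sub(w)=\down w$, a Boolean domain, so their join $x_a\vee x_b$ in $\Sub(w)$ is a subalgebra of $w$ containing $a$ and $b$ with $a\neq b$ (and $a'\neq b'$, etc.), hence has at least three atoms in $\Sub(w)$, forcing $x_a\vee x_b$ to have height at least $2$; since it lies below $w$ which covers both $x_a$ and $x_b$, we get $x_a\vee x_b=w$. (Here I should be careful to note that the join $x_a\vee x_b$ taken in $\down w$ agrees with the join taken in all of $\BSub(A)$, which holds because $\down w$ is a complete lattice and joins in a principal ideal of a poset agree with global joins when they exist; alternatively just compute the join as the subalgebra generated by $\{a,b\}$.)

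The main obstacle I anticipate is the Booleanness argument for the union $\bigcup\mathcal{D}$ in the directed-join step: one must be sure that a union of Boolean subalgebras, even a directed one, is again \emph{Boolean} and not merely a subalgebra — and the clean way to see this is exactly Proposition~\ref{prop:booleansubalgebra}, which is why that proposition was proved first. A secondary subtlety is bookkeeping about where joins are computed (in $\down w$ versus in $\BSub(A)$) for axiom (4), and the fact — illustrated by Example~\ref{ex:frasercube} — that $\BSub(A)$ need not be a lattice, so one cannot blithely talk about meets or arbitrary joins; all the joins invoked (directed joins, and joins of atoms below a common upper bound inside a Boolean domain $\down w$) do exist, but this should be stated with care.
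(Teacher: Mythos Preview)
Your proposal is essentially correct and tracks the paper's proof closely for axioms (1)--(3), including the use of Proposition~\ref{prop:booleansubalgebra} to verify that a directed union is again Boolean. One point to tighten is your treatment of axiom~(4). Your claim that ``joins in a principal ideal of a poset agree with global joins when they exist'' goes the wrong way for what you need: knowing that the join of $x_a,x_b$ in $\down w$ equals $w$ does \emph{not} by itself show that $w$ is the join in $\BSub(A)$, since an upper bound $v$ of $x_a,x_b$ need not lie in $\down w$ (and indeed Example~\ref{ex:frasercube} shows $\BSub(A)$ is not a lattice). Your alternative, ``just compute the join as the subalgebra generated by $\{a,b\}$,'' is the right idea but must be unpacked: you need to observe that since $w$ is an $8$-element Boolean algebra you may take $a,b$ to be atoms of $w$, hence $a\perp b$ in $A$, so the \emph{orthoalgebra} subalgebra generated by $a,b$ is $\{0,a,a',b,b',a\oplus b,(a\oplus b)',1\}=w$, which is Boolean; then any $v\in\BSub(A)$ containing $a,b$ contains $a\oplus b$ by closure under $\oplus$, hence $w\subseteq v$. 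This is exactly the paper's argument, carried out explicitly.
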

\begin{proof}
Let $S \subseteq \BSub(A)$ be a directed family. By directedness, $B=\bigcup S$ is closed under $\oplus,\,',0,1$, and is hence is a subalgebra. Also by directedness, Proposition~\ref{prop:booleansubalgebra} shows that $B$ is Boolean. Thus $\BSub(A)$ has directed joins given by union. 

The atoms of $\BSub(A)$ are the Boolean subalgebras $x_a$ where $a\neq 0,1$. Since directed joins are given by unions, it follows that the compact elements of $\BSub(A)$ are exactly the finite Boolean subalgebras, and hence every atom is compact. Any $B\in\BSub(A)$ is the union and hence join of the atoms beneath it, making $\BSub(A)$ atomistic. Finally, for a Boolean orthoalgebra~$B$, its Boolean subalgebras are exactly its subalgebras that are Boolean, so (3) holds in $\BSub(A)$. 

For (4), suppose $x_a,x_b \in \BSub(A)$ be distinct atoms with $x_a,x_b\lessdot w$.
Then $0,a,a',b,b',1$ are all distinct. Since $w$ covers an atom and $\down w$ is a Boolean domain, it is an $8$-element Boolean subalgebra of $A$ containing $x_a,x_b$. So $w=\{0,a,a',b,b',c,c',1\}$ is an $8$-element Boolean subalgebra of $A$ for some $c \in A$. One of $a,a'$ is an atom of $w$, as is one of $b,b'$, and one of $c,c'$. We may assume that $a,b,c$ are atoms. Then $a\oplus b = c'$ in $w$, and so $a\oplus b=c'$ in $A$. Then if $v$ is a Boolean subalgebra of $A$ that contains $x_a,x_b$, we have $a,b\in v$, hence $a\oplus b=c'\in v$. Thus $w=\{0,a,a',b,b',c,c',1\}\subseteq v$, and $x_a\vee x_b=w$. Thus $\BSub(A)$ is an orthodomain. 
\end{proof}

We now begin the task of reconstructing an orthoalgebra $A$ from its orthodomain $\BSub(A)$. The idea is to extend the directions used in the Boolean case to the orthoalgebra setting. The reader should consult Definitions~\ref{log} and \ref{logg}. 

\begin{definition}\label{xes}
Let $A$ be an orthoalgebra, $a\in A$. Define the \emph{direction corresponding to $a$} to be the map $d_a\colon \up x_a\to(\BSub(A))^2$ given by 
\[ d_a(y)  =  (\down_y a\cup\up_y a',\down_y a'\cup\up_y a)\text.\]
\end{definition}

We seek an order-theoretic description in terms of an orthodomain $X$ of the mappings $d_a$. These are again called directions, since when restricted to the setting of Boolean domains, these are the directions given in Definition~\ref{logg}. 

\begin{definition}\label{scary}
A~\emph{direction} for a basic element $x_a$ of an orthodomain $X$ is a map $d \colon \up x_a\to X^2$ such that for each $y,z\in\up x_a$:
\begin{enumerate}
\item $d(y)$ is a principal pair for $x_a$ in the Boolean domain $\down y$;
\item if $y\leq z$ and $d(z)=(v,w)$, then $d(y)=(y\wedge v,y\wedge w)$;
\item if $x_a\lessdot y,z$ and $d(y)=(x_a,y)$, $d(z)=(z,x_a)$, then $y\vee z$ exists and $y,z\lessdot y\vee z$.
\end{enumerate}
Write $\oDir(X)$ for the set of directions for basic elements of~$X$. 
\end{definition}

Condition (3) of Definition~\ref{scary} looks strange, but its effect will become clear in the proof of Proposition~\ref{at most 2}.

Note that if $d$ is a direction for some basic element $x$, then $x$ can be determined from the partial mapping $d$ as the least element of its domain. 

\begin{proposition}\label{elway}
Let $d$ be a direction for a basic element $x$ of an orthodomain $X$.
\begin{enumerate}
\item for any $x\leq y\leq z$, the value of $d(y)$ is determined by $d(z)$;
\item for any $x<y\leq z$, the value of $d(z)$ is determined by $d(y)$.
\end{enumerate}
\end{proposition}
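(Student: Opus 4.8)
The plan is to prove the two claims separately, using the two-part structure of the definition of a direction together with Proposition~\ref{prop:two-directions} and Theorem~\ref{thm:booleanobjects} applied inside the Boolean domains $\down z$ and $\down y$.

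For part (1), the point is that $d(y)$ is completely pinned down by $d(z)$ via condition (2) of Definition~\ref{scary}: since $x\le y\le z$, both $y,z\in\up x$, and condition (2) gives directly $d(y)=(y\wedge v,y\wedge w)$ where $d(z)=(v,w)$. So there is essentially nothing to do here beyond unwinding the definition; the meets $y\wedge v$ and $y\wedge w$ exist because $\down z$ is a Boolean domain (hence a lattice) and $y,v,w\in\down z$. I would state this as a one-line verification.

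For part (2), this is the substantive direction and is the analogue of Proposition~\ref{swa}(2) in the Boolean setting — indeed I would reduce to it. Given $x<y\le z$, I want to recover $d(z)$ from $d(y)$. First observe that $d(z)$ is a principal pair for $x$ in the Boolean domain $\down z$, and by Proposition~\ref{prop:two-directions} (transported to the Boolean domain $\down z$, which is isomorphic to $\Sub(B)$ for some Boolean algebra $B$) there are at most two such principal pairs, namely a pair $(u,v)$ and its swap $(v,u)$. So it suffices to show that $d(y)$ determines which of these two $d(z)$ is. Now split into cases on the nature of $x$. If $x=\bot$: then the two principal pairs for $\bot$ in $\down z$ are $(\top_{\down z},\bot)=(z,\bot)$ and $(\bot,z)$, and condition (2) forces $d(y)=(y,\bot)$ in the first case and $d(y)=(\bot,y)$ in the second; since $y\ne\bot$ (as $x<y$), these are distinct, so $d(y)$ determines $d(z)$. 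If $x$ is an atom: then since $x<y$, the Boolean domain $\down y$ has more than two elements, so by Corollary~\ref{two} (or Theorem~\ref{thm:booleanobjects}) the two components of the principal pair $d(y)$ for $x$ in $\down y$ are distinct; writing $d(z)=(u,v)$ and applying condition (2) gives $d(y)=(y\wedge u,y\wedge v)$, whereas $d(z)=(v,u)$ would give $d(y)=(y\wedge v,y\wedge u)$, and since the two components of $d(y)$ differ these two possibilities are distinct. Hence $d(y)$ determines $d(z)$.

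The main obstacle — though it is minor — is making sure that passing from an abstract orthodomain to its sub-Boolean-domains $\down y$ and $\down z$ is legitimate: one must note that $\down z$ is a Boolean domain (condition (3) of Definition~\ref{orthodomain}), that $d$ restricted to $\up_{\down z} x = \up x \cap \down z$ is a direction of the Boolean domain $\down z$ in the sense of Definition~\ref{logg} (conditions (1) and (2) of Definition~\ref{scary} are exactly conditions (1) and (2) there), and that $d(y),d(z)$ lie in $\down z$ so the relevant meets are computed in the lattice $\down z$. Once this bookkeeping is in place, part (2) is literally Proposition~\ref{swa}(2) applied to the Boolean domain $\down z$ with the pair $x<y\le z=\top_{\down z}$, and part (1) is immediate from condition (2) of Definition~\ref{scary}. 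I would therefore phrase the whole proof as: reduce to $\down z$, cite Proposition~\ref{swa}, done.
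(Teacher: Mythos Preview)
Your proposal is correct and follows essentially the same route as the paper. Part~(1) is handled identically. For part~(2), the paper gives a slightly slicker unified argument that avoids your case split on $x=\bot$ versus $x$ an atom: it observes directly that if $d(z)=(v,w)$ then $v\wedge w=x$ by Definition~\ref{pp}, so $y\wedge v=y\wedge w$ would force both to equal $y\wedge v\wedge w=x$, making $d(y)=(x,x)$, which contradicts $d(y)$ being a principal pair in $\down y$ (Remark~\ref{smallB}) since $x<y$. Your reduction to Proposition~\ref{swa}(2) in the Boolean domain $\down z$ is also perfectly valid and arguably cleaner as a citation; the paper simply inlines that argument. One small point: the fact that the components of a principal pair are distinct when the Boolean domain has more than two elements is stated in Remark~\ref{smallB}, not Corollary~\ref{two} or Theorem~\ref{thm:booleanobjects}.
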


\begin{proof}
(1) This is immediate from the definition of direction. For (2), let $v,w\in\down z$ be such that $(v,w)$ and $(w,v)$ are the two principal pairs for $x$ in $\down z$, so $d(z)=(v,w)$ or $d(z)=(w,v)$. In the first case, $d(y)=(y\wedge v,y\wedge w)$, in the second $d(y)=(y\wedge w,y\wedge v)$. 
We claim that $y\wedge v\neq y\wedge w$, so $d(y)$ determines $d(z)$. To see this, Definition~\ref{pp} gives that $v\wedge w = x$. So if $y\wedge v=y\wedge w$, then $x=y\wedge v\wedge w = y\wedge v = y\wedge w$, but $d(y)=(x,x)$ contradicts $d(y)$ being a principal pair for~$x$ in $\down y$ (Remark~\ref{smallB}).
\end{proof}

We will show that the directions of an orthodomain $\BSub(A)$ form an orthoalgebra $A$, but we will see that not all orthodomains are of the form $\BSub(A)$ for an orthoalgebra $A$. So our task in the orthodomain setting is more complex than in the Boolean domain setting. To make our path efficient, we next prove the following result that is valid for any orthodomain. 

\begin{proposition}\label{at most 2}
A basic element $x$ of an orthodomain $X$ has at most two directions. 
\end{proposition}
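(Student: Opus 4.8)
The plan is to pin down a direction for a basic element $x$ by its behaviour just above $x$, and then reduce to the already-understood Boolean case. First I would dispose of the case $x=\top$, where there is nothing to do, and the case where $x$ is not an atom (so $x=\bot$), where $\up x = X$ and any direction $d$ assigns to each atom $y$ a principal pair for $\bot$ in $\down y$; since $\down y$ is a Boolean domain with more than two elements, Corollary~\ref{two} says there are exactly two such principal pairs, namely $(y,\bot)$ and $(\bot,y)$, and compatibility (condition (2) of Definition~\ref{scary}) together with Proposition~\ref{elway}(2) forces a global consistent choice; so there are at most two directions. The substantive case is $x$ an atom, say $x = x_a$ in some $\BSub(A)$-like picture, but working abstractly.

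So assume $x$ is an atom. For each $y$ with $x \lessdot y$, the interval $\down y$ is a Boolean domain whose top covers an atom, hence (as in the proof of Proposition~\ref{p:exchange}) it is the subalgebra lattice of an $8$-element Boolean algebra; thus $x$ has exactly two principal pairs in $\down y$, which are $(x,y)$ and $(y,x)$. Therefore a direction $d$ for $x$ is, on covers $y$ of $x$, nothing but a function from the set $C$ of covers of $x$ to $\{\down,\up\}$, writing $d(y)=\down$ for $d(y)=(x,y)$ and $d(y)=\up$ for $d(y)=(y,x)$. By Proposition~\ref{elway}(2), the value of $d$ on any $z > x$ is determined once we know $d$ on a single cover $y \le z$; and by Proposition~\ref{elway}(1) the values on covers are mutually constrained through larger elements. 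So the whole direction is determined by its restriction to $C$, and the question becomes: how many $\{\down,\up\}$-labellings of $C$ extend to a direction?

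The key step is to show that any two covers $y, y'$ of $x$ are "linked": either $d(y)$ and $d(y')$ are forced to agree (under $\down/\up$) or forced to disagree, consistently. Concretely, I would argue that for covers $y, y'$ of $x$ there is a chain of covers $y = y_0, y_1, \dots, y_n = y'$ in which consecutive ones share a common upper bound of height $3$, and that sharing such an upper bound $w$ rigidly relates $d(y_i)$ and $d(y_{i+1})$: inside $\down w$, a Boolean domain which is the subalgebra lattice of a $16$-element Boolean algebra, the direction $d$ restricted to $\up_{\down w} x$ is a Boolean direction, and by Corollary~\ref{Cor:no_of_Bool_directions} there are exactly two of those for $x$, one the $\down/\up$-swap of the other; so $d(y_{i+1})$ is determined by $d(y_i)$ up to the global swap. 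Here condition (3) of Definition~\ref{scary} is exactly what guarantees that when $d(y) = \down$ and $d(y') = \up$ on two covers, they do have a common upper bound $w$ of height $3$, which is what makes the "linking" argument go through — this is presumably the point the paper flags after Definition~\ref{scary}. Running this over a connecting chain shows that once $d$ is fixed on one cover, it is fixed on all of them, hence globally; so there are at most two directions for $x$.

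\textbf{Main obstacle.} The delicate part is the connectivity/linking argument: showing that the covers of an atom $x$ form a single "block" under the relation "share a height-$3$ upper bound", and that condition (3) of Definition~\ref{scary} suffices to propagate a $\down/\up$ choice across such a shared upper bound without contradiction. One has to be careful that a labelling which is locally consistent on each height-$3$ interval is genuinely realized by a direction — i.e. that the compatibility condition (2) is automatically satisfied — and that no cover is "isolated". I expect the bulk of the work, and the use of the orthodomain axioms (especially atomisticity, compactness of atoms, and the covering axiom (4)), to be concentrated there; the reduction to the $16$-element Boolean domain picture and the appeal to Corollary~\ref{Cor:no_of_Bool_directions} should then be routine.
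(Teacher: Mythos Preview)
Your proposal has the right ingredients but the wrong architecture, and the ``main obstacle'' you flag is in fact a genuine gap that does not close the way you suggest.

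First, your treatment of $x=\bot$ is incomplete. Condition~(2) of Definition~\ref{scary} and Proposition~\ref{elway} only relate $d(y)$ and $d(z)$ when $y\le z$; two distinct atoms are incomparable, so nothing there forces a ``global consistent choice'' of $\down/\up$ across all atoms. You still need condition~(3) for $x=\bot$, exactly as for $x$ an atom; the paper in fact treats both cases uniformly.

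Second, and more importantly, the ``chain of covers sharing height-$3$ upper bounds'' connectivity you propose is not available in an arbitrary orthodomain: two covers of $x$ need not be linked that way. Condition~(3) does \emph{not} give a common upper bound for arbitrary pairs of covers; it only fires when a \emph{single} direction assigns $\down$ to one cover and $\up$ to another. So you cannot use it to build a chain independent of the direction under consideration, and your plan of ``fix $d$ on one cover, then propagate along a chain'' does not get off the ground.

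The paper sidesteps this entirely by arguing by contradiction at the level of \emph{pairs} of directions rather than trying to determine a single one. Suppose $d_1,d_2,d_3$ are three distinct directions for $x$. Pick a maximal $w>x$; by pigeonhole two of them, say $d_1,d_2$, agree at $w$, hence (by condition~(2)) agree on some cover $y\le w$. Since $d_1\neq d_2$, they disagree at some maximal $v$, hence at some cover $z\le v$. Now $d_1(y)=d_2(y)$ but $d_1(z)\neq d_2(z)$, so exactly one of $d_1,d_2$ assigns opposite arrows to $y$ and $z$. Condition~(3) applied to \emph{that} direction yields $u=y\vee z$ covering both; inside the Boolean domain $\down u$ the restrictions of $d_1,d_2$ are Boolean directions for $x$ agreeing at $y$, hence (Proposition~\ref{swa}) equal on $\down u$, contradicting $d_1(z)\neq d_2(z)$. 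No chain, no connectivity hypothesis, no worry about isolated covers. This is the move you are missing.
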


\begin{proof}
If $x$ is maximal, $(x,x)$ is the only principal pair for $x$ in $\down x$, so there is only one direction for $x$ in $X$. Suppose that $x$ is not maximal. By Lemma~\ref{maximal} and the definition of a direction, any direction $d$ for $x$ is determined by its value on the maximal elements $w > x$. For any such~$w$, the value $d(w)$ is a principal pair for $x$ in $\down w$, so by Corollary~\ref{two} can take two values. 

Suppose there are three distinct directions $d_1,d_2,d_3$ for $x$. Choose any maximal $w>x$. Then two of $d_1,d_2,d_3$ must agree at $w$, say $d_1$ and $d_2$. Since $d_1\neq d_2$, there is a maximal $v$ with $d_1(v)\neq d_2(v)$. Choose $y,z$ with $x\lessdot y\leq w$ and $x\lessdot z\leq v$. Since $d_1(w)=d_2(w)$ and $y\leq w$, we have $d_1(y)=d_2(y)$, and since $d_1(v)\neq d_2(v)$, we have $d_1(z)\neq d_2(z)$. 

As $x$ is basic and $x\lessdot y$, either $x=\bot$ and $y$ is an atom, or $x$ is an atom and $y$ has height~$2$. 
In either case the principal pairs for $x$ in $\down y$ are $(x,y)$ and $(y,x)$, and similarly the principal pairs for $x$ in $\down z$ are $(x,z)$ and $(z,x)$.
Suppose that $d_1(y)=d_2(y)=(x,y)$.
As $d_1(z)\ne d_2(z)$, one of them is $(x,z)$ and the other is $(z,x)$.
We apply condition (3) of Definition~\ref{scary} and find an upper bound $u=y\vee z$ of $y,z$.
According to Proposition~\ref{swa}, $d_1,d_2$ are uniquely determined on $\down u$ by $d_1(y)=d_2(y)$, a contradiction with $d_1(z)\ne d_2(z)$, $z\in\down u$.
The case $d_1(y)=d_2(y)=(y,x)$ is excluded analogously with the role of $y,z$ interchanged in (3) of Definition~\ref{scary}.
This contradiction shows $x$ has at most two directions. 
\end{proof}

Now we begin putting structure on the set of directions of an orthodomain.

\begin{proposition}\label{complement}
Let $d$ be a direction for a basic element $x$ of an orthodomain $X$. There is a direction $d'$ for $x$ given by $d'(w)=(z,y)$ if $d(w)=(y,z)$. Further, there are directions $0$ and $1$ for the basic element $\bot\in X$, given by
\[ 0(w) = (\bot,w)\quad\quad \mbox{ and }\quad\quad1(w)=(w,\bot)\text. \]
\end{proposition}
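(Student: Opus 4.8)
The plan is to verify, for each claimed map, that it satisfies the three conditions of Definition~\ref{scary}. The complement $d'$ is the easy part: since $d'(w)$ is obtained from $d(w)=(y,z)$ by transposition, condition (1) holds because swapping the components of a principal pair for $x$ in $\down w$ again yields a principal pair for $x$ in $\down w$ (the three cases in Definition~\ref{pp} are symmetric in the two coordinates, recalling that $y\wedge z = z\wedge y = x$); condition (2) holds because $d'(y) = (y\wedge w, y\wedge v)$ precisely when $d(y) = (y\wedge v, y\wedge w)$, i.e. transposition commutes with the coordinatewise meet with $y$; and condition (3) for $d'$ is literally condition (3) for $d$ with the roles of $y$ and $z$ (equivalently, of the two coordinates) interchanged, which is exactly the symmetric form of the hypothesis. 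I would also note $d'$ has the same domain $\up x$ as $d$, so it is again a direction \emph{for} $x$.

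For the two constant-shaped maps $0$ and $1$ on $\up\bot = X$: here $x = \bot$, and for any $w \in X = \up\bot$ the principal pairs for $\bot$ in $\down w$ are exactly $(\bot, w)$ and $(w, \bot)$ (Definition~\ref{pp}, case (1) with $z$ basic being $\bot$, and case (2)), using that $\bot$ is basic and $\bot\vee w = w$, $\bot\wedge w = \bot$; so condition (1) holds for both $0$ and $1$. For condition (2), if $y \leq z$ then $0(z) = (\bot, z)$ and $y\wedge\bot = \bot$, $y\wedge z = y$, so $(y\wedge\bot, y\wedge z) = (\bot, y) = 0(y)$, as required; the computation for $1$ is the mirror image. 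Condition (3) concerns the situation $\bot \lessdot y, z$ with $0(y) = (\bot, y)$ and $0(z) = (z, \bot)$; but $0(z) = (\bot, z)$ always, and since $z$ is an atom (being a cover of $\bot$) with more than, well, $z \neq \bot$, the two components $\bot$ and $z$ are distinct, so $0(z) = (z,\bot)$ is impossible and the hypothesis of (3) is vacuous. The same vacuity argument handles (3) for $1$. Hence $0$ and $1$ are directions for $\bot$.

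The only genuinely substantive point — and the step I expect to need the most care — is condition (3), since it is the one truly new axiom of Definition~\ref{scary} relative to the Boolean case. For $d'$ it is handled by the symmetry observation above, but one must be careful to state precisely that condition (3) of Definition~\ref{scary}, applied to $d$, already bakes in both orientations: its hypothesis names $y$ with $d(y) = (x_a, y)$ and $z$ with $d(z) = (z, x_a)$, and swapping which cover is called $y$ and which is called $z$ gives exactly the statement needed for $d'$. For $0$ and $1$ the hypothesis of (3) is never met, as explained, so there is nothing to check. I would present the verification compactly as three short paragraphs, one per condition, treating $d'$, $0$, and $1$ in parallel within each.
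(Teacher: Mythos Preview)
Your verification is correct. The paper actually states this proposition without proof, treating the claims as immediate from Definition~\ref{scary}; your argument spells out exactly the checks one would make, and each step is sound---in particular the symmetry observation for condition~(3) of $d'$ and the vacuity argument for condition~(3) of $0$ and $1$ are the right ideas.
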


\begin{proposition}\label{enough}
For an orthodomain with no basic maximal elements, the following are equivalent:
\begin{enumerate}
\item each basic element has a direction;
\item each basic element has exactly two directions. 
\end{enumerate}
\end{proposition}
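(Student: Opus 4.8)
The implication $(2)\Rightarrow(1)$ is trivial, so the content is $(1)\Rightarrow(2)$; combined with Proposition~\ref{at most 2}, which already gives ``at most two'', it suffices to show that under hypothesis~(1) a basic element cannot have exactly one direction. First I would dispose of $x=\bot$ directly: Proposition~\ref{complement} exhibits the two directions $0$ and $1$ for $\bot$, and since $X$ has no basic maximal element we have $\bot\neq\top$, so these are distinct (their values $(\bot,w)$ and $(w,\bot)$ differ at any $w>\bot$). So assume $x$ is an atom. By hypothesis~(1) there is at least one direction $d$ for $x$, and Proposition~\ref{complement} produces $d'$; the whole problem reduces to showing $d\neq d'$, i.e.\ that $d(w)$ is never of the form $(y,z)$ with $y=z$. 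If $d(w)=(y,y)$ then by Definition~\ref{pp} we would have $y\wedge y=y=x$, so $d(w)=(x,x)$; but a principal pair for the atom $x$ in $\down w$ has the form required by Definition~\ref{pp}, and $(x,x)$ fails to be such a pair unless $x=\top$ in $\down w$, i.e.\ unless $w=x$ (using Remark~\ref{smallB}, which records that a principal pair $(y,z)$ has $y=z$ only in the small case $y=z=\top$). Since $x$ is not maximal there is some $w>x$, and there $d(w)\neq(x,x)$, hence $d(w)$ has distinct components and $d'(w)\neq d(w)$. Therefore $d\neq d'$, and $x$ has two directions.

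The only subtlety is making sure $d'$ as defined in Proposition~\ref{complement} really is a direction for $x$, but that is exactly the content of Proposition~\ref{complement}, which I may invoke. One should also double-check that $d$ and $d'$ are literally all; but that is Proposition~\ref{at most 2}. So the argument is: at most two (Proposition~\ref{at most 2}), and at least two because $d\neq d'$ whenever $x$ is non-maximal, the non-maximality being guaranteed by the standing hypothesis that $X$ has no basic maximal elements.

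\textbf{Main obstacle.} There is essentially no obstacle: the proposition is a bookkeeping consequence of Proposition~\ref{at most 2}, Proposition~\ref{complement}, and Remark~\ref{smallB}. The one place to be careful is the edge case analysis showing $d(w)\neq d'(w)$ at a well-chosen $w$ — one must pick $w$ with $x<w$ (available since $x$ is not maximal) and then argue that the two components of the principal pair $d(w)$ for $x$ in $\down w$ are distinct. This distinctness is precisely the statement, drawn from Definition~\ref{pp} and Remark~\ref{smallB}, that a principal pair has equal components only in the degenerate situation $y=z=\top$ of a two-element Boolean domain, which is excluded here because $\down w$ properly contains the atom $x$.
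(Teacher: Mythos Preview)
Your proposal is correct and takes essentially the same approach as the paper: use Proposition~\ref{complement} to produce $d'$ from a given direction $d$, then show $d\neq d'$ by exhibiting some $w$ in the domain where the two components of the principal pair $d(w)$ differ, invoking Remark~\ref{smallB} and the non-maximality of $x$. The paper's version is slightly terser---it picks a \emph{maximal} $w\geq x$ and phrases the contradiction as ``$d(w)=d'(w)$ forces $w$ basic, contradicting that $w$ is a basic maximal element''---whereas you pick any $w>x$ and argue directly that $\down w$ has more than two elements; both arguments are equivalent.
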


\begin{proof} 
The direction (2) $\Rightarrow$ (1) is trivial. For the converse, let $d$ be a direction for~$x$. Then so is $d'$ given by Proposition~\ref{complement}. If $d=d'$, then for a maximal element $w$ above $x$ we have that $d(w)=d'(w)$, so $w$ is basic, contrary to our assumptions. Thus each basic element has at least two directions, so by Proposition~\ref{at most 2} has exactly two directions. 
\end{proof}

\begin{definition}\label{proper}
Call an orthodomain \emph{proper} if it has no maximal elements that are basic. 
Say it \emph{has enough directions} if it is proper and each basic element has a direction. 
\end{definition}
	It directly follows that $A$ is proper if and only if $\BSub(A)$ is proper. Note that the definition of an orthoalgebra with enough directions is somewhat analogous to that of spatial frames, which are defined through the existence of a sufficient supply of points. 

\begin{definition}\label{perp}
For $X$ an orthodomain with enough directions, let $\oplus$ be a partial binary operation on $\oDir(X)$ defined by the following three cases. For each direction $d$ set
\begin{enumerate}
\item $d\oplus 0 = d = 0\oplus d$;
\item $d\oplus d'=1$.
\end{enumerate}
For $d$ a direction for $x$ and $e$ a direction for $y$ with $x,y$ near and $z$ the third atom beneath $x\vee y=w$, then $d\oplus e$ is defined if $d(w)=(x,w)$ and $e(w)=(y,w)$, and in this case 
\begin{enumerate}
\setcounter{enumi}{2}
\item $d\oplus e$ is the direction for $z$ with $(d\oplus e)(w)=(w,z)$.
\end{enumerate}
We will write $d\perp e$ and say $d$ \emph{is orthogonal to} $e$ if $d\oplus e$ is defined. 
\end{definition}

\begin{theorem}\label{hhh}
Let $A$ be a proper orthoalgebra. Then the orthodomain $\BSub(A)$ has enough directions and the map $\xi:A\to\oDir(\BSub(A))$, $a\mapsto d_a$ is an orthoalgebra isomorphism.
\end{theorem}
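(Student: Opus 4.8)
The plan is to prove the two assertions of Theorem~\ref{hhh} in turn: first that $\BSub(A)$ has enough directions, and then that $\xi\colon a\mapsto d_a$ is an orthoalgebra isomorphism. For the first part, by Proposition~\ref{prop:orthodomain} we already know $\BSub(A)$ is an orthodomain, and since $A$ is proper, $\BSub(A)$ is proper (as noted after Definition~\ref{proper}); by Proposition~\ref{enough} it therefore suffices to check that each basic element of $\BSub(A)$ has a direction. But every basic element of $\BSub(A)$ is of the form $x_a$ for some $a\in A$ (including $x_0 = x_1 = \bot$), and I would verify directly that the map $d_a$ of Definition~\ref{xes} satisfies the three conditions of Definition~\ref{scary}. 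Conditions~(1) and~(2) are essentially the same verifications as in the Boolean case (cf.\ the discussion after Definition~\ref{log} and Proposition~\ref{swa}(3)), using the expansion $d_a(y)=\bigl(y\wedge(\down a\cup\up a'),\,y\wedge(\down a'\cup\up a)\bigr)$. Condition~(3) is the genuinely new point: if $x_a\lessdot y,z$ with $d_a(y)=(x_a,y)$ and $d_a(z)=(z,x_a)$, then in the $8$-element block $y$ the element $a$ embeds as an atom while in the $8$-element block $z$ it embeds as a coatom; I must produce an upper bound $u$ of $y$ and $z$ covering both. The candidate is the block generated by $y\cup z$; one shows it has $16$ elements by exhibiting its atoms ($a$, together with the two atoms of $z$ orthogonal to $a$, and their combinations), and that $y,z\lessdot u$ because both are $8$-element subalgebras containing $x_a$ and there is no intermediate block.

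For the second part, that $\xi$ is an orthoalgebra isomorphism, I would proceed in the standard stages. \textbf{Injectivity:} if $d_a = d_b$ then in particular $d_a(\top_A)=d_b(\top_A)$ where $\top_A = A$ if $A$ is a block, or more carefully one restricts to a common block $w$ containing both $a$ and $b$ — using that any two elements of an orthoalgebra lie in a common block (each lies in some block, and... actually one uses that $a$ and $b$ generate a Boolean subalgebra only when orthogonal, so instead I would argue: $x_a = x_b$ since $d_a,d_b$ have the same domain, hence $b\in\{0,a,a',1\}$; then $d_a = d_b$ forces $b=a$ rather than $b=a'$ by evaluating at a non-basic block above $x_a$, which exists by properness). \textbf{Surjectivity:} given a direction $d$ for a basic element $x$ of $\BSub(A)$, write $x = x_a$; by Proposition~\ref{enough} there are exactly two directions for $x_a$, namely $d_a$ and $d_{a'}$, and by Proposition~\ref{complement} these are swapped by $\,'$; since $d$ is one of the (at most, hence exactly) two directions for $x_a$, it equals $d_a$ or $d_{a'}$, so $d$ is in the image of $\xi$. \textbf{Preservation of $\,'$, $0$, $1$:} $d_{a'} = (d_a)'$ is immediate from the formula in Proposition~\ref{complement} and the defining formula for $d_a$; $d_0 = 0$ and $d_1 = 1$ by inspection against Proposition~\ref{complement}.

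\textbf{Preservation and reflection of $\oplus$} is the heart of the matter and the step I expect to be the main obstacle. I must show $d_a\oplus d_b$ is defined in $\oDir(\BSub(A))$ exactly when $a\oplus b$ is defined in $A$, and that then $d_a\oplus d_b = d_{a\oplus b}$. The cases $b\in\{0,a'\}$ are handled by the already-established preservation of $0$ and $\,'$. Otherwise, suppose $a\oplus b = c$ is defined with $a,b,c\notin\{0,1\}$ and $a\neq b'$: then $\{0,a,a',b,b',c,c',1\}$ is an $8$-element Boolean subalgebra $w$ of $A$, so $x_a, x_b \lessdot w$, i.e.\ $x_a$ and $x_b$ are near with $x_a\vee x_b = w$ and third atom $x_c$ (note $c' = a\oplus b$ is the coatom, so $x_{c'} = x_c$); evaluating, $d_a(w) = (x_a, w)$ and $d_b(w) = (x_b, w)$, so by Definition~\ref{perp}(3) $d_a\oplus d_b$ is defined and is the direction for $x_c$ with value $(w, x_c)$ at $w$ — which is precisely $d_{c'} = d_{a\oplus b}$ since $a\oplus b = c'$ is the coatom of $w$. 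Conversely, if $d_a\oplus d_b$ is defined via case~(3), then $x_a, x_b$ are near, so they lie in a common $8$-element block $w$, and $d_a(w)=(x_a,w)$, $d_b(w)=(x_b,w)$ say that $a$ and $b$ are both atoms of $w$; since $w$ is Boolean and $a\neq b$ with $a,b$ atoms, $a\perp b$ and $a\oplus b$ is defined in $w$, hence in $A$. Throughout one must keep careful track of which of $a,a'$ is the atom of a given block — this bookkeeping, together with the verification of Definition~\ref{scary}(3) in Part~1, is where the real work lies; the rest is routine once the near/block dictionary is set up.
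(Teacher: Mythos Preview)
Your overall approach is the same as the paper's, step for step. The one substantive slip is in verifying condition~(3) of Definition~\ref{scary}: the atoms of the $16$-element Boolean subalgebra you need are not ``$a$, together with the two atoms of $z$ orthogonal to $a$.'' Since $a$ is a coatom of $z$, the atoms $c_1,c_2$ of $z$ other than $a'$ satisfy $c_1\oplus c_2=a$ and hence lie \emph{below} $a$, not orthogonal to it; and $a$ itself is not an atom of the $16$-element algebra. The correct four atoms are the two atoms $b_1,b_2$ of $y$ other than $a$ (so $b_1\oplus b_2=a'$) together with $c_1,c_2$: then $(b_1\oplus b_2)\oplus(c_1\oplus c_2)=a'\oplus a=1$ exhibits them as a partition of unity in $A$, so they generate a $16$-element Boolean subalgebra $u$ with $y,z\lessdot u=y\vee z$. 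This is exactly what the paper does.

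There is also a harmless typo in your $\oplus$-preservation paragraph: having set $c=a\oplus b$, you later write ``$a\oplus b=c'$''. The direction for $x_c$ taking value $(w,x_c)$ at $w$ is $d_c$, not $d_{c'}$, since $c=a\oplus b$ is a coatom of $w$ and therefore $\down_w c\cup\up_w c'=w$. With these two corrections your argument is the paper's.
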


\begin{proof}
By Proposition~\ref{prop:orthodomain}, $X=\BSub(A)$ is an orthodomain. Let $a\in A$.
%
Let us verify the three conditions of Definition~\ref{scary} for the direction $d_a$ given by Definition~\ref{xes}.
Condition~(1) follows because $d_a(y)$ is a principal pair in $\down y$ for~$x_a$. Condition~(2) follows by construction of~$d_a$. For condition~(3), consider first the case $x_a=\bot$, where $a$ is either $0$ or $1$. If $a=0$ then $d_a(y)=(\bot,y)$ for all $y$, and if $a=1$ then $d_a(y)=(y,\bot)$ for all $y$, so (3) holds vacuously. Suppose $x_a$ is an atom of $X$ with $x_a\lessdot y,z$ and that $d_a(y)=(x_a,y)$ and $d_a(z)=(z,x_a)$. This means that $y,z$ are $8$-element Boolean algebras, that $a$ is an atom in $y$, and $a$ is a coatom in $z$. Let $b_1,b_2$ be the atoms of $y$ distinct from $a$ and $c_1,c_2$ be the atoms of $z$ distinct from $a'$. We depict $y$ below on the left, and $z$ on the right.


\[\begin{tikzpicture}[inner sep=1pt,yscale=.9]
\node (0) at (0,0) {$0$};
\node (a) at (1,.9) {$a$};
\node (b1) at (0,.9) {$b_1$};
\node (b2) at (-1,.9) {$b_2$};
\node (b2') at (-1,2.1) {$b_2'$};
\node (b1') at (0,2.1) {$b_1'$};
\node (a') at (1,2.1) {$a'$};
\node (1) at (0,3) {$1$};
\draw (0) to (a); \draw (0) to (b1); \draw (0) to (b2);
\draw (1) to (a'); \draw (1) to (b1'); \draw (1) to (b2');
\draw (a) to (b2'); \draw (a) to (b1');
\draw (b1) to (b2'); \draw (b1) to (a');
\draw (b2) to (b1'); \draw (b2) to (a');
\end{tikzpicture}
\qquad\qquad\qquad
\begin{tikzpicture}[inner sep=1pt,yscale=.9]
\node (0) at (0,0) {$0$};
\node (a) at (-1,.9) {$a'$};
\node (b1) at (0,.9) {$c_1$};
\node (b2) at (1,.9) {$c_2$};
\node (b2') at (1,2.1) {$c_2'$};
\node (b1') at (0,2.1) {$c_1'$};
\node (a') at (-1,2.1) {$a$};
\node (1) at (0,3) {$1$};
\draw (0) to (a); \draw (0) to (b1); \draw (0) to (b2);
\draw (1) to (a'); \draw (1) to (b1'); \draw (1) to (b2');
\draw (a) to (b2'); \draw (a) to (b1');
\draw (b1) to (b2'); \draw (b1) to (a');
\draw (b2) to (b1'); \draw (b2) to (a');
\end{tikzpicture}\]

Then $c_1\oplus c_2=a$ and $b_1\oplus b_2=a'$ in $A$. Therefore $b_1,b_2,c_2,c_2$ are the atoms of a $16$-element Boolean subalgebra $u$ of $A$. Clearly $u=y\vee z$ and $y,z\lessdot u$, establishing condition~(3). Thus $d_a$ is a direction. Since this holds for each $a\in A$, the orthodomain $X$ has enough directions. 

Every basic element of $X$ is of the form $x_a$ and has $2$ directions. Since $d_a$ and $d_{a'}$ are directions for $x_a$, the map $\xi$ is surjective. If $\xi(a)=\xi(b)$, then since $d_a$ is a direction given by $a$ and $d_b$ is a direction given by $b$, we must have that $b=a$ or $b=a'$. But $d_a\neq d_{a'}$, so $\xi$ is injective. 

To show that $\xi$ is an isomorphism, it is easily seen that $\xi$ maps $0,1$ of $A$ to the directions $0,1$ of $X$, and that $\xi(a')=\xi(a)'$. It remains to show that $a\perp b$ if and only if $\xi(a)\perp\xi(b)$ and that then $\xi(a\oplus b) = \xi(a)\oplus\xi(b)$. Consider the possibilities to have $a\perp b$. For any $a$ we have $a\perp 0$, $\xi(a)\perp\xi(0)$, and $\xi(a\oplus 0)=\xi(a)=\xi(a)\oplus\xi(0)$. For any $a$ we have $a\perp a'$ and $a\oplus a'=1$. Since $\xi(a')=\xi(a)'$, then $\xi(a)\perp\xi(a')$ and $\xi(a\oplus a')=1=\xi(a)\oplus\xi(a')$. 

The remaining possibility to have $a\perp b$ is when $a,b$ are distinct atoms of an $8$-element Boolean subalgebra $w$ of $A$. In this case, $x_a$ and $x_b$ are basic elements that are near, $x_a\vee x_b=w$, $z=\{0,a\oplus b,(a\oplus b)',1\}$ is the third atom beneath $w$, and $d_a(w)=(x_a,w)$, $d_b(w)=(x_b,w)$. Thus $\xi(a)\perp\xi(b)$, and as $d_a\oplus d_b$ is the direction for $z$ with $(d_a\oplus d_b)(w)=(w,z)$, we have $\xi(a)\oplus\xi(b)=\xi(a\oplus b)$. Conversely, suppose $\xi(a)\perp\xi(b)$ via condition~(3) of Definition~\ref{perp}. Since $d_a$ is a direction for $x_a$ and $d_b$ is a direction for $x_b$, this condition assumes $x_a,x_b$ are near and generate an $8$-element Boolean subalgebra of~$A$. Further, since $d_a(w)=(x_a,w)$ and $d_b(w)=(x_b,w)$, we have that $a,b$ are atoms of $w$, hence $a\perp b$ in $A$. Finally, $d_a\oplus d_b$ is the direction for the third atom $x_{a\oplus b}$ beneath $w$ with $(d_a\oplus d_b)(w)=(w,x_{a\oplus b})$, so $d_a\oplus d_b = d_{a\oplus b}$. 
\end{proof}

\begin{remark}\label{kino}
The previous theorem achieves one of our primary aims: a means to reconstruct an orthoalgebra $A$ from its poset of Boolean subalgebras. It only applies if $A$ is a proper orthoalgebra, but, with one exception, we can still recover $A$ from $\BSub(A)$ without this restriction. The exception is when $\BSub(A)$ has a single element, which occurs when $A$ is a 1-element orthoalgebra and also when $A$ is a 2-element orthoalgebra. In these cases it is clearly impossible to recover $A$ from $\BSub(A)$. 

Suppose then that $A$ has more than two elements. If it does have small blocks, these appear in $\BSub(A)$ as maximal atoms. Provided $A$ has a block that is not small, removing these blocks from $A$ yields an orthoalgebra $\tA $, and $\BSub(\tA )$ is obtained from $\BSub(A)$ from removing maximal atoms. Since we can reconstruct $\tA $ as $\oDir(\BSub(\tA ))$, we can then reconstruct $A$ by adding a number of horizontal summands equal to the number of maximal atoms of $\BSub(A)$. If $A$ consists of only small blocks, it is determined by the cardinality of the set of its maximal atoms. 
\end{remark}

\section{Characterizing orthodomains of the form $\BSub(A)$ }\label{sec:orthodomains}

In this section, we show, for any orthodomain $X$ with enough directions, that $\oDir(X)$ is an orthoalgebra, and characterize those orthodomains that are of the form $\BSub(A)$ for some orthoalgebra~$A$.  

\begin{definition}
For an orthodomain $X$, let $X^*$ be the set of elements of $X$ of height $3$ or less. A \emph{shadow} of $X$ is a nonempty subset $S\subseteq X^*$ satisfying:
\begin{enumerate}
\item $S$ is a downset of $X^*$;
\item $S$ is closed under existing joins in $X^*$.
\end{enumerate}
\end{definition}

Note, the second condition means that if $T\subseteq S$ and there is $w\in X^*$ that is the least upper bound of $T$ in $X$, which will imply that $w$ is also the least upper bound of $T$ in $X^*$, then $w\in S$. 

\begin{proposition} \label{ded}
Let $X$ be an orthodomain, $S$ be a shadow of $X$, $x$ be a basic element of $X$ with $x\in S$, and $d$ be a direction of $X$ for~$x$. Then:
\begin{enumerate}
\item $S$ is an orthodomain;
\item the restriction $d|_S$ of $d$ to $\up x\cap S$ is a direction of $S$.
\end{enumerate}
Hence if $X$ has enough directions and $S$ has no maximal elements which are basic, 
then $S$ has enough directions. 
\end{proposition}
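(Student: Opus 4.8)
The plan is to verify the defining conditions of an orthodomain for $S$, then check the two conditions of a direction for $d|_S$, and finally deduce the "enough directions" statement.

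\medskip

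\textbf{Proof plan.} First I would establish (1), that $S$ is an orthodomain. I treat the four conditions of Definition~\ref{orthodomain} in turn. For directed joins: if $D\subseteq S$ is directed, then since $D\subseteq X^*$ consists of elements of height at most $3$ and $S$ is nonempty with a least element (it is a downset containing $\bot$, because every nonempty downset of $X^*$ contains $\bot$), directedness of $D$ inside elements of bounded height forces $D$ to be essentially finite — more carefully, a directed subset of a poset in which every element has height $\le 3$ has a maximum, since a chain of length $\le 4$ suffices to bound everything and directedness then yields a top element. Hence $\bigvee D\in D\subseteq S$. So (1)(1) holds, and in fact $S$ has \emph{all} existing joins from $X^*$ of its directed subsets trivially. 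For (1)(2), atomisticity: the atoms of $X$ lying in $S$ are atoms of $S$ (the order on $S$ is inherited), each such atom is compact in $S$ because it is compact in $X$ and $S$-joins are among $X$-joins, and every $s\in S$ is the join in $X$ of the atoms below it, all of which lie in $S$ as $S$ is a downset; this join is then also the join in $S$. For (1)(3): for $s\in S$, the principal ideal $\down^S s$ equals $\down^X s$ since $S$ is a downset, and $\down^X s$ is a Boolean domain by assumption on $X$. For (1)(4): if $x,y$ are distinct atoms of $S$ with $x,y\lessdot^S w$ in $S$, then also $x,y\lessdot^X w$ (a covering in a downset, with all three elements present, is a covering in the ambient poset), so $x\vee^X y=w$ by condition (4) for $X$; and this $X$-join is the $S$-join since $w\in S$.

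\medskip

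Next I would prove (2), that $d|_S$ is a direction of $S$ for $x$. Its domain is $\up^S x=\up^X x\cap S$, mapping into $S^2$: this needs checking, namely that for $y\in\up^X x\cap S$, the principal pair $d(y)=(v,w)$ has both components in $S$. Each component is an element of $\down^X y$, and since $S$ is a downset and $y\in S$, both $v,w\in S$. Condition (1) of Definition~\ref{logg}/\ref{scary}: $d(y)$ is a principal pair for $x$ in $\down^X y=\down^S y$, and since the notion of principal pair (Definition~\ref{pp}) is computed entirely within the Boolean domain $\down y$, which is unchanged, $d(y)$ is a principal pair for $x$ in $\down^S y$. Condition (2): for $y\le z$ in $\up^S x$, the meets $y\wedge v$ and $y\wedge w$ are taken in $\down^X z$, hence in $\down^S z$, so the identity $d(y)=(y\wedge v, y\wedge w)$ transfers verbatim. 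Condition (3) (the orthodomain-direction condition of Definition~\ref{scary}): suppose $x\lessdot^S y, z$ with $d|_S(y)=(x,y)$, $d|_S(z)=(z,x)$; then $x\lessdot^X y,z$ and $d(y)=(x,y)$, $d(z)=(z,x)$, so condition (3) for $d$ in $X$ gives that $y\vee^X z$ exists with $y,z\lessdot^X y\vee^X z$. But $y,z$ have height $2$, so $y\vee^X z$ has height $3$, hence lies in $X^*$; moreover it is the least upper bound in $X^*$ of $\{y,z\}$, so by closure of $S$ under existing joins in $X^*$ (the second shadow condition), $y\vee^X z\in S$, and it is then $y\vee^S z$ with $y,z\lessdot^S y\vee^S z$.

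\medskip

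Finally, for the concluding clause: assume $X$ has enough directions and $S$ has no basic maximal elements. To show $S$ has enough directions, by Definition~\ref{proper}/\ref{enough} it suffices (given properness, which is exactly the "no basic maximal elements" hypothesis) to produce a direction in $S$ for each basic element $x$ of $S$. Such an $x$ lies in $S$ and is basic in $X$; since $X$ has enough directions, there is a direction $d$ of $X$ for $x$, and part (2) shows $d|_S$ is a direction of $S$ for $x$. Hence every basic element of $S$ has a direction, so by Proposition~\ref{enough} applied to $S$ (which is legitimate since $S$ is proper) every basic element has exactly two, and $S$ has enough directions.

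\medskip

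\textbf{Main obstacle.} The only genuinely delicate point is condition (3) of Definition~\ref{scary} for $d|_S$: one must argue that the relevant join $y\vee z$ produced by condition (3) in $X$ actually survives into $S$. This is where the second shadow axiom (closure under existing joins in $X^*$) is essential, together with the height bookkeeping that places $y\vee z$ in $X^*$. Everything else is a routine transfer of order-theoretic data across a downset, using that downsets inherit covers, principal ideals, atoms, and compactness, and that the combinatorial notion of "principal pair" depends only on the unchanged principal ideals.
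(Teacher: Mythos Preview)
Your proposal is correct and follows essentially the same approach as the paper: finite height of $S\subseteq X^*$ handles directed joins, the downset property transfers atomisticity and Boolean-domain ideals, and the key point for condition~(3) of Definition~\ref{scary} is exactly that the join $y\vee z$ produced in $X$ lands in $X^*$ and hence in $S$ by the shadow closure axiom. One small inaccuracy: you write ``$y,z$ have height $2$'', but when $x=\bot$ the covers $y,z$ are atoms of height $1$; the paper phrases this more carefully as ``$w$ has height at most $3$'', which covers both cases without affecting your argument.
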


\begin{proof}
Since $X^*$, and hence $S$, has finite height, every directed set has a maximal element and hence a join, and each element is compact. Since $X$ is atomistic and $S$ is a downset, it is atomistic. Since $S$ is a downset of $X$, for each $s\in S$ the ideal $\down s$ is a Boolean domain. Finally, if $x,y$ are atoms of $S$ and $x,y\lessdot w$, then $x\vee y = w$ in $X$, hence $x\vee y = w$ in $S$ as well. Thus $S$ is an orthodomain, establishing part (1). 

To see part~(2) we verify the three conditions of Definition~\ref{scary}. The first two are trivial consequences of restricting. 
For the third, suppose there are $x\lessdot y,z$ with $y,z\in S$ and $d(y)=(x,y)$, $d(z)=(z,x)$. Since $d$ is a direction of $X$, then $y\vee z=w$ exists in $X$ and $y,z\lessdot w$. Since $x$ is basic and $x\lessdot y,z\lessdot w$ then $w$ has height at most 3. So $w\in X^*$ and $w$ is the join of $y,z$ in~$X^*$. Since $S$ is a shadow, it is closed under joins in $X^*$, so $w\in S$ and $w=y\vee z$ in $S$. 
\end{proof}

\begin{definition}
Let $X$ be an orthodomain with enough directions and $S$ be a shadow of $X$ that is proper. Write $\oDir_S(X)$ for the set of directions of $X$ for basic elements $x\in S$, and let $\mu_S \colon \oDir_S(X)\to\oDir(S)$ be given by $\mu_S(d)=d|_S$. 
\end{definition}

\begin{proposition}\label{zzzz}
Let $X$ be an orthodomain with enough directions and $S$ be a shadow of $X$ that is proper. Then:
\begin{enumerate}
\item $\oDir_S(X)$ contains $0,1$ and is closed under $\,'\!$ and $\oplus$;
\item $\mu_S$ is a bijection from $\oDir_S(X)$ to $\oDir(S)$;
\item $\mu_S$ preserves $0,1$ and $\,'$;
\item $d\perp e$ if and only if $\mu_S(d)\perp\mu_S(e)$, and in this case $\mu_S(d\oplus e)=\mu_S(d)\oplus\mu_S(e)$.
\end{enumerate}
\end{proposition}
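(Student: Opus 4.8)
The plan rests on two facts established earlier. First, by Proposition~\ref{ded} and the properness of $S$, the shadow $S$ is itself an orthodomain with enough directions, and for any $d\in\oDir_S(X)$ — that is, a direction of $X$ for a basic element $x$ with $x\in S$ — the restriction $\mu_S(d)=d|_S$ is a direction of $S$ for the same $x$; so $\mu_S$ is well defined and carries directions for $x$ to directions for $x$. Second, the argument inside the proof of Proposition~\ref{at most 2}, combined with Proposition~\ref{elway}, yields a \emph{determination principle} valid in any orthodomain: distinct directions for a basic element disagree at every element covering it, equivalently, two directions for a basic element that agree at one element covering it must coincide. (This uses only condition~(3) of Definition~\ref{scary}, Proposition~\ref{elway}, and Proposition~\ref{swa}(2) applied to the Boolean domain $\down(y\vee z)$; it does not need enough directions.) I will use this principle for both $X$ and $S$.

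For part~(1): a shadow is nonempty and a downset, so $\bot\in S$, and hence the directions $0,1$ for $\bot$ supplied by Proposition~\ref{complement} lie in $\oDir_S(X)$; and if $d\in\oDir_S(X)$ is a direction for $x\in S$ then its complement $d'$ of Proposition~\ref{complement} is again a direction for $x$, so $d'\in\oDir_S(X)$. Closure under $\oplus$ only needs checking in case~(3) of Definition~\ref{perp}: there $d,e$ are directions for near atoms $x,y\in S$, the element $w=x\vee y$ has height $2$, hence lies in $X^*$ and is already the join of $x,y$ in $X^*$; since $S$ is closed under joins in $X^*$ we get $w\in S$, the third atom $z$ below $w$ then lies in $\down w\subseteq S$, and $d\oplus e$ — a direction for the basic element $z$ — belongs to $\oDir_S(X)$.

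For part~(2): injectivity is the determination principle. If $d,e\in\oDir_S(X)$ are directions for the same basic $x\in S$ with $d|_S=e|_S$, then, $S$ being proper, $x$ has a cover $y$ inside $S$ (found in $\down s$ for any $s\in S$ with $x<s$, which lies in $S$ since $S$ is a downset), and $d(y)=d|_S(y)=e|_S(y)=e(y)$ forces $d=e$. For surjectivity, let $\delta\in\oDir(S)$ be a direction of $S$ for a basic $x\in S$; since $X$ has enough directions, $x$ has exactly two directions $d,d'$ in $X$ (Proposition~\ref{enough}), their restrictions $d|_S,d'|_S$ are directions of $S$ for $x$ and are distinct by the injectivity just shown, and since $S$ has at most two directions for $x$ (Proposition~\ref{at most 2}) these are all of them, so $\delta\in\{d|_S,d'|_S\}$ lies in the image of $\mu_S$.

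Part~(3) is immediate: the formulas $0(w)=(\bot,w)$, $1(w)=(w,\bot)$, and ``$d'(w)=(z,y)$ when $d(w)=(y,z)$'' are stable under restriction to $\up x\cap S$, so $\mu_S$ preserves $0,1$ and $\,'$. For part~(4), the cases of $\oplus$ involving $0$ or $d'$ reduce to part~(3) together with the injectivity of $\mu_S$ (for instance $\mu_S(e)=0=\mu_S(0)$ forces $e=0$). The substantive case is case~(3) of Definition~\ref{perp}, where the point is to check that the data ``$x,y$ near'', ``$w=x\vee y$'', and ``$z$ the third atom below $w$'' are the same whether read off in $S$ or in $X$: the covers $x\lessdot w$ and $y\lessdot w$ persist in $X$ because $S$ is a downset, so condition~(4) of Definition~\ref{orthodomain} gives $w=x\vee y$ in $X$ and hence also, since any upper bound of $x,y$ in $S$ dominates $w$, $w=x\vee y$ in $S$; the third atom below $w$ in either poset is an atom below $w$ distinct from $x,y$, hence is the unique such atom by the Exchange Property (Proposition~\ref{p:exchange}), so it is the same $z$; and the values $\mu_S(d)(w)=(x,w)$, $\mu_S(e)(w)=(y,w)$ are literally $d(w),e(w)$ because $w\in S$. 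Running this transfer in both directions gives $d\perp e\iff\mu_S(d)\perp\mu_S(e)$, and then $\mu_S(d\oplus e)$ and $\mu_S(d)\oplus\mu_S(e)$ are both directions of $S$ for $z$ taking the value $(w,z)$ at the cover $w$, hence equal by the determination principle. The main obstacle is precisely this bookkeeping in part~(4): confirming that passing to a shadow changes neither which atoms are near, nor their join, nor which atom is the third one, which is exactly what forces the shadow axioms (downset, closure under $X^*$-joins), condition~(4) of Definition~\ref{orthodomain}, and the Exchange Property to be invoked together.
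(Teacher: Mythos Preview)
Your proof is correct and follows essentially the same approach as the paper's. You are more explicit in a couple of places---extracting the ``determination principle'' from the proof of Proposition~\ref{at most 2} and spelling out in part~(4) why nearness, the join $w=x\vee y$, and the third atom $z$ are the same whether computed in $X$ or in $S$---but the underlying argument matches the paper's, which handles part~(2) by noting that $d|_S$ and $d'|_S$ are orthocomplements (hence distinct since $S$ is proper) and treats part~(4) tersely by observing that the defining data of $d\perp e$ in case~(3) of Definition~\ref{perp} are literally the same in $X$ and in $S$.
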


\begin{proof}
(1) Since $0,1$ are directions for $\bot$ and $\bot\in S$, we have $0,1\in\oDir_S(X)$. If $d$ is a direction for~$x$, then $d'$ is also a direction for~$x$, giving closure under $'$. For closure under $\oplus$, suppose $d,e\in\oDir_S(X)$ with $d$ a direction for $x\in S$, $e$ a direction for $y\in S$, and $d\perp e$. There are several cases for $\perp$. If one of $d,e$ is $0$, then $d\oplus e$ equals $d$ or $e$, and if $e=d'$, then $d\oplus e=1$, so these cases are trivial. In the remaining case $x$ and $y$ are near. Say $x\vee y = w$ with $z$ the third atom beneath $w$. Then $w\in S$ since $S$ is closed under joins in $X^*$, and so $z\in S$ since $S$ is a downset of $X$. Since $d\oplus e$ is a direction for~$z$, we have $d\oplus e\in\oDir_S(X)$. 

(2) For a basic $x \in S$, the two directions for $x$ in $X$ are $d$ and $d'$. These restrict to directions of $S$ for $x$ and their restrictions are orthocomplements. Then as $S$ has no basic maximal elements, these restrictions are distinct and are the only two directions for $x$ in $S$. Part (3) is trivial.

For part (4), suppose $d,e\in\oDir_S(X)$ with $d$ a direction for $x$ and $e$ a direction for~$y$. Note that one of $d,e$ is $0$ iff one of $\mu_S(d),\mu_S(e)$ is $0$, and in this case $\mu_S(d\oplus e)=\mu_S(d)\oplus\mu_S(e)$. Next, $e=d'$ iff $\mu_S(e)=\mu_S(d)'$, and in this case $\mu_S(d\oplus e)=\mu_S(d)\oplus\mu_S(e)$. For the remaining case we have $d\perp e$ if and only if $x,y$ are near and $d(w)=(x,w),e(w)=(y,w)$ where $x\vee y=w$. But this is equivalent to $\mu_S(d)\perp\mu_S(e)$. In this case, $d\oplus e$ is the direction for the third atom $z$ beneath $w$ with $(d\oplus e)(w)=(w,z)$, and thus its restriction is a direction for $z$ taking value $(w,z)$ at $w$, and hence is $\mu_S(d)\oplus\mu_S(e)$. 
\end{proof}

A specific instance of the previous proposition is of particular interest. It is easily seen that $X^*$ is a shadow of $X$ that has no basic maximal elements when $X$ has none. Furthermore, since every basic element of $X$ belongs to $X^*$, we have $\oDir_{X^*}(X)=\oDir(X)$.

\begin{corollary}\label{dsd}
If $X$ is an orthodomain with enough directions, then so is $X^*$, and restriction gives an isomorphism $\oDir(X)\simeq\oDir(X^*)$. 
\end{corollary}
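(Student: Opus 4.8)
The plan is to obtain Corollary~\ref{dsd} as the special case $S = X^*$ of Proposition~\ref{zzzz}; the work then reduces to checking that $X^*$ satisfies the hypotheses of that proposition and reading off its conclusion.

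First I would note that $X^*$ is trivially a shadow of itself: it is a downset of $X^*$ and closed under joins that exist in $X^*$. Then I would check that $X^*$ is proper whenever $X$ is, i.e.\ that $X^*$ has no maximal basic elements. Given a basic $x \in X^*$, properness of $X$ supplies some $w > x$. Working inside the Boolean domain $\down w$: since $x$ is compact and $w$ is a directed join of the compact elements below it, there is a compact $c \leq w$ with $c \not\leq x$, whence $x < x \vee c \leq w$, and $\down(x\vee c)$ is a finite partition lattice, which contains a cover $x \lessdot v$. As $x$ has height at most $1$, $v$ has height at most $2$, so $v \in X^*$ and $x$ is not maximal in $X^*$. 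This small argument is the only step that is not a triviality, and is where I would be most careful.

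With $X^*$ a proper shadow, Proposition~\ref{ded}(1) gives that $X^*$ is an orthodomain, and the closing line of Proposition~\ref{ded}, combined with the assumption that $X$ has enough directions, gives that $X^*$ has enough directions as well. Since every basic element of $X$ already lies in $X^*$, we have $\oDir_{X^*}(X) = \oDir(X)$, so the map $\mu_{X^*}$ of Proposition~\ref{zzzz} is exactly the restriction map $d \mapsto d|_{X^*}$ from $\oDir(X)$ to $\oDir(X^*)$. Finally, parts (2)--(4) of Proposition~\ref{zzzz} tell us that $\mu_{X^*}$ is a bijection preserving $0$, $1$, $'$, orthogonality and $\oplus$; since a bijection of this kind has an inverse that also respects the structure, $\mu_{X^*}$ is the desired isomorphism $\oDir(X) \simeq \oDir(X^*)$.
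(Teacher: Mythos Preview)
Your proposal is correct and follows exactly the paper's approach: specialize Proposition~\ref{zzzz} to the shadow $S=X^*$, using that every basic element of $X$ already lies in $X^*$ so that $\oDir_{X^*}(X)=\oDir(X)$. The paper simply asserts that $X^*$ is a shadow with no basic maximal elements; your paragraph spelling out properness via a cover of $x$ inside a compact element of $\down w$ is a welcome elaboration of what the paper calls ``easily seen'' (note only that your use of ``$\down(x\vee c)$ is a finite partition lattice'' implicitly uses that the join of two compact elements in a Boolean domain is compact, which holds since finitely generated Boolean algebras are finite). One phrasing quibble: $X^*$ is a shadow \emph{of $X$}, not ``of itself.''
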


We next set out to prove that $\oDir(X)$ is an orthoalgebra for any orthodomain $X$  with enough directions.

\begin{lemma}\label{associative}
For $X$ an orthodomain with enough directions, the partial binary operation $\oplus$ on $\oDir(X)$ is commutative and associative: when one side of an expression $d\oplus e=e\oplus d$ or $(d\oplus e)\oplus f=d\oplus (e\oplus f)$ is defined, so is the other, and the two are equal. 
\end{lemma}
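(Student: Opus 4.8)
\emph{Commutativity} is read off directly from Definition~\ref{perp}: cases (1) and (2) are manifestly symmetric, using $(d')'=d$ for case (2), and in case (3) both the hypothesis ``$d(w)=(x,w)$ and $e(w)=(y,w)$'' and the resulting direction for the third atom $z$ with value $(w,z)$ at $w$ are unchanged when $d$ and $e$ are interchanged. So I would dispose of commutativity in a sentence and spend the effort on associativity, which I would establish in the asymmetric form $(\star)$: \emph{if $(d\oplus e)\oplus f$ is defined then so is $d\oplus(e\oplus f)$, and the two are equal}; the reverse implication will follow by a symmetric argument rather than from commutativity alone.

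The degenerate cases---when one of $d,e,f$ equals $0$, or $e=d'$, or $e=f'$, or $f=(d\oplus e)'$---I would handle by unwinding Definition~\ref{perp} directly ($d\oplus 0=d$, $d\oplus d'=1$, and $1\oplus g$ is defined only for $g=0$, etc.), except in the few spots where one must rule out that one side is defined while the other is not; there I would apply the same localization described below, but to a principal ideal $\down w$ with $w$ of height $2$. The substantive case is: $d$ a direction for an atom $x_1$, $e$ a direction for an atom $x_2$ with $x_1,x_2$ near, $x_1\vee x_2=w_{12}$, third atom $x_3$, $d(w_{12})=(x_1,w_{12})$ and $e(w_{12})=(x_2,w_{12})$, so that $d\oplus e$ is the direction for $x_3$ with $(d\oplus e)(w_{12})=(w_{12},x_3)$; and $f$ a direction for an atom $x_4$ with $x_3,x_4$ near, $x_3\vee x_4=w_{34}$, $(d\oplus e)(w_{34})=(x_3,w_{34})$ and $f(w_{34})=(x_4,w_{34})$.

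The plan in this case is to pass to a single height-$3$ principal ideal that contains all of the basic elements involved. Since $x_3\lessdot w_{12},w_{34}$ while $(d\oplus e)(w_{12})=(w_{12},x_3)$ and $(d\oplus e)(w_{34})=(x_3,w_{34})$, condition (3) of Definition~\ref{scary} applied to the direction $d\oplus e$ produces $W:=w_{12}\vee w_{34}$ with $w_{12},w_{34}\lessdot W$; hence $W$ has height $3$, and $x_1,x_2,x_3,x_4$ together with the third atom $x_5$ of $w_{34}$ all lie in $\down W$. One checks that $\down W$ is a proper shadow of $X$: it is a downset inside $X^*$, it is closed under those joins of its subsets that exist in $X^*$ because $W$ bounds every such subset, and its unique maximal element $W$ is not basic. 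Being a principal ideal of the orthodomain $X$ it is a Boolean domain, and having height $3$ it has more than two elements (indeed it is the subalgebra lattice of a $16$-element Boolean algebra by Theorem~\ref{hui}), so by Corollary~\ref{Cor:booleanobjects}(2) the orthoalgebra $\oDir(\down W)$ is Boolean, and in particular $\oplus$ is associative on it. By Proposition~\ref{zzzz}, restriction $\mu:=\mu_{\down W}$ is a bijection $\oDir_{\down W}(X)\to\oDir(\down W)$ that preserves $0,1,\,'$, satisfies $d\perp e\iff\mu(d)\perp\mu(e)$ with $\mu(d\oplus e)=\mu(d)\oplus\mu(e)$, and whose domain is closed under $\oplus$; moreover $d,e,f,d\oplus e$ all belong to $\oDir_{\down W}(X)$ since their basic elements $x_1,x_2,x_4,x_3$ lie in $\down W$. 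Transporting the hypothesis through $\mu$ makes $(\mu(d)\oplus\mu(e))\oplus\mu(f)$ defined in $\oDir(\down W)$; associativity there makes $\mu(d)\oplus(\mu(e)\oplus\mu(f))$ defined and equal to it; transporting back through the bijection $\mu$ then gives that $e\oplus f$ and hence $d\oplus(e\oplus f)$ are defined in $\oDir(X)$ and equal to $(d\oplus e)\oplus f$, which is $(\star)$. The reverse implication is obtained the same way, applying condition (3) of Definition~\ref{scary} instead to the direction $e\oplus f$ to manufacture the ambient height-$3$ element.

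I expect the only genuine obstacle to be producing that single element $W$ and checking that $\down W$ is a shadow containing every relevant basic element: this is the one step that is truly global, and it is exactly what condition (3) of Definition~\ref{scary} was designed to supply. Everything after it is routine transport along Proposition~\ref{zzzz} together with the fact that the directions of a height-$\le 3$ Boolean domain form an honest (Boolean) orthoalgebra, on which $\oplus$ is associative by definition. By Corollary~\ref{dsd} one may additionally assume $X=X^*$ throughout, which slightly streamlines the shadow check but is not essential.
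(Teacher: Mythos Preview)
Your proposal is correct and follows essentially the same route as the paper: after disposing of the degenerate cases, you apply condition~(3) of Definition~\ref{scary} to the direction $d\oplus e$ at the two covers $w_{12},w_{34}$ of $x_3$ to manufacture a height-$3$ element $W$, then localize to the proper shadow $\down W$ and transport associativity from the Boolean algebra $\oDir(\down W)$ via Proposition~\ref{zzzz}. The only cosmetic difference is that the paper invokes Theorem~\ref{hhh} (recognizing $\down W$ as $\Sub$ of a $16$-element Boolean algebra) where you invoke Corollary~\ref{Cor:booleanobjects}(2); both work.
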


\begin{proof}
Clearly $\oplus$ is commutative. Making use of this and symmetry, it suffices to show that if $(d\oplus e)\oplus f$ is defined, then $d\oplus(e\oplus f)$ is defined, and the two are equal. For this, we consider a number of cases. 

If any of $d,e,f$ are $0$, then it is easily verified. The only direction orthogonal to $1$ is $0$, so we may also assume that none of $d,e,d\oplus e,f$ is $1$. So there are atoms $x,y,z$ with $d$ a direction for~$x$, $e$ a direction for~$y$, and $f$ a direction for~$z$. Having $d=e'$ gives $d\oplus e=1$, so $x,y$ are distinct, and therefore to have $d\perp e$ we must have that $x,y$ are near. Let $x\vee y=w$ and let $p$ be the third atom beneath $w$. Then $d\oplus e$ is the direction for $p$ with $(d\oplus e)(w)=(w,p)$. 

Since neither $d\oplus e$ or $f$ equals $0$ or $1$, there are two possibilities to have $(d\oplus e) \perp f$. We consider first the case that $f=(d\oplus e)'$. Since $d\oplus e$ is the direction for $p$ with $(d\oplus e)(w)=(w,p)$, this means that $f$ is the direction for $z=p$ with $f(w)=(p,w)$. Since $x,y,p$ are the three pairwise near atoms under $w$, we then have that $e\oplus f$ is defined, and that $e\oplus f$ is the direction for $x$ with $(e\oplus f)(w)=(w,x)$. Thus $e\oplus f = d'$. So $d\oplus(e\oplus f)$ is also defined and both sides of the expression in this case evaluate to $1$. 

\begin{figure}[h]
\begin{tikzpicture}[inner sep=2pt, scale=1.25]
 \node (0) at (0,0) {$\bot$};
 \node (x) at (-2,1) {$x$};
 \node (y) at (-1,1) {$y$};
 \node (p) at (0,1) {$p$};
 \node (z) at (1,1) {$z$};
 \node (q) at (2,1) {$q$};
 \node (w) at (-1,2) {$w$};
 \node (v) at (1,2) {$v$};
 \node (u) at (0,3) {$u$};
 \draw (0) to (x);
 \draw (0) to (y);
 \draw (0) to (p);
 \draw (0) to (q);
 \draw (0) to (z);
 \draw (x) to (w);
 \draw (y) to (w);
 \draw (p) to (w);
 \draw (p) to (v);
 \draw (q) to (v);
 \draw (z) to (v);
 \draw (w) to (u);
 \draw (v) to (u);
\end{tikzpicture}
\quad\quad
\begin{tikzpicture}[inner sep=2pt, scale=1.25]
    \node (0) at (0,0) {$\bot$};
    \node (a) at (-3,.8) {$x$};
    \node (b) at (-2,.8) {$y$};
    \node (c) at (-1,.8) {$z$};
    \node (d) at (0,.8) {$q$};
    \node (e) at (1,.8) {$p$};
    \node (f) at (2,.8) {$\cdot$};
    \node (g) at (3,.8) {$\cdot$};
    \node (m1) at (-2.5,2.25) {$w$};
    \node (m2) at (-1.5,2.25) {$\cdot$};
    \node (m3) at (-.5,2.25) {$\cdot$};
    \node (m4) at (.5,2.25) {$\cdot$};
    \node (m5) at (1.5,2.25) {$\cdot$};
    \node (m6) at (2.5,2.25) {$v$};
    \node (B) at (0,3.25) {$u$};
    \draw (0) to (a.south); \draw (0) to (b.south); \draw (0) to (c.south); \draw (0) to (d.south); \draw (0) to (e.south); \draw (0) to (f.south); \draw (0) to (g.south); 
    \draw (B) to (m1.north); \draw (B) to (m2.north); \draw (B) to (m3.north); \draw (B) to (m4.north); \draw (B) to (m5.north); \draw (B) to (m6.north);
    \draw (a.north) to (m1.south); \draw (a.north) to (m2.south); \draw (a.north) to (m3.south);
    \draw (b.north) to (m1.south); \draw (b.north) to (m4.south); \draw (b.north) to (m5.south);
    \draw (c.north) to (m2.south); \draw (c.north) to (m4.south); \draw (c.north) to (m6.south);
    \draw (d.north) to (m3.south); \draw (d.north) to (m5.south); \draw (d.north) to (m6.south);
    \draw (e.north) to (m1.south); \draw (e.north) to (m6.south); 
    \draw (f.north) to (m2.south); \draw (f.north) to (m5.south); 
    \draw (g.north) to (m3.south); \draw (g.north) to (m4.south); 
  \end{tikzpicture}

\caption{A part of the Hasse diagram of the shadow from the proof of Lemma~\protect\ref{associative} (two possible partial diagrams of the same situation)
\label{part}}
\end{figure}
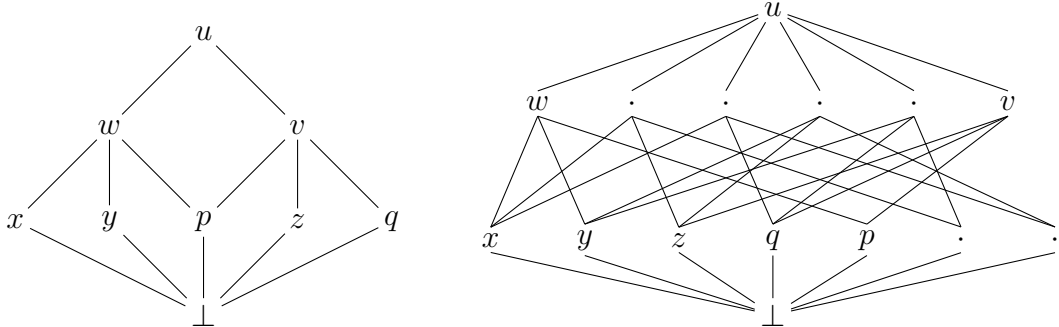

For the final case (see Figure~\ref{part}), it must be that item (3) in Definition~\ref{perp} applies to $(d\oplus e)\perp f$. Since $d\oplus e$ is the direction for $p$ with $(d\oplus e)(w)=(w,p)$ and $f$ is a direction for~$z$, the assumptions of (3) give that $p,z$ are near. Say $p\vee z = v$, and let $q$ be the third atom distinct from $p,z$ under~$v$. Then to have $d\oplus e\perp f$ we have $(d\oplus e)(v)=(p,v)$ and $f(v)=(z,v)$, and the sum $(d\oplus e)\oplus f$ is the direction for $q$ with $\bigl((d\oplus e)\oplus f\bigr)(v)=(v,q)$. 

Since $(d\oplus e)(w)=(w,p)$ and $(d\oplus e)(v)=(p,v)$, we have $w\neq v$. Since the three atoms beneath $w$ are $x,y,p$, the three atoms beneath $v$ are $p,q,z$, and $w,v$ cannot have more than one common atom beneath them since they are distinct, we have that $x,y,p,q,z,w,v$ are distinct. Since $d\oplus e$ is a direction for $p$, by Definition~\ref{scary}
\[\bigl( p\lessdot w,v,\ (d\oplus e)(w)=(w,p) \,\mbox{ and }\,(d\oplus e)(v)=(p,v)\bigr) \ \Rightarrow\  w\vee v  \mbox{ exists and } w,v\lessdot w\vee v\,.\]
Let $u=w\vee v$. Since $p$ is an atom and $p\lessdot w,v\lessdot u$ then $u$ has height 3 so belongs to~$X^*$. Let $S=\down u$ and note that this is a shadow of $X$. Since $S$ is isomorphic to $\Sub(B)$ for a $16$-element Boolean algebra $B$, Theorem~\ref{hhh} gives that $\oDir(S)\simeq B$. Proposition~\ref{zzzz} gives $\oDir_S(X)\simeq\oDir(S)$. Since $d,e,f$ all belong to $\oDir_S(X)$, their associativity under $\oplus$ follows. 
\end{proof}

\begin{theorem}\label{isOA}
If $X$ is an orthodomain with enough directions, then $\oDir(X)$ is a proper orthoalgebra. 
\end{theorem}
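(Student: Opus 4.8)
The plan is to verify the three orthoalgebra axioms for $(\oDir(X),\oplus,{}',0,1)$ as provided by Proposition~\ref{complement} and Definition~\ref{perp}, and then to check properness. Commutativity and associativity of $\oplus$ are precisely Lemma~\ref{associative}, so the first axiom is in hand. For the second, that $d'$ is the \emph{unique} $e$ with $d\oplus e$ defined and equal to $1$: existence is clause~(2) of Definition~\ref{perp}. For uniqueness, recall that a direction is determined by the least element of its domain, so the two directions $0,1$ for $\bot$ are distinct from every direction for an atom; hence clause~(3), whose output is a direction for an atom, never produces $1$, while clauses~(1) and~(2), together with $1'=0$ and $0'=1$, force $e=d'$.

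For the third axiom, that $d\oplus d$ is defined iff $d=0$: the ``if'' direction is $0\oplus0=0$. Conversely assume $d\neq0$. Clause~(1) would force $d=0$; clause~(3) would require the base point $x$ of $d$ to be near itself, impossible since near atoms are distinct; and clause~(2) would require $d=d'$. But $d=d'$ forces $d(w)=(v,v)$ for every maximal $w$ above $x$, and since a principal pair for $x$ in $\down w$ has equal components only when $x=w$ (Remark~\ref{smallB}), this would make $x$ a maximal basic element of $X$, contradicting properness. Hence $d\oplus d$ is undefined for $d\neq0$, so $(\oDir(X),\oplus,{}',0,1)$ is an orthoalgebra.

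For properness we must show $\oDir(X)$ has no block of $4$ or fewer elements. Since $X$ is proper it is not the one-point poset, so it has an atom $a$; the two directions for $a$ are distinct from each other and from $0,1$, so $|\oDir(X)|\geq4$ and $0\neq1$. Thus every Boolean subalgebra has at least $2$ elements, and it suffices to rule out $\{0,1\}$ as a block and to rule out $4$-element blocks. The former fails since $\oDir(X)$ has atoms. Suppose $M=\{0,d,d',1\}$ is a $4$-element block. Then $d$ is a direction for an atom $x$ of $X$ (the only directions for $\bot$ being $0,1$), and $x$ is not maximal, so $x\lessdot w$ for some $w$ of height~$2$; since replacing $d$ by $d'$ leaves $M$ unchanged, we may assume $d(w)=(x,w)$. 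By the exchange property (Proposition~\ref{p:exchange}), the atoms beneath $w$ are $x$ and two further atoms $y,z$, pairwise near. Since $X$ has enough directions, $y$ has directions $e,e'$, and as $e'(w)$ is the coordinate swap of $e(w)$, exactly one of $e(w),e'(w)$ equals $(y,w)$; replacing $e$ by $e'$ if needed, assume $e(w)=(y,w)$. Then clause~(3) of Definition~\ref{perp} makes $d\oplus e$ defined, $e\notin M$, and computing the remaining sums via Definition~\ref{perp} shows $\{0,d,d',e,e',d\oplus e,(d\oplus e)',1\}$ is an $8$-element Boolean subalgebra properly containing $M$, contradicting maximality. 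So every block of $\oDir(X)$ has more than $4$ elements, and $\oDir(X)$ is proper.

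The main obstacle is the properness step: from ``enough directions'' plus the exchange property one must produce an orthogonal partner $e$ for $d$ with the correct orientation at a height-$2$ element, and then verify that $d,d',e,e',d\oplus e,(d\oplus e)'$ genuinely close up into an $8$-element Boolean subalgebra. The remaining axiom checks amount to bookkeeping over which clause of Definition~\ref{perp} can apply.
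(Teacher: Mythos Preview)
Your proof is correct and follows essentially the same route as the paper's: verify the orthoalgebra axioms using Lemma~\ref{associative} for associativity, case analysis on Definition~\ref{perp} for uniqueness of complements and the $d\oplus d$ axiom, and then rule out small blocks by producing an orthogonal partner $e$ at a height-$2$ cover of $x$. Your justification for $d\neq d'$ via Remark~\ref{smallB} is slightly more explicit than the paper's (which simply asserts $d\neq d'$), but the argument is the same in substance.
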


\begin{proof}
Lemma~\ref{associative} shows that $\oplus$ is commutative and associative. There are directions $0,1$. For each direction $d$ also $d'$ is a direction, $d\oplus d'$ is defined, and $d\oplus d'=1$. Suppose $e$ is another direction with $d\oplus e$ defined and $d\oplus e=1$. Since $1$ is a direction given by the basic element~$0$, it cannot be that $d\perp e$ because of reason~(3) in Definition~\ref{perp}. If it is defined because of reason~(2), then $e=d'$. If it is defined because of reason~(1), then one of $d,e$ is $0$, and because we have required $d\oplus e=1$, the other must be $1$, hence again $e=d'$. So $d'$ is the unique direction with $d\oplus d'=1$. Finally, suppose that $d$ is a direction with $d\oplus d$ defined. This cannot be defined because of reason~(3) of Definition~\ref{perp}. It cannot be because of reason~(2) since $d\neq d'$. So it must be defined because of reason~(1), giving $d=0$.
	
	To show that $\oDir(X)$ is proper, let $d\neq0,1$ be a direction for the basic element $x\in X$, so $x$ must be an atom. We show that $\{0,d,d',1\}$ is properly contained in some other Boolean subalgebra of $\oDir(X)$. Since $X$ is proper, $x$ cannot be maximal, hence $x<v$ for some $v\in X$. Then $\down v$ is a Boolean domain, hence it must contain some $w$ such that $x\lessdot w$. Let $y$ be some other atom of $X$ such that $y\lessdot w$, so $x$ and $y$ are near. Since $(x,w)$ and $(w,x)$ are principal pairs for $x$ in $\down w$, it follows that either $d(w)=(x,w)$ or $d'(w)=(x,w)$. Without loss of generality, assume the former. Let $e$ be the direction for $y$ with $e(w)=(y,w)$. By Definition \ref{perp}, $d\perp e$ is defined. Since $e$ is a direction for $y$ and $y\neq 0,x$ we have $e\neq 0,1$ and $e$ is distinct from $d,d’$, hence $d$ and $e$ generate a Boolean subalgebra of eight elements, which properly contains $\{0,d,d',1\}$. It follows that any small Boolean subalgebra of $\oDir(X)$ is properly contained in a larger Boolean subalgebra, hence $\oDir(X)$ cannot have small blocks. 
\end{proof}

\begin{remark}\label{r:nonisomorphic}
For an orthodomain $X$ with enough directions, $X \simeq \BSub(\oDir(X))$ does not usually hold. By Corollary~\ref{dsd} $\oDir(X)\simeq\oDir(X^*)$, and we clearly do not have $X\simeq X^*$ for each orthodomain $X$ with enough directions. In fact, $X=\BSub(A)$ provides a counterexample for any orthoalgebra $A$ with no small blocks and a block with more than 4 atoms. 
\end{remark}

\begin{definition} 
A shadow $S \subseteq X^*$ of an orthodomain $X$ is a \emph{Boolean shadow} if either:
\begin{enumerate}
\item $S=\down x$ for some basic $x\in X$;
\item $S$ has enough directions and $\oDir(S)$ is a Boolean orthoalgebra. 
\end{enumerate}
Write $\BShad(X)$ for the partially ordered set of Boolean shadows of $X$ under inclusion.
\end{definition}

\begin{definition}\label{arw}
Let $X$ be an orthodomain with enough directions, and let $B$ be a Boolean subalgebra of $\oDir(X)$. Define:
\begin{align*}
T_B& = \{x \mid x \text{ is basic in $X$ and there is some $d\in B$ with $d$ a direction for~$x$}\}\text,\\
S_B& = \text{the closure of $T_B$ under existing joins in $X^*$}\text.
\end{align*}
\end{definition}

\begin{proposition}\label{dhd}
Let $X$ be an orthodomain with enough directions and let $B$ be a Boolean subalgebra of $\oDir(X)$. Then:
\begin{enumerate}
\item if $B$ has more than $4$ elements, then $S_B$ is proper and $B=\oDir_{S_B}(X)$;
\item $S_B$ is a Boolean shadow of $X$.
\end{enumerate}
\end{proposition}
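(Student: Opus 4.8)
The plan is to prove part~(1) first and then derive part~(2) from it together with a direct inspection of the two small cases $|B|\le 4$. For part~(1) I would check, in order, that $S_B$ is a shadow, that it is proper when $|B|>4$, and that $B=\oDir_{S_B}(X)$. Two bookkeeping facts are used throughout: $\bot\in T_B$, because $0,1\in B$ are directions for $\bot$; and the basic elements of $X$ lying in $S_B$ are exactly those of $T_B$, because a join of basic elements that lands at an atom $a$ must already include $a$ (as $\down a=\{\bot,a\}$). Since iterated joins collapse, it follows that every element of $S_B$ is either in $T_B$ or is the join in $X^*$ of some subset of $T_B$.

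The engine of the argument is a closure property of $T_B$ coming from $B$ being closed under $\oplus$: if $p,q\in T_B$ are near atoms with $p\vee q=w$, then the third atom $z\lessdot w$ provided by the exchange property (Proposition~\ref{p:exchange}) also lies in $T_B$. Indeed the two directions for $p$ both lie in $B$ (being complementary, by Proposition~\ref{complement}) and take the two values $(p,w)$ and $(w,p)$ at $w$; choosing the one equal to $(p,w)$ at $w$, and likewise for $q$, their $\oplus$ is defined by clause~(3) of Definition~\ref{perp}, lies in $B$, and is a direction for $z$.

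\emph{$S_B$ is a shadow.} Nonemptiness and closure under existing joins in $X^*$ are built in, so the work is the downset condition. Given $w\in S_B$, we may assume $w$ is the join in $X^*$ of some $F\subseteq T_B$ and has height $2$ or $3$. If $w$ has height $2$, then $\down w$ is the subalgebra lattice of an $8$-element Boolean algebra and so has three atoms; $\bigvee F=w$ forces at least two of them into $F\subseteq T_B$, the third is then in $T_B$ by the closure property, and so $\down w\subseteq S_B$. If $w$ has height $3$, then $\down w$ is the subalgebra lattice of a $16$-element Boolean algebra $C$ and is a proper shadow of $X$; by Theorem~\ref{hhh} and Proposition~\ref{zzzz}, $\oDir_{\down w}(X)$ is a copy of $C$ sitting inside $\oDir(X)$ as a Boolean subalgebra, and the atoms of $\down w$ lying in $T_B$ correspond, through the isomorphism $\oDir_{\down w}(X)\cong C$, to the elements other than $0,1$ of the subalgebra $B\cap\oDir_{\down w}(X)$ of $C$. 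Their join in $\down w$ is at most the element of $\down w$ named by that subalgebra, and it equals $\bigvee F=w$, the top of $\down w$; hence $B\cap\oDir_{\down w}(X)=\oDir_{\down w}(X)$, so every atom of $\down w$ lies in $T_B$, every height-$2$ element of $\down w$ is a join of two such atoms (Proposition~\ref{p:exchange}) and hence lies in $S_B$, and therefore $\down w\subseteq S_B$. I expect this height-$3$ case to be the main obstacle, since it is the only place where the combinatorics of the subalgebra lattice of the $16$-element Boolean algebra must be reconciled with joins of basic elements in $X^*$; everything else reduces to manipulation of Definitions~\ref{perp} and~\ref{scary}.

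\emph{Properness of $S_B$ when $|B|>4$, and $B=\oDir_{S_B}(X)$.} If $\bot$ were maximal in $S_B$ then $S_B=\{\bot\}$, so $T_B=\{\bot\}$ and $B=\{0,1\}$, contrary to hypothesis. If an atom $m$ of $X$ were maximal in $S_B$ then $m\in T_B$ and some direction $\delta\in B$ for $m$ has $\delta\ne 0,1$; since $|B|>4$, one of the principal ideals $[0,\delta]$, $[0,\delta']$ of $B$ has more than two elements, and accordingly either $\delta=\eta\oplus\zeta$ in $B$ with $\eta,\zeta$ directions for near atoms $p,q$ and $m$ the third atom below $w=p\vee q$, or $\delta\oplus\eta$ is defined in $B$ for some $\eta$ a direction for an atom $p$ near to $m$, with $w=m\vee p$; in either case clause~(3) of Definition~\ref{perp} applies, $w$ has height $2$ and lies in $S_B$, and $w>m$, contradicting maximality. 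That $B=\oDir_{S_B}(X)$ is then formal: a direction in $B$ is a direction for $\bot$ or for an atom of $T_B\subseteq S_B$, and conversely a direction of $X$ for a basic $x\in S_B$ has $x\in T_B$, both of whose two directions lie in $B$. Finally, for part~(2): when $|B|>4$, part~(1) makes $S_B$ a proper shadow with $\oDir_{S_B}(X)=B$ Boolean, restriction is a structure-preserving bijection $B\to\oDir(S_B)$ by Proposition~\ref{zzzz} and every basic element of $S_B$ inherits a direction from $B$ by Proposition~\ref{ded}, so $S_B$ has enough directions and $\oDir(S_B)\cong B$ is Boolean (clause~(2) of the definition of Boolean shadow); while $B=\{0,1\}$ gives $S_B=\{\bot\}=\down\bot$ and $B=\{0,d,d',1\}$ gives $S_B=\{\bot,x\}=\down x$ for the atom $x$ of which $d$ is a direction, each a Boolean shadow by clause~(1).
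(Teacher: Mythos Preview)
Your overall architecture matches the paper's: first show $S_B$ is a shadow (downset plus join-closure), then properness, then $B=\oDir_{S_B}(X)$, then deduce part~(2). The height-$2$ case, the properness argument, the identification $B=\oDir_{S_B}(X)$, and the treatment of $|B|\le 4$ are all fine and essentially the paper's arguments.

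The gap is in the height-$3$ step, precisely where you flagged the difficulty. You assert that $B\cap\oDir_{\down w}(X)$ is ``the subalgebra of $C$'' and then speak of ``the element of $\down w$ named by that subalgebra''. But $\down w\cong\Sub(C)$ consists of \emph{Boolean} subalgebras of $C$, and the intersection of two Boolean subalgebras of an orthoalgebra need not be Boolean (cf.\ Example~\ref{ex:frasercube}); a priori $B\cap\oDir_{\down w}(X)$ is only a sub-orthoalgebra of $C$. In the critical situation where $F=\{x,y\}$ consists of two edge atoms of $\down w$ (which are not near), the directions $d,d',e,e'$ for $x,y$ together with $0,1$ form a six-element $MO_2$ inside $C$, not a Boolean subalgebra, so there is no element of $\down w$ it ``names''. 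Your inequality $\bigvee F\le(\text{that element})$ is therefore not well-posed, and the argument as written does not use the Boolean structure of $B$ at all in this step---which is exactly what is needed.

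The paper closes this gap differently. It first reduces the height-$3$ case to the situation just described (two edge atoms $x,y\in T_B$ with $x\vee y=w$), using the already-proved height-$2$ case to propagate. Then it exploits that none of $d,d'$ is orthogonal to either of $e,e'$ (since $x,y$ are not near), so in the Boolean algebra $B$ the pair $d,e$ generates a $16$-element Boolean subalgebra $Y\subseteq B$. The point is that the Boolean operations of $B$ manufacture new directions (the four atoms of $Y$) beyond those for $x$ and $y$; one then argues $Y=\oDir_{\down w}(X)$, whence every atom of $\down w$ lies in $T_B$. If you want to salvage your formulation, what you actually need to prove is that $\langle d,e\rangle_B=\langle d,e\rangle_{\oDir_{\down w}(X)}$ for any $d,e\in B\cap\oDir_{\down w}(X)$; this is automatic when $x,y$ are near (by cancellation of $\oplus$), and in the non-near case it can be extracted from condition~(3) of Definition~\ref{scary} applied to $d$, which forces the atoms underlying the four atoms of $\langle d,e\rangle_B$ to sit below a single height-$3$ element containing both $x$ and $y$, hence below $w$. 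Either way, the missing ingredient is a genuine use of the Boolean structure of $B$ to produce elements of $\oDir_{\down w}(X)$ that were not visibly there.
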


\begin{proof}
We first prove that $S_B$ is a shadow of $X$.

By definition, $S_B\subseteq X^*$ and is closed under existing joins in $X^*$. It suffices to show that $S_B$ is a downset of $X^*$. Clearly if $w$ is a basic element of $X$ that belongs to $S_B$, then any $x\leq w$ also belongs to $S$. This covers the case that $w$ is of height $0$ or~$1$. Suppose $w\in S_B$ is of height $2$ in~$X$. Then $w$ belonging to $S_B$ means it is the join $w=x\vee y$ of two elements $x,y$ of $T_B$, both of which are atoms of $X$. Since $x,y\in T_B$ there are directions $d,e\in B$ with $d$ a direction for $x$ and $e$ a direction for~$y$. Furthermore, since $x$ and $y$ are near, these may be chosen so that $d\perp e$. If $z\leq w$, then either $z$ is one of $0,x,y,w$, or $z$ is the third atom beneath $x\vee y$. In the last case $d\oplus e$ is a direction for $z$ and $d\oplus e$ belongs to $B$, so $z\in T_B\subseteq S_B$. 

Our final case is when $w\in S_B$ is of height~$3$. Since $w\in S_B$, it is the join of atoms of $T_B$. Since $w\in X$ we have $\down w$ isomorphic to the poset of subalgebras of a $16$-element Boolean algebra as shown in Figure~\ref{fig1}. Our task is to show that all $7$ atoms $x$ in $\down w$ belong to $T_B$ since this then shows that all elements of height $2$ in $\down w$ are in $S_B$. The atoms in $\down w$ can be divided into two groups, the four at left and the three at right. Note that the latter three atoms are not near to each other. There are two possibilities to consider: 
\begin{itemize}
\item[(i)]  $w$ is the join of two atoms $w=x\vee y$ from the right with $x,y\in T_B$;
\item[(ii)] $w$ is the irredundant join of 3 atoms of $T_B$. 
\end{itemize}
  
Using the result above that if $z\in S_B$ is of height 2 then $\down z\subseteq S_B$, and that $S_B$ is closed under joins in $X^*$, the second case can be reduced to the first, so we just consider the first. 
Since $w$ has height~$3$, it follows that $\oDir(\down w)$ must be a 16-element Boolean algebra. Since $\down w$ is a proper shadow of $X$, Proposition \ref{zzzz} yields an isomorphism between $\oDir_{\down w}(X)$ and $\oDir(\down w)$. Let $d,d'$ be the directions for $x$ and $e,e'$ be the directions for~$y$. So $d,d',e,e'\in B\cap\oDir_{\down w}(X)$ and neither of $d,d'$ is orthogonal to either of $e,e'$, since $x$ and $y$ are not near. Since $B$ is a Boolean subalgebra of $\oDir(X)$, then $d,e$ generate a $16$-element subalgebra $Y$ of $B\cap\oDir_{\down w}(X)$, which must be equal to $\oDir_{\down w}(X)$, since the latter is also a $16$-element Boolean algebra. Hence $\oDir_{\down w}(X)\subseteq B$, which expresses that any direction for any atom in $\down w$ is contained in $B$. Hence all atoms of $\down w$ are contained in $T_B$, establishing that $S_B$ is a shadow.

For part~(1), assume $B$ has more than 4 elements. To see that $S_B$ is proper, first note that it is not the case that 0 is maximal in $S_B$. Let $x$ be an atom in $S_B$ and hence in $T_B$. Then there is a direction $d$ in $B$ that is a direction for~$x$. Since $B$ has more than 4 elements, there is a nonzero direction $e$ in $B$  orthogonal and unequal to one of $d$ or $d'$. If $e$ is a direction for~$y$, then $x,y$ are near, and $w=x\vee y\in S_B$. So no atom is maximal in $S_B$, and $S_B$ is proper. 

It remains to show that $B=\oDir_{S_B}(X)$. Let $d$ be a direction in $B$ for a basic element~$x$. Then by definition, $x\in T_B\subseteq S_B$. Thus by definition $d\in\oDir_{S_B}(X)$. Conversely, let $d\in\oDir_{S_B}(X)$ be a direction for the basic element $x$ of~$X$. By definition, $x\in S_B$. But $S_B$ consists of the elements of $X^*$ that are joins of elements of $T_B$, and as $x$ is basic, it must be that $x\in T_B$. Thus there is a direction $e$ in $B$ for~$x$. But there are only two directions for~$x$, namely $d,d'$. So either $e=d$ or $e'=d$, and in either case $d$ is in $B$ since $B$ is closed under orthocomplementation.

For part~(2), it remains to show that the shadow $S_B$ is Boolean. If $B$ has 4 or fewer elements, then $T_B=\down x$ for a basic element $x$, so $S_B=T_B$, and so $S_B$ is Boolean. Suppose $B$ has more than 4 elements. Then by (1) $B=\oDir_{S_B}(X)$. Proposition~\ref{zzzz} gives $\oDir_{S_B}(X)\simeq\oDir(S_B)$. So $\oDir(S_B)$ is Boolean, giving that $S_B$ is a Boolean shadow.
\end{proof}

\begin{proposition}\label{gamp}
For $X$ an orthodomain with enough directions, there is an isomorphism of posets $\Gamma \colon \BSub(\oDir(X))\to\BShad(X)$ given by $\Gamma(B)=S_B$. 
\end{proposition}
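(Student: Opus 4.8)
The plan is to exhibit an explicit inverse $\Delta\colon\BShad(X)\to\BSub(\oDir(X))$ for $\Gamma$ and to check that both maps are monotone. For a Boolean shadow $S$ of $X$, I would let $\Delta(S)$ be the set of directions of $X$ for basic elements lying in $S$; when $S$ is proper this is exactly $\oDir_S(X)$. The first point to verify is that $\Delta(S)$ is always a Boolean subalgebra of $\oDir(X)$. For proper $S$ this is immediate from Proposition~\ref{zzzz}: part~(1) gives that $\oDir_S(X)$ contains $0,1$ and is closed under $\,'$ and $\oplus$, part~(2) gives the isomorphism $\oDir_S(X)\simeq\oDir(S)$, and the definition of a Boolean shadow makes $\oDir(S)$ Boolean. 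The only other Boolean shadows are those of the form $\down x$ for a basic $x$, which are not proper; for these one checks by hand that $\Delta(\down\bot)=\{0,1\}$ and that $\Delta(\down x)=\{0,d,d',1\}$ when $x$ is an atom with directions $d,d'$, the only delicate point being that no further orthogonal sums are defined among these four elements, which is read off the three clauses of Definition~\ref{perp}.

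Monotonicity of both maps is immediate: $B\subseteq B'$ forces $T_B\subseteq T_{B'}$ and hence $S_B\subseteq S_{B'}$, while $S\subseteq S'$ gives $\Delta(S)\subseteq\Delta(S')$ straight from the definition. So it remains to prove that $\Delta$ and $\Gamma$ are mutually inverse. For $\Delta\circ\Gamma=\mathrm{id}$, Proposition~\ref{dhd}(1) already yields $\Delta(\Gamma(B))=\oDir_{S_B}(X)=B$ whenever $B$ has more than four elements; when $B$ has at most four elements it is $\{0,1\}$ or $\{0,d,d',1\}$ with $d$ a direction for an atom $x$, and then $T_B$ is $\{\bot\}$ or $\{\bot,x\}$, the closure $S_B$ coincides with $T_B$, and $\Delta(S_B)$ is visibly $B$ again.

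The substantive step is $\Gamma\circ\Delta=\mathrm{id}$. Given a Boolean shadow $S$, a direction belongs to $\Delta(S)$ precisely when the basic element it is a direction for (the least element of its domain) belongs to $S$; hence $T_{\Delta(S)}$ is exactly the set of basic elements of $S$, and $S_{\Delta(S)}$ is the closure of that set under existing joins in $X^*$. Since $S$ is a downset of $X^*$ closed under such joins, this gives $S_{\Delta(S)}\subseteq S$ at once. For the reverse inclusion I would invoke atomisticity of an orthodomain (Definition~\ref{orthodomain}(2)): each $w\in S\subseteq X^*$ is the join in $X$ of the atoms of $X$ beneath it; those atoms are basic elements of $S$, so they lie in $T_{\Delta(S)}$, and since $w$ itself lies in $X^*$ this join witnesses $w\in S_{\Delta(S)}$. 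Thus $S\subseteq S_{\Delta(S)}$, so $\Gamma$ and $\Delta$ are mutually inverse, and being monotone with monotone inverse, $\Gamma$ is an order isomorphism.

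I expect the main obstacle to be not a single deep step but the case bookkeeping around the small cardinalities --- Boolean subalgebras with at most four elements and the non-proper Boolean shadows $\down x$ --- which fall outside the reach of Propositions~\ref{dhd}(1) and~\ref{zzzz} and have to be checked directly; everything else is a routine unwinding of the definitions of $T_B$, $S_B$, $\oDir_S(X)$, and the atomisticity of $X$.
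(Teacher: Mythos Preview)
Your proposal is correct and follows essentially the same route as the paper: both arguments hinge on Proposition~\ref{dhd} and Proposition~\ref{zzzz}, and both identify $T_{\Delta(S)}$ with the set of basic elements of the shadow $S$. The only organizational difference is that the paper proves $\Gamma$ is an order embedding (via join-irreducibility of basic elements) and then surjective, whereas you build the inverse $\Delta$ explicitly and check the two composites; your atomisticity argument for $\Gamma\circ\Delta=\mathrm{id}$ in fact spells out a step the paper leaves implicit.
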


\begin{proof}
The map is well-defined by Proposition~\ref{dhd}. If $B_1\subseteq B_2$, then surely $S_{B_1}\subseteq S_{B_2}$, so $\Gamma$ preserves order. Suppose $S_{B_1}\subseteq S_{B_2}$. Since elements of $S_{B_2}$ are joins of elements of $T_{B_2}$, and elements of $T_{B_1}$ are basic and hence join irreducible, this implies that $T_{B_1}\subseteq T_{B_2}$ and this gives that $B_1\subseteq B_2$. So $\Gamma$ is an order embedding. 

To see that it is surjective, let $S$ be a Boolean shadow of $X$. If $S$ is either $\{\bot\}$ or $\{\bot,x\}$ for some atom $x$ of $X$, then $S=\Gamma(B)$ where $B=\{0,1\}$ or $B=\{0,d,d',1\}$ where $d$ is a direction for~$x$ respectively. Suppose that $S$ has enough directions and $\oDir(S)$ is a Boolean orthoalgebra. Let $B=\oDir_S(X)$. By Proposition~\ref{zzzz}, $B$ is a subalgebra of $\oDir(X)$ and the restriction map from $B$ to $\oDir(S)$ is an isomorphism, so $B$ is a Boolean subalgebra of $\oDir(X)$. Then $\Gamma(B)=S_B$ is the shadow generated by $T_B$, and the elements of $T_B$ are those basic elements $x$ of $X$ that have a direction $d\in B=\oDir_S(X)$. By definition, the elements of $\oDir_S(X)$ are those directions that are for some basic $x\in S$. Thus $T_B$ consists of the basic elements in $S$, so $\Gamma(B)=S$. 
\end{proof}

\begin{definition}
Let $X$ be an orthodomain.
We say $X$ is \emph{short} if $X=X^*$. We say $X$ is \emph{tall} if $m=\bigvee S$ exists and $\down m\cap X^*=S$ for each Boolean shadow $S$. 
\end{definition}

\begin{proposition}\label{weep}
Let $A$ be a proper orthoalgebra. Then $X=\BSub(A)$ is a tall orthodomain with enough directions. 
\end{proposition}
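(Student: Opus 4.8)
The orthodomain and enough-directions parts are immediate: $X=\BSub(A)$ is an orthodomain by Proposition~\ref{prop:orthodomain}, and it has enough directions by Theorem~\ref{hhh}. So the real content is tallness, and the plan is to produce, for each Boolean shadow $S$ of $X$, an explicit element $m\in X$ with $\bigvee S=m$ and $\down m\cap X^*=S$. I would start by fixing a Boolean shadow $S$; by Proposition~\ref{gamp} I may write $S=S_B=\Gamma(B)$ for some Boolean subalgebra $B$ of $\oDir(X)$, and by Theorem~\ref{hhh} the map $\xi\colon A\to\oDir(X)$, $a\mapsto d_a$, is an orthoalgebra isomorphism; I then set $m:=\xi^{-1}[B]$, which is a Boolean subalgebra of $A$, i.e.\ an element of $X$.

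Next I would pin down $T_B$ and $S_B$ in terms of $m$. Since every direction of $X$ is some $d_a=\xi(a)$ with $d_a$ a direction for $x_a$ (Theorem~\ref{hhh}), the set $T_B$ of basic elements of $X$ carrying a direction lying in $B$ is exactly $\{x_a : a\in m\}$, each a subalgebra of $m$. The downset $\down m\subseteq X$ is closed under any join that exists in $X$, because the join in $\BSub(A)$ of subalgebras of $m$ is the subalgebra they generate, which stays inside $m$; hence the closure $S_B$ of $T_B$ under existing joins in $X^*$ satisfies $S_B\subseteq\down m\cap X^*$. For the reverse inclusion I would take $D\in\down m\cap X^*$; since $\down D$ is a Boolean domain of height at most $3$, $D$ is a finite Boolean subalgebra of $m$ with at most four atoms $p_1,\dots,p_k$, so $D$ is generated by $\{p_1,\dots,p_k\}\subseteq m$ and therefore $D=\bigvee_X\{x_{p_1},\dots,x_{p_k}\}$; as $D\in X^*$ this is also the join computed in $X^*$, each $x_{p_i}$ lies in $T_B\subseteq S_B$, and $S_B$ is closed under existing joins in $X^*$, whence $D\in S_B$. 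This gives $S_B=\down m\cap X^*$.

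Finally I would verify that $m=\bigvee S$ in $X$. Since $S_B\subseteq\down m$ we get $\bigcup S_B\subseteq m$, while $T_B\subseteq S_B$ gives $\bigcup S_B\supseteq\bigcup_{a\in m}x_a=m$; hence $\bigcup S_B=m$, and as $m$ is itself a Boolean subalgebra of $A$ it is the least upper bound of $S_B$ in $\BSub(A)$, i.e.\ $\bigvee S=\bigvee S_B=m$ exists. Combined with $\down m\cap X^*=S_B=S$ this shows $X$ is tall.

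The step I expect to be the main obstacle is the identification $S_B=\down m\cap X^*$, and in particular its $\supseteq$ half: one has to use condition~(3) of Definition~\ref{orthodomain} together with the classification of small Boolean domains from Section~\ref{sec:Booleandirections} to see that the height-$\le 3$ elements of $X$ below $m$ are precisely the finite subalgebras of $m$ with at most four atoms, and then note that each such subalgebra is realised as a \emph{single} join in $X^*$ of the basic elements $x_{p_i}$ attached to its atoms, so that mere closure under existing joins suffices. A secondary point needing care is that $\bigvee S$ genuinely exists in $X$ — joins in $\BSub(A)$ need not exist in general (Example~\ref{ex:frasercube}) — which is why the argument checks directly that $\bigcup S_B$ is the Boolean subalgebra $m$.
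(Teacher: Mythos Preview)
Your proposal is correct and follows essentially the same route as the paper: pull back along the isomorphism $\xi\colon A\to\oDir(X)$ of Theorem~\ref{hhh} to obtain a Boolean subalgebra $m\subseteq A$, and verify that $m=\bigvee S$ with $\down m\cap X^*=S$. The only cosmetic difference is that you go through Proposition~\ref{gamp} to write $S=S_B$ and set $m=\xi^{-1}[B]$, whereas the paper takes $m=\xi^{-1}[\oDir_S(X)]$ directly (these coincide, since $\Gamma$ is a bijection with inverse $S\mapsto\oDir_S(X)$); your verification of $\down m\cap X^*=S$ via the atoms of the finite Boolean algebra $D$ is a more explicit version of the paper's one-line appeal to atomisticity and closure under joins.
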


\begin{proof}
By Theorem~\ref{hhh}, $X$ is an orthodomain with enough directions, and there is an ortho-algebra isomorphism $\xi \colon A\to\oDir(X)$ where $\xi(a) = d_a$ is the direction for $x_a$ with 
\[d_a(w)=(\down_w a\cup\up_w a',\down_w a'\cup\up_w a)\text.\]
Let $S$ be a Boolean shadow of $X$. If $S=\down x$ for a basic element $x$ it is clear that $x=\bigvee S$ exists and $\down x\cap X^*=S$. Assume that $S$ has enough directions and $\oDir(S)$ is Boolean. By Proposition~\ref{zzzz} 
$\oDir(S)\simeq\oDir_{S}(X)$, hence $\oDir_S(X)$ is Boolean. Let $m=\xi^{-1}[\oDir_S(X)]$. Then $m$ is a Boolean subalgebra of $A$, and consists of all the $a\in A$ with $\xi(a)\in\oDir_S(X)$, hence all $a\in A$ with $d_a\in\oDir_S(X)$, and therefore all $a\in A$ with $x_a\in S$. Since each basic element of $X$ is of the form $x_a$ given by some $a\in A$, we have for a basic element $x\in X$, that $x\in S$ exactly when $x\leq m$. Since $S$ is a downset and $X$ is atomistic $m=\bigvee S$, and since $S$ is closed under existing joins in $X^*$ we have $\down m \cap X^*=S$. Thus $X$ is tall. 
\end{proof}

The following proposition says that the situation described in Remark~\ref{r:nonisomorphic} cannot happen for tall orthodomains.

\begin{proposition}\label{lopp}
If $X$ is a tall orthodomain with enough directions, then $X\simeq\BSub(\oDir(X))$. 
\end{proposition}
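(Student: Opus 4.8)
The plan is to factor the claimed isomorphism through Boolean shadows. Proposition~\ref{gamp} already supplies an order isomorphism $\Gamma\colon\BSub(\oDir(X))\to\BShad(X)$, so it will suffice to produce an order isomorphism $\Psi\colon X\to\BShad(X)$; then $\Gamma^{-1}\circ\Psi$ is the map we want. The obvious candidate is $\Psi(x)=\down x\cap X^*$, with tentative inverse $S\mapsto\bigvee S$. Indeed, tallness is precisely the assertion that $m=\bigvee S$ exists and $\down m\cap X^*=S$ for every Boolean shadow $S$, so once $\Psi$ is shown to be well defined these two maps will be mutually inverse on the nose.

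The first thing to check is that $\down x\cap X^*$ is genuinely a Boolean shadow. That it is a downset of $X^*$ closed under existing joins in $X^*$ is immediate, since $x$ is an upper bound of any subset of $\down x$. Because $\down x$ is a downset of $X$, heights computed in $\down x$ agree with those in $X$, so $\down x\cap X^*=(\down x)^*$. If $x$ is basic, this is just $\down x$, a Boolean shadow by clause~(1) of the definition. If $x$ has height at least $2$, then $\down x$ is a Boolean domain (axiom~(3) of orthodomains) with more than two elements; its top element $x$ is then not basic, so $\down x$ is proper, and by Corollary~\ref{Cor:no_of_Bool_directions} every basic element of $\down x$ has a direction, so $\down x$ is an orthodomain with enough directions. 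Corollary~\ref{dsd} then yields that $(\down x)^*$ has enough directions with $\oDir((\down x)^*)\simeq\oDir(\down x)$, and $\oDir(\down x)$ is Boolean by Corollary~\ref{Cor:booleanobjects}; hence $\down x\cap X^*$ is a Boolean shadow by clause~(2). So $\Psi$ is well defined.

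It then remains to verify that $\Psi$ is an order isomorphism. Monotonicity is clear. For order-reflection, use that $X$ is atomistic: every atom below $x$ lies in $\down x\cap X^*$ and $x$ is their join, so $\down x\cap X^*\subseteq\down y\cap X^*$ forces $x\le y$; in particular $\Psi$ is injective. Surjectivity is immediate from tallness: for a Boolean shadow $S$ we have $\Psi(\bigvee S)=\down(\bigvee S)\cap X^*=S$. Thus $\Psi$, and hence $\Gamma^{-1}\circ\Psi\colon X\to\BSub(\oDir(X))$, is an order isomorphism.

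I expect the only nontrivial point to be the well-definedness step, and within it the observation that a Boolean domain with more than two elements automatically has enough directions — this is what allows Corollaries~\ref{dsd} and~\ref{Cor:booleanobjects} to be applied to $\down x$. Everything else is routine bookkeeping with atomisticity and the definition of tallness.
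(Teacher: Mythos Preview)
Your proof is correct and follows essentially the same route as the paper's: both factor through $\BShad(X)$ via Proposition~\ref{gamp}, define the map $x\mapsto\down x\cap X^*$ with inverse $S\mapsto\bigvee S$, verify well-definedness by splitting into the basic and non-basic cases (using that $\down x$ is a proper Boolean domain in the latter and invoking Corollaries~\ref{dsd} and~\ref{Cor:booleanobjects}), and then use atomisticity and tallness to conclude bijectivity. Your presentation is slightly more explicit in citing Corollary~\ref{Cor:no_of_Bool_directions} for why a proper Boolean domain has enough directions, but the argument is the same.
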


\begin{proof}
By Proposition~\ref{gamp}, we have $\BSub(\oDir(X))\simeq\BShad(X)$. Define $\psi\colon\BShad(X)\to X$ by $\psi(S)=\bigvee S$, and $\lambda\colon X\to\BShad(X)$ by $\lambda(m)=\down m\cap X^*$. Since $X$ is tall, $\bigvee S$ exists and $\down (\bigvee S)\cap X^*=S$. For any $w\in X$ we have $\down w\cap X^*$ is a downset of $X^*$ that is closed under existing joins in $X^*$, hence is a shadow. If $w$ is basic in $X$, then by definition $\down w$ is a Boolean shadow. Otherwise $\down w$ is a proper Boolean domain, hence has enough directions. Since $\down w\cap X^*=(\down w)^*$, Corollary~\ref{dsd} gives that $\down w\cap X^*$ is an orthodomain with enough directions and that $\oDir(\down w\cap X^*)$ is isomorphic to $\oDir(\down w)$, and hence is Boolean. In any case, $\down w\cap X^*$ is a Boolean shadow of $X$. So $\psi$ and $\lambda$ are well-defined. Since $\down (\bigvee S)\cap X^*=S$ we have $\lambda\circ\psi=\mathrm{id}$. For $w\in X$, by atomisticity $w=\bigvee(\down w\cap X^*)$, so $\psi\circ\lambda=\mathrm{id}$. Thus $\BShad(X)\simeq X$, so $\BSub(\oDir(X))\simeq X$. 
\end{proof}

\begin{theorem}
The following are equivalent for an 
orthodomain~$X$:
\begin{enumerate}
\item $X$ is tall and has enough directions;
\item $X\simeq\BSub(A)$ for a proper orthoalgebra $A$.
\end{enumerate}
When these conditions hold, $\oDir(X)$ is an orthoalgebra and $X\simeq\BSub(\oDir(X))$. 
\end{theorem}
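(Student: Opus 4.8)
The plan is to derive this purely by assembling earlier results, since the statement is essentially a repackaging of Theorem~\ref{isOA}, Proposition~\ref{weep}, and Proposition~\ref{lopp}.

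First I would handle the implication $(2)\Rightarrow(1)$. Assume $X\simeq\BSub(A)$ for a proper orthoalgebra $A$. Proposition~\ref{weep} already shows $\BSub(A)$ is a tall orthodomain with enough directions, so it remains only to observe that these three properties transport along a poset isomorphism $f\colon X\to\BSub(A)$. For ``orthodomain'' this is immediate, since each defining clause (existence of directed joins, atomisticity and compactness of atoms, each $\down x$ a Boolean domain, the covering condition on pairs of atoms) is order-theoretic. For ``enough directions'' I would note that $f$ carries basic elements to basic elements and induces, for each basic $x$, a bijection between directions of $X$ for $x$ and directions of $\BSub(A)$ for $f(x)$ by acting coordinatewise, $d\mapsto f\circ d\circ f^{-1}$, using that all three conditions of Definition~\ref{scary} are phrased entirely in terms of the order and of meets and joins taken inside principal ideals. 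For ``tall'' I would note that $f$ restricts to a bijection $X^*\to\BSub(A)^*$, sends Boolean shadows to Boolean shadows (the notion of Boolean shadow is built from order data together with $\oDir$ of principal downsets, both of which transport), and preserves existence of joins; hence the defining condition of tallness passes across. This yields $(1)$.

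Next I would prove $(1)\Rightarrow(2)$ together with the closing sentence in a single step. Assume $X$ is tall with enough directions. By Theorem~\ref{isOA}, $\oDir(X)$ is a proper orthoalgebra; in particular it is an orthoalgebra, which is the first assertion of the final clause. By Proposition~\ref{lopp}, $X\simeq\BSub(\oDir(X))$, which is both the remaining assertion of the final clause and, taking $A=\oDir(X)$, the required witness for $(2)$.

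I do not expect a genuine obstacle here: the substantive content — that directions on an orthodomain with enough directions form a proper orthoalgebra, that $\BSub$ of a proper orthoalgebra is tall with enough directions, and the round-trip isomorphism $X\simeq\BSub(\oDir(X))$ — is already established in Theorem~\ref{isOA} and Propositions~\ref{weep} and~\ref{lopp}. The only point demanding care is the transport-of-structure argument in $(2)\Rightarrow(1)$; this is routine but should be made explicit so that ``tall orthodomain with enough directions'' is visibly an isomorphism-invariant notion, and it is worth double-checking that the induced bijection on directions really respects all three clauses of Definition~\ref{scary}, particularly clause~(3) with its reference to the join $y\vee z$ existing and being covered by $y$ and $z$.
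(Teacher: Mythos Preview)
Your proposal is correct and matches the paper's own proof, which simply cites Theorem~\ref{isOA} and Proposition~\ref{lopp} for $(1)\Rightarrow(2)$ and Theorem~\ref{hhh} together with Proposition~\ref{weep} for $(2)\Rightarrow(1)$. The transport-of-structure discussion you include is sound but more explicit than the paper, which tacitly treats ``tall orthodomain with enough directions'' as an isomorphism-invariant notion.
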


\begin{proof}
The direction (1) $\Rightarrow$ (2) follows from  Theorem~\ref{isOA} and Proposition~\ref{lopp}. The direction (2) $\Rightarrow$ (1) follows from Theorem~\ref{hhh} and Proposition~\ref{weep}. 
\end{proof} 

\begin{remark}\label{ruut}
The previous theorem only characterizes orthodomains of the form $\BSub(A)$ for a proper orthoalgebra $A$. This can be extended to orthoalgebras $A$ with small blocks as follows. If $A$ has two or fewer elements, then $\BSub(A)$ is a 1-element orthodomain. If $A$ has more than two elements and all its blocks are small, then $\BSub(A)$ is an orthodomain where all elements are basic, and each orthodomain where all elements are basic arises this way as the horizontal sum of 4-element Boolean algebras, one for each atom of the orthodomain. Otherwise not all blocks of $A$ are small. Let $\tA $ be the orthoalgebra obtained by removing small blocks from $A$. Then $\BSub(\tA )$ is a tall orthodomain with enough directions, and $\BSub(A)$ is obtained from this by adding a maximal atom to $\BSub(\tA )$ for each small block of $A$. So the orthodomains isomorphic to $\BSub(A)$ for some orthoalgebra $A$ are exactly those that have one element, have all their elements basic, or are constructed by adding a set of maximal atoms to a tall orthodomain with enough directions. 
\end{remark}

\begin{theorem}
The following are equivalent for an orthodomain~$X$:
\begin{enumerate}
\item $X$ is short and has enough directions;
\item $X\simeq\BSub(A)^*$ for a proper orthoalgebra $A$.
\end{enumerate}
When these conditions hold, $\oDir(X)$ is an orthoalgebra and $X\simeq\BSub(\oDir(X))^*$. 
\end{theorem}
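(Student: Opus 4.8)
The plan is to run the argument in parallel with the preceding (tall) theorem, using the truncation operator $(-)^*$ and the isomorphism $\Gamma$ of Proposition~\ref{gamp}.

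The direction $(2)\Rightarrow(1)$ is immediate: if $A$ is proper and $X\simeq\BSub(A)^*$, then $\BSub(A)$ is a tall orthodomain with enough directions by Proposition~\ref{weep}, so by Corollary~\ref{dsd} its truncation $\BSub(A)^*$ is an orthodomain with enough directions, and it is short since $(Y^*)^*=Y^*$ for every orthodomain $Y$.

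For $(1)\Rightarrow(2)$, let $X$ be short with enough directions and put $A=\oDir(X)$, which is a proper orthoalgebra by Theorem~\ref{isOA}; this already supplies the first half of the closing assertion, so it remains to construct an isomorphism $X\simeq\BSub(A)^*=\BSub(\oDir(X))^*$. By Proposition~\ref{gamp}, $\Gamma\colon\BSub(\oDir(X))\to\BShad(X)$, $B\mapsto S_B$, is an order isomorphism, and since order isomorphisms preserve height it restricts to an isomorphism $\BSub(\oDir(X))^*\simeq\BShad(X)^*$; hence it suffices to show $X\simeq\BShad(X)^*$. I would define $\lambda\colon X\to\BShad(X)$ by $\lambda(x)=\down x$ (legitimate as $X=X^*$) and check that: (a) $\down x$ is a Boolean shadow --- by clause~(1) of the definition if $x$ is basic, and otherwise because $\down x$ is a proper Boolean domain, hence has enough directions with $\oDir(\down x)$ Boolean (Corollary~\ref{Cor:booleanobjects}); (b) $\down x\in\BShad(X)^*$, since $\Gamma^{-1}$ maps the principal ideal of $\down x$ in $\BShad(X)$ onto $\Sub(\oDir(\down x))$, whose height equals that of $x$ in $X$ and so is at most $3$; and (c) $\lambda$ is an order embedding, which is clear. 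The remaining, and only substantial, point is that $\lambda$ is onto $\BShad(X)^*$: given a Boolean shadow $S$ of height at most $3$ in $\BShad(X)$, set $B=\Gamma^{-1}(S)$, a Boolean subalgebra of $\oDir(X)$ with at most $16$ elements; if $|B|\le4$ then $S=\down y$ for a basic $y$ as in the proof of Proposition~\ref{dhd}(2), and if $4<|B|\le16$ then Proposition~\ref{dhd}(1) gives $S=S_B$ with $S_B$ proper and $\oDir(S_B)\simeq B$, and one must show $S_B$ has a greatest element $w$, whence $S=\down w=\lambda(w)$. Once $\lambda$ is bijective onto $\BShad(X)^*$ the proof is complete, and the closing statement $X\simeq\BSub(\oDir(X))^*$ is exactly the isomorphism constructed.

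The main obstacle is thus the claim that a short proper orthodomain with enough directions whose direction algebra is an $8$- or $16$-element Boolean algebra has a top. I would prove this from the structure already developed: the count $|\oDir(S_B)|=2+2k$, with $k$ the number of atoms of $S_B$, forces $k=3$ (resp.\ $k=7$), and then the exchange property (Proposition~\ref{p:exchange}) together with the $\oplus$-operation on directions --- equivalently, Theorem~\ref{hhh} applied to the single block $\oDir(S_B)$ --- pins $S_B$ down to the subalgebra lattice of an $8$- (resp.\ $16$-) element Boolean algebra, which has a greatest element. The $8$-element case is straightforward: the closure of the three atoms under joins is just $\down w$, where $w$ is the common cover of those atoms supplied by the exchange property. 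The $16$-element case is the delicate one: there one must produce the requisite height-$3$ element of $S_B$ as a join $w_1\vee w_2$ of two height-$2$ elements that share an atom on which a suitable direction takes the opposite values $(x,w_1)$ and $(w_2,x)$, so that clause~(3) of Definition~\ref{scary} supplies the join. This is routine but fiddly combinatorics, and it is the only step of the proof not obtained by direct citation of an earlier result.
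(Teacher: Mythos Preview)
Your proposal is correct and follows essentially the same route as the paper. In particular, the paper also reduces via $\Gamma$ to identifying $\BShad(X)^*$ with the principal downsets $\down w$, and in the $16$-element case it produces the height-$3$ element exactly as you outline: taking atoms $d_1,\dots,d_4$ of $B$, it forms $z=x_1\vee x_2$ and $y=x_3\vee x_4$, observes that $d_3\oplus d_4=(d_1\oplus d_2)'$ forces the third atom under $y$ to coincide with the third atom $x$ under $z$, and then $(d_1\oplus d_2)(z)=(z,x)$ and $(d_1\oplus d_2)(y)=(x,y)$ trigger clause~(3) of Definition~\ref{scary} to yield $w=y\vee z$ of height~$3$, with $S_B=\down w$ by counting.
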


\begin{proof}
The direction (2) $\Rightarrow$ (1) follows from Theorem~\ref{hhh} and Corollary~\ref{dsd}. For the converse, assume (1). Theorem \ref{isOA} assures that $\oDir(X)$ is a proper orthoalgebra. By Proposition~\ref{gamp} there is an isomorphism $\Gamma\colon\BSub(\oDir(X))\to\BShad(X)$ given by $\Gamma(B)=S_B$ where $S_B$ is from Definition~\ref{arw}. Then $\Gamma$ restricts to an isomorphism of posets $\Gamma'\colon\BSub(\oDir(X))^*\to\BShad(X)^*$. We will show that $\BShad(X)^*$ is equal to the poset of principal downsets $\down w$ where $w\in X$, hence is isomorphic to $X$. This will show that $\BSub(\oDir(X))^*$ is isomorphic to $X$, establishing (2) and the further remark. 

Suppose $w\in X$. If $w$ is basic, then by definition $\down w$ is a Boolean shadow that clearly has height at most 1 in $\BShad(X)$. Otherwise $\down w$ is a Boolean domain with enough directions and $\oDir(\down w)$ is a Boolean algebra. Thus $\down w$ is a Boolean shadow. Since $X$ is short, $w$ has height at most 3, so $\down w$ has height at most 3 in $\BShad(X)$, so belongs to $\BShad(X)^*$. 

From the isomorphism $\Gamma'$, the elements of $\BShad(X)^*$ are the $S_B$ where $B$ is a Boolean subalgebra of $\oDir(X)$ with at most $16$ elements. We must show that all such $S_B$ are equal to $\down w$ for some $w\in X$. If $B$ has 4 or fewer elements then $S_B$ is equal to $\down w$ for a basic element $w\in X$. Suppose $B$ has $8$ elements. Let $d_1,d_2,d_3$ be the directions that are the atoms of $B$ and assume $d_i$ is a direction for the basic element $x_i$ of $X$ for $i=1,2,3$. Since $d_1$ is orthogonal to $d_2$ we have that $x_1,x_2$ are near, so have a join $w=x_1\vee x_2$, and this belongs to $S_B$. By simple counting, $S_B$ must be equal to $\down w$. Finally, suppose that $B$ has $16$ elements and $d_1,\ldots,d_4$ are the atoms of $B$ with $d_i$ a direction for $x_i$ for $i=1,\ldots,4$. Since $d_1,d_2$ are orthogonal $x_1,x_2$ are near, so $z=x_1\vee x_2$ exists. Suppose $x$ is the third atom beneath $z$. Then $(d_1\oplus d_2)(z)=(z,x)$. Let $y=x_3\vee x_4$. Since $d_3\oplus d_4=(d_1\oplus d_2)'$ we have that the third atom under $y$ is $x$. Also $(d_3\oplus d_4)(y)=(y,x)$, so $(d_1\oplus d_2)(y)=(x,y)$. Then by condition~(3) of Definition~\ref{scary} $w=y\vee z$ exists and has height 3. Then simple counting gives that $S_B=\down w$. 
\end{proof}

\begin{remark}
The previous theorem extends to small blocks as in Remark~\ref{ruut}. This provides a bijective correspondence between isomorphism classes of orthoalgebras and isomorphism classes of short orthodomains where each basic element has a direction. Here we use the obvious fact that a maximal atom has a direction. 
\end{remark}

\begin{remark}
We have shown that for an orthoalgebra $A$, the poset $\BSub(A)^*$ of elements of height $3$ or less in $\BSub(A)$ is sufficient to reconstruct $A$. We will show one cannot make due with the order structure of the elements of height $2$ or less. Specifically, for an orthodomain~$X$, let $X^\dagger$ be the poset of elements of height $2$ or less in $X$. We will give two non-isomorphic orthoalgebras $A$ and $C$ where $\BSub(A)^\dagger$ and $\BSub(C)^\dagger$ are isomorphic. 

Let $A$ be the $16$-element Boolean algebra shown in Figure~\ref{fig1}. The diagram below shows the Fano plane minus a single line, the circle connecting the middle elements of each side. This figure gives a poset $P$ with bottom $\bot$, seven atoms given by the vertices of this figure, and six elements of height $2$ given by the lines of the figure, with the understanding that a vertex lies beneath a line if it lies on the line. Then $P$ is isomorphic to the elements $\BSub(A)^\dagger$ of height $2$ or less in $\BSub(A)$. 

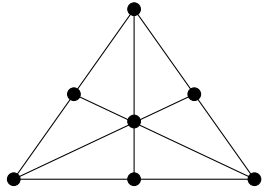
\begin{figure}[h]
\begin{center}
\begin{tikzpicture}[scale = 1.6]
\draw[fill] (0,0) circle(1.5pt); \draw[fill] (1,0) circle(1.5pt); \draw[fill] (2,0) circle(1.5pt); \draw[fill] (1,1.414) circle(1.5pt); \draw[fill] (1,.48) circle(1.5pt); \draw[fill] (.5,.707) circle(1.5pt); \draw[fill] (1.5,.707) circle(1.5pt); 
\draw[thin] (0,0)--(2,0)--(1,1.414)--(0,0)--(1.5,.707); \draw[thin] (1,1.414)--(1,0); \draw[thin] (.5,.707)--(2,0);
\end{tikzpicture}
\end{center}
\caption{The hypergraph view of the subalgebras of a $16$-element Boolean algebra}
\label{fig:hyp}
\end{figure}

However, it follows from the usual Greechie hypergraph representation of orthoalgebras (see \cite{N:handQL} with a correction~\cite{Navara}) that this poset also represents the atom structure of an orthoalgebra $C$. This means that there is an orthoalgebra $C$ whose blocks all have $8$ elements where the atoms of $C$ are the vertices of this figure, and the sets of atoms forming a block of $C$ are exactly the vertices lying on a line in the figure. Then $\BSub(C)$ is isomorphic to $P$, and as every element of it is of height 2 or less, $\BSub(C)^\dagger=\BSub(C)$. Thus $\BSub(A)^\dagger \simeq\BSub(C)^\dagger$, but $A\not\simeq C$. 

It turns out that such pathologies do not exist for orthomodular posets --- an orthomodular poset $A$ is determined up to isomorphism by the elements of height at most 2 in $\BSub(A)$. See Theorem~\ref{OMPdetermined} for details.
\end{remark}

\section{Hypergraphs}\label{sec:hypergraphs}

In this section, we begin the process of making a categorical view of the correspondence between orthoalgebras and their structures of Boolean subalgebras. Here we refine the object level correspondence suggested at the end of the previous section between orthoalgebras $A$ and their posets $\BSub(A)^*$ of Boolean subalgebras having at most $16$ elements. We treat these posets graph-theoretically, to be precise as certain hypergraphs, in a way that seems more intuitive. The next section introduces morphisms between these hypergraphs and relates the resulting categories. 

\begin{definition}
A {\em hypergraph} is a triple $\mathcal{G}=(P,L,T)$ consisting of a set $P$ of {\em points}, a set $L$ of {\em lines}, and a set $T$ of {\em planes}. A line is a set of 3 points, and a plane is a set of 7 points where the restriction of the lines to these 7 points is as shown below. 
\vspace{2ex}

\begin{center}
\begin{tikzpicture}[scale = 1]
\draw[fill] (-6,.6) circle(1.5pt); \draw[fill] (-4,.6) circle(1.5pt);\draw[fill] (-3,.6) circle(1.5pt);\draw[fill] (-2,.6) circle(1.5pt);
\draw[thin] (-4,.6)--(-2,.6);
\node at (-6,-.5) {point}; \node at (-3,-.5) {line}; \node at (1,-.5) {plane};
\draw[fill] (0,0) circle(1.5pt); \draw[fill] (1,0) circle(1.5pt); \draw[fill] (2,0) circle(1.5pt); \draw[fill] (1,1.414) circle(1.5pt); \draw[fill] (1,.48) circle(1.5pt); \draw[fill] (.5,.707) circle(1.5pt); \draw[fill] (1.5,.707) circle(1.5pt); 
\draw[thin] (0,0)--(2,0)--(1,1.414)--(0,0)--(1.5,.707); \draw[thin] (1,1.414)--(1,0); \draw[thin] (.5,.707)--(2,0);
\end{tikzpicture}
\end{center}
\end{definition}
 
Note that a set of 7 points, where the lines among them are as in a plane, need not be a plane. Having lines among 7 points as shown is a necessary condition to be a plane, but not a sufficient one. If one is drawing a picture of a hypergraph, planes should be placed in circles to indicate that they are indeed planes. Note also that a plane has two types of points. One type, called a \emph{corner point}, lies on 3 lines of the plane; the other type, called an \emph{edge point}, lies on 2 lines of the plane. Each plane has 4 corner points (the middle is a corner) and 3 edge points. 

\begin{definition}
  A subset $S$ of the points of a hypergraph $\mathcal{G}$ is a \emph{subgraph} if it is closed under lines and planes:
  \begin{itemize}
    \item if two points of a line belong to $S$, then so does the third point; 
    \item if two points, that do not lie on a line but do lie in a plane, belong to $S$, then so do all the other points of that plane. 
  \end{itemize}
  The smallest subgraph containing a set of points is called the subgraph it {\em generates}. 
\end{definition}

Observe that a subgraph $S$ of $\mathcal{G}$, together with the sets of lines and planes of $\mathcal{G}$ that contain only elements from $S$, forms a hypergraph. Each point, line, and plane of any hypergraph is a subgraph, but there can be others. 

\begin{definition}
  Each orthoalgebra $A$ defines a hypergraph $\mathcal{G}(A)=(P,L,T)$ by:
  \begin{itemize}
    \item points are the atoms of $\BSub(A)$;
    \item three points form a line if they are the atoms under an element of $\BSub(A)$ of height~$2$, so if they are near;
    \item seven points form a plane if they are the atoms under an element of $\BSub(A)$ of height~$3$. 
  \end{itemize}
  A hypergraph $\mathcal{G}$ is an \emph{orthohypergraph} if it is isomorphic to $\mathcal{G}(A)$ for some orthoalgebra $A$. It is a Boolean hypergraph if it is isomorphic to $\mathcal{G}(B)$ for a Boolean algebra $B$. 
\end{definition}

Notice that the points of $\mathcal{G}(A)$ are all $x_a$, $a\in A\setminus\{0,1\}$.
The Boolean hypergraph of a 4-element Boolean algebra is a single point, that of an $8$-element Boolean algebra is 3 points arranged in a single line, and that of a $16$-element Boolean algebra is 7 points arranged into a plane. The corner points in the plane correspond to Boolean subalgebras of the form $x_a$ for $a$ an atom in the ambient 16-element Boolean algebra. The edge points correspond to Boolean subalgebras of the form $x_a$ where $a$ is an element of height 2 in the ambient Boolean algebra. See also Figure \ref{fig1}, where the four left atoms correspond to corner points, and the three right atoms to edge points.

\begin{theorem}\label{qwqq}
  For an orthoalgebra $A$, the poset $\BSub(A)^*$ of Boolean subalgebras of $A$ of height $3$ or less can be reconstructed from $\mathcal{G}(A)$. Thus, if $A$ is proper, then up to isomorphism $A$ can be reconstructed from its hypergraph $\mathcal{G}(A)$ as the directions of $\BSub(A)^*$. 
\end{theorem}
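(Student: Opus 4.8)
The plan is to reconstruct $\BSub(A)^*$ directly from the combinatorial data $\mathcal G=\mathcal G(A)=(P,L,T)$ by assembling a poset out of that data, then exhibiting an order isomorphism from it to $\BSub(A)^*$; the displayed consequence then follows by feeding this into the reconstruction already proved in Theorem~\ref{hhh}. Concretely, I would form the poset $P(\mathcal G)$ on the disjoint union $\{\bot\}\sqcup P\sqcup L\sqcup T$, ordered by declaring $\bot$ below everything, a point $p$ below a line $\ell$ iff $p\in\ell$, a point $p$ below a plane $\pi$ iff $p\in\pi$, and a line $\ell$ below a plane $\pi$ iff $\ell\subseteq\pi$ as sets of points. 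Reflexivity and antisymmetry are immediate, and the only transitivity instance needing a word is $p\le\ell\le\pi$, where $p\in\ell\subseteq\pi$ gives $p\le\pi$. Note that $P(\mathcal G)$ is built from $\mathcal G$ alone, using the \emph{given} set $T$ of planes — not trying to detect planes from line incidences, which the remark following the definition of hypergraph warns is not possible.

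Next I would define $\Phi\colon P(\mathcal G(A))\to\BSub(A)^*$ by $\Phi(\bot)=\{0,1\}$, $\Phi(x_a)=x_a$ for a point $x_a$, $\Phi(\ell)=$ the unique $8$-element Boolean subalgebra whose atoms are the three points of $\ell$, and $\Phi(\pi)=$ the unique $16$-element Boolean subalgebra whose atoms are the seven points of $\pi$. The subalgebra for $\ell$ exists and is unique because the three points are pairwise near, so any two of them generate such a subalgebra, and a finite Boolean subalgebra is determined by its set of atoms; the subalgebra for $\pi$ exists precisely because, by the definition of $\mathcal G(A)$, planes are the atom-sets of the height-$3$ elements of $\BSub(A)$, and it is unique for the same reason. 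Since every element of $\BSub(A)^*$ has height $0,1,2,$ or $3$, and since — using Definition~\ref{orthodomain}(3), Proposition~\ref{p:exchange}, and Theorem~\ref{hui} — the elements of $\BSub(A)$ of heights $1,2,3$ are exactly the $4$-, $8$-, and $16$-element Boolean subalgebras, which biject with points, lines, and planes via their atom sets, $\Phi$ is a bijection.

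For the order: the cases involving $\bot$ are trivial, and $x_a\le B$ in $\BSub(A)$ iff $a\in B$ iff $x_a$ is an atom beneath $B$, which handles point-below-line and point-below-plane. For a line $\ell=\{x_a,x_b,x_c\}$ and a plane $\pi$ with corresponding $16$-element subalgebra $u$: if $\ell\subseteq\pi$ then $a,b,c\in u$, so the $8$-element subalgebra generated by $a,b,c$ lies in $u$, i.e.\ $\Phi(\ell)\le\Phi(\pi)$; conversely, if $\Phi(\ell)\subseteq u$ then $x_a,x_b,x_c\le u$ are atoms beneath $u$, so $\ell\subseteq\pi$. Thus $\Phi$ and $\Phi^{-1}$ both preserve order, and since no further comparabilities occur in $P(\mathcal G)$ (distinct points, distinct lines, and distinct planes are pairwise incomparable, being of equal height), $\Phi$ is an order isomorphism, proving that $\BSub(A)^*$ is reconstructed from $\mathcal G(A)$. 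For the final clause, if $A$ is proper then Theorem~\ref{hhh} gives that $\BSub(A)$ has enough directions with $A\simeq\oDir(\BSub(A))$, and Corollary~\ref{dsd} gives $\oDir(\BSub(A))\simeq\oDir(\BSub(A)^*)$; composing, $A\simeq\oDir(\BSub(A)^*)$, so applying the construction above to $\mathcal G(A)$ and then taking directions recovers $A$ up to isomorphism.

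The step I expect to be the main obstacle is the bijectivity bookkeeping in the middle: pinning down cleanly that a Boolean subalgebra of $A$ has height $1$, $2$, or $3$ exactly when it has $4$, $8$, or $16$ elements — so that the four ``levels'' of $P(\mathcal G)$ genuinely correspond to those of $\BSub(A)^*$ — and that such a subalgebra is recovered uniquely from its set of atoms, so that the assignments $\ell\mapsto\Phi(\ell)$ and $\pi\mapsto\Phi(\pi)$ are well defined and injective. Once that is in place, the order comparisons reduce to the short case analysis above.
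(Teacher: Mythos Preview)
Your proposal is correct and follows essentially the same route as the paper: form the poset on $\{\bot\}\sqcup P\sqcup L\sqcup T$ with the containment order and observe it is order-isomorphic to $\BSub(A)^*$, then invoke Theorem~\ref{hhh} and Corollary~\ref{dsd} for the reconstruction of $A$. The paper's proof is a brief sketch of exactly this construction, so your more careful bookkeeping (height versus cardinality, uniqueness of the subalgebra determined by the atoms beneath it) simply fills in details the paper leaves implicit.
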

\begin{proof}
  Construct a poset isomorphic to $\BSub(A)$ as follows. Let $\bot$ be its bottom and let its atoms be the points of $\mathcal{G}(A)$.  Elements of height 2 are the lines, and lie above the atoms they contain. Elements of height 3 are the planes, and lie above the atoms they contain. The poset so constructed is isomorphic to $\BSub(A)^*$. Thus, when $A$ is proper, we can then reconstruct $A$ from its hypergraph $\mathcal{G}(A)$ via the directions of $\BSub(A)^*$. 
\end{proof}

\begin{remark}
Theorem~\ref{qwqq} shows how to characterize orthohypergraphs among hypergraphs. Given a hypergraph $\mathcal{G}$, one constructs a poset $X$ of height at most 3 from it as described in the theorem. Then $\mathcal{G}$ is an orthohypergraph if and only if this poset is an orthodomain with enough directions. Due to the nature of the poset constructed from $\mathcal{G}$, several conditions of the definition of an orthodomain, Definition~\ref{orthodomain}, are automatically satisfied. Finite height implies directed subsets have joins, it is atomistic, and each principle ideal is a Boolean domain. Only the fourth condition needs to be verified, and this says that two if two points lie on a line, then they do not both lie on another line or both belong to a plane that does not contain this line. Determining whether this orthodomain has enough directions is more problematic, as it is in the orthodomain setting. However, the conditions to be a direction are easily translated into the hypergraph setting (see Lemma~\ref{deer} below), and are easier to work with in this way. 
\end{remark}

In the following, we consider the hypergraph $\mathcal{G}(A)$ of an orthoalgebra $A$. By definition, a point $p$ of $\mathcal{G}(A)$ is an atom of $\BSub(A)$, and a line $l$ of $\mathcal{G}(A)$ that contains $p$ is a cover of $p$ in $\BSub(A)$. 
The following observations will be used in the next section.

\begin{proposition}
  Let $A$ be an orthoalgebra and $\mathcal{G}(A)$ be its hypergraph. 
  \begin{enumerate}
    \item If two lines have at least $2$ points in common, then they are equal. 
    \item If $A$ is a Boolean algebra then any two points of $\mathcal{G}(A)$ generate a line or plane. 
  \end{enumerate}
\end{proposition}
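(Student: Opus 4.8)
The plan is to prove the two parts separately, since they are essentially independent combinatorial facts about $\mathcal{G}(A)$.

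For part (1), suppose lines $l_1$ and $l_2$ share two distinct points, say $x_a$ and $x_b$. By the definition of $\mathcal{G}(A)$, each $l_i$ is an element of $\BSub(A)$ of height $2$, hence an $8$-element Boolean subalgebra, and both contain the two distinct atoms $x_a, x_b$. So $x_a, x_b \lessdot l_i$ for $i = 1,2$. By Proposition~\ref{prop:orthodomain}, $\BSub(A)$ is an orthodomain, and condition~(4) of Definition~\ref{orthodomain} forces $x_a \vee x_b = l_1$ and $x_a \vee x_b = l_2$; hence $l_1 = l_2$. Equivalently, one can observe directly that an $8$-element Boolean subalgebra is the subalgebra generated by any two of its three atoms, so $l_1$ and $l_2$ are both the subalgebra generated by $x_a \cup x_b$ inside $A$. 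Either way this is a short argument.

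For part (2), assume $A$ is a Boolean algebra and take two distinct points $x_a, x_b$ of $\mathcal{G}(A)$, so $a, b \in A \setminus \{0,1\}$ with $\{a,a'\} \neq \{b,b'\}$. I would split according to whether $x_a$ and $x_b$ have an upper bound of height $2$ in $\BSub(A)$. If they do, that upper bound is a line containing both, and we are done. If they do not, then $x_a$ and $x_b$ are not near, and I claim the Boolean subalgebra $C$ of $A$ generated by $\{a,b\}$ has $16$ elements: since $A$ is Boolean, $C$ is a finite Boolean subalgebra, and it is generated by two complementary pairs that do not collapse to a single one (as $x_a, x_b$ are distinct) and are not related by an orthogonality forcing an $8$-element algebra (as they are not near), so $C$ must be the free Boolean algebra on two generators, which has $16$ elements. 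Then $C$ is an element of $\BSub(A)$ of height $3$, whose seven atoms form a plane of $\mathcal{G}(A)$ containing both $x_a$ and $x_b$. Hence $x_a$ and $x_b$ generate (are contained in) a plane.

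The main obstacle is the casework in part (2): one must rule out that $\langle a, b\rangle$ has fewer than $16$ elements in the "not near" case. The point is that a finite Boolean algebra generated by two elements has $2$, $4$, $8$, or $16$ elements, the small cases corresponding exactly to $a,b \in \{0,1,a\}$ with one generator trivial or $a,b$ comparable/complementary/orthogonal; once $x_a \neq x_b$ and they fail to be near (which precisely excludes the comparable, complementary, and orthogonal-but-incomparable configurations that give an $8$-element algebra), only the $16$-element case remains. I would phrase this via the height of $\langle a,b\rangle$ as an element of the Boolean domain $\BSub(A)$: it is at most $3$ since it is generated by two atoms of $\BSub(A)$, it is not $\leq 1$ since $x_a \neq x_b$, and it is not $2$ since $x_a, x_b$ are not near; so it is exactly $3$, i.e.\ a plane.
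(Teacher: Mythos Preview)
Your proof is correct. For part~(1) you invoke orthodomain condition~(4) (via Proposition~\ref{prop:orthodomain}) to conclude $l_1 = x_a \vee x_b = l_2$, whereas the paper argues directly in the orthoalgebra: since $l,m$ are $8$-element Boolean subalgebras containing $a,b$, one of $a,a'$ is orthogonal to one of $b,b'$, and then both $l$ and $m$ have atoms $a,b,(a\oplus b)'$, so $l=m$. Your route is a clean use of the abstract structure already established; the paper's is more self-contained.

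For part~(2), both you and the paper look at the Boolean subalgebra $S$ generated by $a,b$ and use that the free Boolean algebra on two generators has $16$ elements, so $|S| \in \{8,16\}$. The paper stops there: if $|S|=8$ it is a line, if $|S|=16$ it is a plane, done. Your near/not-near case split is correct but unnecessary---the proposition only asks that $x_a,x_b$ lie in a line or plane, not which one, so there is no need to pin down when $|S|=8$ versus $|S|=16$. Your final paragraph's height argument is essentially the paper's argument rephrased, and is the cleanest way to close the gap you flagged.
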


\begin{proof}
    (1) Suppose that lines $l,m$ have common points $x_a,x_b$ for some $a,b\in A$ with none of $a,a',b,b'$ equal to $0,1$ or to each other. Furthermore, $l$ and $m$ are $8$-element Boolean subalgebras of $A$ that contain $a,b$. So in $A$, one of $a,a'$ is orthogonal to one of $b,b'$. Suppose $a$ is orthogonal to~$b$. Then the atoms of $l$ are $a,b,(a\oplus b)'$ and the atoms of $m$ are $a,b,(a\oplus b)'$. Thus $l=m$. 

    (2) Suppose $x_a,x_b$ are distinct points, where $a,a',b,b'$ are distinct from $0,1$ and from each other. Let $S$ be the subalgebra of the Boolean algebra $A$ generated by $a,b$. Then $S$ has at least $8$ elements from the properties of $a,a',b,b'$, and $S$ has at most $16$ elements because the free Boolean algebra generated by a 2-element set has $16$ elements. If $S$ has $8$ elements, then it is a line of $\mathcal{G}(A)$ that contains $x_a,x_b$, and if $S$ has $16$ elements, then it is a plane that contains $x_a,x_b$. 
\end{proof}

As is shown in Remark~5.20, the elements of height 2 or less in the poset $\BSub(A)$ do not determine $A$ in the general case when $A$ is an orthoalgebra. Thus points, lines and planes are necessary in some cases to describe the hypergraph of an orthoalgebra. However, if it is known that the orthoalgebra is an orthomodular poset, one can do better, as we will now show. 

\begin{proposition}
	\label{loppo}
	Let $A$ be an orthomodular poset. If the hypergraph $\mathcal{G}(A)$ has points and lines configured as the hypergraph of a 16-element Boolean algebra, then these points and lines are the points and lines of a plane of $\mathcal{G}(A)$. 
\end{proposition}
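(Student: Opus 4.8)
The plan is to produce a single element $w\in\BSub(A)$ of height $3$ lying above two of the six given lines, and then read off from $\down w$ that the seven given points and six given lines form a plane. Write $X=\BSub(A)$, an orthodomain by Proposition~\ref{prop:orthodomain}, and recall that a line of $\mathcal{G}(A)$ is an $8$-element Boolean subalgebra, i.e.\ an element of $X$ of height $2$. Label the seven points so that the four lying on three of the given lines are $A_1,\dots,A_4$ and the three lying on two of them are $e_{12},e_{13},e_{14}$, with the lines $\ell_{12}=\{A_1,A_2,e_{12}\}$, $\ell'_{12}=\{A_3,A_4,e_{12}\}$, $\ell_{13}=\{A_1,A_3,e_{13}\}$, $\ell_{24}=\{A_2,A_4,e_{13}\}$, $\ell_{14}=\{A_1,A_4,e_{14}\}$, $\ell_{23}=\{A_2,A_3,e_{14}\}$; this is exactly the incidence pattern of the seven atoms of $\Sub(B)$ for a $16$-element Boolean algebra $B$, the $A_i$ being the ``corner'' atoms and the $e_{ij}$ the ``edge'' atoms.

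Granting such a $w$ (height $3$, with $\ell_{12},\ell'_{12}\le w$), I would finish as follows. Since $\down w$ is a Boolean domain of height $3$, it is isomorphic to $\Sub(B)$ for a $16$-element $B$, so it has exactly seven atoms, four of which lie below three of its six height-$2$ elements and three below two. All of $A_1,\dots,A_4,e_{12}$ lie below $w$, being below $\ell_{12}$ or $\ell'_{12}$. Whenever two of the $A_i$ are near, their join in $X$ exists, covers them, and --- by condition~(4) of Definition~\ref{orthodomain} together with Proposition~\ref{p:exchange} --- is the unique line of $\mathcal{G}(A)$ through them, hence one of the six given lines; computing the same join inside $\down w\cong\Sub(B)$ shows it is $\le w$. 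Running through the six pairs of corners, this shows every $\ell_{ij}$, and hence every $e_{ij}$, lies below $w$, that the $A_i$ are precisely the four atoms of $\down w$ lying below three lines of $\down w$, and that $e_{12},e_{13},e_{14}$ are the remaining three atoms. So the atoms of $\down w$ are exactly the seven given points and the height-$2$ elements of $\down w$ are exactly the six given lines; i.e.\ these points and lines form a plane of $\mathcal{G}(A)$.

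To produce $w$ I would fix a representative $b_{12}$ with $x_{b_{12}}=e_{12}$; in each of $\ell_{12},\ell'_{12}$ exactly one of $b_{12},b'_{12}$ is an atom. \emph{Case 1:} $b_{12}$ is an atom of one of $\ell_{12},\ell'_{12}$ and a coatom of the other. Then the direction $d_{b_{12}}$ for $e_{12}$ (Definition~\ref{xes}; it is a direction of $X$ by the part of the proof of Theorem~\ref{hhh} verifying condition~(3) of Definition~\ref{scary}, which does not use properness) assigns opposite values to $\ell_{12}$ and $\ell'_{12}$, so Definition~\ref{scary}(3) yields that $\ell_{12}\vee\ell'_{12}$ exists and covers $\ell_{12}$; take $w=\ell_{12}\vee\ell'_{12}$, which then has height $3$. \emph{Case 2:} $b_{12}$ is an atom of both $\ell_{12}$ and $\ell'_{12}$ (the ``coatom of both'' case is symmetric). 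Put $m=b'_{12}$ and let $p_1,p_2$ be the atoms of $\ell_{12}$ besides $b_{12}$ and $p_3,p_4$ those of $\ell'_{12}$, with $x_{p_i}=A_i$; then $p_1,\dots,p_4\le m$ and $p_1\oplus p_2=p_3\oplus p_4=m$. Since $A_1$ and $A_3$ are near they share the $8$-element subalgebra $\ell_{13}$, so $p_1,p_3$ are compatible in $A$, hence in the interval orthomodular poset $[0,m]$; a compatible pair in an orthomodular poset generates a Boolean subalgebra (it has the form $u\oplus t,\,v\oplus t$ with $u,v,t$ pairwise orthogonal, pairwise orthogonal triples in an orthomodular poset are jointly orthogonal, and then Proposition~\ref{prop:booleansubalgebra} applies). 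Inspecting the four possibilities for which of $\{p_1,p'_1\}$, $\{p_3,p'_3\}$ are the atoms of $\ell_{13}$ shows that $p_1\perp p_3$, or $p_1\le p_3$, or $p_3\le p_1$ (the fourth possibility forces $b_{12}=0$), so in each case $\langle p_1,p_3\rangle$ computed inside $[0,m]$ is generated by a jointly orthogonal set of at most three elements and so has at most $8$ elements. Adjoining $b_{12}$, which is orthogonal to $[0,m]$, gives (via Proposition~\ref{prop:booleansubalgebra}) a Boolean subalgebra $w$ of $A$ with at most $16$ elements containing $\ell_{12}=\langle b_{12},p_1\rangle$ and $\ell'_{12}=\langle b_{12},p_3\rangle$; as $w$ properly contains the $8$-element $\ell_{12}$, it has exactly $16$ elements, i.e.\ height $3$.

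The hard part will be the second case: one must bound the \emph{size} of the Boolean subalgebra generated by $\ell_{12}\cup\ell'_{12}$, not merely show it is nonempty, and this is exactly where orthomodularity of $A$ enters --- for a general orthoalgebra the analogous statement fails, as the Fraser cube of Example~\ref{ex:frasercube} shows --- since it is orthomodularity that forces the elements $p_i$ coming from the lines $\ell_{ij}$ to be pairwise comparable-or-orthogonal in $[0,m]$, keeping $\langle p_1,p_3\rangle$ small.
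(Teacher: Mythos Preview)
Your argument is correct but organised quite differently from the paper's. The paper works from the four \emph{corner} points: it first excludes ``mixed'' behaviour at a corner (e.g.\ $a\perp b$ together with $a'\perp c$) by a short calculation in the $16$-element Boolean subalgebra containing the chain $b<a'<c'$; this yields pairwise-orthogonal corner representatives $a,b,c,d$, the OMP property makes them jointly orthogonal, and an edge incidence then gives $a\oplus b\oplus c\oplus d=1$ --- no directions are invoked. You instead fix an \emph{edge}, build a height-$3$ element $w$ above the two lines through it (Case~1 via the verification of condition~(3) in the proof of Theorem~\ref{hhh}, Case~2 by a direct joint-orthogonality construction), and then pigeon-hole the remaining points and lines into $\down w$. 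One remark worth making: your Case~2 is in fact vacuous --- in your construction $b_{12}$ is always an atom of $w$, so $e_{12}$ is a corner of $\down w$, whereas your finish identifies the six height-$2$ elements of $\down w$ with the six given lines, only two of which pass through $e_{12}$, forcing $e_{12}$ to be an edge; so what you have written there is really an implicit reductio rather than a genuine second case. Your route buys clean reuse of the direction machinery already developed in the paper; the paper's route is more elementary and makes the single use of orthomodularity (pairwise $\Rightarrow$ jointly orthogonal, applied to four elements) more transparent.
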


\begin{proof}
	The assumptions provide that there are elements $a,b,c,d,e,f,g\in A$ different from $0,1$ such that the points $x_a,x_b,x_c,x_d,x_e,x_f,x_g$ of the hypergraph are configured as indicated below. 
	\[\begin{aligned}
	\begin{tikzpicture}[scale=1.85]
	\node[Dot] (a) at (0,0) {};
	\node[Dot] (b) at (1.5,0) {};
	\node[Dot] (e) at (.75,1.3) {};
	\node[Dot] (f) at (.75,.44) {};
	\node[Dot] (af) at (1.125,.65) {};
	\node[Dot] (ab) at (.75,.0) {};
	\node[Dot] (bf) at (.375,.65) {};
	\draw[Line] (a) to (e) to (b) to (a);
	\draw[Line] (a) to (af);
	\draw[Line] (e) to (ab);
	\draw[Line] (b) to (bf);
	\node at (-.2,-.2) {$x_a$};
	\node at (1.7,-.2) {$x_b$};
	\node at (.75,-.22) {$x_e$};
	\node at (.75,1.5) {$x_c$};
	\node at (.98,.415) {$x_d$};
	\node at (.17,.7) {$x_g$};
	\node at (1.33,.7) {$x_f$};
	\end{tikzpicture}
	\end{aligned}\]
	
	Any two points of $x_a,x_b,x_c,x_d$ are connected by a line, or equivalently, are near. Hence, given two of these points, we can form four pairs consisting of a non-trivial member of the first point and one non-trivial members of the second point, and precisely one of these pairs consists of mutually orthogonal elements. For instance, given the points $x_a$ and $x_b$, exactly one of $a\oplus b$, $a'\oplus b$, $a\oplus b'$ and $a'\oplus b'$ is defined.  We first exclude the possibility that both non-trivial elements in a point are orthogonal to some non-trivial members in the other points.
	For instance, we cannot have both $a\perp b$ and $a'\perp c$. By symmetry, this case implies all other cases. So assume $a\perp b$ and $a'\perp c$. We derive a contradiction. 
		
	
	These assumptions mean that $b < a' < c'$. Since these elements form a chain, they lie in a 16-element Boolean subalgebra of $A$ whose atoms are $b$, $a'\wedge b'$, $c'\wedge a$, $c$. Since $b\leq c'$, we have that $b\vee c$ is one of $f,f'$, hence lies in a block with $a,a'$. So one of the following must hold:
	\[ \mbox{(i) } a< b\vee c \qquad \mbox{(ii) } a'< b\vee c \qquad \mbox{(iii) } b\vee c < a\qquad \mbox{(iv) } b\vee c < a'\]
	We are in a Boolean algebra, so if (i) applies, $a=a\wedge(b\vee c) = (a\wedge b)\vee (a\wedge c) = a\wedge c$, giving $a\leq c$, a contradiction. If (ii) applies, $a'=a'\wedge(b\vee c)=(a'\wedge b)\vee(a'\wedge c) = a'\wedge b$, giving $a'\leq b$, a contradiction. If (iii) applies, $b\leq a$, a contradiction, and if (iv) applies, $c\leq a'$, a contradiction. 
	
So for each of $x_a,x_b,x_c,x_d$ there is one non-trivial member of that set that is orthogonal to a non-trivial member of each of the others. Without loss of generality, we may assume that these members are $a,b,c,d$. So $a, b, c, d$ are pairwise orthogonal. We note that pairwise orthogonal elements of an orthomodular poset are jointly orthogonal. Indeed, $a\perp c$ and $b\perp c$ imply $a,b\leq c'$, so $a\oplus b=a\vee b\leq c'$, hence $(a\oplus b)\oplus c$ is defined, and in a similar way, we find that $((a\oplus b)\oplus c)\oplus d)$ is defined. This means that $a,b,c,d$ lie in some Boolean subalgebra of $A$. Note, this is not a property that holds in orthoalgebras! Therefore, in each of the lines in the diagram where $x_a$ appears, $a$ is an atom of the line (recall, this line is a Boolean subalgebra of $A$), and similarly for $b,c,d$. It follows that the third atom of the line containing $x_a,x_b$ is $a'\wedge b'$. Then $a'\wedge b'$ is equal to one of $e,e'$, and we may assume $a'\wedge b'=e$. Since $d<a'\wedge b'$, the third atom of the line containing $x_c,x_d$ cannot be $e$, and therefore must be $e'=a\vee b$. Therefore $a\vee b \vee c\vee d = 1$. It follows that $a,b,c,d$ generate a 16-element Boolean subalgebra $w$ of $A$. Each element of the sets $x_e,x_f,x_g$ can be obtained from $a,b,c,d$, and therefore $x_e,x_f,x_g$ are also contained in $w$. By cardinality considerations, these are all the points beneath $w$, and hence also all the lines beneath $w$. 
\end{proof}

\begin{theorem} \label{OMPdetermined}
	If $A$ is an orthomodular poset, then the hypergraph $\mathcal{G}(A)$ is completely determined by its points and lines. 
\end{theorem}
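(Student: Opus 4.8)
The plan is to deduce the theorem directly from Proposition~\ref{loppo}, which already carries the essential content: in the hypergraph of an orthomodular poset, any seven points carrying six lines arranged as in a plane in fact form a plane. So the family of planes is recoverable, by a purely combinatorial recipe, from the points and lines alone.

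First I would pin down what the statement asserts. Say that a bijection $f$ between the point sets of two hypergraphs is a \emph{point-line isomorphism} if both $f$ and $f^{-1}$ send lines to lines. The assertion is that whenever $A$ and $C$ are orthomodular posets, every point-line isomorphism $\mathcal{G}(A)\to\mathcal{G}(C)$ also sends planes to planes, and so is an isomorphism of hypergraphs; equivalently, the set $T$ of planes of $\mathcal{G}(A)$ is determined by its points and lines in a way invariant under point-line isomorphisms.

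The heart of the argument is a combinatorial characterisation of $T$. By the definition of a hypergraph, if $S$ is a plane then $|S|=7$ and the lines of $\mathcal{G}(A)$ contained in $S$ are exactly the six lines of the plane configuration; in particular the condition ``$|S|=7$ and the lines of $\mathcal{G}(A)$ inside $S$ form a plane configuration'' holds for every plane. Conversely, if $S\subseteq P$ satisfies this condition, then the points and lines of $S$ are configured exactly as the hypergraph of a $16$-element Boolean algebra, so Proposition~\ref{loppo} applies and forces $S$ to be a plane of $\mathcal{G}(A)$. Hence $T$ is precisely the collection of seven-element sets of points whose internal lines form a plane configuration, a description phrased solely in terms of points and lines; since this description is preserved by any point-line isomorphism, such a map carries the planes of $\mathcal{G}(A)$ bijectively onto those of $\mathcal{G}(C)$. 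Combined with Theorem~\ref{qwqq}, this also shows that a proper orthomodular poset can be reconstructed up to isomorphism from the points and lines of its hypergraph alone.

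There is no real obstacle once Proposition~\ref{loppo} is in hand; only a little bookkeeping remains. One should note that a plane configuration, regarded purely as incidence data among seven points and six lines, already distinguishes the four corner points (those lying on three of the six lines) from the three edge points (those lying on two), so no extra labelling is needed to recognise it; and one should observe that it is exactly the converse implication ``looks like a plane $\Rightarrow$ is a plane'' that Proposition~\ref{loppo} supplies, the forward implication being immediate from the definition of a hypergraph.
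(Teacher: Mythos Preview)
Your proposal is correct and follows essentially the same approach as the paper: both deduce the result directly from Proposition~\ref{loppo}, noting that the forward direction (every plane has the required line configuration) is immediate from the definition of a hypergraph, while the converse is exactly what Proposition~\ref{loppo} provides. Your write-up is more detailed---spelling out the invariance under point-line isomorphisms and the remark on reconstruction via Theorem~\ref{qwqq}---but the underlying argument is the same.
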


\begin{proof}
	Proposition~\ref{loppo} shows that any configuration of points and lines in $\mathcal{G}(A)$ that is isomorphic to the set of points and lines of a plane, is the set of all points and lines of a plane of $\mathcal{G}(A)$. So the planes of $\mathcal{G}(A)$ are determined by the points and lines of $\mathcal{G}(A)$. 
\end{proof}

We have given several properties that hold in Boolean hypergraphs and orthohypergraphs. These will be used in the next section when we introduce morphisms. These properties are not intended to be sufficient to characterize these hypergraphs. The matter of having workable conditions to recognize the hypergraphs that are orthohypergraphs or Boolean hypergraphs seem quite interesting, and potentially of considerable use in constructing such orthohypergraphs, and hence their corresponding orthoalgebras. 

\begin{problem}
  Characterize those hypergraphs that are isomorphic to the hypergraphs of Boolean algebras, and those that are isomorphic to the hypergraphs of orthoalgebras. 
\end{problem}

We next consider several examples of hypergraphs of orthoalgebras and compare these with the Greechie diagrams of the structures. We refer the reader to \cite{N:handQL} for a thorough account, and comment only that Greechie diagrams show the atoms of a chain-finite orthoalgebra with the atoms of a block being connected by a line. We begin with the setting where Greechie diagrams are most familiar, representing orthoalgebras whose blocks have at exactly 8 elements. For such an orthoalgebra $A$ its Greechie diagram and hypergraph are the same. The 12-element orthomodular poset $MO_2\times 2$ consists of two 8-element Boolean algebras that intersect in a 4-element Boolean algebra. Its Greechie diagram is the same as its hypergraph, and is shown below. 
\[\begin{aligned}\begin{tikzpicture}[scale=.6]
\node[Dot] (a) at (0,0) {};
\node[Dot] (b) at (1,1) {};
\node[Dot] (c) at (2,2) {};
\node[Dot] (d) at (3,1) {};
\node[Dot] (e) at (4,0) {};
\draw[Line] (a) to (c) to (e);
\end{tikzpicture}\end{aligned}\]

For orthoalgebras having blocks with 16 or more elements, the situation is more complex. Consider the orthomodular poset $MO_2\times MO_2$. This structure has 36 elements, 8 atoms and 4 blocks of 16 elements each. Its Greechie diagram is given below. This diagram gives good insight into the behavior of the atoms and coatoms, but one must infer the behavior of the 18 elements of height two. In particular, the key feature of this structure, that it has two central elements of height 2 that are therefore in all four blocks, is hidden. With experience, one can tell how the ``missing'' elements of the structure behave, but as the structures become more complex, this becomes increasingly difficult. 
\[\begin{tikzpicture}[scale=.8]
\node[Dot] (a) at (-2,0) {};
\node[Dot] (b) at (-1,0) {};
\node[Dot] (c) at (0,1) {};
\node[Dot] (d) at (0,2) {};
\node[Dot] (e) at (1,0) {};
\node[Dot] (f) at (2,0) {}; 
\node[Dot] (g) at (0,-1) {}; 
\node[Dot] (h) at (0,-2) {}; 
\draw[Line] (-2,-.2) -- (-1,-.2) to [out=-5,in=95] (-.2,-1) -- (-.2,-2);
\draw[Line] (-2,.2) -- (-1,.2) to [out=5,in=-95] (-.2,1) -- (-.2,2);
\draw[Line] (2,-.2) -- (1,-.2) to [out=185,in=85] (.2,-1) -- (.2,-2);
\draw[Line] (2,.2) -- (1,.2) to [out=175,in=-85] (.2,1) -- (.2,2);
\end{tikzpicture}\]
 
In describing the hypergraph of a structure with larger blocks, it is useful to note that there are many ways to draw the seven points and six lines that comprise a plane. Below we give three possibilities, the original as a plane that we have used, and two others that we will employ. They are all equivalent. In these diagrams, we choose to depict the corner points and some portions of lines as larger than others as an aid to readability. The point is that pairs of ``parallel'' sides connecting corner points intersect at an edge point at ``infinity''. 
\[\begin{aligned}
\begin{tikzpicture}[scale=2]
\node[Dot] (a) at (0,0) {};
\node[Dot] (b) at (1.5,0) {};
\node[Dot] (e) at (.75,1.3) {};
\node[Dot] (f) at (.75,.44) {};
\node[dot] (af) at (1.125,.65) {};
\node[dot] (ab) at (.75,.0) {};
\node[dot] (bf) at (.375,.65) {};
\draw[Line] (a) to (e) to (b) to (a);
\draw[line] (a) to (af);
\draw[line] (e) to (ab);
\draw[line] (b) to (bf);
\end{tikzpicture}
\quad\quad\quad\quad\quad\quad
\begin{tikzpicture}[scale=2]
\node[Dot] (a) at (0,0) {};
\node[Dot] (b) at (1,0) {};
\node[Dot] (e) at (0,1) {};
\node[Dot] (f) at (1,1) {};
\node[dot] (af) at (.5,.5) {};
\node[dot] (ab) at (1.75,.5) {};
\node[dot] (bf) at (.5,1.75) {};
\draw[Line] (a) to (e) to (f) to (b) to (a);
\draw[line] (a) to (af) to (f);
\draw[line] (e) to (af) to (b);
\draw[line] (ab) to[out=-130,in=0] (b);
\draw[line] (ab) to[out=130,in=0] (f);
\draw[line] (bf) to[out=-140,in=90] (e);
\draw[line] (bf) to[out=-40,in=90] (f);
\end{tikzpicture}
\quad\quad\quad\quad\quad
\begin{tikzpicture}[scale=2]
\node[Dot] (a) at (0,0) {};
\node[Dot] (b) at (1,0) {};
\node[Dot] (e) at (0,1) {};
\node[Dot] (f) at (1,1) {};
\node[dot] (af) at (.5,.5) {};
\node[dot] (ab) at (.5,1) {};
\node[dot] (bf) at (.5,1.75) {};
\draw[Line] (a) to (e) to (f) to (b);
\draw[line] (a) to (af) to (f);
\draw[line] (e) to (af) to (b);
\draw[Line] (a) to [out=85,in=185] (ab) to [out=-5,in=95] (b);
\draw[line] (bf) to[out=-140,in=90] (e);
\draw[line] (bf) to[out=-40,in=90] (f);
\end{tikzpicture}
\end{aligned}\]

Making use of the rightmost form of depicting planes, the hypergraph of $MO_2\times MO_2$ is shown below. The Greechie diagram ``sits inside'' the hypergraph, and the elements missing from the Greechie diagram are included as smaller dots as well. The primary feature of this structure, the central element, is clearly visible. 
\[\begin{aligned}\begin{tikzpicture}[scale=1.3]
\node[Dot] (a) at (-2,0) {};
\node[Dot] (b) at (-1,0) {};
\node[Dot] (c) at (0,1) {};
\node[Dot] (d) at (0,2) {};
\node[Dot] (e) at (1,0) {};
\node[Dot] (f) at (2,0) {}; 
\node[Dot] (g) at (0,-1) {}; 
\node[Dot] (h) at (0,-2) {}; 
\node[dot] at (.55,.55) {};
\node[dot] at (-.55,.55) {};
\node[dot] at (.55,-.55) {};
\node[dot] at (-.55,-.55) {};
\node[dot] at (0,0) {};
\draw[thinline] (-2,-.1) to (-.1,-1);
\draw[thinline] (-1,-.1) to (-.1,-2);
\draw[thinline] (-2,.1) to (-.1,1);
\draw[thinline] (-1,.1) to (-.1,2);
\draw[thinline] (2,-.1) to (.1,-1);
\draw[thinline] (1,-.1) to (.1,-2);
\draw[thinline] (2,.1) to (.1,1);
\draw[thinline] (1,.1) to (.1,2);
\draw[Line] (-2,-.1) -- (-1,-.1) to (-.1,-1) -- (-.1,-2);
\draw[Line] (-2,.1) -- (-1,.1) to (-.1,1) -- (-.1,2);
\draw[Line] (2,-.1) -- (1,-.1) to (.1,-1) -- (.1,-2);
\draw[Line] (2,.1) -- (1,.1) to (.1,1) -- (.1,2);
\draw[Line] (-2,-.1) to [out=0,in=135] (-.55,-.55) to [out=-45,in=90] (-.1,-2);
\draw[Line] (-2,.1) to [out=0,in=-135] (-.55,.55) to [out=45,in=-90] (-.1,2);
\draw[Line] (2,-.1) to [out=180,in=45] (.55,-.55) to [out=-135,in=90] (.1,-2);
\draw[Line] (2,.1) to [out=180,in=-45] (.55,.55) to [out=135,in=-90] (.1,2);
\node[dot] at (-.71,-.71) {};
\node[dot] at (-.71,.71) {};
\node[dot] at (.71,-.71) {};
\node[dot] at (.71,.71) {};
\draw[line] (-1,.1) to [out=0,in=-90, looseness=2.1] (-.1,1);
\draw[line] (-1,-.1) to [out=0,in=90, looseness=2.1] (-.1,-1);
\draw[line] (1,.1) to [out=180,in=-90, looseness=2.1] (.1,1);
\draw[line] (1,-.1) to [out=180,in=90, looseness=2.1] (.1,-1);  
\end{tikzpicture}\end{aligned}\]

Here we may view the hypergraph as augmenting the Greechie diagram, allowing us to precisely depict and reason about the elements not depicted in the Greechie diagram. For those with experience with Greechie diagrams, this is not necessary for $MO_2\times MO_2$. However, when reasoning with more complicated situations such as the Fraser cube of Example~\ref{ex:frasercube}, this can be useful. For instance, the Fraser cube has the property that two of its blocks have intersection that is non Boolean, a fact that until very recently was incorrectly reported in the literature \cite{Navara}. 

The Fraser cube has 36 elements, with 8 atoms, 6 blocks of 16 elements each. Its Greechie diagram, as the name suggests, is a cube. Here the atoms are vertices of the cube, and the blocks are its faces. Using the middle of the three ways to depict a plane described above, we can draw the hypergraph of the Fraser cube as shown below. 
\[\begin{aligned}\begin{tikzpicture}[scale=2.8]
\node[Dot] (a) at (0,0) {};
\node[Dot] (b) at (1,0) {};
\node[Dot] (e) at (0,1) {};
\node[Dot] (f) at (1,1) {};
\node[Dot] (c) at (.4,.3) {};
\node[Dot] (d) at (1.4,.3) {};
\node[Dot] (g) at (.4,1.3) {};
\node[Dot] (h) at (1.4,1.3) {};
\node[dot] (af) at (.5,.5) {};
\node[dot] (ch) at (.9,.8) {};
\node[dot] (df) at (1.2,.65) {};
\node[dot] (ag) at (.2,.65) {};
\node[dot] (ad) at (.65,.15) {};
\node[dot] (eh) at (.65,1.15) {};
\node[dot] (ab) at (2.2,.65) {};
\node[dot] (bf) at (.65,2) {};
\draw[Line] (e) to (f) to (h) to (g) to (e);
\draw[Line] (a) to (e);
\draw[Line] (c) to (g);
\draw[Line] (d) to (h);
\draw[thinline] (a) to (af) to (f);
\draw[thinline] (e) to (af) to (b);
\draw[thinline] (b) to (df) to (h);
\draw[thinline] (f) to (df) to (d);
\draw[thinline] (a) to (ag) to (g);
\draw[thinline] (e) to (ag) to (c);
\draw[thinline] (c) to (ch) to (h);
\draw[thinline] (g) to (ch) to (d);
\draw[thinline] (a) to (ad) to (d);
\draw[thinline] (c) to (ad) to (b);
\draw[thinline] (e) to (eh) to (h);
\draw[thinline] (g) to (eh) to (f);
\draw[Line] (b) to (f);
\draw[Line] (a) to (b) to (d) to (c) to (a);

\node[dot] (ac) at (-.6,.1) {};

\draw[line] (ac) to[out=-60,in=-140] (a);
\draw[line] (ac) to[out=-30,in=-140,looseness=.7] (b);
\draw[line] (ac) to[out=75,in=-140] (e);
\draw[line] (ac) to[out=20,in=-140] (f);

\draw[line] (ab) to[out=-130,in=0] (b);
\draw[line] (ab) to[out=-140,in=0] (d);
\draw[line] (ab) to[out=150,in=0] (f);
\draw[line] (ab) to[out=120,in=0] (h);
\draw[line] (bf) to[out=-140,in=90] (e);
\draw[line] (bf) to[out=-60,in=90] (f);
\draw[line] (bf) to[out=-120,in=90] (g);
\draw[line] (bf) to[out=-20,in=90] (h);
\end{tikzpicture}\end{aligned}\]

The diagram has a point in the middle of each face, as well as 3 additional ``points at infinity'', one for each pencil of parallel edges. The blocks given by the front and back face intersect at the two points at infinity given by sideways and vertical edges, so do not intersect in a Boolean algebra. Here too, the Greechie diagram is sitting inside of the hypergraph, and the additional detail of the hypergraph allows us to depict and reason more clearly. 

\begin{remark}
Throughout our treatment of hypergraphs, we have used terminology reflective of that used in projective geometry. This is indicated in our use of the terms `point', `line', and `plane'. It has also been of benefit that there is a similarity between the planes used in our hypergraphs and the usual Fano plane, which is realized as the lattice of subspaces of a 3-dimensional vector space over the 2-element field $\mathbb{Z}_2$. 

Our results have close analogues in projective geometry. In projective geometry, one takes the lattice $L=\Sub(V)$ of subspaces of a vector space $V$. Lattices arising this way are characterized as certain complemented, algebraic, atomistic, modular lattices. This lattice $L$ is determined by its elements of height at most 2, and these elements are organized into the points and lines of a projective geometry with points being atoms and lines elements of height 2. The vector space $V$ can be reconstructed from $L$ by techniques essentially dating to Euclid. More recent treatments of this classical subject treat categorical aspects of this correspondence as well \cite{Faure}. All this is mirrored in our treatment of an orthoalgebra $A$ via its poset $\BSub(A)$ of Boolean subalgebras. In the case that $A$ is an orthomodular poset, we require only the elements of height 2 in this poset to reconstruct $A$, but for general orthoalgebras must go one level higher. 

Partial explanation of this similarity can be found by noting that each Boolean algebra $A$ is a vector space over $\mathbb{Z}_2$ under the addition $+$ of symmetric difference. Subalgebras of $A$ are the vector subspaces that contain 1 and are additionally closed under meet. Write $\langle 1 \rangle$ for the subspace of $A$ generated by the vector $1$, and consider the interval $I=[\langle 1\rangle,A]$ of the subspace lattice. The subalgebras of height $n$ in $\BSub(A)$ have height $n$ in $I$. Since $I$ is isomorphic to the subspace lattice of $A/\langle 1\rangle$, we may regard $\BSub(A)$ as sitting inside a projective geometry over $\mathbb{Z}_2$. This explains the similarity of the hypergraph of a 16-element Boolean algebra to a Fano plane --- the missing line is a subspace containing $1$ that is not a subalgebra. 
\end{remark}

\section{Morphisms}\label{sec:morphisms}

In this section we consider morphisms between orthoalgebras and orthohypergraphs. There are some basic obstacles to producing a full categorical equivalence, such as the fact that a 4-element Boolean algebra has two automorphisms, while its poset of Boolean subalgebras has only a single element. However, modulo such isolated pathologies, we show that morphisms between orthoalgebras can be captured by morphisms between their associated orthohypergraphs. 

Recall that an orthoalgebra morphism $f\colon A\to C$ preserves the orthocomplementation, and if $a\oplus b$ is defined, so is $f(a)\oplus f(b)$, in which case $f(a\oplus b)=f(a)\oplus f(b)$. Note that the image of $f$ need not be a sub-orthoalgebra of $C$, since elements $f(a)$ and $f(b)$ might be orthogonal in $C$ without $a$ and $b$ being orthogonal in $A$. For example, consider \textsc{mo}$_2$, the horizontal sum of two 4-element Boolean algebras. Call its four atoms $a,a',b,b'$. This embeds into the power set of $\{i,j,k\}$ by $f(a)=\{i\}$, $f(a')=\{j,k\}$, $f(b)=\{j\}$, and $f(b')=\{i,k\}$. On a positive note, we record the following fact. 

\begin{proposition}
  For orthoalgebras $A$ and $C$, a map $f\colon A\to C$ is an orthoalgebra morphism if and only if for each Boolean subalgebra $B$ of $A$, the restriction of $f$ to $B$ is a Boolean algebra homomorphism of $B$ into a Boolean subalgebra of $C$.
  \qed
\end{proposition}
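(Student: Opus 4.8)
The plan is to prove both implications by reducing everything to Proposition~\ref{prop:booleansubalgebra} together with two elementary features of orthoalgebras: that a finite orthogonal sum $\bigoplus_{i=1}^n x_i$ is defined as soon as each of its partial sums is (so that sub-sums of a jointly orthogonal family are again jointly orthogonal), and that $w\le x$ together with $w\le x'$ forces $w=0$ (since then $w\oplus w$ is a defined sub-sum of $x\oplus x'$).

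For the forward implication, suppose $f\colon A\to C$ is an orthoalgebra morphism and let $B\subseteq A$ be a Boolean subalgebra. First record that $f(0)=0$ (as $f(0)\oplus f(0)$ is defined), that $f(1)=f(0')=f(0)'=1$, that $f(a')=f(a)'$, and that $f$ preserves finite orthogonal sums by induction on their length. The key step is to show that the subalgebra $D\subseteq C$ generated by $f(B)$ is Boolean, which I would do via Proposition~\ref{prop:booleansubalgebra}. A finite subset of $D$ lies in the subalgebra of $C$ generated by finitely many elements $f(b_1),\dots,f(b_k)$ with $b_j\in B$; let $B_0\subseteq B$ be the finite subalgebra generated by $b_1,\dots,b_k$, with atom set $F=\{t_1,\dots,t_n\}$. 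Then $\{f(t_1),\dots,f(t_n)\}$ is jointly orthogonal in $C$ with sum $f(1)=1$, any coincidence $f(t_i)=f(t_j)$ with $i\ne j$ is forced to equal $0$ by the cancellation fact, and after deleting zeros we obtain a jointly orthogonal set $F'$ whose associated set $\{\bigoplus E\mid E\subseteq F'\}$ contains all the $f(b_j)$. A short computation shows this set is itself a Boolean subalgebra of $C$: if $x=\bigoplus_{I}s$ and $y=\bigoplus_{J}s$ with $x\perp y$, then $\bigoplus_{I\cap J}s\le x\wedge x'=0$, so $I$ and $J$ may be taken disjoint, and closure under $\,'$ and $\oplus$ follows, while Booleanness is the finite case of Proposition~\ref{prop:booleansubalgebra}. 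Hence every finite subset of $D$ lies in a Boolean subalgebra of $C$, so $D$ is Boolean. Finally, $f|_B\colon B\to D$ is a Boolean homomorphism: it preserves $0,1,\,'$, and applying $f$ to the decomposition $a\vee b=(a\wedge b)\oplus(a\wedge b')\oplus(a'\wedge b)$ in $B$ and using that $\oplus$ coincides with $\vee$ inside the Boolean algebra $D$ gives $f(a\vee b)=f(a)\vee f(b)$, after which De Morgan with $f(x')=f(x)'$ gives $f(a\wedge b)=f(a)\wedge f(b)$.

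For the converse, suppose $f|_B$ is a Boolean homomorphism into a Boolean subalgebra of $C$ for every Boolean subalgebra $B\subseteq A$. Applying this to $B=\{0,1\}$ yields $f(0)=0$, $f(1)=1$, and applying it to $B=x_a=\{0,a,a',1\}$, together with the fact that the Boolean complement inside a Boolean subalgebra of $C$ agrees with the orthocomplement of $C$, yields $f(a')=f(a)'$. Now suppose $a\perp b$ in $A$. I would first observe that $a$ and $b$ always lie together in some Boolean subalgebra $B$ of $A$: the cases where $a$, $b$, or $a\oplus b$ equals $0$ or $1$ are immediate from $x_a$ or $x_b$, and otherwise $\{a,b,(a\oplus b)'\}$ is a three-element partition of unity whose generated subalgebra is an $8$-element Boolean subalgebra of $A$ containing $a$ and $b$. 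Inside $B$ we have $a\wedge b=0$ and $a\oplus b=a\vee b$, so in the target Boolean subalgebra $f(a)\wedge f(b)=f(0)=0$, whence $f(a)\perp f(b)$ in $C$ (orthogonality being absolute for subalgebras), and $f(a\oplus b)=f(a\vee b)=f(a)\vee f(b)=f(a)\oplus f(b)$. Thus $f$ is an orthoalgebra morphism.

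The main obstacle I anticipate is the forward direction's claim that $f(B)$ generates a Boolean subalgebra of $C$. Since $f$ need not be injective, one must take care that distinct atoms of a finite subalgebra of $B$ may be identified by $f$ — though only possibly to $0$ — and one must invoke the orthoalgebra cancellation fact to see that a power-set-like family $\{\bigoplus E\mid E\subseteq F'\}$ built from a jointly orthogonal set is genuinely closed under $\oplus$ rather than merely forming a homomorphic image. Everything else is routine bookkeeping with Proposition~\ref{prop:booleansubalgebra} and elementary Boolean algebra.
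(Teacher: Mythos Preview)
Your proposal is correct and follows essentially the same route as the paper: both directions hinge on Proposition~\ref{prop:booleansubalgebra}, the orthogonal decomposition $a\vee b=(a\wedge b)\oplus(a\wedge b')\oplus(a'\wedge b)$ for the join-preservation step, and the $8$-element Boolean subalgebra $\{0,a,a',b,b',a\oplus b,(a\oplus b)',1\}$ for the converse.

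There is one genuine, if minor, difference in the forward direction. The paper proves the stronger fact that $f[B]$ is \emph{itself} a subalgebra of $C$: given $a,b\in B$ with $f(a)\oplus f(b)$ defined in $C$, it writes $a=\bigoplus E_a$, $b=\bigoplus E_b$ over a common jointly orthogonal $F\subseteq B$, observes that any $e\in E_a\cap E_b$ has $f(e)=0$, and then sets $c=\bigoplus(E_b\setminus E_a)\in B$ to get $f(a)\oplus f(b)=f(a\oplus c)\in f[B]$. You instead pass to the subalgebra $D$ of $C$ generated by $f(B)$ and verify directly that $D$ is Boolean via Proposition~\ref{prop:booleansubalgebra}. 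Your route is slightly more economical for the statement as phrased (which only asks for a Boolean subalgebra \emph{containing} the image), while the paper's route yields the sharper conclusion that $f[B]$ is already the desired Boolean subalgebra. Both the cancellation argument for coincidences $f(t_i)=f(t_j)$ and the closure of $\{\bigoplus E\mid E\subseteq F'\}$ under $\oplus$ that you flag as potential obstacles are handled correctly.
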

\begin{proof}
Assume that $f$ is an orthoalgebra morphism, and let $a,b\in B$ such that $f(a)\oplus f(b)$ in $C$ is defined. By Proposition \ref{prop:booleansubalgebra}, there is a jointly orthogonal set $F\subseteq B$ such that $a=\bigoplus E_a$ and $b=\bigoplus E_b$ for some $E_a,E_b\subseteq F$. Hence we have $f(a)=\bigoplus f[E_a]$ and $f(b)=\bigoplus f[E_b]$. Assume there is some $e\in E_a\cap E_b$. Since $f(a)\oplus f(b)$ is defined, it follows that $f(e)\oplus f(e)$ is defined, so $f(e)$ must be $0$. Thus $f[E_a]\cap f[E_b]\subseteq\{0\}$. Let $c=\bigoplus (E_b\setminus E_a)$. Then $a\oplus c$ is defined and 
\[ f(a\oplus c)=\bigoplus f[E_a]\oplus\bigoplus f[E_b\setminus E_a]=\bigoplus f[E_a]\oplus\bigoplus f[E_b]=f(a)\oplus f(b),\]
so we conclude that $f(a)\oplus f(b)\in f[B]$. As a consequence, $f[B]$ is a subalgebra of $C$. To see that $f[B]$ is Boolean, let $S\subseteq f[B]$ be finite. Then there is a finite $T\subseteq B$ such that $f[T]=S$. Since $B$ is Boolean, there is some jointly orthogonal set $F\subseteq B$ such that $T\subseteq\{\bigoplus E \mid E\subseteq F\}$. Then $f[F]$ is a jointly orthogonal set in $f[B]$ such that $S\subseteq\{\bigoplus E\mid E\subseteq f[F]\},$ hence $f[B]$ is Boolean. Thus $f$ restricts to an orthoalgebra morphism between the Boolean algebras $B$ and $f[B]$. Let $a\leq b$ in $A$. By definition of the order in an orthoalgebra, we have $a\oplus c=b$ for some $c\in A$, hence $f$ preserves the order. Let $a,b\in B$. Then $f(a)\vee f(b)\leq f(a\vee b)$. Moreover, $a\vee b=e_1\vee e_2\vee e_3$ for $e_1=a\wedge b$, $e_2=a\wedge b'$ and $e_3=a'\wedge b$, which are mutually orthogonal, so
\[f(a\vee b)=f(e_1\oplus e_2\oplus e_3)=f(e_1)\oplus f(e_2)\oplus f(e_3)=f(e_1)\vee f(e_2)\vee f(e_3).\]
Since $f(e_1)\vee f(e_2)\leq f(a)$ and $f(e_1)\vee f(e_3)\leq f(b)$, we obtain $f(e_1)\vee f(e_2)\vee f(e_3)\leq f(a)\vee f(b)$, whence $f(a\vee b)=f(a)\vee f(b)$. Since $f$ preserves the orthocomplementation, it follows now from De Morgan's Law that $f$ preserves meets in $B$, hence its restriction to $B$ is a Boolean algebra homomorphism.

For the converse, let $a,b\in A$ such that $c=a\oplus b$ is defined. Then $B=\{0,a,a',b,b',c,c',1\}$ is a Boolean subalgebra $B$ of $A$, and $f$ restricts to a Boolean algebra homomorphism of $B$ into some Boolean subalgebra of $C$. As a consequence, we have $f(a')=f(a)'$. Since $a\perp b$, we have $b\leq a'$, so $f(b)\leq f(a)'$, i.e., $f(a)\perp f(b)$. Hence \[f(a\oplus b)=f(a\vee b)=f(a)\vee f(b)=f(a)\oplus f(b),\] so $f$ is an orthoalgebra morphism.   
\end{proof}

Suppose that $A$ and $C$ are orthoalgebras. Write $P$ and $Q$ for the sets of points of the hypergraphs $\mathcal{G}(A)$ and $\mathcal{G}(C)$. We define a morphism $\alpha\colon\mathcal{G}(A)\to\mathcal{G}(C)$ to be a partial function $\alpha\colon P\to Q$ satisfying certain properties outlined below. In dealing with partial function, we write $\alpha(p)=\bot$ to indicate that the partial function is not defined at the point $p$, and indicate this diagrammatically by crossing out the vertex of the hypergraph indicating $p$. 

\begin{definition}\label{morphism}
  For $\mathcal{G},\mathcal{H}$ orthohypergraphs with point sets $P$ and $Q$, a morphism $\alpha\colon\mathcal{G}\to\mathcal{H}$ is a partial function $\alpha\colon P\to Q$ such that:
  \begin{itemize}
    \item[(A1)] The partial function $\alpha$ acts on a line $l$ of $\mathcal{G}$ in one of the following ways. 
    \[
    \begin{tikzpicture}[scale = 1]
    \draw[fill] (0,0) circle(1.5pt); \draw[fill] (0,.5) circle(1.5pt); \draw[fill] (0,1) circle(1.5pt); \draw[thin] (0,0)--(0,1);
    \draw[thin] (-.1,-.1)--(.1,.1); \draw[thin] (.1,-.1)--(-.1,.1); 
    \draw[thin] (-.1,.4)--(.1,.6); \draw[thin] (.1,.4)--(-.1,.6);
    \draw[thin] (-.1,.9)--(.1,1.1); \draw[thin] (.1,.9)--(-.1,1.1);
    \node at (0,-.5) {(i) none defined};

    \draw[fill] (4,0) circle(1.5pt); \draw[fill] (4,.5) circle(1.5pt); \draw[fill] (4,1) circle(1.5pt); \draw[thin] (4,0)--(4,1);
    \draw[thin] (3.9,-.1)--(4.1,.1); \draw[thin] (4.1,-.1)--(3.9,.1); 
    \draw[thin] (4,.75) ellipse(.2 and .4);
    \draw[thin, ->] (4.3,.75) to [out=0,in=120] (4.85,.6); \draw[fill] (5,.5) circle(1.5pt);
    \node at (4.25,-.5) {(ii) point image};

    \draw[fill] (8.5,0) circle(1.5pt); \draw[fill] (8.5,.5) circle(1.5pt); \draw[fill] (8.5,1) circle(1.5pt); \draw[thin] (8.5,0)--(8.5,1);
    \draw[fill] (9.5,0) circle(1.5pt); \draw[fill] (9.5,.5) circle(1.5pt); \draw[fill] (9.5,1) circle(1.5pt); \draw[thin] (9.5,0)--(9.5,1);
    \draw[thin,->] (8.8,.5)--(9.2,.5);
    \node at (8.75,-.5) {(iii) isomorphism};
    \end{tikzpicture} 
    \]
    \item[(A2)] The partial function $\alpha$ acts on a plane $t$ of $\mathcal{G}$ in one of the following ways. 
    \[
    \begin{tikzpicture}[xscale=.8,yscale=1]
    \draw[fill] (0,0) circle(1.5pt); \draw[fill] (1,0) circle(1.5pt); \draw[fill] (2,0) circle(1.5pt); \draw[fill] (1,1.414) circle(1.5pt); \draw[fill] (1,.48) circle(1.5pt); \draw[fill] (.5,.707) circle(1.5pt); \draw[fill] (1.5,.707) circle(1.5pt); 
    \draw[thin] (0,0)--(2,0)--(1,1.414)--(0,0)--(1.5,.707); \draw[thin] (1,1.414)--(1,0); \draw[thin] (.5,.707)--(2,0);
    \draw[thin] (-.1,-.1)--(.1,.1); \draw[thin] (-.1,.1)--(.1,-.1); \draw[thin] (.9,-.1)--(1.1,.1); \draw[thin] (.9,.1)--(1.1,-.1); \draw[thin] (1.9,-.1)--(2.1,.1); \draw[thin] (1.9,.1)--(2.1,-.1); \draw[thin] (.9,1.314)--(1.1,1.514); \draw[thin] (.9,1.514)--(1.1,1.314);
    \draw[thin] (.9,.4)--(1.1,.6); \draw[thin] (.9,.6)--(1.1,.4); \draw[thin] (.4,.607)--(.6,.807); \draw[thin] (.4,.807)--(.6,.607); \draw[thin] (1.4,.607)--(1.6,.807); \draw[thin] (1.4,.807)--(1.6,.607);
    \node at (.75,-.75) {(i) none defined};
    \end{tikzpicture} 
    \hspace{1ex}
    \begin{tikzpicture}[xscale=.8,yscale=1]
    \draw[fill] (0,0) circle(1.5pt); \draw[fill] (1,0) circle(1.5pt); \draw[fill] (2,0) circle(1.5pt); \draw[fill] (1,1.414) circle(1.5pt); \draw[fill] (1,.48) circle(1.5pt); \draw[fill] (.5,.707) circle(1.5pt); \draw[fill] (1.5,.707) circle(1.5pt); 
    \draw[thin] (0,0)--(2,0)--(1,1.414)--(0,0)--(1.5,.707); \draw[thin] (1,1.414)--(1,0); \draw[thin] (.5,.707)--(2,0);
    \draw[fill] (3,.707) circle(1.5pt);
    \draw[thin] (-.1,-.1)--(.1,.1); \draw[thin] (-.1,.1)--(.1,-.1); \draw[thin] (.9,-.1)--(1.1,.1); \draw[thin] (.9,.1)--(1.1,-.1); \draw[thin] (1.9,-.1)--(2.1,.1); \draw[thin] (1.9,.1)--(2.1,-.1); 
    \draw (1,.9) ellipse (.75 and .65); \draw[thin,->] (1.9,1.0) to [out=0,in=135] (2.75,.8);
    \node at (.75,-.75) {(ii) point image};
    \end{tikzpicture} 
    \hspace{3ex}
    \begin{tikzpicture}[xscale=.8,yscale=1]
    \draw[fill] (0,0) circle(1.5pt); \draw[fill] (1,0) circle(1.5pt); \draw[fill] (2,0) circle(1.5pt); \draw[fill] (1,1.414) circle(1.5pt); \draw[fill] (1,.48) circle(1.5pt); \draw[fill] (.5,.707) circle(1.5pt); \draw[fill] (1.5,.707) circle(1.5pt); 
    \draw[thin] (0,0)--(2,0)--(1,1.414)--(0,0)--(1.5,.707); \draw[thin] (1,1.414)--(1,0); \draw[thin] (.5,.707)--(2,0);
    \draw[fill] (3.5,0) circle(1.5pt); \draw[fill] (3.5,.707) circle(1.5pt); \draw[fill] (3.5,1.414) circle(1.5pt); \draw[thin] (3.5,0)--(3.5,1.414);
    \draw[thin] (-.1,-.1)--(.1,.1); \draw[thin] (-.1,.1)--(.1,-.1);
    \draw[thin] (1.5,0) ellipse (.75 and .15);
    \draw[thin,rotate=25] (1.35,0) ellipse (.65 and .15);
    \draw[thin,rotate=55] (1.35,0) ellipse (.65 and .175);
    \draw[thin,->] (2.5,0)--(3.2,0); \draw[thin,->] (2,.707)--(3.2,.707); \draw[thin,->] (1.5,1.414)--(3.2,1.414);
    \node at (1.75,-.75) {(iii) line image};
    \end{tikzpicture} 
    \hspace{5ex}
    \begin{tikzpicture}[xscale=.8,yscale=1]
    \draw[fill] (0,0) circle(1.5pt); \draw[fill] (1,0) circle(1.5pt); \draw[fill] (2,0) circle(1.5pt); \draw[fill] (1,1.414) circle(1.5pt); \draw[fill] (1,.48) circle(1.5pt); \draw[fill] (.5,.707) circle(1.5pt); \draw[fill] (1.5,.707) circle(1.5pt); 
    \draw[thin] (0,0)--(2,0)--(1,1.414)--(0,0)--(1.5,.707); \draw[thin] (1,1.414)--(1,0); \draw[thin] (.5,.707)--(2,0);
    \node at (2.25,-.75) {(iv) isomorphism};
    \draw[fill] (3,0) circle(1.5pt); \draw[fill] (4,0) circle(1.5pt); \draw[fill] (5,0) circle(1.5pt); \draw[fill] (4,1.414) circle(1.5pt); \draw[fill] (4,.48) circle(1.5pt); \draw[fill] (3.5,.707) circle(1.5pt); \draw[fill] (4.5,.707) circle(1.5pt); 
    \draw[thin] (3,0)--(5,0)--(4,1.414)--(3,0)--(4.5,.707); \draw[thin] (4,1.414)--(4,0); \draw[thin] (3.5,.707)--(5,0);
    \draw[thin,->] (2,.707)--(3,.707);
    \end{tikzpicture} 
    \]
    \item[(A3)] If $l,m$ are lines of $\mathcal{G}$ that intersect in the point $p$, and $\alpha(l),\alpha(m)$ are distinct lines of a plane $t'$ of $\mathcal{H}$ whose intersection is an edge point $\alpha(p)$ of $t'$, then $l,m$ lie in a plane $t$ of $\mathcal{G}$ that is mapped isomorphically to $t'$. 
    \[
    \begin{tikzpicture}[xscale=.8,yscale=1]
    \draw[fill] (0,0) circle(1.5pt); \draw[fill] (1,0) circle(1.5pt); \draw[fill] (2,0) circle(1.5pt); \draw[fill] (1,1.414) circle(1.5pt); \draw[fill] (1,.48) circle(1.5pt); \draw[fill] (.5,.707) circle(1.5pt); \draw[fill] (1.5,.707) circle(1.5pt); 
    \draw[thin, dashed] (0,0)--(2,0)--(1,1.414)--(0,0)--(1.5,.707); \draw[very thick] (1,1.414)--(1,0); \draw[thin, dashed] (.5,.707)--(2,0); \draw[very thick] (0,0)--(2,0);
    \node at (-.4,0) {$l$}; \node at (.4,1.414) {$m$};
    \draw[thin,->] (2.5,.707)--(4.5,.707);
    \node at (1,-.5) {$p$};
    \node at (6,-.5) {$\alpha(p)$};
    \draw[fill] (5,0) circle(1.5pt); \draw[fill] (6,0) circle(1.5pt); \draw[fill] (7,0) circle(1.5pt); \draw[fill] (6,1.414) circle(1.5pt); \draw[fill] (6,.48) circle(1.5pt); \draw[fill] (5.5,.707) circle(1.5pt); \draw[fill] (6.5,.707) circle(1.5pt); 
    \draw[thin] (5,0)--(7,0)--(6,1.414)--(5,0)--(6.5,.707); \draw[very thick] (6,1.414)--(6,0); \draw[thin] (5.5,.707)--(7,0); \draw[very thick] (5,0)--(7,0);
    \node at (7.8,0) {$\alpha(l)$}; \node at (6.9,1.414) {$\alpha(m)$};
    \end{tikzpicture} 
    \]
  \end{itemize}
\end{definition}

\begin{proposition}
  Orthohypergraphs and the hypergraph morphisms form a category under the usual composition of partial functions. 
\end{proposition}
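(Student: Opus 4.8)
The plan is to verify the three ingredients of a category: that morphisms compose, that identity partial functions are morphisms, and that the associativity and unit laws hold. The last point is automatic, because the underlying operation is ordinary composition of partial functions, which is associative and has the identity partial functions as two-sided units; so all the content lies in closure under composition and the presence of identities, and essentially all the work is in checking that a composite of two hypergraph morphisms satisfies axioms (A1), (A2), (A3) of Definition~\ref{morphism}. For identities this is immediate: for an orthohypergraph $\mathcal{G}$ with point set $P$, the identity partial function $\mathrm{id}_P$ carries every line isomorphically onto itself and every plane isomorphically onto itself, so it acts on lines as in case (iii) of (A1) and on planes as in case (iv) of (A2); and if lines $l,m$ meet at $p$ with $\mathrm{id}_P(l)=l$, $\mathrm{id}_P(m)=m$ distinct lines of a plane $t'$ of $\mathcal{G}$ meeting at an edge point $\mathrm{id}_P(p)=p$, then $l,m$ already lie in the plane $t=t'$, which $\mathrm{id}_P$ carries isomorphically onto $t'$. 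Hence $\mathrm{id}_P$ is a morphism.

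For composition, let $\alpha\colon\mathcal{G}\to\mathcal{H}$ and $\beta\colon\mathcal{H}\to\mathcal{K}$ be morphisms. To check (A1) for $\beta\circ\alpha$, fix a line $l$ of $\mathcal{G}$. By (A1) for $\alpha$, either no point of $l$ lies in the domain of $\alpha$ (then the same is true for $\beta\circ\alpha$, case (i)); or $\alpha$ sends all of its defined points on $l$ to a single point $q$ (then $\beta\circ\alpha$ sends them to $\beta(q)$, a single point or undefined, case (i) or (ii)); or $\alpha$ restricts to an isomorphism of $l$ onto a line $\alpha(l)$ of $\mathcal{H}$, and then applying (A1) for $\beta$ to $\alpha(l)$ shows that $\beta\circ\alpha$ acts on $l$ as in case (i), (ii), or (iii). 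The verification of (A2) is the same case analysis one dimension higher: starting from the four cases of (A2) for $\alpha$ on a plane $t$, each followed by the appropriate instance of (A1) or (A2) for $\beta$ applied to the line or plane $\alpha(t)$, lands $\beta\circ\alpha$ in one of the four cases of (A2). The closure facts making every case collapse back into the list are that a composite of plane- (resp. line-) isomorphisms is again a plane- (resp. line-) isomorphism, and that anything composed after a collapse to a point or to a line is still a collapse to a point or to a line.

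The crux is (A3) for the composite. Suppose $l,m$ are lines of $\mathcal{G}$ meeting at a point $p$ and that $(\beta\circ\alpha)(l)$ and $(\beta\circ\alpha)(m)$ are distinct lines of a plane $t''$ of $\mathcal{K}$ whose intersection is the edge point $(\beta\circ\alpha)(p)$ of $t''$. Since $(\beta\circ\alpha)(l)$ is a line, the case analysis above forces $\alpha$ to restrict to an isomorphism of $l$ onto a line $l_1:=\alpha(l)$ of $\mathcal{H}$ with $\beta$ restricting to an isomorphism of $l_1$ onto $(\beta\circ\alpha)(l)$, and likewise $m_1:=\alpha(m)$ is a line with $\beta$ an isomorphism of $m_1$ onto $(\beta\circ\alpha)(m)$. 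As these two composite images differ, $l_1\neq m_1$; since $p\in l\cap m$ we get $\alpha(p)\in l_1\cap m_1$, and two distinct lines of an orthohypergraph have at most one common point (the proposition in Section~\ref{sec:hypergraphs}), so $l_1\cap m_1=\{\alpha(p)\}$ and $\beta(\alpha(p))=(\beta\circ\alpha)(p)$ is the common edge point of $\beta(l_1),\beta(m_1)$ in $t''$. Applying (A3) to $\beta$, the lines $l_1,m_1$ lie in a plane $t'$ of $\mathcal{H}$ that $\beta$ carries isomorphically onto $t''$; since an isomorphism of planes preserves the corner/edge dichotomy, $\alpha(p)$ is an edge point of $t'$. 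Now apply (A3) to $\alpha$ with the lines $l,m$, the point $p$, and the plane $t'$: the lines $l,m$ lie in a plane $t$ of $\mathcal{G}$ that $\alpha$ carries isomorphically onto $t'$. Composing the two plane isomorphisms, $\beta\circ\alpha$ carries $t$ isomorphically onto $t''$ with $l,m\subseteq t$, which is precisely (A3) for $\beta\circ\alpha$. Thus $\beta\circ\alpha$ is a morphism, and with associativity and the unit laws inherited from composition of partial functions, the statement follows.

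The step I expect to be the main obstacle is (A3) for a composite: one must recognise that a line-valued composite forces both factors to be line-isomorphisms, so that (A3) for $\beta$ even applies, and one must then use that a plane isomorphism sends edge points to edge points in order to legitimately feed an edge point of $t'$ back into (A3) for $\alpha$. The verifications of (A1) and (A2) are routine once the handful of closure facts about the cases are isolated.
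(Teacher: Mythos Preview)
Your proof is correct and follows essentially the same approach as the paper: a direct case analysis for (A1) and (A2), and for (A3) the two-step reduction first applying (A3) to $\beta$ to obtain a plane $t'$ in $\mathcal{H}$, then applying (A3) to $\alpha$ to obtain the desired plane $t$ in $\mathcal{G}$. Your treatment is in fact slightly more careful than the paper's in spelling out why $\alpha(p)$ is an edge point of $t'$ (via preservation of the corner/edge dichotomy under plane isomorphisms) and in verifying the identity axioms explicitly.
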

\begin{proof}
  The identity map on an orthohypergraph is a hypergraph morphism. Suppose $\mathcal{G},\mathcal{H},\mathcal{J}$ are orthohypergraphs with point sets $P,Q,R$, and $\alpha\colon\mathcal{G}\to\mathcal{H}$ and $\beta\colon\mathcal{H}\to\mathcal{J}$ are hypergraph morphisms. So $\alpha\colon P\to Q$ and $\beta\colon Q\to R$ are partial functions. The composite $\beta\circ\alpha$ is the usual relational product $\beta\circ\alpha = \{(p,r) \mid \exists q\in Q \colon (p,q)\in\alpha, (q,r)\in\beta\}$. So $\gamma=\beta\circ\alpha$ is a partial function from $P$ to $R$. We must verify (A1)--(A3). 

  For (A1), let $l=\{x,y,z\}$ be a line of $\mathcal{G}$. If case (i) of (A1) applies to $\alpha(l)$, then none of $\alpha(x),\alpha(y),\alpha(z)$ are defined, so none of $\gamma(x),\gamma(y),\gamma(z)$ is defined, so (i) of (A1) applies to $\gamma$. In case (ii), $\alpha(l)$ is a point $q$ of $Q$. If $\beta(q)=\bot$, then case (A1.i) applies to $\gamma(l)$, and if $\beta(q)=r$, then (A1.ii) applies to $\gamma(l)$. In case (iii), $\alpha(l)=m$ for some line $m$ of $R$, and whichever of case (i)--(iii) of (A1) applies to $\beta(m)$ also applies to $\gamma(l)$. Thus (A1) holds for $\gamma$.

  For (A2), let $t$ be a plane of $\mathcal{G}$. If case (i) of (A2) applies to $\alpha(t)$, then (A2.i) applies to $\gamma(t)$. In case (ii), $\alpha(t)$ is a point $q$ of $Q$. Then either (A2.i) or (A2.ii) applies to $\gamma(t)$ according to whether $\beta(q)$ is undefined or a point of $R$. In case (iii), $\alpha(t)$ is a line $m$ of $Q$. Then (A2.i), (A2.ii), (A2.iii) applies to $\gamma(t)$ according to whether (A1.i), (A1.ii), (A1.iii) applies to $\beta(m)$. If case (A2.iv) applies to $\alpha(t)$, then $\gamma(t)$ is a plane $t'$ of $Q$. Then case (i)--(iv) of (A2) applies to $\gamma(t)$ according to which of case (i)--(iv) of (A2) applies to $\beta(t')$. Thus (A2) holds for $\gamma$. 

  For (A3), suppose $l,m$ are lines of $P$ that intersect in a point $p$. Suppose that $\gamma(l)=l'',\gamma(m)=m''$ are distinct lines of a plane $t''$ of $\mathcal{J}$ that intersect in an edge point $r$ of $t''$. Since $\gamma(l),\gamma(m)$ are distinct lines of $R$, the lines $\alpha(l)=l'$ and $\alpha(m)=m'$ of $Q$ must be distinct. Since $p$ lies on $l,m$, then $\alpha(p)=q$ is a point on both $l',m'$, and must therefore be their unique intersection point. Then the lines $l',m'$ intersect in a point $q$ and $\beta(l')=l'',\beta(m')=m''$ are distinct lines of the plane $t''$ of $\mathcal{J}$ that intersect in the edge point $r$ of $t''$. Since (A3) applies to $\beta$, the lines $l',m'$ lie in a plane $t'$ of $\mathcal{H}$ that is mapped isomorphically by $\beta$ to $t''$. In particular, $l',m'$ are distinct lines of $t'$, and their intersection point $q$ is an edge point of $t'$. Applying (A3) to $\alpha$ gives a plane $t$ of $\mathcal{G}$ that contains $l,m$ and is mapped by $\alpha$ isomorphically to $t'$. Thus $\gamma$ maps $t$ isomorphically to $t''$, as required. Thus (A3) holds for $\gamma$.
\end{proof}

\begin{definition}
Let $\cat{OA}$ be the category of orthoalgebras and orthoalgebra morphisms, and $\cat{OH}$ be the category of orthohypergraphs and  hypergraph morphisms. 
\end{definition}

Next we extend $A \mapsto \mathcal{G}(A)$ to a functor $\cat{OA} \to \cat{OH}$. Let $A$ and $C$ be orthoalgebras with $P$ and $Q$ the point sets of their hypergraphs. So points of $P$ are 4-element subalgebras $x_a$ of $A$ and points of $Q$ are 4-element subalgebras $x_c$ of~$C$. 

\begin{definition}\label{mii}
  For an orthoalgebra morphism $f\colon A\to C$, define a partial map $\mathcal{G}(f)\colon P\to Q$ by 
  \[\mathcal{G}(f)(x_a)=\begin{cases} x_{f(a)} & \mbox{if  $f(a)\neq 0,1$,}\\
 \bot &\mbox{otherwise}. \end{cases}\]
\end{definition}
Informally, if we regard $\bot$ as an augmented least element of the hypergraph, we have $x_0=x_1=\bot$, hence $\mathcal G(f)(x_a)=x_{f(a)}$ for each $a\in A$. 
\begin{proposition}\label{pkl}
  If $f$ is an orthoalgebra morphism, then $\mathcal{G}(f)$ is a well-defined hypergraph morphism, giving a functor $\mathcal{G}\colon \cat{OA}\to\cat{OH}$.
\end{proposition}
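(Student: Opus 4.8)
The plan is to check, in turn, that $\mathcal{G}(f)$ is well defined as a partial function on points, that it satisfies the three axioms (A1)--(A3) of Definition~\ref{morphism}, and that $\mathcal{G}$ preserves identities and composition. For well-definedness, recall that a point of $\mathcal{G}(A)$ is $x_a$ with $a\in A\setminus\{0,1\}$ and that $x_a=x_b$ exactly when $b\in\{a,a'\}$; since $f(a')=f(a)'$ we have $x_{f(a)}=x_{f(a')}$, and $f(a)\in\{0,1\}$ iff $f(a')\in\{0,1\}$, so Definition~\ref{mii} is independent of the representative. Adopting the convention $x_0=x_1=\bot$, so that $\mathcal{G}(f)(x_a)=x_{f(a)}$ for all $a\in A$, functoriality is immediate: $\mathcal{G}(\mathrm{id})=\mathrm{id}$, and for composable $f,g$ one has $\mathcal{G}(g\circ f)(x_a)=x_{g(f(a))}=\mathcal{G}(g)\bigl(x_{f(a)}\bigr)=\bigl(\mathcal{G}(g)\circ\mathcal{G}(f)\bigr)(x_a)$.

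For (A1) and (A2) the essential input is the preceding proposition: $f$ restricts, on each Boolean subalgebra $B$ of $A$, to a surjective Boolean homomorphism onto a Boolean subalgebra $f[B]$ of $C$, so (as $C$ is nontrivial) $f[B]$ is a nontrivial power set and the kernel of $f|_B$ is a principal ideal of $B$. If $l$ is a line, then $B=l$ has $8$ elements and $f[B]$ has $8$, $4$, or $2$ elements, corresponding respectively to: $f|_l$ an isomorphism of lines (case (iii) of (A1)); kernel $\{0,e\}$ with $e$ an atom, so two points of $l$ collapse to one point of $\mathcal{G}(C)$ and the third goes to $\bot$ (case (ii)); kernel $\down u$ with $u$ a coatom, so all of $l$ goes to $\bot$ (case (i)). If $t$ is a plane, then $B=t$ has $16$ elements and $f[B]$ has $16$, $8$, $4$, or $2$ elements, corresponding to cases (iv), (iii), (ii), (i) of (A2); in each case one computes the images of the four corner atoms and the three height-$2$ edge elements of $B$ and checks the configuration against the pictures in Definition~\ref{morphism}. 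These are routine case analyses.

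The substance is (A3). Let $l,m$ be lines of $\mathcal{G}(A)$ meeting at $p=x_{a_1}$, with $\mathcal{G}(f)(l)$ and $\mathcal{G}(f)(m)$ distinct lines of a plane $t'$ of $\mathcal{G}(C)$ whose intersection is the edge point $\bar p:=\mathcal{G}(f)(p)$; write $B'$ for the $16$-element Boolean subalgebra of $C$ carrying $t'$. Since $\mathcal{G}(f)(l),\mathcal{G}(f)(m)$ are lines, case (iii) of (A1) forces $f$ to restrict to isomorphisms of $l$ and $m$ onto the two $8$-element subalgebras $\bar l:=f[l]$ and $\bar m:=f[m]$, which are exactly the two lines of $t'$ through $\bar p$. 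Replacing $a_1$ by $a_1'$ if necessary, take $a_1$ to be an atom of $l$. The first step is to exclude that $a_1$ is also an atom of $m$: in that case $f(a_1)$ would be a common atom of $\bar l$ and $\bar m$, hence the height-$2$ element of $B'$ with point $\bar p$, and a short analysis of the two lines of $t'$ through $\bar p$ shows that, possibly after swapping $l$ and $m$, the two atoms of $l$ other than $a_1$ are sent strictly below $f(a_1)$ in $B'$; but then for such an atom $a_2\neq a_1$ of $l$ the sum $a_1\oplus a_2$ is defined in $A$, whereas $0\neq f(a_2)<f(a_1)$ inside $B'$, so $f(a_1)$ and $f(a_2)$ are not orthogonal in $C$, contradicting that $f$ is a morphism. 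Hence $a_1$ is a coatom of $m$, and computing the direction $d_{a_1}$ of Definition~\ref{xes} gives $d_{a_1}(l)=(x_{a_1},l)$ and $d_{a_1}(m)=(m,x_{a_1})$; since $x_{a_1}\lessdot l$, $x_{a_1}\lessdot m$ in $\BSub(A)$ and $d_{a_1}$ is a direction (as established in the proof of Theorem~\ref{hhh}), condition (3) of Definition~\ref{scary} yields a join $t:=l\vee m$ in $\BSub(A)$ with $l,m\lessdot t$. Thus $t$ has height $3$, i.e. is a $16$-element Boolean subalgebra of $A$ --- a plane of $\mathcal{G}(A)$ containing both $l$ and $m$. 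Finally $\mathcal{G}(f)(t)$ must fall under case (iv) of (A2): it cannot be (i)--(iii), whose images have at most three points, since the image already contains the two distinct lines $\bar l,\bar m$. So $f|_t$ is an isomorphism of $t$ onto the $16$-element subalgebra $f[t]\supseteq\bar l\cup\bar m$, which forces $f[t]=B'$ by cardinality; hence $\mathcal{G}(f)$ carries the plane $t$ isomorphically onto $t'$, as (A3) demands.

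The main obstacle is exactly this last argument: organising the two sub-cases of (A3) so that precisely one of them survives, eliminating the other by the non-orthogonality computation inside the Boolean subalgebra $B'$ of $C$, and then recognising that the surviving case is exactly the hypothesis of the direction-theoretic axiom Definition~\ref{scary}(3), which is what manufactures the plane $t$. Everything else --- well-definedness, functoriality, and (A1)--(A2) --- is bookkeeping built on the restriction proposition and the convention $x_0=x_1=\bot$.
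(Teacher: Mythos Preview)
Your treatment of well-definedness, functoriality, and axioms (A1)--(A2) matches the paper's approach and is fine. For (A3) you diverge in an interesting way in the ``good'' sub-case ($a_1$ an atom of $l$ and a coatom of $m$): where the paper exhibits the plane directly as the $16$-element subalgebra generated by the partition of unity obtained from the atoms of $l$ and $m$, you instead invoke condition~(3) of Definition~\ref{scary} for the direction $d_{a_1}$ (verified in the proof of Theorem~\ref{hhh}). That is a legitimate and rather elegant alternative: it trades the explicit combinatorics for the direction machinery already built, and it explains \emph{why} (A3) holds in terms of the central notion of the paper.

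However, your exclusion of the ``bad'' sub-case ($a_1$ an atom of both $l$ and $m$) is not right as written. You assert that ``possibly after swapping $l$ and $m$, the two atoms of $l$ other than $a_1$ are sent strictly below $f(a_1)$ in $B'$,'' and then derive a non-orthogonality contradiction. But this intermediate claim is false: since $f|_l$ is an isomorphism onto $\bar l$ and $a_1,a_2$ are distinct atoms of $l$, the images $f(a_1),f(a_2)$ are orthogonal atoms of $\bar l$, so $f(a_2)\le f(a_1)'$ in $B'$, not $f(a_2)<f(a_1)$; indeed $f(a_2)<f(a_1)$ together with $f(a_2)\le f(a_1)'$ would force $f(a_2)=0$. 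The correct (and shorter) argument is the one the paper gives: since $\bar p=x_{f(a_1)}$ is an \emph{edge} point of $t'$, the element $f(a_1)$ is an atom of exactly one of the two $8$-element subalgebras $\bar l,\bar m$ of $B'$ through $\bar p$ and a coatom of the other; but your hypothesis makes $f(a_1)$ an atom of both (via the isomorphisms $f|_l$ and $f|_m$), which is already the contradiction. No further orthogonality computation is needed.
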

\begin{proof}
  Let $f \colon A\to C$ and $\alpha=\mathcal{G}(f)$. By definition, $\alpha$ is a partial function from the set of points $P$ of the hypergraph $\mathcal{G}(A)$ to the set of points $Q$ of the hypergraph $\mathcal{G}(C)$. To show it is a hypergraph morphism, we must verify (A1)--(A3).

  Suppose $l$ is a line of $P$. Then $l$ is an $8$-element Boolean subalgebra of $A$. The points $p$ on the line $l$ are the $x_a$ where $a$ is an atom of $l$. The image $s=f(l)$ is a Boolean subalgebra of $C$. If $s$ has $1$ or $2$ elements, then $f$ maps each atom of $l$ to $0$ or~$1$, hence each point $p$ of the line $l$ to $\bot$, and (A1.i) applies. If $s$ has $4$ elements, so $s=x_c$ is a point of $Q$, then one atom of $l$ is mapped to $0$ and the other two to $c,c'$. So one point on the line $l$ is mapped by $\alpha$ to $\bot$, and the other two to $s$. So (A1.ii) applies to $\alpha(l)$. If $s$ has $8$ elements, then $f$ is an isomorphism from $l$ to $s$ and (A1.iii) applies. 

  Let $s=f(t)$ be the image of a $16$-element Boolean subalgebra $t$ of~$A$. Then $s$ is a Boolean subalgebra of $C$. If $s$ has $1$ or $2$ elements, then every element of $t$ is mapped to $0,1$, so (A2.i) applies. If $s$ is a $4$-element Boolean algebra with atoms $a,a'$, then $s=x_a$ is a point of $Q$. In this case, exactly two atoms of $t$ are mapped by $f$ to $0$, and the other two atoms are mapped to $a,a'$. Then two corners of $t$ are mapped to $\bot$, and the other two corners to the point~$s$. That the remainder of the situation is as described in (A2.ii) follows from behavior of $\alpha$ on lines as given in (A1) that is already established.  Suppose $s$ has $8$ elements. Then three atoms of $t$ are mapped to the three atoms of $s$, and the fourth atom of $t$ is mapped to~$0$. Thus three corners of $t$ are mapped to the three points on the line $s$ of $A$ and the fourth corner is mapped to~$\bot$. That the remainder of the situation is as in (A2.iii) follows from the behavior of $\alpha$ on lines already established. Finally, if $s$ has $16$ elements, then $f$ is an isomorphism from $t$ to~$s$, and the situation is as in (A2.iv).

  For (A3), let $l,m$ be distinct lines of $P$ that intersect in a point and satisfy the hypotheses of (A3). Suppose the points on $l$ are $x_a, x_b, x_c$, and those of $m$ are $x_c, x_d, x_e$ for some $a,b,c,d,e\in A$ with none equal to $0,1$. One of $a,a'$, one of $b,b'$, and one of $c,c'$ is an atom of $l$; and one of $c,c'$, one of $d,d'$, and one of $e,e'$ is an atom of $m$. We may assume without loss of generality that $a,b$ are atoms of $l$ and that $d,e$ are atoms of $m$. There are now two distinct possibilities: that the same member of $c,c'$ that is an atom of $l$ is an atom of $m$, and that one of $c,c'$ is an atom of $l$ and the other is an atom of $m$. We may assume without loss of generality that $c$ is an atom of $l$. 

  If $c'$ is an atom of $m$, then $a\oplus b = c'$, so $((a\oplus b)\oplus d)\oplus e$ exits and is equal to $1$. Thus $a,b,d,e$ are a partition of unity in $A$, so generate a $16$-element Boolean subalgebra $t$ of $A$. Thus $t$ is a plane of $P$, and since neither $c,c'$ is an atom of $t$, we have that $l,m$ intersect in the edge point $x_c$ of this plane. We now consider the case where $c$ is an atom of both $l,m$. Since $\alpha(x_c)$ is an edge point of the plane $t'$ that contains $\alpha(l)$ and $\alpha(m)$, we have that $f(c)$ is an atom of one of $\alpha(l),\alpha(m)$, and a coatom of the other. But our assumptions of $\alpha(l),\alpha(m)$ give that $f$ maps $l$ isomorphically to $\alpha(l)$, and $f$ maps $m$ isomorphically to $\alpha(m)$. So $f$ cannot map an atom of one of these Boolean algebras to a coatom of its image. 

  This shows that $\alpha=\mathcal{G}(f)$ is a hypergraph morphism. If $g \colon C\to E$ is an orthoalgebra morphism, it is clear that $\mathcal{G}(g\circ f)=\mathcal{G}(g)\circ\mathcal{G}(f)$, and that $\mathcal{G}$ takes the identity morphism of $A$ to the identity of $\mathcal{G}(A)$. Thus $\mathcal{G}$ is a functor. 
\end{proof}

There are several fundamental obstacles preventing an equivalence between the categories $\cat{OA}$ and $\cat{OH}$. On the level of objects, the one-element and 2-element Boolean algebras both have a 1-element poset of Boolean subalgebras, and hence empty hypergraphs. Furthermore, the 4-element Boolean algebra has two automorphisms, while its hypergraph has one point and therefore only one automorphism. The latter difficulty extends to any orthoalgebra having a 4-element block. Moreover, if we were to consider a morphism from a countable free Boolean algebra to itself whose image was a 4-element Boolean algebra, a similar problem would arise, and this would be the case also if we took an orthoalgebra that was a horizonal sum of two such free Boolean algebras. However, modulo such small blocks, we next show that morphisms in $\cat{OA}$ can be treated via morphisms in $\cat{OH}$. 

\begin{definition}
  An orthoalgebra morphism $f \colon A\to C$ is \emph{proper} if 
  \begin{itemize}
  	\item each $a$ 
  	 in $A$ is in a block whose image has more than 4 elements
  \item each orthogonal $a,b\in A$ are contained in a block of $A$ whose image does not equal $\{0,f(a),f(a)',f(b),f(b)',1\}$. 
  \end{itemize}
\end{definition}

 Clearly, if the image of each block of $A$ under $f\colon A\to C$ has more than 4 elements, then $f$ is proper. The reason we prefer the more complex condition of properness to simply saying that the image of each block has more than 4 elements is only in part due to greater generality. The condition of properness has a simpler, and more easily applicable, translation to the hypergraph setting. 

\begin{definition}\label{A4}
  A hypergraph morphism $\alpha\colon P\to Q$ is {\em proper} when:
  \begin{itemize}
    \item[(A4)] Each point $p$ in $P$ is in a line or plane whose image contains a line;
    \item[(A5)] For distinct points $p,q$ of $P$ that lie on a line of $P$ there is a point $s\in P$ so that $p,q,s$ lie in a line or plane of $P$ and $\alpha(s)$ is defined and not equal to $\alpha(p),\alpha(q)$. 
  \end{itemize}
\end{definition}

\begin{proposition}\label{trek}\label{normal}
  Let $A,C$ be orthoalgebras and let $P,Q$ be the point sets of their hypergraphs. An orthoalgebra morphism $f\colon A\to C$ is proper if and only if its hypergraph morphism $\mathcal{G}(f)\colon P\to Q$ is proper.
\end{proposition}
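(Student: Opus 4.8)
The plan is to match the two conditions defining a \emph{proper} orthoalgebra morphism --- call them (i) (``each $a\in A$ lies in a block whose image has more than $4$ elements'') and (ii) (``each orthogonal $a,b$ lie in a block $B$ with $f[B]\neq\{0,f(a),f(a)',f(b),f(b)',1\}$'') --- against (A4) and (A5) for $\alpha=\mathcal G(f)$. Throughout I would use: that $f$ restricts to a surjective Boolean homomorphism on each Boolean subalgebra $B$ of $A$, so the subalgebra generated by $f[S]$ is $f[\langle S\rangle]$ for $S\subseteq B$; that $\alpha(x_a)=x_{f(a)}$ with $x_0=x_1=\bot$; that a Boolean algebra contains an $8$-element subalgebra iff it has more than $4$ elements; that distinct points $x_a,x_b$ are near iff $a,b$ may be taken to be the two atoms of the unique $8$-element subalgebra $l=\langle a,b\rangle$ through both, with $a\oplus b\neq1$; and that the image of a line or plane $S$ under $\alpha$ contains a line iff $|f[S]|\geq8$. (The statement is to be read for $A$ with more than two elements; for $|A|\leq2$ there are no points, $\alpha$ is vacuously proper, and this degenerate case plays no role later.)

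First I would prove (i)$\iff$(A4). For $(\Leftarrow)$: a line or plane $S$ through $x_a$ with $|f[S]|\geq8$ lies in a block $B$, so $|f[B]|\geq8>4$; applied to any point this also forces some block to have image larger than $4$, covering $a\in\{0,1\}$. For $(\Rightarrow)$, let $a$ lie in a block $B$ with $|f[B]|\geq8$. If $f(a)\notin\{0,1\}$, pick $w\in f[B]\setminus\{0,1,f(a),f(a)'\}$ and $b\in B$ with $f(b)=w$; then $b\notin\{0,1,a,a'\}$, so $\langle a,b\rangle$ is a line or plane through $x_a$ with $|f[\langle a,b\rangle]|=|\langle f(a),w\rangle|\geq8$. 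If $f(a)\in\{0,1\}$, replace $a$ by $a'$ so that $f(a)=0$; choose a chain $0<s<t<1$ in $f[B]$, take any lifts $\tilde b,\tilde c$, and set $b=\tilde b\vee a$, $c=\tilde c\vee b$, obtaining $0<a<b<c<1$ in $B$ with $f(b)=s$, $f(c)=t$; then the subalgebra generated by $a,b,c$ has exactly $16$ elements (a plane through $x_a$; its atoms are $a$, $b\wedge a'$, $c\wedge b'$, $c'$) and maps onto $\langle s,t\rangle$, which has $8$ elements.

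Next I would prove (ii)$\iff$(A5); since the $b=a'$ instances of (ii) follow from (i), only distinct near points matter. Fix such $x_a,x_b$ with $a,b$ the atoms of $l=\langle a,b\rangle$, put $D=\{0,f(a),f(a)',f(b),f(b)',1\}$, and note $f[l]=\langle f(a),f(b)\rangle\supseteq D$, with equality iff $f(a)\oplus f(b)\in D$, that is (routine) iff $f(a)=0$ or $f(b)=0$ or $f(b)=f(a)'$. If $f[l]\supsetneq D$, then $f(a\oplus b)\notin D$, so the point $x_{a\oplus b}$ on $l$ has $\alpha(x_{a\oplus b})\notin\{\alpha(x_a),\alpha(x_b),\bot\}$ and witnesses (A5), while every block over $l$ has image properly containing $D$, so (ii) holds. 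If $f[l]=D$, then for every line or plane $S$ through $x_a,x_b$ and every block $B\supseteq S$ one has $D=f[l]\subseteq f[S]\subseteq f[B]$; hence if (ii) fails (all blocks over $l$ map onto $D$) then every such $S$ has $f[S]\subseteq D$, no point of $S$ has image outside $D$, and (A5) fails; whereas if (ii) holds, fix a block $B\supseteq l$ with $f[B]\supsetneq D$, observe (checking the three cases for $D$) that $f[B]$ contains $u$ with $0<u<f(y)$ for some atom $y$ of $l$, lift $u$ to $v\in B$, and split $y=(v\wedge y)\oplus(v'\wedge y)$: the subalgebra generated by $l$ and $v\wedge y$ has exactly $16$ elements, is a plane through $x_a,x_b$, and contains the point $x_{v\wedge y}$ with $f(v\wedge y)=u\notin D$, witnessing (A5). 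Combining the two equivalences gives that $f$ is proper iff $\alpha$ satisfies (A4) and (A5), i.e.\ iff $\mathcal G(f)$ is proper.

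The step I expect to be the main obstacle is the pair of explicit constructions --- the chain-lift in the first equivalence and the atom-split in the second --- each of which must produce a Boolean subalgebra of $A$ that is small enough to be a line or a plane (at most $16$ elements) yet large enough that $f$ does not shrink it below $8$ elements. The trick making this work is never to build from scratch but always to enlarge the already $8$-element subalgebra $l=\langle a,b\rangle$ by one element chosen to split exactly one atom of $l$; the small case analysis on how $f$ acts on $f(a),f(b)$ is precisely what guarantees that such an element exists with image of the required size.
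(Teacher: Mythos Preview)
Your proposal is correct and follows the same overall strategy as the paper --- match (i) with (A4), match (ii) with (A5), and in each direction build a small Boolean subalgebra (line or plane) witnessing the relevant condition --- but your explicit constructions differ from the paper's in two places, and in both places yours are cleaner.

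For (A4) in the case $f(a)=0$, the paper picks $b,c\in B$ with $f(b),f(c)$ distinct atoms of an $8$-element subalgebra of $f[B]$, replaces them by the pairwise orthogonal $a,\,a'\wedge b\wedge c',\,a'\wedge b'\wedge c$, and observes these generate an $8$- or $16$-element subalgebra. Your chain-lift (choose $0<s<t<1$ in $f[B]$, lift to $0<a<b<c<1$) is tidier and always produces a $16$-element subalgebra on the nose. For (A5), the paper does a six-way case analysis on the values $f(c)\wedge f(a)$ and $f(c)\wedge f(b)$, producing a tailored $c_1$ in each case. You instead split on whether $f[l]=D$; when $f[l]\supsetneq D$ the third point of $l$ already witnesses (A5), and when $f[l]=D$ your observation that some $0<u<f(y)$ exists for an atom $y$ of $l$ (the ``three cases'' all force $|D|\le 4$, so $|f[B]|\ge 8$ and any $w\in f[B]\setminus D$ yields such a $u$ via $w\wedge f(y)$) lets you split $y$ uniformly. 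This is the same underlying idea --- enlarge $l$ by one element splitting one atom --- but your organization avoids the proliferation of subcases.

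One small remark: you correctly flag the degenerate case $|A|\le 2$, where (i) fails (the only block has at most two elements) while (A4) holds vacuously. The paper's statement and proof tacitly assume $A$ has at least one point; since the proposition is only invoked later for proper orthoalgebras, this does no harm, and your parenthetical is the honest way to handle it.
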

\begin{proof}
  Write $\alpha=\mathcal{G}(f)$.

  ``$\Rightarrow$'' Let $p$ be a point of $P$. Then $p=x_a$ for some $a\neq 0,1$ in $A$. Since $f$ is proper there is a block $B$ of $A$ that contains $a$ and whose image has more than four elements. If $\alpha(p)$ is a point $x_{f(a)}$ of $Q$, then there is some $b\in B$ with $x_b$ a point mapped by $\alpha$ to a point $x_{f(b)}$ of $Q$ different from~$x_{f(a)}$. Since $a,b$ generate a Boolean subalgebra of $B$ with more than $4$ elements, $p,q$ lie on a line or plane of~$P$. The image of this line or plane under $\alpha$ contains distinct points $x_{f(a)}, x_{f(b)}$, hence by (A1)--(A2) contains a line. 

  Suppose $\alpha(p)=\bot$. So we may assume $f(a)=0$. Since the image of $B$ has more than 4 elements, there are $b,c\in B$ with $f(b)=i$ and $f(c)=j$ where $i,j$ are distinct atoms of an $8$-element Boolean subalgebra of the image of $B$. Set $b_1=a'\wedge b\wedge c'$ and $c_1=a'\wedge b'\wedge c$. Then $a,b_1,c_1$ belong to $B$ and $f(a)=0$, $f(b_1)=i$, $f(c_1)=j$. Further, $a,b_1,c_1$ are pairwise orthogonal. So $a,b_1,c_1$ generate a subalgebra of $B$ whose atoms are among $a,b_1,c_1,a'\wedge b_1'\wedge c_1'$. This subalgebra has either $8$ or $16$ elements, so is either a line or plane of $P$ that contains~$p$. Its image under $\alpha$ contains distinct points of $Q$, hence contains a line of~$Q$. 

    To show that $\alpha$ satisfies (A5), suppose $p,q$ are distinct elements of $P$ that lie on a line of $P$. Then there are $a,b\in A$ with $p=x_a$ and $q=x_b$. Since $p,q$ lie on a line we have that one of $a,a'$ is orthogonal to one of $b,b'$, and we assume $a$ is orthogonal to~$b$. By assumption, there is a block $B$ of $A$ that contains $a,b$ and some $c\in B$ with $f(c)\notin S:=\{0,f(a),f(a)',f(b),f(b)',1\}$. We consider several cases, and in each produce an element $c_1\in B$ with $f(c_1)\notin S$ and $a,b,c_1$ generating an at most $16$-element subalgebra of~$B$. We use the fact that three pairwise orthogonal elements of a Boolean algebra, and that a $3$-element chain of $B$, generate an at most $16$-element subalgebra. 

  If $f(c)\wedge f(a)\neq 0,f(a)$, set $c_1=a\wedge c$. Then $c_1<a\leq b'$. Also $f(c_1)\neq 0,f(a)$, and since $0<f(c_1)<f(a)\leq f(b')$, we cannot have $f(c_1)=f(a)',f(b),f(b)',1$. So $f(c_1)\notin S$. If $f(c)\wedge f(b)\neq 0,f(b)$ the situation is symmetric. If $f(c)\wedge f(a)=0$ and $f(c)\wedge f(b)=0$, then set $c_1=a'\wedge b'\wedge c$. Then $f(c_1)=f(c)$ and $a,b,c_1$ are pairwise orthogonal. If $f(c)\wedge f(a)=0$ and $f(c)\wedge f(b)=f(b)$, set $c_1=(a'\wedge c)\vee b$. Then $f(c_1)=f(c)$ and $b\leq c_1\leq a'$. If $f(c)\wedge f(a)=f(a)$ and $f(c)\wedge f(b)=0$ it is symmetric. Finally, if $f(c)\wedge f(a)=f(a)$ and $f(c)\wedge f(b)=f(b)$, set $c_1=a'\wedge b'\wedge c'$. Then $f(c_1)=f(c)'$ and $a,b,c_1$ are pairwise orthogonal. 

  ``$\Leftarrow$'' Suppose $a\neq 0,1$. Then it follows from (A4) that $x_a$ is in a line or plane of $P$ whose image under $\alpha$ contains a line. Thus $a$ is in a Boolean subalgebra of $A$ whose image under $f$ contains an $8$-element subalgebra of $C$. Extend this Boolean subalgebra containing $a$ to a block, and this provides the first condition. For the second, suppose $a,b\in A$ are orthogonal. If one or both of $a,b$ is 0 or $a=b'$, then the second condition for the properness of $f$ follows from the first. If both $a,b\neq 0$ and $a\neq b'$, then $x_a$ and $x_b$ are distinct points of $P$, and the orthogonality of $a,b$ gives that they lie on a line of $P$. By (A5) there is a point $x_c$ of $P$ that lies in a line or plane of $P$ with $\alpha(x_c)$ defined and not equal to $\alpha(x_a),\alpha(x_b)$. Then $a,b,c$ lie in a Boolean subalgebra of $A$ with at most $16$-elements and $f(c)$ is not equal to any of $0,f(a),f(a)',f(b),f(b'),1$. Extend this Boolean subalgebra to a block. 
\end{proof}

\begin{lemma}\label{injectionssurjections}
  Let $A,C$ be proper orthoalgebras.
  If an orthoalgebra morphism $f \colon A \to C$ is injective, then it is proper.
  Moreover, $f$ is injective if and only if $\mathcal{G}(f)$ is injective.
\end{lemma}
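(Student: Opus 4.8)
The plan is to handle the three assertions in sequence: first that injective morphisms are proper, then the two directions of the biconditional. For the first assertion, suppose $f\colon A\to C$ is injective. To verify the first clause of properness, take $a\in A$. Since $A$ is proper, $a$ lies in a block $B$ with more than $4$ elements, so $B$ contains two orthogonal atoms, say $u,v$, generating an $8$-element subalgebra of $B$. Injectivity of $f$ forces $f(u),f(v)$ to be orthogonal, nonzero, and distinct from each other's orthocomplement (otherwise $f(u)=f(v)'$ would collide with $f(v')$, or $f(u)=0$ would collide with $f(0)$, since $u\neq 0$); hence $f(u)\oplus f(v)$ generates an $8$-element Boolean subalgebra in the image of $B$, so that image has more than $4$ elements. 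For the second clause, given orthogonal $a,b\in A$, pick a block $B$ containing $a,b$; since $A$ is proper this block has more than $4$ elements, so it contains an element $c$ not in $\{0,a,a',b,b',1\}$, and by injectivity $f(c)\notin\{0,f(a),f(a)',f(b),f(b)',1\}$ (any such equality would identify $c$ with one of $0,1,a,a',b,b'$). Thus $f$ is proper.

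For the biconditional, the easy direction is that $\mathcal{G}(f)$ injective implies $f$ injective. Suppose $f(a)=f(b)$ with $a\neq b$. If neither $a$ nor $b$ is in $\{0,1\}$, then $x_a,x_b$ are points of $P$; since $f(a)=f(b)$, either $\mathcal{G}(f)(x_a)=\mathcal{G}(f)(x_b)=x_{f(a)}$ (when $f(a)\neq 0,1$) with $x_a\neq x_b$ unless $a=b'$, or both map to $\bot$; either way we need to rule this out, so the genuinely delicate point is the case $b=a'$, where $x_a=x_b$ already and $\mathcal{G}(f)$ sees no collision even though $f$ is not injective. This shows the naive statement is false, so the correct reading must exploit properness of $A$: if $a=b'$ and $f(a)=f(a')=f(a)'$, then $f(a)=f(a)'$ forces nothing directly, but since $A$ is proper, $a$ lies in a block with an atom $u$ orthogonal to something, and one finds $x_u$ with $f(u)$ orthogonal to $f(a)$; tracking a line through $x_a$ and $x_u$ and its image under $\mathcal{G}(f)$ (which by (A1) is a line or point) lets one recover that $f$ restricted to that $8$-element block is non-injective, contradicting either (A1.iii) failing to be an isomorphism or a point-image collision. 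So the argument is: $a\neq b$ with $f(a)=f(b)$ yields, using properness, two distinct points of $P$ with the same $\mathcal{G}(f)$-image, or a line on which $\mathcal{G}(f)$ identifies two points.

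The converse direction, $f$ injective $\Rightarrow$ $\mathcal{G}(f)$ injective, is the main obstacle. Here $\mathcal{G}(f)(x_a)=\mathcal{G}(f)(x_b)$ with $x_a\neq x_b$ means $a\notin\{b,b'\}$ (and none of $a,b$ trivial), and either $x_{f(a)}=x_{f(b)}$, i.e.\ $f(b)\in\{f(a),f(a)'\}$, or both images are $\bot$, i.e.\ $f(a),f(b)\in\{0,1\}$. The latter is immediately excluded by injectivity since $a\neq b$ and at most one of $a,b$ can map to $0$ and at most one to $1$, while $a\notin\{b,b'\}$; wait—this needs care, as $f(a)=0,f(b)=1$ is a priori possible, but then $f(a')=1=f(b)$ forces $a'=b$ by injectivity, contradiction. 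In the former case, $f(b)=f(a)$ contradicts injectivity directly (as $a\neq b$), and $f(b)=f(a)'=f(a')$ contradicts injectivity (as $b\neq a'$). So this direction is in fact short once the case analysis is laid out carefully; the real work, and the place I expect to spend the most effort, is pinning down the failure-mode in the $b=a'$ subcase of the other direction, making sure properness of $A$ is invoked correctly and that the appeal to (A1) genuinely produces the claimed contradiction.
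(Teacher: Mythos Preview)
Your argument for properness is correct but more elaborate than needed: once $f$ is injective and $B$ is a block of $A$ with at least $8$ elements (guaranteed since $A$ is proper), the image $f[B]$ automatically has at least $8$ elements, which disposes of both clauses of properness at once. The paper does exactly this.

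You have inverted the difficulty of the two directions of the biconditional. The direction you call ``the main obstacle'', namely $f$ injective $\Rightarrow \mathcal{G}(f)$ injective, is in fact the short one, and your case analysis for it (once the hedging is removed) is correct and essentially what the paper does: $\mathcal{G}(f)(x_a)=\mathcal{G}(f)(x_b)$ forces $f(a)\in\{f(b),f(b')\}$, and injectivity of $f$ then gives $a\in\{b,b'\}$, i.e.\ $x_a=x_b$.

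The genuine gap is in the other direction, at precisely the $b=a'$ subcase you flag. You correctly arrive at $f(a)=f(a')=f(a)'$ and then assert that this ``forces nothing directly''. It does: in any orthoalgebra, $c=c'$ means $c\oplus c'=c\oplus c$ is defined, and the orthoalgebra axiom then gives $c=0$; but $0=0'$ means $0=1$, so $C$ is the one-element orthoalgebra, contradicting the hypothesis that $C$ is proper. That one line replaces your entire proposed detour through blocks, lines, and condition~(A1), which is both unnecessary and, as sketched, not obviously sound. The paper's proof of this direction is exactly: $f(a)=f(b)\Rightarrow x_a=x_b$ (via injectivity of $\mathcal{G}(f)$) $\Rightarrow a\in\{b,b'\}$, and $a=b'$ is ruled out because it would force $f(b)=f(b)'$.
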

\begin{proof}
  If $a \neq 0,1$ in $A$, it is in a block with more than $4$ elements because $A$ has no small blocks.
  Now let $a,b \in A$ be orthogonal.
  Suppose $f$ is injective.
  Then $a,b$ are contained in a block $B$ of $A$ with at least $8$ elements. Hence $f(a),f(b)$ lie in a block that contains $f(B)$ and therefore has at least $8$ elements, so cannot equal $\{0,f(a),f(a)',f(b),f(b)',1\}$. Hence $f$ is proper. Let $x_a,x_b$ be points of the hypergraph corresponding to $A$ and assume that $\mathcal G(f)(x_a)=\mathcal G(f)(x_b)$. Then $\{0,f(a),f(a)',1\}=\{0,f(b),f(b)',1\}$ (also if $f(a)=0,1$), hence $f(a)=f(b)$ or $f(a)=f(b')$. Injectivity of $f$ gives $a=b$ or $a=b'$. In both cases we have $x_a=x_b$. Conversely, assume that $\mathcal{G}(f)$ is injective, and let $a,b\in A$ such that $f(a)=f(b)$. Then \[ \mathcal G(f)(x_a)=x_{f(a)}=x_{f(b)}=\mathcal G(f)(x_b),\]
  	hence $x_a=x_b$ by injectivity of $\mathcal G(f)$, whence $a=b$ or $a=b'$. However, the latter would imply $f(b)=f(a)=f(b')=f(b)'$, which is impossible if $A$ is proper, so we must have $a=b$. We conclude that $f$ is injective.  
%
%
\end{proof}

\begin{remark}
  Neither orthoalgebra morphisms that are proper, nor proper hypergraph morphisms, are closed under composition. 
  For example, consider the $16$-element Boolean algebra $A$ with atoms $a,b,c,d$, define $f \colon A \to A$ by $a\mapsto 0$, $b \mapsto b$, $c \mapsto c$, $d \mapsto d$, and define $g \colon A \to A$ by $a \mapsto a$, $b \mapsto 0$, $c \mapsto c$, $d \mapsto d$. Then $f$ and $g$ are both orthoalgebra morphisms that are proper, but $g \circ f$ has a $4$-element image so is not proper. Since $A$ is a proper orthoalgebra, it also follows that an orthoalgebra morphism between proper orthoalgebras need not be a proper morphism.

  Thus a direct categorical approach using proper morphisms is not possible. 
  By the previous lemma, we can restrict to the category $\cat{OA_i}$ of proper orthoalgebras and injective orthoalgebra homomorphisms. 
  For now, we will stay general, and show that the functor $\mathcal{G} \colon \cat{OA}\to\cat{OH}$ is full and faithful with respect to proper morphisms. 
\end{remark}

\begin{proposition}\label{cvb}
  Orthoalgebra morphisms $f,g \colon A\to C$ that are proper are equal if $\mathcal{G}(f)=\mathcal{G}(g)$.
\end{proposition}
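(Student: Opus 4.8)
The plan is to encode the hypothesis as a sign function on $A$ and then show that sign is identically $+1$, using a small exchange-type lemma together with properness. First I would unwind $\mathcal{G}(f)=\mathcal{G}(g)$: since $\mathcal{G}(f)(x_a)=x_{f(a)}$ with the convention $x_0=x_1=\bot$, and similarly for $g$, the hypothesis says precisely that $x_{f(a)}=x_{g(a)}$, i.e. $\{0,f(a),f(a)',1\}=\{0,g(a),g(a)',1\}$, for every $a\in A$; in particular $g(a)\in\{f(a),f(a)'\}$. Since $f(a)\ne f(a)'$ always holds in an orthoalgebra, I would then define $\sigma(a)=+1$ if $g(a)=f(a)$ and $\sigma(a)=-1$ if $g(a)=f(a)'\ne f(a)$, and record the easy facts $\sigma(a')=\sigma(a)$ and $\sigma(0)=\sigma(1)=+1$ (using $f(0)=g(0)=0$, hence $f(1)=g(1)=1$). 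The whole statement reduces to proving $\sigma\equiv+1$.

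The main tool would be the lemma: \emph{if $a\perp b$ in $A$ with $f(a),f(b)\notin\{0,1\}$ and $f(a\oplus b)\ne1$, then $g(a)=f(a)$ and $g(b)=f(b)$.} I would prove this entirely inside the Boolean subalgebra $D$ of $C$ generated by $f(a)$ and $f(b)$, so that no meets or joins of elements from different Boolean subalgebras of $C$ are ever compared. In $D$ we have $f(a)\wedge f(b)=0$ (since $a\perp b$ forces $f(a)\perp f(b)$, hence $f(b)\le f(a)'$), we have $f(a)\vee f(b)=f(a\oplus b)\ne1$, and $f(a),f(b)\ne0,1$; a two-line check then shows the only orthogonal pair among $\{f(a),f(a)'\}\times\{f(b),f(b)'\}$ is $(f(a),f(b))$. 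Since $g(a)\in\{f(a),f(a)'\}$, $g(b)\in\{f(b),f(b)'\}$ and $g(a)\perp g(b)$ (because $g(a)\oplus g(b)=g(a\oplus b)$ is defined), the lemma follows.

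Then I would argue by contradiction: suppose $\sigma(a_0)=-1$ for some $a_0$, so $a_0\ne0,1$. In the case $f(a_0)\notin\{0,1\}$ I would apply the lemma to the pairs $(a_0,b)$ for $b\le a_0'$ and to $(a_0',b)$ for $b\le a_0$; since $\sigma(a_0)=\sigma(a_0')=-1$ the lemma's conclusion would fail each time, so one of its hypotheses must fail, and the only available failure (after discarding the impossible $f(b)=1$) forces $f(b)\in\{0,f(a_0)'\}$ for every $b\le a_0'$ and $f(b)\in\{0,f(a_0)\}$ for every $b\le a_0$; writing an arbitrary $c$ in a block $B$ containing $a_0$ as $c=(c\wedge a_0)\vee(c\wedge a_0')$ then gives $f(c)\in\{0,f(a_0),f(a_0)',1\}$, so $|f(B)|\le4$, contradicting properness of $f$. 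In the case $f(a_0)\in\{0,1\}$, replacing $a_0$ by $a_0'$ if needed I may assume $f(a_0)=0$, so $g(a_0)=1$ and $g(a_0')=0$; picking (by properness) a block $B\ni a_0$ with $|f(B)|>4$ and some $b\in B$ with $f(b)\notin\{0,1\}$, the element $b_1=b\wedge a_0'\le a_0'$ satisfies $f(b_1)=f(b)\notin\{0,1\}$, hence $g(b_1)\notin\{0,1\}$, while $g(b_1)\le g(a_0')=0$ forces $g(b_1)=0$ --- a contradiction. Hence $\sigma\equiv+1$, i.e. $f=g$.

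The only steps I expect to require care are the lemma's ``only orthogonal pair'' computation and keeping the degenerate cases $f(a_0)\in\{0,1\}$ and $f(b)=1$ tidy. I do not anticipate any real obstacle: because every comparison of meets and joins happens within a single Boolean subalgebra, the Fraser-cube phenomenon (where meets of elements from distinct Boolean subalgebras of $C$ may disagree) plays no role, and only the first clause of the properness hypothesis on $f$ is needed (properness of $g$ is not used).
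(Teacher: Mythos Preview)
Your proof is correct. The approach is essentially the same as the paper's, reorganized: both hinge on the observation that if $a$ lies in an $8$-element Boolean subalgebra of $A$ that $f$ maps isomorphically onto an $8$-element subalgebra of $C$, then $g(a)=f(a)$ is forced (your lemma is exactly this statement, phrased via the orthogonal pair $(a,b)$ with $f(a),f(b),f(a\oplus b)$ all nontrivial). The paper constructs such a subalgebra directly from properness using a short case analysis on how an element $e$ with $f(e)\notin x_{f(a)}$ sits relative to $f(a)$; you instead contrapose, showing that if $\sigma(a_0)=-1$ then no such $b$ exists, whence every block through $a_0$ has image contained in $x_{f(a_0)}$, contradicting properness. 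Your handling of the degenerate case $f(a_0)\in\{0,1\}$ is the same as the paper's. The sign-function packaging is a clean way to track the dichotomy $g(a)\in\{f(a),f(a)'\}$, and your explicit remark that only the first clause of properness (and only for $f$) is used matches what the paper's argument actually needs.
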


\begin{proof}
  Suppose $P,Q$ are the point sets of $\mathcal{G}(A),\mathcal{G}(C)$ and set $\alpha=\mathcal{G}(f)=\mathcal{G}(g)$. So by Proposition~\ref{normal} $\alpha\colon P\to Q$ is proper, hence by Definition~\ref{A4} it satisfies conditions (A4)-(A5). We have that $\mathcal{G}(g)(x_a)=\mathcal{G}(f)(x_a)=\mathcal{G}(f)(x_{a'})$ for each $a\in A$. In particular, $\mathcal{G}(f)[A]=\mathcal{G}(g)[A]$, and also $f[A]=g[A]$. To show that $f=g$, we take an arbitrary $a\in A$ and show that $f(a)=g(a)$. This is obvious if $a=0,1$, so we assume $a\neq 0,1$. Since $f$ is proper, there is a block $B$ that contains $a$ whose image under $f$ has more than $4$ elements. In showing that $f(a)=g(a)$ we consider two cases. 

  Suppose $f(a)$ is either $0,1$, and therefore that $g(a)$ is either $0,1$. We will show that $f(a)=0$ implies $g(a)=0$. The argument for $f(a)=1$ follows from this since $f(a')=0$ implies $g(a')=0$, hence $g(a)=1$. Let $e\in B$ with $f(e)\neq 0,1$. Set $b=e\wedge a'$ and $c=e'\wedge a'$. Then $a,b,c$ are pairwise orthogonal in $B$ and their join is 1. We have $a\neq 0,1$ and since $f(b)=f(e)\neq 0,1$ and $f(c)=f(e')\neq 0,1$, we have $b,c\neq 0,1$. Thus $a,b,c$ are atoms of an $8$-element Boolean subalgebra of $B$. We cannot have $g(a)=1$ since that gives $g(a')=0$, hence $g(b)=0$, contrary to $g(b)$ being either $f(b),f(b')$ and therefore not equal to $0,1$. 

  Next, suppose $f(a)\neq 0,1$. Since the image of $B$ under $f$ has more than $4$ elements, there is $e\in B$ with $f(e)\notin x_{f(a)}$. We claim there is an $8$-element subalgebra $S$ of $B$ that contains $a$ that is mapped isomorphically to an $8$-element Boolean subalgebra $T$ of~$C$. Since $a,a'$ are an atom and coatom of $S$ and $g(a)$ is equal to either $f(a)$ or $f(a)'$, it must be that $g(a)=f(a)$. To produce this $S$, we consider several cases. First, if $f(e)<f(a)$, then set $b=e\wedge a$. Then $f(b)=f(e)$, so $0<b<a$, and the subalgebra $S$ generated by $a$ and $b$ is an $8$-element subalgebra. Also the image of $S$ is an $8$-element subalgebra $T$ generated by $f(a), f(e)$, where $0<f(e)<f(a)$. A similar argument holds in the case of any comparability among $f(a),f(a)'$ and $f(e),f(e)'$. Suppose there is no such comparability, so a meet of one of $f(a),f(a)'$ with one of $f(e),f(e)'$ does not belong to $\{0,f(a),f(a)',f(e),f(e)',1\}$. Set $b=e\wedge a'$ and $c=e'\wedge a'$. Then $a,b,c$ are pairwise orthogonal and their join is 1, so they generate an $8$-element subalgebra $S$ of $B$. Since none of $f(a),f(b),f(c)$ is 0 or 1, the image of $T$ of $S$ is then an $8$-element Boolean subalgebra of~$C$. 
\end{proof}


Having shown that the functor $\mathcal{G}$ is faithful on proper morphisms, we turn attention to showing that it is full. Our earlier notion of directions will be key. 

\begin{definition}
  Suppose $A$ is a proper orthoalgebra, and let $\mathcal{G}=\mathcal{G}(A)$ is its hypergraph. Write $\oDir(\mathcal{G})$ for the orthoalgebra of directions of the orthodomain $\BSub(A)^*$ that is the set of points, lines, and planes of $\mathcal{G}$ with the obvious order. 
\end{definition}

Let's review some basics of directions when using the terms points, lines, and planes of $\mathcal{G}$ to refer to elements of height at most 3 in the orthodomain $\BSub(A)^*$. Basic elements are $\bot$ and the points $p$. A direction $d$ for a basic element assigns to each cover of that basic element either ${\uparrow}$ or $\downarrow$ (see also the remark below Corollary \ref{Cor:booleanobjects}). The direction $d$ is determined by its assignment on any given cover. The orthocomplementary direction $d'$ assigns exactly the opposite choice of $\uparrow$ or $\downarrow$ to each cover. The basic element $\bot$ has two directions, 0 and 1. The direction 0 assigns $\downarrow$ to each point, and the direction 1 assigns $\uparrow$ to each point. 

\begin{lemma}\label{deer}
  Let $d$ be a direction for a point $p$ of an orthohypergraph $\mathcal{G}$.
  \begin{enumerate}
    \item If $p$ is a corner point of a plane $t$, then $d$ takes the same value of $\,{\uparrow}\!$ or ${\downarrow}$ on all three lines of $t$ containing $p$. 
    \item If $p$ is a edge point of a plane $t$, then $d$ takes opposite values of $\,\uparrow\!$ and $\downarrow$ on the two lines of $t$ containing $p$. 
    \item If $l,m$ are two lines containing $p$ and $d$ takes opposite values of $\,\uparrow\!$ and $\downarrow$ on $l,m$, then there is a plane $t$ with $p$ as an edge point and $l,m$ the two lines of $t$ containing $p$. 
  \end{enumerate}  
    \[
    \begin{tikzpicture}[xscale=.8,yscale=1]
    \draw[fill] (0,0) circle(1.5pt); \draw[fill] (1,0) circle(1.5pt); \draw[fill] (2,0) circle(1.5pt); \draw[fill] (1,1.414) circle(1.5pt); \draw[fill] (1,.48) circle(1.5pt); 
    \draw[very thick] (1,1.414)--(1,0); \draw[very thick] (0,0)--(2,0);
    \node at (-.7,0) {${\uparrow}\,\,l$}; \node at (.1,1.414) {${\downarrow}\,\,m$};
    \draw[thin,->] (2.5,.707)--(4.5,.707);
    \node at (1,-.5) {$p$};
    \node at (6,-.5) {$p$};
    \draw[fill] (5,0) circle(1.5pt); \draw[fill] (6,0) circle(1.5pt); \draw[fill] (7,0) circle(1.5pt); \draw[fill] (6,1.414) circle(1.5pt); \draw[fill] (6,.48) circle(1.5pt); \draw[fill] (5.5,.707) circle(1.5pt); \draw[fill] (6.5,.707) circle(1.5pt); 
    \draw[thin,dashed] (5,0)--(7,0)--(6,1.414)--(5,0)--(6.5,.707); \draw[very thick] (6,1.414)--(6,0); \draw[thin,dashed] (5.5,.707)--(7,0); \draw[very thick] (5,0)--(7,0);
    \node at (7.4,0) {$l$}; \node at (6.6,1.414) {$m$}; 
    \end{tikzpicture} \]
\end{lemma}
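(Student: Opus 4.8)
The plan is to translate each of the three claims about a direction $d$ for a point $p$ into the poset language of $\BSub(A)^*$, where $p$ is an atom, lines through $p$ are the covers $p\lessdot l$ of height $2$, and planes containing $p$ are the height-$3$ elements above $p$, and then read off the values $\uparrow$/$\downarrow$ from the convention established just below Corollary~\ref{Cor:booleanobjects}: writing $d(l)=(p,l)$ we say $d$ takes value $\downarrow$ on $l$, and writing $d(l)=(l,p)$ we say $d$ takes value $\uparrow$ on $l$. The whole lemma will then follow from Definition~\ref{scary}, from Theorem~\ref{hhh} applied to the sub-orthodomain $\down t$ of a plane $t$ (which is isomorphic to the subalgebra lattice of a $16$-element Boolean algebra, so $\oDir(\down t)$ is that Boolean algebra), and from compatibility of restriction as in Proposition~\ref{zzzz} / Corollary~\ref{dsd}.

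For (1) and (2): fix a plane $t$ containing $p$, so $\down t\in \BSub(A)^*$ is (isomorphic to) $\Sub(B)$ for a $16$-element Boolean algebra $B$, and by Proposition~\ref{swa}/Theorem~\ref{hhh} the restriction $d|_{\down t}$ is a direction of $\down t$ determined by $d(t)$, hence corresponds to some $b\in B$ with $x_b=p$, i.e. $b$ or $b'$ is the element of $B$ whose four-element subalgebra is $p$. The lines of $t$ through $p$ are exactly the $8$-element subalgebras of $B$ containing $b$. If $p$ is a corner point of $t$ then (by the description of $\Sub(B)$ for $16$-element $B$, cf.\ Figure~\ref{fig1} and the discussion of corner/edge points) the element realizing $p$ can be taken to be an atom of $B$, so $b$ is an atom, and in every $8$-element subalgebra $l$ of $B$ containing $b$, $b$ is an atom of $l$; by Definition~\ref{xes} this means $d(l)=(\down_l b\cup\up_l b',\,\dots)=(p,l)$, i.e. $d$ takes value $\downarrow$ on all three such $l$. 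If $d(t)$ had been the other principal pair for $p$ in $\down t$, i.e. $d'$, then dually $b'$ is an atom and $d'$ takes value $\uparrow$ on all three; either way $d$ is constant on the three lines of $t$ through $p$, proving (1). If $p$ is an edge point of $t$, then $b$ is an element of height $2$ in $B$, so $b$ is an atom of exactly one of the two $8$-element subalgebras of $B$ through $b$ and a coatom of the other; hence $d$ takes value $\downarrow$ on one and $\uparrow$ on the other, proving (2).

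For (3): suppose $l,m$ are distinct lines through $p$ (so $p\lessdot l$, $p\lessdot m$) and, after possibly replacing $d$ by $d'$, $d(l)=(p,l)$ and $d(m)=(m,p)$. This is precisely the hypothesis of condition~(3) of Definition~\ref{scary} for the direction $d$, which yields that $w:=l\vee m$ exists and $l,m\lessdot w$; since $p$ is an atom with $p\lessdot l,m\lessdot w$, the element $w$ has height $3$, so it is a plane $t$ of $\mathcal{G}$ containing $l,m$. It remains to check that $p$ is an edge point of $t$ and that $l,m$ are exactly the two lines of $t$ through $p$: working again inside $\down t\cong\Sub(B)$ with $d|_{\down t}$ corresponding to $b\in B$ with $x_b=p$, the fact that $d$ takes opposite values on $l$ and $m$ forces $b$ to be an atom of one of $l,m$ and a coatom of the other, which by the classification of elements of $B$ is possible only when $b$ has height $2$ in $B$, i.e. $p$ is an edge point of $t$; and an edge point lies on exactly two lines of its plane, which must therefore be $l$ and $m$. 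This proves (3).

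The main obstacle I anticipate is bookkeeping rather than conceptual: one must be careful and uniform about the $d$-versus-$d'$ choice (the statements are symmetric under $d\mapsto d'$, so it suffices to fix one representative, but the write-up has to say this cleanly), and one must reliably use the explicit structure of $\Sub(B)$ for a $16$-element $B$ --- namely the correspondence between corner points and atoms of $B$, and between edge points and height-$2$ elements of $B$, together with the fact that an $8$-element subalgebra $l\ni b$ has $b$ as an atom iff $b$ is $\le$-minimal in $l\setminus\{0\}$. All of these are already recorded in the excerpt (the paragraph after the definition of $\mathcal{G}(A)$ identifies corner points with $x_a$ for $a$ an atom and edge points with $x_a$ for $a$ of height $2$; Theorem~\ref{hhh} gives $\oDir(\down t)\simeq B$; Definition~\ref{xes} gives the formula for $d_b$), so no new machinery is needed.
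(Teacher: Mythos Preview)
Your proposal is correct and follows essentially the same approach as the paper: for (1) and (2) you restrict to $\down t\cong\Sub(B)$ for a $16$-element Boolean algebra $B$, identify the restricted direction with an element $b\in B$, and read off the arrow values from whether $b$ is an atom or coatom in each $8$-element subalgebra; for (3) you invoke condition~(3) of Definition~\ref{scary} to produce the plane. The only cosmetic difference is that the paper finishes (3) by saying ``the remainder follows from (2)'' (i.e.\ ruling out the corner case by contrapositive of (1)), whereas you argue directly that $b$ must have height~$2$ in $B$; these are equivalent.
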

\begin{proof}
  (1) and (2) can be shown by calculating the directions for a $16$-element Boolean algebra $B$ and depicting this on the plane that is its hypergraph. Alternately, for such $B$, the corner points of its plane are the subalgebras $x_a$ where $a$ is an atom of $B$, and the edge points are the subalgebras $x_b$ where $b,b'$ are elements of height $2$ in $B$. A direction for the point essentially chooses one element from the point and provides $\uparrow$ or $\downarrow$ as a value for a line containing that point depending one whether the chosen element is an atom or coatom of the $8$-element subalgebra corresponding to that line. An atom $a$ of $B$ lies in three $8$-element subalgebras of $B$ and is an atom of each. An element $b$ of height $2$ in $B$ lies in two $8$-element subalgebras, and is an atom in one, and a coatom in the other (see for instance Figure \ref{fig1} where the element `1', corresponding to an atom of $B$, occurs in the first three subalgebras of the second row as an atom. Likewise, the element `12', corresponding to an element of height two in $B$,  occurs in the first and the last subalgebras of the second row as a coatom and an atom, respectively). (3) Definition~\ref{scary} of a direction provides that in the indicated circumstance $l\vee m=t$ exists and covers $l,m$. Then $t$ is a plane containing $l,m$, and the remainder follows from (2). 
\end{proof}

Now comes the key notion. In the rest of this section we assume that $A,C$ are proper orthoalgebras. Write $\mathcal{G}$ and $\mathcal{H}$ for their hypergraphs, with point sets $P$ and $Q$.

\begin{definition}\label{map}
  For $\alpha\colon P\to Q$ a proper hypergraph morphism, define $f_\alpha\colon \oDir(\mathcal{G})\to\oDir(\mathcal{H})$ as follows. 
  Let $f_\alpha$ map the directions $0,1$ to $0,1$. If $d$ is a direction for $p\in P$, let $l$ be a line through $p$ such that $\alpha(l)$ goes through $\alpha(p)$, and set 
  \[f_\alpha(d) = \mbox{ the direction for $\alpha(p)$ that takes value $d(l)$ at $\alpha(l)$}\text.\]
\end{definition}

The following results show that this is indeed well-defined.

\begin{lemma}\label{loi}
  If $\alpha \colon P\to Q$ is a proper hypergraph morphism, then for each $p\in P$ there is a line $l$ containing $p$ with $\alpha(l)$ covering $\alpha(p)$. 
\end{lemma}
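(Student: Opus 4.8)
The plan is to run on the properness of $\alpha$, which via Definition~\ref{A4} supplies condition (A4): every point $p$ of $P$ lies in a line or plane $K$ of $\mathcal{G}$ whose image under $\alpha$ contains a line. I fix $p$ and such a witnessing $K$. The guiding observation is a translation of ``covers'' in the orthodomain $\BSub(C)^*$, whose elements are the points, lines and planes of $\mathcal{H}$: a line $\alpha(l)$ of $\mathcal{H}$ covers $\alpha(p)$ precisely when $\alpha(p)$ is a point lying on that line, i.e.\ precisely when $\alpha$ restricted to $l$ is an isomorphism onto a line (case (iii) of (A1)); and a point $\alpha(l)$ covers $\alpha(p)$ precisely when $\alpha(p)=\bot$ and $\alpha$ restricted to $l$ is of ``point image'' type (case (ii) of (A1)). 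So it suffices to produce, when $\alpha(p)$ is a point, a line $l\ni p$ on which $\alpha$ is an isomorphism, and when $\alpha(p)=\bot$, a line $l\ni p$ on which $\alpha$ is a point-image map.

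First I would dispose of the easy cases. If $K$ is a line, then since $\alpha(K)$ contains a line, (A1) leaves only case (iii), so $\alpha|_K$ is an isomorphism, $\alpha(p)$ is a point, and $l=K$ works. If $K$ is a plane then by (A2) it falls under case (iii) (image a line $m$) or case (iv) (image a plane), the other two cases having image of size at most one. In case (iv), $\alpha|_K$ is an isomorphism onto a plane, so $\alpha(p)$ is a point, and any one of the (two or three) lines of $K$ through $p$ is mapped isomorphically onto a line through $\alpha(p)$. The real work is case (A2.iii). Here, by the shape of case (iii) of (A2) in Definition~\ref{morphism} (spelled out in the proof of Proposition~\ref{pkl}), $\alpha$ sends exactly one corner of $K$ to $\bot$ and the remaining three corners bijectively onto the three points of $m$; and then, applying (A1) to the six lines of $K$, every edge point of $K$ maps to a point of $m$, because each edge point lies on one of those lines whose two corners are both non-$\bot$ (the two lines through an edge point partition the four corners), and on such a line $\alpha$ must be an isomorphism. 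With this structure in hand I examine the seven positions of $p$: if $p$ is the distinguished corner then $\alpha(p)=\bot$, and the line through $p$ and any other corner carries exactly one $\bot$-point and two points mapping to a single point of $m$, so $\alpha$ is a point-image map there; if $p$ is any other corner or any edge point then $\alpha(p)$ is a point, and among the lines of $K$ through $p$ there is one carrying a second point whose $\alpha$-image is a point of $m$ different from $\alpha(p)$ (using that any two corners of a plane lie on a common line, and that the two lines through an edge point exhaust all four corners), so $\alpha$ is an isomorphism on that line.

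The step I expect to be the main obstacle is pinning down the internal structure of case (A2.iii) rigorously enough to support the seven-case check: that precisely one corner dies and the remaining three corners map bijectively onto $m$, and hence, via (A1), that all three edge points land on $m$. I would either quote this from the analysis in the proof of Proposition~\ref{pkl} or re-derive it directly, which is short since a plane has only six lines and running (A1) around them forces the pattern. Everything after that is routine, provided one also keeps in mind that $\BSub(C)^*$ has elements only of heights $0,1,2,3$, so that ``a point covers $\bot$'' and ``a line covers each of its points'' hold with no intermediate elements that could spoil the covering relations claimed above.
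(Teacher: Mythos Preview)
Your proposal is correct and takes essentially the same approach as the paper's own proof: invoke (A4), handle the line case directly via (A1.iii), and in the plane case split on (A2.iii) versus (A2.iv). The paper's version is considerably terser---it simply asserts that in the plane case, if $\alpha(p)=\bot$ then some line through $p$ has point image, and otherwise some line through $p$ has line image---leaving the internal structure of (A2.iii) implicit, whereas you carefully unpack that structure (one corner to $\bot$, the other three corners bijecting onto $m$, all edge points landing on $m$) and run the seven-position check explicitly.
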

\begin{proof}
  Condition (A4) of a proper hypergraph morphism says that $p$ is in a line or plane whose image contains a line. Suppose $p$ is in a line $l$ whose image contains a line. Then $\alpha(p)$ is a point and $\alpha(l)$ is a line containing that point, hence covering $\alpha(p)$. Suppose $p$ is in a plane $t$ whose image contains a line. In condition (A2), only cases (iii) or (iv) may apply to $\alpha(t)$. If $\alpha(p)=\bot$, then $p$ is in a line in this plane whose image is a point and therefore covers $\alpha(p)$, and otherwise $p$ is in a line in this plane whose image is a line and therefore covers $\alpha(p)$. 
\end{proof}

For the following result, recall that each basic element $p$ has exactly two directions, and that if one direction for the element is $d$, then the other $d'$ is formed by reversing the directions of all values $d(l)$ for covers of $l$ of the basic element. So if $l,m$ cover $p$, then one direction for $p$ takes the same value at $l,m$ precisely when both directions for $p$ take the same value at $l,m$. 

\begin{lemma}
  Suppose $\alpha\colon P\to Q$ is a proper hypergraph morphism and $p\in P$ belongs to lines $l,m$ where $l'=\alpha(l)$ and $m'=\alpha(m)$ cover $\alpha(p)$. If $d$ is a direction of $\mathcal{G}$ for $p$ and $e$ is a direction of $\mathcal{H}$ for $\alpha(p)$, then $d(l)=e(l')$ implies that $d(m)=e(m')$. 
\end{lemma}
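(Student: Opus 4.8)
The plan is to reduce the statement to a small number of combinatorial cases about how the lines $l,m$ through $p$ relate to each other, and then to invoke Lemma~\ref{deer} together with the defining conditions of a direction. First I would deal with the degenerate situation where $l=m$: then $l'=m'$ and the claim is trivial, so assume $l\neq m$. Since $l,m$ are distinct lines through the common point $p$, and since $\alpha(l)=l'$, $\alpha(m)=m'$ both cover $\alpha(p)$, there are two possibilities according to condition~(A2) applied to any plane containing $l,m$ (if there is one) and condition~(A1) applied to $l,m$: either $l'=m'$, or $l'\neq m'$.

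In the case $l'=m'$, the image line $l'=m'$ carries the single direction value at $\alpha(p)$; concretely $e(l')=e(m')$, so from $d(l)=e(l')$ we want $d(m)=e(l')$ as well, i.e. $d(l)=d(m)$. Here I would argue that $\alpha$ mapping two distinct lines $l,m$ through $p$ to the same line $l'$ through $\alpha(p)$ forces, via the isomorphism clauses of (A1) and the structure of an $8$-element Boolean subalgebra, that $p$ is a corner point of a plane $t$ containing $l$ and $m$ — more precisely, $l,m$ must lie in a common plane $t$ which is mapped by $\alpha$ with image a line (case (A2.iii)), with $p$ a corner point of $t$ (since $\alpha(p)\neq\bot$ is on the image line, $p$ is not collapsed). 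Then Lemma~\ref{deer}(1) gives $d(l)=d(m)$, as desired. In the case $l'\neq m'$: if $d(l)=d(m)$, then both directions for $p$ agree on $l,m$ by the remark preceding the lemma, and I claim this forces $e(l')=e(m')$. If instead $d(l)\neq d(m)$, then by Lemma~\ref{deer}(3) there is a plane $t$ of $\mathcal{G}$ with $p$ an edge point and $l,m$ its two lines through $p$; applying (A2) to $t$, since $\alpha(l),\alpha(m)$ are distinct lines, only case (A2.iv) can apply, so $t$ is mapped isomorphically to a plane $t'$ with $\alpha(p)$ an edge point and $l',m'$ its two lines through $\alpha(p)$, whence Lemma~\ref{deer}(2) gives $e(l')\neq e(m')$. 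So in the case $l'\neq m'$ we get the equivalence $d(l)=d(m)\iff e(l')=e(m')$, which combined with $d(l)=e(l')$ yields $d(m)=e(m')$ in every sub-case.

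The remaining gap is establishing, in the case $l'=m'$, that $l$ and $m$ must lie in a common plane of $\mathcal{G}$ with $p$ a corner point. The key observation is that $l$ and $m$ are distinct $8$-element Boolean subalgebras of $A$ sharing the atom (or coatom) corresponding to $p$; choosing the member $a$ of $p=x_a$ that is an atom of $l$, the morphism $\alpha=\mathcal{G}(f)$ maps $l$ isomorphically onto $l'$, so $f(a)$ is an atom of $l'$, and likewise $f$ restricted to $m$ is an isomorphism onto $m'=l'$, so $f(a)$ is an atom of $m'=l'$ too. Picking an atom $b$ of $l$ distinct from $a$ and an atom $c$ of $m$ distinct from $a$, one checks using that $f$ is injective on $l$ and on $m$ that $a,b,c$ together with $a'\wedge b'\wedge c'$ generate a $16$-element Boolean subalgebra of $A$ (the three are pairwise orthogonal and do not sum to $1$, since $a\oplus b\oplus c=f^{-1}$ of something lying properly below $1$), i.e. a plane $t$ containing $l,m$; and since $a$ is an atom of this $16$-element algebra, $p=x_a$ is a corner point of $t$.

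The main obstacle I expect is precisely this last argument — verifying that $l'=m'$ with $l\neq m$ really does force a plane, and in particular that the third atoms $b,c$ of $l,m$ are not orthogonal (so that $a,b,c$ generate a $16$-element rather than an $8$-element algebra, ruling out $l=m$ after all). This requires carefully tracking the atom/coatom bookkeeping inside $l$, $m$, $l'$ under the Boolean isomorphisms supplied by (A1.iii), and using the hypothesis that $\alpha(l),\alpha(m)$ both \emph{cover} $\alpha(p)$ (so that $\alpha$ is genuinely an isomorphism on $l$ and on $m$, not case (A1.ii)). Everything else is routine once the plane is in hand and Lemma~\ref{deer} is available.
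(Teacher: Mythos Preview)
Your approach has the right skeleton but contains a genuine gap and a circularity.

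The gap is in the sub-case $l'\neq m'$ with $d(l)=d(m)$: you simply \emph{claim} that this forces $e(l')=e(m')$ and give no argument. This is exactly where axiom~(A3) is needed, and you never invoke it. The paper proceeds by contrapositive: if $e(l')\neq e(m')$, then by Lemma~\ref{deer}(3) the lines $l',m'$ lie in a plane $t'$ of $\mathcal{H}$ with $\alpha(p)$ an edge point; now (A3) produces a plane $t$ of $\mathcal{G}$ containing $l,m$, mapped isomorphically to $t'$, with $p$ an edge point, and Lemma~\ref{deer}(2) gives $d(l)\neq d(m)$. Without (A3) there is no mechanism to transport a plane from $\mathcal{H}$ back to $\mathcal{G}$, and the implication can fail for partial maps satisfying only (A1)--(A2), (A4)--(A5).

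The circularity is in your $l'=m'$ case: you write ``$\alpha=\mathcal{G}(f)$'' and reason with an orthoalgebra morphism $f$, but no such $f$ is given. The lemma is stated for an abstract proper hypergraph morphism $\alpha$; its whole purpose is to show that $f_\alpha$ is well defined, from which one \emph{later} builds an orthoalgebra morphism $g_\alpha$ with $\mathcal{G}(g_\alpha)=\alpha$. Assuming $\alpha$ already has this form begs the question. In fact the $l'=m'$ case needs no separate treatment: once the biconditional $d(l)=d(m)\Leftrightarrow e(l')=e(m')$ is established (via (A3) as above together with your correct argument for the other direction), the case $l'=m'$ drops out, since the right-hand side is then trivially true and $d(l)\neq d(m)$ would force, by your own (A2.iv) argument, that $l'\neq m'$.

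Finally, you do not separate out the case $\alpha(p)=\bot$. There $l',m'$ are \emph{points} of $\mathcal{H}$ rather than lines, so the phrase ``$\alpha(l),\alpha(m)$ are distinct lines'' is false and your appeal to (A2.iv) breaks down. The paper handles this case first, using that the only directions for $\bot$ are $0$ and $1$, each constant on all points, together with an inspection of which (A2) cases allow an edge point to be sent to $\bot$.
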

\begin{proof}
  Suppose first that $\alpha(p)=\bot$. Since the directions of $\mathcal{H}$ for $\bot$ are 0, 1 and each takes the same value on all covers of $\bot$, that is, on all points of $Q$, we must show that a direction $d$ for $p$ takes the same value on the lines $l,m$ that contain $p$. If not, then by Lemma~\ref{deer}(3), there is a plane $t$ containing $l,m$ and having their intersection $p$ as an edge point. Consider condition (A2) together with the assumption $\alpha(p)=\bot$. The only possibilities that have an edge point of the plane undefined are (i) and (ii), and both have at least one of the two lines $l,m$ containing $p$ mapped to $\bot$, and hence not covering $\alpha(p)$. 

  Suppose that $\alpha(p)=q$ is a point of $Q$. So for $l'$ and $m'$ to cover $\alpha(p)$ we have that $l',m'$ are lines of $Q$ that contain $q$. Our result will follow if we show that $d(l)= d(m)$ iff $e(l')=e(m')$. If $d(l)$ and $d(m)$ take opposite values, then by Lemma~\ref{deer}(3) we have that $l, m$ lie in a plane $t$ with their intersection $p$ an edge point of this plane. Considering the possibilities for $\alpha(t)$ given by (A2), only case (iv) can apply. So $\alpha$ maps $t$ isomorphically to a plane $t'$, hence with $l',m'$ distinct lines of $t'$ with their intersection point $q=\alpha(p)$ an edge point of $t'$. Then by part 2 of Lemma~\ref{deer}, we have that $e$ takes opposite values at $l',m'$. If $e(l')$ and $e(m')$ take opposite values, then by part 3 of Lemma~\ref{deer} we have that $l',m'$ are distinct lines of a plane $t'$ with their intersection $q$ an edge point of this plane. By condition (A3), $l$ and $m$ lie in a plane $t$ of $\mathcal{G}$ that is mapped isomorphically by $\alpha$ to $t'$, and hence with the intersection $p$ of $l,m$ being an edge point of $t$. So $d(l)$ and $d(m)$ take opposite values by Lemma~\ref{deer}(2). 
\end{proof}

Together, Lemmas~\ref{loi} and~\ref{deer} show that Definition~\ref{map} is a valid definition of a mapping $f_\alpha \colon \oDir(\mathcal{G})\to\oDir(\mathcal{H})$. We now proceed to establish properties of this map. 

\begin{proposition}
  If $\alpha\colon P\to Q$ is a proper hypergraph morphism, then $f_\alpha\colon \oDir(\mathcal{G})\to\oDir(\mathcal{H})$ is an orthoalgebra morphism. 
\end{proposition}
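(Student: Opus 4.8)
The plan is to verify the three defining properties of an orthoalgebra morphism for $f_\alpha$: preservation of $0,1$ and orthocomplementation, and compatibility with $\oplus$. Preservation of $0,1$ is immediate from Definition~\ref{map}. For orthocomplementation, recall that for a direction $d$ for a point $p$, the direction $d'$ is obtained by reversing all values $d(l)$. Pick a line $l$ through $p$ with $\alpha(l)$ covering $\alpha(p)$ (Lemma~\ref{loi}); then $f_\alpha(d)$ takes value $d(l)$ at $\alpha(l)$, while $f_\alpha(d')$ takes value $d'(l)=d(l)'$ at $\alpha(l)$, so $f_\alpha(d')$ and $f_\alpha(d)'$ are two directions for $\alpha(p)$ agreeing up to reversal at a single cover; by Proposition~\ref{swa}(2) applied in $\BSub(C)^*$ this forces $f_\alpha(d')=f_\alpha(d)'$. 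It also handles the constants since $0'=1$.

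The substantive part is $\oplus$. I must show that $d\perp e$ implies $f_\alpha(d)\perp f_\alpha(e)$ with $f_\alpha(d\oplus e)=f_\alpha(d)\oplus f_\alpha(e)$, going through the three cases of Definition~\ref{perp}. The cases where one of $d,e$ is $0$, or $e=d'$, are routine given preservation of $0,1,{}'$. The real case is when $d$ is a direction for $p$, $e$ for $q$, with $p,q$ near, $w=p\vee q$ a line (height-$2$ element), $z$ the third point on $w$, $d(w)=(p,w)$, $e(w)=(q,w)$, and $d\oplus e$ the direction for $z$ with $(d\oplus e)(w)=(w,z)$. In hypergraph language $d$ takes value $\downarrow$ on the line $w$ through $p$, and $e$ takes $\downarrow$ on $w$ through $q$; I want to show $\alpha(w)$ is a line through the three points $\alpha(p),\alpha(q),\alpha(z)$, that $f_\alpha(d),f_\alpha(e)$ are orthogonal directions taking $\downarrow$ on $\alpha(w)$, and that $f_\alpha(d\oplus e)$ is the direction for $\alpha(z)$ taking $\uparrow$ on $\alpha(w)$. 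The key input is that $\alpha$ acts on the line $w$ in one of the three ways of (A1). If (A1.i) or (A1.ii) holds, I must check that at least one of $\alpha(p),\alpha(q)$ is $\bot$, reducing to the $0$-case via the orthocomplementation already handled (here properness is what excludes $\alpha$ collapsing both $p$ and $q$ while $\alpha(z)$ survives, which would violate compatibility). If (A1.iii) holds, $\alpha$ restricts to an isomorphism $w\to\alpha(w)$, so the direction computation transfers verbatim: $f_\alpha(d)$ is computed via the line $w$ itself, takes $\downarrow$ on $\alpha(w)$ at the point $\alpha(p)$, similarly for $e$, and Lemma~\ref{deer} together with the isomorphism identifies $f_\alpha(d\oplus e)$ with the direction for $\alpha(z)$ taking $\uparrow$ on $\alpha(w)$, which is exactly $f_\alpha(d)\oplus f_\alpha(e)$ by Definition~\ref{perp}(3). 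Conversely one checks $f_\alpha(d)\perp f_\alpha(e)$ forces $d\perp e$ by running the same isomorphism backwards, again using properness to rule out the degenerate images.

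I expect the main obstacle to be the bookkeeping in the $\oplus$-case when the image line $\alpha(w)$ degenerates to a point or to $\bot$: one has to be careful that the three points $p,q,z$ on $w$ are not mapped inconsistently (e.g.\ $\alpha(p)=\alpha(q)=\bot$ but $\alpha(z)$ a genuine point), and the tool that prevents this is precisely condition (A1), which allows only the uniform behaviours "none defined", "point image with exactly one point surviving as $\bot$... ", i.e.\ the middle point image case where two of the three go to the image point and one to $\bot$. Matching this combinatorics against the definition of $\oplus$ on directions, and invoking Proposition~\ref{swa} to pin down directions from a single cover value, is the crux; the remaining verifications are direct.
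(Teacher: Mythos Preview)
Your treatment of the constants and orthocomplementation is fine and matches the paper. The gap is in the degenerate line cases, and it is a real one.

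First, a minor point: the converse you mention at the end (that $f_\alpha(d)\perp f_\alpha(e)$ forces $d\perp e$) is not part of the definition of an orthoalgebra morphism and is neither needed nor claimed.

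The substantive problem is your handling of (A1.i) and (A1.ii). You write that in these cases ``at least one of $\alpha(p),\alpha(q)$ is $\bot$, reducing to the $0$-case''. Neither half of this is correct. In (A1.ii) the single undefined point can perfectly well be the third point $z$, with $\alpha(p)=\alpha(q)$ a common point of $Q$; then $f_\alpha(d)$ and $f_\alpha(e)$ are two non-trivial directions for the \emph{same} point, and you must show they are orthocomplements and that $f_\alpha(d\oplus e)=1$. There is no $0$ in sight. And even in the cases where, say, $\alpha(p)=\bot$, knowing that $f_\alpha(d)\in\{0,1\}$ does not tell you \emph{which} it is: that depends on the value of $d$ on some other line through $p$ whose image is a point, and you have not produced such a line or computed $d$ there.

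The missing idea is the role of (A5). When $\alpha$ collapses the line $w$ (all of your degenerate cases), properness supplies a plane $t$ containing $p,q$ together with a point $s\in t$ whose image lies outside $\{\alpha(p),\alpha(q)\}$. One then reads off, from (A2), exactly how $\alpha$ acts on $t$, and uses the other lines of $t$ through $p,q,z$ (whose images now genuinely cover $\alpha(p),\alpha(q),\alpha(z)$) to compute $f_\alpha(d),f_\alpha(e),f_\alpha(d\oplus e)$. The corner/edge dichotomy of Lemma~\ref{deer} controls how the $\uparrow/\downarrow$ values propagate from $w$ to these new lines. The paper carries this out in three separate subcases ($\alpha(p)=\alpha(q)=\bot$; $\alpha(p)=\bot$, $\alpha(q)$ a point; $\alpha(p)=\alpha(q)$ a point), each requiring its own small computation. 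Your proposal does not contain this mechanism, and without it the argument does not go through.
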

\begin{proof}
  By Definition~\ref{map}, $f_\alpha$ preserves 0 and 1. If $d$ is a direction for a basic element $p$ and $l$ is a cover of $p$ with $\alpha(l)$ a cover of $\alpha(p)$, then we have that $d$ and its orthocomplement $d'$ take opposite values at $l$. Then by Definition~\ref{map}, $f_\alpha(d)$ and $f_{\alpha}(d')$ are directions for $\alpha(p)$ that take opposite values at the cover $\alpha(l)$ of $\alpha(p)$. Therefore $f_\alpha(d)$ and $f_\alpha(d')$ are orthocomplements. So $f_\alpha$ also preserves orthocomplementation. 

  It remains to show that $f_\alpha$ preserves orthogonal joins. For the rest of the proof, assume that $d$ is a direction for the basic element $p$, that $e$ is a direction for the basic element $q$, and that $d$ is orthogonal to $e$. We must show that $f_\alpha(d)$ is orthogonal to $f_\alpha(e)$ and that $f_\alpha(d\oplus e)=f_\alpha(d)\oplus f_\alpha(e)$. This requires distinguishing several cases. 
  \begin{description}
    \item[Case 1]
      At least one of $p,q$ is $\bot$. Since the directions for $\bot$ are 0 and 1, since $d$ is orthogonal to $e$, it follows that at least one of $d$ and $e$ is 0. Since $f_\alpha$ preserves 0, it then follows that $f_\alpha(d)$ is orthogonal to $f_\alpha(e)$ and that $f_\alpha(d\oplus e)=f_\alpha(d)\oplus f_\alpha(e)$. 
  \end{description}
  In the remainder, assume neither $p$ nor $q$ equals $\bot$, and therefore both are points of $P$. 
  \begin{description}
    \item[Case 2] $p=q$. Since $d,e$ are directions for the same point $p$ and are orthogonal, it follows that they are orthocomplements. Since $f_\alpha$ preserves orthocomplements, it follows that $f_\alpha(d)$ is orthogonal to $f_\alpha(e)$ and that $f_\alpha(d\oplus e)=f_\alpha(1)=1=f_\alpha(d)\oplus f_\alpha(e)$. 
\end{description}
    So we now have the situation where $p$ and $q$ are distinct points of $P$. Since $d$ is orthogonal to $e$, it follows that $p$ and $q$ lie on a line $l$. Let $r$ be the third point on $l$. Then since $d$ is orthogonal to $e$ we have that $d(l)$ and $e(l)$ have the value ${\downarrow}$, and that $d\oplus e$ is the direction for $r$ with $(d\oplus e)(l)$ having value ${\uparrow}$. The cases that follow will all assume this setup. 
\begin{description}
    \item[Case 3] $\alpha(l)$ properly contains $\alpha(p),\alpha(q)$. From (A1), this implies that $\alpha(l)=l'$ is a line, and therefore $\alpha(p)=p'$, $\alpha(q)=q'$, and $\alpha(r)=r'$ are distinct points comprising the line. Definition~\ref{map} then gives that $f_\alpha(d)$ is the direction for $p'$ with $f_\alpha(d)(l')={\downarrow}$, that $f_\alpha(e)$ is the direction for $q'$ with $f_\alpha(e)(l')={\downarrow}$, and that $f_\alpha(d\oplus e)$ is the direction for $r'$ with $f_\alpha(d\oplus e)(l')={\uparrow}$. Then $f_\alpha(d)$ and $f_\alpha(e)$ are orthogonal and $f_\alpha(d)\oplus f_\alpha(e)$ is the direction for $r'$ taking value ${\uparrow}$ at $l'$. Since $f_\alpha(d\oplus e)$ and $f_\alpha(d)\oplus f_\alpha(e)$ are directions for $r'$ taking the same value at $l'$, they are equal. 
\end{description}
    Suppose $\alpha(l)$ does not properly contain $\alpha(p),\alpha(q)$. Up to symmetry, there are several possibilities: (i) $\alpha(p)=\alpha(q)=\bot$, (ii) $\alpha(p)=\bot$, $\alpha(q)=q'$, and (iii) $\alpha(p)=\alpha(q)=p'$. In any of these cases, since $\alpha$ is proper, by (A5) there is a plane $t$ of $P$ with $\alpha(t)$ properly containing $\alpha(p),\alpha(q)$. Hence there is a point $s\in t$ with $\alpha(s)$ not in $l'$. Our remaining cases include this setup. 
\begin{description}
    \item[Case 4] $\alpha(p)=\alpha(q)=\bot$. By (A1) also $\alpha(r)=\bot$. Since there is a point $s$ in $t$ with $\alpha(s)\neq\bot$, the situation must be as indicated as in (A2.ii) with $p,q,r$ forming the bottom of the plane. We then let $u$ be the top of the plane, and note that $\alpha(u)\neq\bot$. There are two different possibilities, that $p,q$ are both corners of $t$, and that one of $p,q$ is an edge point of $t$, say $p$ is an edge point. These lead to the two situations depicted below. 
    \[\begin{tikzpicture}[xscale=.8,yscale=1]
    \draw[fill] (0,0) circle(1.5pt); \draw[fill] (1,0) circle(1.5pt); \draw[fill] (2,0) circle(1.5pt); \draw[fill] (1,1.414) circle(1.5pt); \draw[fill] (1,.48) circle(1.5pt); \draw[fill] (.5,.707) circle(1.5pt); \draw[fill] (1.5,.707) circle(1.5pt); 
    \draw[thin] (0,0)--(2,0)--(1,1.414)--(0,0)--(1.5,.707); 
    \draw[thin] (1,1.414)--(1,0); \draw[thin] (.5,.707)--(2,0);
    \node at (0,-.5) {$p$}; \node at (1,-.5) {$r$}; \node at (2,-.5) {$q$}; \node at (1,1.9) {$u$};
    \end{tikzpicture}
    \hspace{15ex}
    \begin{tikzpicture}[xscale=.8,yscale=1]
    \draw[fill] (0,0) circle(1.5pt); \draw[fill] (1,0) circle(1.5pt); \draw[fill] (2,0) circle(1.5pt); \draw[fill] (1,1.414) circle(1.5pt); \draw[fill] (1,.48) circle(1.5pt); \draw[fill] (.5,.707) circle(1.5pt); \draw[fill] (1.5,.707) circle(1.5pt); 
    \draw[thin] (0,0)--(2,0)--(1,1.414)--(0,0)--(1.5,.707); 
    \draw[thin] (1,1.414)--(1,0); \draw[thin] (.5,.707)--(2,0);
    \node at (0,-.5) {$r$}; \node at (1,-.5) {$p$}; \node at (2,-.5) {$q$}; \node at (1,1.9) {$u$};
    \end{tikzpicture}\]
    Note that $l$ is the bottom line of each plane. In each case, let $i$ be the line containing $p,u$, let $j$ be the line containing $q,u$, and let $k$ be the line containing $r,u$. Since $\alpha(u)\neq\bot$, in each case $\alpha(i)=i'$, $\alpha(j)=j'$ and $\alpha(k)=k'$ cover $\bot$. In a plane, a direction for a corner point takes the same value on all three lines containing the point, and a direction for an edge point takes opposite values on the two lines containing the point. Since $d(l)={\downarrow}$, $e(l)={\downarrow}$, $(d\oplus e)(l)={\uparrow}$, this yields the following. 
    \begin{align*}
      \mbox{Situation at left: }&\,\,\,d(i)={\downarrow},\,\,\, e(j)={\downarrow},\,\, \mbox{ and }\,\,(d\oplus e)(k)={\downarrow}\\ 
      \mbox{Situation at right: }&\,\,\,d(i)={\uparrow},\,\,\, e(j)={\downarrow},\,\, \mbox{ and }\,\,(d\oplus e)(k)={\uparrow}
    \end{align*}
    Definition~\ref{map} provides that $f_\alpha(d)(i')$ takes the same value of $\uparrow$ or $\downarrow$ as $d(i)$. But since $\alpha(p)$, $\alpha(q)$, and $\alpha(r)$ equal $\bot$, each of $f_\alpha(d)$, $f_\alpha(e)$, and $f_\alpha(d\oplus e)$ is either 0 or 1. Together with the above information, this provides the following. 
    \begin{align*}
      \mbox{Situation at left: }&\,\,\,f_\alpha(d)=0,\,\,\, f_\alpha(e)=0,\,\, \mbox{ and }\,\,f_\alpha(d\oplus e)=0\\ 
      \mbox{Situation at right: }&\,\,\,f_\alpha(d)=1,\,\,\, f_\alpha(e)=0,\,\, \mbox{ and }\,\,f_\alpha(d\oplus e)=1
    \end{align*}
    In each case, $f_\alpha(d)$ and $f_\alpha(e)$ are orthogonal, and $f_\alpha(d\oplus e)=f_\alpha(d)\oplus f_\alpha(e)$. 

  \item[Case 5] 
    $\alpha(p)=\bot$ and $\alpha(q)=q'$. From (A1), $\alpha(r)$ is also equal to the point $q'$. The existence of the point $s$ in $t$ with $\alpha(s)$ different from $\alpha(p),\alpha(q)$ implies (A2.iii) applies. There are then two possibilities depending on whether $q$ or $r$ is an edge point of $t$. 
    \[\begin{tikzpicture}[xscale=.8,yscale=1]
    \draw[fill] (0,0) circle(1.5pt); \draw[fill] (1,0) circle(1.5pt); \draw[fill] (2,0) circle(1.5pt); \draw[fill] (1,1.414) circle(1.5pt); \draw[fill] (1,.48) circle(1.5pt); \draw[fill] (.5,.707) circle(1.5pt); \draw[fill] (1.5,.707) circle(1.5pt); 
    \draw[thin] (0,0)--(2,0)--(1,1.414)--(0,0)--(1.5,.707); 
    \draw[thin] (1,1.414)--(1,0); \draw[thin] (.5,.707)--(2,0);
    \node at (0,-.5) {$p$}; \node at (1,-.5) {$r$}; \node at (2,-.5) {$q$}; \node at (1,1.9) {$u$};
    \end{tikzpicture}
    \hspace{15ex}
    \begin{tikzpicture}[xscale=.8,yscale=1]
    \draw[fill] (0,0) circle(1.5pt); \draw[fill] (1,0) circle(1.5pt); \draw[fill] (2,0) circle(1.5pt); \draw[fill] (1,1.414) circle(1.5pt); \draw[fill] (1,.48) circle(1.5pt); \draw[fill] (.5,.707) circle(1.5pt); \draw[fill] (1.5,.707) circle(1.5pt); 
    \draw[thin] (0,0)--(2,0)--(1,1.414)--(0,0)--(1.5,.707); 
    \draw[thin] (1,1.414)--(1,0); \draw[thin] (.5,.707)--(2,0);
    \node at (0,-.5) {$p$}; \node at (1,-.5) {$q$}; \node at (2,-.5) {$r$}; \node at (1,1.9) {$u$};
    \end{tikzpicture}\]
    Write $u$ for the top of the plane and $i,j,k$ for the lines containing $p,u$, and $q,u$, and $r,u$ respectively as in the previous case. In each situation we have that $d(l)={\downarrow}$, so $d(i)={\downarrow}$, and hence the direction $f_\alpha(d)$ for $\bot$ takes value ${\downarrow}$ at the point $\alpha(i')$, and therefore $f_\alpha(d)=0$. So in each situation we have that $f_\alpha(d)$ is orthogonal to $f_\alpha(e)$ and $f_\alpha(d)\oplus f_\alpha(e)=f_\alpha(e)$. It remains to show that $f_\alpha(d\oplus e)=f_\alpha(e)$. Using the fact that $e(l)={\downarrow}$ and $(d\oplus e)(l)={\uparrow}$, as well as our description of how arrows work at corner and edge points of a plane, we have the following. 
    \begin{align*}
      \mbox{Situation at left: }& e(j) = {\downarrow}\,\,\, \mbox{ and }\,\,\, (d\oplus e)(k) = {\downarrow}\\
      \mbox{Situation at right: }& e(j) = {\uparrow}\,\,\, \mbox{ and }\,\,\, (d\oplus e)(k) = {\uparrow}
    \end{align*}
   Since (A2.iii) applies, it follows that $\alpha(j)=j'$ and $\alpha(k)=k'$ are the same line containing $q'$.
    In the first situation, $f_\alpha(e)(j')={\downarrow}$ and $f_\alpha(d\oplus e)(k')={\downarrow}$, whereas in the second situation $f_\alpha(j')={\uparrow}$ and $f_\alpha(d\oplus e)(k')={\uparrow}$. Since $j'=k'$, in either situation $f_\alpha(e)=f_\alpha(d\oplus e)$ as required. 

  \item[Case 6]
    $\alpha(p)=\alpha(q)= p'$. Then (A1.ii) applies to $\alpha(l)$, so $\alpha(r)=\bot$. Since there is a point $s$ in $t$ with $\alpha(s)$ different from $p'$ and $\bot$, case (A2.iii) applies with $r$ the corner point of $t$ mapped to $\bot$. Then one of $p,q$ is a corner point and the other an edge point. By symmetry we need only consider the situation where $p$ is an edge point. 
    \[\begin{tikzpicture}[xscale=.8,yscale=1]
    \draw[fill] (0,0) circle(1.5pt); \draw[fill] (1,0) circle(1.5pt); \draw[fill] (2,0) circle(1.5pt); \draw[fill] (1,1.414) circle(1.5pt); \draw[fill] (1,.48) circle(1.5pt); \draw[fill] (.5,.707) circle(1.5pt); \draw[fill] (1.5,.707) circle(1.5pt); 
    \draw[thin] (0,0)--(2,0)--(1,1.414)--(0,0)--(1.5,.707); 
    \draw[thin] (1,1.414)--(1,0); \draw[thin] (.5,.707)--(2,0);
    \node at (0,-.5) {$r$}; \node at (1,-.5) {$p$}; \node at (2,-.5) {$q$}; \node at (1,1.9) {$u$};
    \end{tikzpicture}\]
    Write $u$ for the top of the plane and $i,j,k$ for the lines containing $p,u$, and $q,u$, and $r,u$ respectively as in the previous case. Note that (A2.iii) implies that $\alpha(u)=u'$ is a point distinct from $p'$. Our considerations for the way arrows behave in a plane in conjunction with $d(l)={\downarrow}$, $e(l)={\downarrow}$ and $(d\oplus e)(l)={\uparrow}$ imply the following. 
    \[d(i)={\uparrow},\,\,\, e(j)={\downarrow},\,\,\, \mbox{ and }\,\,\, (d\oplus e)(k)={\uparrow}\]
    Since $\alpha(p)=\alpha(q)=p'$, then $f_\alpha(d)$ and $f_\alpha(e)$ are directions for $p'$. Also, $\alpha(i)=i'$ and $\alpha(j)=j'$ both contain the distinct points $p',u'$, so $i'=j'$. It follows from the above that $f_\alpha(d)$ and $f_\alpha(e)$ take opposite values ${\uparrow}$ and ${\downarrow}$ respectively at the line $i'=j'$. Since they are directions for the same point $p'$, they are orthocomplements. Thus $f_\alpha(d)\oplus f_\alpha(e)=1$. We have that $d\oplus e$ is a direction for $r$ and $\alpha(r)=\bot$. Also, $\alpha(k)=k'$ is the point $u'$. Since $d\oplus e$ takes value ${\uparrow}$ at $k$, then $f_\alpha(d\oplus e)$ takes value ${\uparrow}$ at $u'$. Thus $f_\alpha(d\oplus e)=1$. This completes the proof of this case, and of the proposition. 
    \qedhere
  \end{description}
\end{proof}

Recall from Theorem~\ref{hhh} that for a proper orthoalgebra $A$, there is an orthoalgebra isomorphism from $A$ to $\oDir(\BSub(A))$ taking an element $a\in A$ to the direction $d_a$ given by Definition~\ref{xes}. Suppose $\mathcal{G}$ is the hypergraph of $A$. Then as noted in Theorem~\ref{qwqq}, and as used throughout this section, the orthodomain $\BSub(A)$ can be reconstructed from $\mathcal{G}$.
 
\begin{theorem}\label{drew}
  Suppose $A$ and $C$ are orthoalgebras with hypergraphs $\mathcal{G}$ and $\mathcal{H}$ whose point sets are $P$ and $Q$. Suppose $\alpha\colon P\to Q$ is a proper hypergraph morphism. Then the orthoalgebra morphism $f_\alpha\colon \oDir(\mathcal{G})\to\oDir(\mathcal{H})$ induces an orthoalgebra morphism $g_\alpha\colon A\to C$ where 
  \[g_\alpha(a) = c\quad \mbox{ if }\quad f_\alpha(d_a) = d_c\]
  Further, the induced hypergraph morphism $\mathcal{G}(g_\alpha)$ is equal to $\alpha$, and therefore $g_\alpha$ is proper. 
\end{theorem}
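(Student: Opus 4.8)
The statement packages together three things: (1) that $f_\alpha$ genuinely "descends" to a well-defined map $g_\alpha$ on elements of $A$; (2) that $g_\alpha$ is an orthoalgebra morphism; and (3) that $\mathcal{G}(g_\alpha) = \alpha$, which by Proposition~\ref{trek} forces $g_\alpha$ to be proper. I would dispatch these in that order, leaning heavily on Theorem~\ref{hhh} to translate back and forth between $A$ and $\oDir(\BSub(A)) \cong \oDir(\mathcal{G})$.

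\emph{Well-definedness of $g_\alpha$.} By Theorem~\ref{hhh}, $\xi\colon A\to\oDir(\BSub(A))$, $a\mapsto d_a$, is an orthoalgebra isomorphism, and by Corollary~\ref{dsd} the restriction gives $\oDir(\BSub(A))\simeq\oDir(\BSub(A)^*)=\oDir(\mathcal{G})$; likewise for $C$ and $\mathcal{H}$. So composing, $g_\alpha = \xi_C^{-1}\circ f_\alpha\circ\xi_A$ is an honest function $A\to C$, and the formula $g_\alpha(a)=c$ iff $f_\alpha(d_a)=d_c$ is exactly this composite once we identify directions of $\mathcal{G}$ with directions of $\BSub(A)$. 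Here it is worth remarking that every direction of $\mathcal{G}$ is $d_a$ for some $a\in A$ (surjectivity of $\xi$, Theorem~\ref{hhh}), so "the $c$ with $f_\alpha(d_a)=d_c$" always exists, and it is unique because $d\mapsto(\text{basic element }x, d)$ together with $\xi_C$ being a bijection pins it down up to the choice between $c$ and $c'$, which the assignment $d_c$ resolves.

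\emph{$g_\alpha$ is an orthoalgebra morphism.} This is immediate: $g_\alpha$ is a composite of three maps each of which preserves $0,1,{}',\oplus$ — the isomorphisms $\xi_A,\xi_C^{-1}$ (and the restriction isomorphism of Corollary~\ref{dsd}) and the orthoalgebra morphism $f_\alpha$ established in the preceding Proposition. So $g_\alpha$ preserves orthocomplementation, and whenever $a\oplus b$ is defined in $A$, $d_a\oplus d_b=d_{a\oplus b}$ in $\oDir(\mathcal{G})$, hence $f_\alpha(d_a)\oplus f_\alpha(d_b)$ is defined and equals $f_\alpha(d_{a\oplus b})$, which transports back to $g_\alpha(a)\oplus g_\alpha(b)=g_\alpha(a\oplus b)$.

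\emph{$\mathcal{G}(g_\alpha)=\alpha$, hence $g_\alpha$ proper.} This is the substantive step and the main obstacle. We must show that for every point $x_a$ of $P$, $\mathcal{G}(g_\alpha)(x_a)=\alpha(x_a)$, i.e.\ $x_{g_\alpha(a)}=\alpha(x_a)$ (reading $x_0=x_1=\bot$ as in the remark after Definition~\ref{mii}). Unwinding definitions, $g_\alpha(a)$ is the element $c$ with $f_\alpha(d_a)=d_c$, and by Definition~\ref{map}, $f_\alpha(d_a)$ is the direction for $\alpha(x_a)$ taking value $d_a(l)$ at $\alpha(l)$ for a suitable line $l$ through $x_a$ with $\alpha(l)$ covering $\alpha(x_a)$ (such $l$ exists by Lemma~\ref{loi}). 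The key point is that $d_c$ is a direction \emph{for the basic element $x_c$}, so $x_c = \alpha(x_a)$ is forced simply because a direction determines the basic element it is a direction for as the least element of its domain (the remark after Definition~\ref{scary}). Thus $x_{g_\alpha(a)}=x_c=\alpha(x_a)$, giving $\mathcal{G}(g_\alpha)=\alpha$ on the nose. Finally, since $\alpha$ is proper by hypothesis and $\mathcal{G}(g_\alpha)=\alpha$, Proposition~\ref{trek} (the equivalence "$f$ proper iff $\mathcal{G}(f)$ proper") yields that $g_\alpha$ is proper. The only delicate bookkeeping is making sure the identification of $\oDir(\mathcal{G})$ with $\oDir(\BSub(A))$ is used consistently — that a "direction for a point $p=x_a$ of $\mathcal{G}$" is literally a direction for the atom $x_a$ of $\BSub(A)$ — but this is precisely the content of the definition of $\oDir(\mathcal{G})$ and of Corollary~\ref{dsd}, so no real work is hidden there.
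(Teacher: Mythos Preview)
Your proof is correct and follows essentially the same route as the paper: define $g_\alpha$ as the composite $\xi_C^{-1}\circ f_\alpha\circ \xi_A$ using the isomorphisms of Theorem~\ref{hhh}, then verify $\mathcal{G}(g_\alpha)=\alpha$ by noting that $f_\alpha(d_a)$ is simultaneously a direction for $\alpha(x_a)$ (by Definition~\ref{map}) and for $x_c$ (since it equals $d_c$), forcing $\alpha(x_a)=x_c$; properness then follows from Proposition~\ref{trek}. Your write-up is somewhat more explicit than the paper's about well-definedness and the identification $\oDir(\mathcal{G})\cong\oDir(\BSub(A))$ via Corollary~\ref{dsd}, but the argument is the same.
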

\begin{proof}
  Since $g_\alpha$ is given by the composite of the orthoalgebra morphism $f_\alpha$ with the orthoalgebra isomorphisms from $A$ to $\oDir(\mathcal{G})$ and from $\oDir(\mathcal{H})$ to $C$, it is an orthoalgebra morphism. To see that $\mathcal{G}(g_\alpha)=\alpha$, suppose $x_a$ is point of~$P$. Let $g_\alpha(a)=c$. Then $x_c$ is either $\bot$ or a point of $Q$. By Definition~\ref{mii}, we have that $\mathcal{G}(g_\alpha)(x_a)=x_c$. Since $d_a$ is a direction for $x_a$, Definition~\ref{map} gives that $f_\alpha(d_a)$ is a direction for $\alpha(x_a)$. But $g_\alpha(a)=c$ implies by the definition of $g_\alpha$ that $f_\alpha(d_a)=d_c$. But $d_c$ is a direction for $x_c$. Thus $f_\alpha(d_a)$ is a direction for $\alpha(x_a)$ and a direction for $x_c$, hence $\alpha(x_a)=x_c$. Thus $\mathcal{G}(g_\alpha)=\alpha$. That this implies that $g_\alpha$ is proper is given by Proposition~\ref{trek}. 
\end{proof}

A functor $F\colon \mathcal{C}\to\mathcal{D}$ is an equivalence of categories when it is full, faithful, and essentially surjective. Fulness and faithfulness mean that for any objects $A$ and $C$ of $\mathcal{C}$, there is a bijection $F\colon \mathcal{C}(A,C)\to\mathcal{D}(F(A),F(C))$ of homsets. Essential surjectivity means that each object $D\in\mathcal{D}$ is isomorphic to $F(C)$ for some object $C$ of $\mathcal{C}$. We have seen various obstructions to providing an equivalence between the categories of orthoalgebras and orthohypergraphs. These include the fact that a one and two-element orthoalgebra have the same hypergraph, and various difficulties involving morphisms between orthoalgebras where the image of a block might be small. Essentially, the difficulty arises from the fact that a 4-element Boolean algebra has 2 automorphisms while its 1-element hypergraph only has one. This is the only difficulty.

\begin{definition}
  Write $\cat{OA_p}(A,C)$ for the collection of orthoalgebra morphisms that are proper from one orthoalgebra $A$ to another $C$,
  and $\cat{OH_p}(\mathcal{G},\mathcal{H})$ for the collection of proper hypergraph morphisms from one hypergraph $\mathcal{G}$ to another $\mathcal{H}$.
  Write $\cat{OA_i}$ for the category of proper orthoalgebras and injective orthoalgebra morphisms. 
  Write $\cat{OH_i}$ for the category of orthohypergraphs in which every point lies on a line and injective orthohypergraph morphisms.
\end{definition}

\begin{theorem}
  The functor $\mathcal{G} \colon \cat{OA} \to \cat{OH}$ has the following properties:
  \begin{itemize}
    \item it is essentially surjective on objects;
    \item it is injective on objects with the exception of 1- and 2-element orthoalgebras;
    \item it is full on proper morphisms: $\mathcal{G} \colon \cat{OA_p}(A,C) \to \cat{OH_p}(\mathcal{G}(A),\mathcal{G}(C))$ is surjective;
    \item it is faithful on proper morphisms: $\mathcal{G} \colon \cat{OA_p}(A,C) \to \cat{OH_p}(\mathcal{G}(A),\mathcal{G}(C))$ is injective.
  \end{itemize}
\end{theorem}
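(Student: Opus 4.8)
The plan is to derive all four clauses from results already in hand, so that the proof is largely assembly. \emph{Essential surjectivity} follows at once from the definition: an orthohypergraph is, by definition, a hypergraph isomorphic to $\mathcal{G}(A)$ for some orthoalgebra $A$, so every object of $\cat{OH}$ lies in the image of $\mathcal{G}$ up to isomorphism. \emph{Faithfulness on proper morphisms} is exactly Proposition~\ref{cvb}: proper orthoalgebra morphisms $f,g\colon A\to C$ with $\mathcal{G}(f)=\mathcal{G}(g)$ coincide. For \emph{fullness on proper morphisms} I would first invoke Proposition~\ref{trek} to see that $\mathcal{G}$ does map $\cat{OA_p}(A,C)$ into $\cat{OH_p}(\mathcal{G}(A),\mathcal{G}(C))$, and then, given a proper hypergraph morphism $\alpha\colon\mathcal{G}(A)\to\mathcal{G}(C)$, apply Theorem~\ref{drew} to obtain a proper orthoalgebra morphism $g_\alpha\colon A\to C$ with $\mathcal{G}(g_\alpha)=\alpha$; this gives surjectivity on proper morphisms for proper $A,C$, and the remaining case of non-proper $A,C$ reduces to it by deleting small blocks as in Remark~\ref{ruut}.

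For \emph{injectivity on objects} I would argue as follows. Suppose $\mathcal{G}(A)\simeq\mathcal{G}(A')$; by Theorem~\ref{qwqq} this yields $\BSub(A)^*\simeq\BSub(A')^*$. The key first observation is that the isolated points of $\mathcal{G}(A)$ --- those lying on no line --- are precisely the maximal atoms of $\BSub(A)$, equivalently the $4$-element blocks of $A$; in particular $A$ is proper iff $\mathcal{G}(A)$ has no isolated point, and this is transferred across the isomorphism. If there are no isolated points, Theorem~\ref{hhh} and Corollary~\ref{dsd} give $A\simeq\oDir(\BSub(A))\simeq\oDir(\BSub(A)^*)$, and likewise for $A'$, whence $A\simeq A'$. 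If $\mathcal{G}(A)$ has both an isolated point and a line, then deleting the isolated points turns $\mathcal{G}(A)$ into $\mathcal{G}(\tA)$ for the proper orthoalgebra $\tA$ obtained by removing small blocks (since $\BSub(\tA)$ is $\BSub(A)$ with its maximal atoms removed, by Remark~\ref{kino}); $\tA$ is reconstructed as in the previous case, and $A$ is then recovered from $\tA$ by adjoining one $4$-element horizontal summand per isolated point, a number that is an isomorphism invariant. If $\mathcal{G}(A)$ consists only of isolated points, $A$ is a horizontal sum of $4$-element Boolean algebras determined by the cardinality of its point set. The sole uncovered case is $\mathcal{G}(A)=\emptyset$, which occurs exactly when $A$ has at most two elements; so $\mathcal{G}$ identifies precisely the $1$- and $2$-element orthoalgebras and is injective on isomorphism classes otherwise.

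The hard part --- indeed the only portion not quoted essentially verbatim from earlier sections --- will be this last bookkeeping for orthoalgebras with small blocks: one must check carefully that the isolated points of the hypergraph are exactly the small blocks, that deleting them commutes with the passage from orthoalgebras to hypergraphs, that the recipe of Remark~\ref{ruut} correctly rebuilds $A$ from the pruned hypergraph together with the count of isolated points, and that the empty hypergraph is realised by no orthoalgebra other than the $1$- and $2$-element ones, which pins down the single exception.
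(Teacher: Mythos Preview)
Your proposal is correct and follows essentially the same assembly as the paper: essential surjectivity by definition of orthohypergraph, injectivity on objects via the direction reconstruction (Theorem~\ref{qwqq}, Theorem~\ref{hhh}, Corollary~\ref{dsd}) together with the small-block bookkeeping of Remarks~\ref{kino} and~\ref{ruut}, faithfulness from Proposition~\ref{cvb}, and fullness from Theorem~\ref{drew} (with Proposition~\ref{trek} to see the map lands in the proper morphisms). One small simplification: your reduction ``the remaining case of non-proper $A,C$ reduces to it by deleting small blocks'' is unnecessary for fullness, since if $A$ has a small block then the isolated point $x_a$ cannot satisfy (A4), so $\cat{OH_p}(\mathcal{G}(A),\mathcal{G}(C))$ is empty, as is $\cat{OA_p}(A,C)$; the paper likewise does not spell this out.
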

\begin{proof}
  That $\mathcal{G} \colon \cat{OA} \to \cat{OH}$ is a functor is Proposition~\ref{pkl}. By definition of orthohypergraphs, $\mathcal{G}$ is essentially surjective. A proper orthoalgebra $A$ is isomorphic to the orthoalgebra of directions of $\BSub(A)^*$, and hence determined by its hypergraph $\mathcal{G}(A)$. As described in Remarks~\ref{kino} and~\ref{ruut}, non-trivial proper orthoalgebras are also determined up to isomorphism by their posets of Boolean subalgebras of height at most 3, and hence by their hypergraphs. So $\mathcal{G}$ is essentially injective on non-trivial orthoalgebras.  Proposition~\ref{cvb} proves $\mathcal{G} \colon \cat{OA_p}(A,C)\to\cat{OH_p}(\mathcal{G}(A),\mathcal{G}(C))$ injective, and Theorem~\ref{drew} proves   it surjective. 
\end{proof}

\begin{corollary}    
  The functor $\mathcal{G}$ restricts to an equivalence $\cat{OA_i} \simeq \cat{OH_i}$. 
\end{corollary}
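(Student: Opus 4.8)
The plan is to verify the three defining properties of an equivalence of categories---full, faithful, and essentially surjective---for the restriction of $\mathcal{G}$ to $\cat{OA_i}$, using the work already done in the preceding theorem and in Lemma~\ref{injectionssurjections}. First I would check that $\mathcal{G}$ actually restricts to a functor $\cat{OA_i}\to\cat{OH_i}$, i.e. that it sends the relevant objects and morphisms into the target category. An object of $\cat{OA_i}$ is a proper orthoalgebra $A$; by definition a proper orthoalgebra has no small blocks, so every atom $x_a$ of $\BSub(A)$ lies beneath an $8$-element Boolean subalgebra, which is to say every point of $\mathcal{G}(A)$ lies on a line. Thus $\mathcal{G}(A)$ is an object of $\cat{OH_i}$. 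A morphism of $\cat{OA_i}$ is an injective orthoalgebra morphism $f\colon A\to C$ between proper orthoalgebras; by Lemma~\ref{injectionssurjections}, $f$ is then proper and $\mathcal{G}(f)$ is injective, so $\mathcal{G}(f)$ is a morphism of $\cat{OH_i}$. Hence $\mathcal{G}$ does restrict as claimed.

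Next I would deal with faithfulness and fullness on homsets. For two objects $A,C$ of $\cat{OA_i}$, a morphism $f$ in $\cat{OA_i}(A,C)$ is in particular a proper orthoalgebra morphism, so the restriction $\mathcal{G}\colon\cat{OA_i}(A,C)\to\cat{OH_i}(\mathcal{G}(A),\mathcal{G}(C))$ is the restriction of the injective map $\mathcal{G}\colon\cat{OA_p}(A,C)\to\cat{OH_p}(\mathcal{G}(A),\mathcal{G}(C))$ of the previous theorem, hence is itself injective: faithfulness is immediate. For fullness, let $\alpha\colon\mathcal{G}(A)\to\mathcal{G}(C)$ be a morphism in $\cat{OH_i}$, i.e. an injective hypergraph morphism. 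I must produce an injective orthoalgebra morphism $f$ with $\mathcal{G}(f)=\alpha$. Since every point of $\mathcal{G}(A)$ lies on a line, $\alpha$ is automatically a \emph{proper} hypergraph morphism: condition (A4) holds because each point $x_a$ lies on a line $l$, and injectivity of $\alpha$ forces $\alpha(l)$ to be a line (it cannot collapse to a point or to $\bot$ without identifying two points of $l$, or---if $\alpha(p)=\bot$ for some $p\in l$---one would still need to argue, so let me phrase it more carefully), and condition (A5) follows similarly because on any line $l=\{p,q,r\}$ the injective $\alpha$ cannot send two of $p,q,r$ to the same value, so the third point $r$ witnesses (A5); some care is needed when one point maps to $\bot$, but $\alpha$ being defined and injective on the at most one remaining point still suffices. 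Granting that $\alpha$ is proper, Theorem~\ref{drew} yields an orthoalgebra morphism $g_\alpha\colon A\to C$ with $\mathcal{G}(g_\alpha)=\alpha$, and Lemma~\ref{injectionssurjections} then gives that $g_\alpha$ is injective since $\mathcal{G}(g_\alpha)=\alpha$ is injective. Thus $g_\alpha\in\cat{OA_i}(A,C)$ and maps to $\alpha$, establishing fullness.

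Finally, essential surjectivity: given an object $\mathcal{H}$ of $\cat{OH_i}$, it is in particular an orthohypergraph, so by definition $\mathcal{H}\simeq\mathcal{G}(A)$ for some orthoalgebra $A$. I must check $A$ can be taken proper; since every point of $\mathcal{H}$, and hence of $\mathcal{G}(A)$, lies on a line, $\BSub(A)$ has no maximal basic elements, so $A$ has no small blocks and is proper. Therefore $\mathcal{H}$ is isomorphic to $\mathcal{G}(A)$ for an object $A$ of $\cat{OA_i}$, which is essential surjectivity.

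I expect the main obstacle to be the verification that an injective hypergraph morphism between orthohypergraphs in which every point lies on a line is automatically proper---checking conditions (A4) and (A5) carefully, in particular handling the cases where $\alpha$ sends some point of a line to $\bot$ (recall $\alpha$ is a partial function, so ``injective'' must be understood as injective on its domain of definition). One must make sure that in such edge cases there are still enough defined, distinct images to satisfy (A5), which may require invoking that the source point lies not merely on a line but---when that line degenerates under $\alpha$---on a plane, using condition (A4)'s ``line or plane'' clause together with the structure of $\mathcal{G}(A)$ from the fact that $A$ is proper. Once that is pinned down, everything else is bookkeeping on top of Theorem~\ref{drew}, Lemma~\ref{injectionssurjections}, and the preceding theorem.
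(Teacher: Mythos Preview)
Your proposal is correct and follows exactly the paper's approach: the paper's proof is the single sentence ``Observe that an orthoalgebra $A$ has no small blocks if and only if every point of $\mathcal{G}(A)$ lies on a line, and combine the previous theorem with Lemma~\ref{injectionssurjections},'' and you have simply spelled out what that combination entails.

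Your flagged worry about whether an injective hypergraph morphism is automatically proper is legitimate but less delicate than you fear. The proof of Lemma~\ref{injectionssurjections} treats $\bot$ as a value when speaking of injectivity of $\mathcal{G}(f)$ (note the parenthetical ``also if $f(a)=0,1$''), so ``injective'' for a hypergraph morphism should be read as: distinct points have distinct images, counting $\bot$ as an image. Under this reading, at most one point of the source can be sent to $\bot$; but if some point $p$ had $\alpha(p)=\bot$, then on any line $l$ through $p$ the other two points would have distinct defined images, forcing case (A1.iii) and hence $\alpha(p)$ defined after all. Thus $\alpha$ is total and injective, every line maps isomorphically to a line, and (A4), (A5) are immediate with $s$ the third point on the line through $p,q$. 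No planes are needed.
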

\begin{proof}
  Observe that an orthoalgebra $A$ has no small blocks if and only if every point of $\mathcal{G}(A)$ lies on a line, and combine the previous theorem with Lemma~\ref{injectionssurjections}.
\end{proof}

\section{Concluding remarks}\label{sec:conclusion}

We have introduced a new method to describe orthoalgebras. Several previous methods have existed for about 50 years \cite{Kalmbach}. These include pasted families of Boolean algebras, which describe an orthostructure by specifying its maximal Boolean subalgebras (blocks) and their intersections; orthogonality relations, which give the elements of the orthostructure directly and a relation of orthogonality; and Greechie diagrams used for chain-finite orthostructures, where the structure is described via a hypergraph whose points are atoms of the structure. 

This new description is based on the poset of Boolean subalgebras of an orthoalgebra. We emphasize that it is the abstract structure of this poset that is required, not a knowledge of the actual Boolean subalgebras and their containments; it is enough to know `how the parts fit together', we do not need to know the parts themselves. Any non-trivial orthoalgebra can be reconstructed from its poset of Boolean subalgebras via a technique called directions. This can be further refined to use only of the portion of this poset of Boolean subalgebras of height 3 or less, which can be described as a type of hypergraph. For orthoalgebras, this hypergraph requires points, lines, and planes, while for orthomodular posets points and lines of 3 points each are sufficient. The idea behind this reconstruction of an orthoalgebra from a hypergraph is as follows: each point of the hypergraph yields two elements of the orthoalgebra to be reconstructed, and the two directions for each point say whether each element sits as an atom or coatom in each $8$-element Boolean algebra that contains it. This suffices to determine the orthoalgebra up to isomorphism. 

In contrast to other methods of representing orthostructures, a categorical correspondence in the setting of hypergraphs is relatively elegant. Morphisms between hypergraphs are certain partial mappings between their points that satisfy basic conditions describing how homomorphisms work on Boolean algebras with at most $16$ elements, as well as one simple, but more specific axiom. This provides a functor $\mathcal{G}$ from the category $\cat{OA}$ of orthoalgebras and their morphisms to the category $\cat{OH}$ of orthohypergraphs and their morphisms. On objects this is essentially surjective, and even injective when excepting trivial orthoalgebras. Basic obstacles involving automorphisms of 4-element Boolean algebras prevent an equivalence between any obvious modification of these categories. However, a stronger result obtains when restricting to proper morphisms, that for orthoalgebras bypass the difficulty of the image of a block being a 4-element Boolean algebra. On these proper morphisms, $\mathcal{G}$ is full and faithful. Unfortunately proper morphisms do not compose and therefore do not form a category in their own right. Informally, modulo some minor exceptions for trivial orthoalgebras and morphisms where some blocks have small images, the categories $\cat{OA}$ and $\cat{OH}$ are `nearly' equivalent. 

A description of sorts is given for the posets, and therefore the hypergraphs, that arise as posets of Boolean subalgebras of an orthoalgebra. Several of the conditions required are relatively simple using in an essential way the characterization of the poset of Boolean subalgebras of a Boolean algebra in terms of partition lattices \cite{Gratzer}. However, a higher order condition involving the existence of a sufficient supply of directions for the poset is also required. This leads to the following question. We believe a positive solution to this would be of substantial benefit in moving this direction of research forward to allow use of hypergraph techniques to problems in orthostructures previously addressed only via techniques similar to Greechie diagrams \cite{Navara,N:handQL}. 

\begin{problem}
  Characterize the hypergraphs that arise as orthohypergraphs of Boolean algebras, orthomodular lattices, orthomodular posets, and of orthoalgebras. 
\end{problem}

Aside from its basic interest and potential applicability, the results here are directly related to several lines of research. They are a direct continuation of work begun by Sachs \cite{Sachs} and continued by Gr\"atzer \textit{et.\ al.}~\cite{Gratzer} on the connection between Boolean algebras and their lattices of subalgebras. Indeed, our key notion of directions requires the analysis originally given by Sachs. Results here are new even in the Boolean context. They provide a more direct reconstruction of a Boolean algebra from its poset of subalgebras via directions rather than by the colimit approach of \cite{Gratzer}; they introduce hypergraph techniques that simplify descriptions; and they give a categorical treatment that involves morphisms. 

The results here are also directly related to the topos approach to quantum mechanics of Isham \textit{et.\ al.}~\cite{Isham}. In this line of investigation, the poset of Boolean subalgebras of the orthomodular lattice of closed subspaces of a Hilbert space is the central ingredient used to construct various sheaves. In \cite{HardingNavara} it was shown that even in the setting of orthomodular lattices, this poset determines the orginal orthomodular lattice. Various studies have continued this investigation to the matter of connecting the poset of abelian subalgebras of a von Neumann algebra or $C^*$-algebra to the given von Neumann or $C^*$-algebra \cite{Doring,Hamhalter,Lindenhovius,HeunenReyes}. There is a fundamental obstacle in this line of investigation given that there exist non-isomorphic von Neumann algebras with isomorphic Jordan structure. Also a primary barrier is the fact that only for special classes of $C^*$-algebras is the poset of its abelian C*-subalgebras atomic. Bearing this in mind, we ask the following broadly phrased question. 

\begin{problem}
Develop the connection between $C^*$-algebras and their posets of abelian subalgebras using hypergraph techniques. 
\end{problem}

\end{document}